\newcommand\reallywidehat[1]{%
\savestack{\tmpbox}{\stretchto{%
  \scaleto{%
    \scalerel*[\widthof{\ensuremath{#1}}]{\kern-.6pt\bigwedge\kern-.6pt}%
    {\rule[-\textheight/2]{1ex}{\textheight}}%WIDTH-LIMITED BIG WEDGE
  }{\textheight}% 
}{0.5ex}}%
\stackon[1pt]{#1}{\tmpbox}%
}
\renewcommand{\phi}{\varphi}
\renewcommand{\Im}{\textup{Im }}
\newcommand{\mc}[1]{\mathcal{#1}}
\def\be{\begin{equation}}
\def\ee{\end{equation}}
\def\bea{\begin{eqnarray}}
\def\eea{\end{eqnarray}}
\def\nn{\nonumber}
\def\T{\mathbb{T}}
\def\Z{\mathbb{Z}}
\def\N{\mathbb{N}}
\DeclareMathSymbol{\leqslant}{\mathalpha}{AMSa}{"36} % nicer `smaller or equal'
\DeclareMathSymbol{\geqslant}{\mathalpha}{AMSa}{"3E} % nicer `larger or equal'
\DeclareMathSymbol{\eset}{\mathalpha}{AMSb}{"3F}     % nicer `emptyset'
\renewcommand{\leq}{\;\leqslant\;}                   % redef. of < or =
\renewcommand{\geq}{\;\geqslant\;}                   % redef. of > or =
\DeclareMathOperator{\supp}{supp}
\DeclareMathOperator{\Span}{span}
\def\ie{\textit{i.e. }}
\def\a{\alpha}
\def\e{\varepsilon}
\def\d{\delta}
\def\g{\gamma}
\def\b{\beta}
\def\D{\Delta}
\def\l{\lambda}
\def\r{\rho}
\def\s{\sigma}
\def\t{\tau}
\def\k{\kappa}
\def\vs{\varsigma}
\def\R{\mathbb{R}}
\theoremstyle{plain}
\newtheorem{theorem}{Theorem}[section]
\newtheorem{lemma}[theorem]{Lemma}
\newtheorem{proposition}[theorem]{Proposition}
\newtheorem{corollary}[theorem]{Corollary}
\theoremstyle{definition}
\theoremstyle{remark}
\newtheorem{remark}[theorem]{Remark}
\numberwithin{equation}{section}
\definecolor{light}{gray}{.9}
\author{Giuseppe Genovese}
\address{Institut of Mathematics, University of Z\"urich,
Winterthurerstrasse 190
CH-8057 Z\"urich,  Switzerland}
\email{giuseppe.genovese@math.uzh.ch}
\author{Renato Luc\`a}
\address{BCAM - Basque Center for Applied Mathematics, 48009 Bilbao, Spain and Ikerbasque, Basque Foundation
for Science, 48011 Bilbao, Spain.}
\email{rluca@bcamath.org} 
\author{Nikolay Tzvetkov}
\address{Department of Mathematics (AGM), University of Cergy-Pontoise, 2, av. Adolphe Chauvin, 95302 Cergy-Pontoise Cedex, FRANCE}
\email{nikolay.tzvetkov@u-cergy.fr}
\title[Transport of Gaussian measures for Hamiltonian PDE\lowercase{s}]
{Transport of Gaussian measures with exponential cut-off for Hamiltonian PDE\lowercase{s}}
\date{\today}
\begin{document}
\begin{abstract}
We show that introducing an exponential cut-off on a suitable Sobolev norm facilitates the proof of quasi-invariance of Gaussian measures with respect to Hamiltonian PDE flows and allows us to establish the exact Jacobi formula for the density.
We exploit this idea in two different contexts, namely the periodic fractional Benjamin-Bona-Mahony (BBM) equation with dispersion~$\beta >1$ and the periodic one dimensional quintic defocussing nonlinear 
Schr\"odinger equation (NLS). For the BBM equation we study the transport of the cut-off Gaussian measures on 
fractional Sobolev spaces, while for the NLS equation we 
study the measures based on the modified energies introduced by Planchon-Visciglia and the third author. 
Moreover for the BBM equation we also show almost sure global well-posedness for data in~$C^\a(\T)$ for arbitrarily 
small~$\a>0$ and invariance of the Gaussian measure associated with the $H^{\beta/2}(\T)$ norm. 
 \end{abstract}

\maketitle

\section{Introduction}
\subsection{The setting}
This work fits in the line of research initiated in \cite{sigma} (inspired by the previous work \cite{DTV,TV13a,TV13b,TV14} on an integrable equation) aiming to study the transport of Gaussian measures under the flow of non integrable partial differential equations, in particular their invariance and quasi-invariance. We say that a measure $\mu$ is invariant under a (reversible) flow map $\{\Phi_t\}_{t\in \R}$ if $\mu\circ \Phi_t=\mu$ for any $t\in \R$ and it is quasi-invariant if $\mu\circ \Phi_t$ is absolutely continuous w.r.t. $\mu$. 

Our main aim here is to show that the introduction in the measure of an exponential weight suppressing large values of a suitable Sobolev norm (not necessarily conserved by the flow) is an efficient tool in the study of absolute continuity of the transported measure.
 Indeed it makes the proofs easier and allows us to give more information on the resulting Radon-Nikodim derivatives, establishing the so-called Jacobi formula for the density as a limit of finite-dimensional functions. We present the theory via two notable examples, namely the Benjamin-Bona-Mahony (BBM) 
equation with dispersion $\beta >1$ and the periodic quintic defocussing nonlinear Schr\"odinger equation (NLS), demonstrating that in this way we can improve on the previous analyses of \cite{sigma} and \cite{PTV} respectively.

The result of \cite{sigma} was extended to more involved models in \cite{deb, gauge, GLV0, GLV, FS,forl, GOTW,OS,OT1,OT2,OT3,OTT,PTV,phil, BBM2}. We 
believe that, beyond the BBM and NLS equation, the idea of an exponential cut-off introduced in the present paper may be relevant in the context of some of these works and, more generally, in the study of quasi-invariant measures for Hamiltonian PDEs.
We also refer to the recent paper \cite{BT} where quantitative quasi-invariance of certain Gaussian measures are exploited in questions of the long time behaviour of solutions for dispersive PDE's.

First of all we introduce the main objects we shall deal with, namely Gaussian measures on Sobolev spaces. We warn the reader that will use two slightly different definitions of Gaussian measures for the BBM and the NLS equations, with however similar or equal notations. 

We denote by $H^\sigma(\T)=H^\sigma$ the 
Sobolev space of 
real or complex valued functions (used respectively for the BBM and the NLS equation) equipped with the norm 
\begin{equation}\label{Def:SobNorm}
\|u\|_{H^\sigma}=\Big(\sum_{n \in \Z}(1+|n|^{2\sigma})|\hat{u}(n)|^2\Big)^{\frac{1}{2}}\,\, 
\end{equation}
(here and further $\widehat u(n)$ denotes the $n$-th Fourier coefficient of the function $u$). 
Let $\{h_n\}_{n> 0}$, $\{l_n\}_{n> 0}$ be two independent sequences of independent standard Gaussian random variables. Let $g_0$ be a standard Gaussian random variable independent on anything else and set
$$
g_n:=
\begin{cases}
\frac{1}{\sqrt{2}}(h_n+il_n) & n\in\N\\
\frac{1}{\sqrt{2}}(h_n-il_n) & -n\in\N\,.
\end{cases}
$$
Let $\beta>1$, $s\geq0$ and denote by $\g_{s}$ the Gaussian measure on $H^s$ induced by the random Fourier series 
\begin{equation}\label{Def:GaussmEasure}
\varphi_s(x)=\sum_{n\in\Z}\frac{g_n}{(1+|n|^{2s+\beta})^{\frac{1}{2}}}\,e^{inx}\,.
\end{equation}
This measure will be central in the analysis of the BBM equation. Observe that throughout the paper we will systematically omit the dependence of $\g_s$ on the parameter $\beta$ in the notations. 

For NLS we have complex solutions, so we consider a sequence of complex standard Gaussian random variables $\{g_n\}_{n\in\Z}$ and for integers $k \geq 2$ the Fourier series
\begin{equation}\label{Def:GaussmEasureNLS}
\varphi_{2k}(x)=\sum_{n\in\Z}\frac{g_n}{(1+|n|^{4k})^{\frac{1}{2}}}\,e^{inx}\,.
\end{equation} 
We indicate by $\g_{2k}$ the induced measure on $H^{2k-\frac12-}:=\bigcap_{\e>0} H^{2k-\frac12-\e}$.

\subsection{The BBM equation} 
For $\beta>1$, we consider the fractional BBM equation, posed on the one dimensional flat torus $\T := \R/2\pi \Z$:
\begin{equation}\label{BBM-gamma}
\partial_t u+\partial_t|D_x|^\beta u+\partial_x u+\partial_x (u^2)=0, \quad u(0,x)=u_0(x)\,,
\end{equation}
where $u$ is real valued and
$$
|D_x|^\beta(u)(x):=\sum_{n \neq 0}|n|^\beta \hat{u}(n)e^{inx}  \,.
$$

It turns out that the Sobolev spaces introduced above are natural for the study of the global Cauchy problem for \eqref{BBM-gamma}. This is because the $H^{\beta/2}$ norm is formally preserved by the BBM equation. 

We refer to \cite{sigma} for the modelling arguments leading to the derivation of \eqref{BBM-gamma}.
Using the arguments of \cite{sigma}, we can show the global well-posedness
in the Sobolev spaces $H^\sigma(\T)$, $\sigma\geq \beta/2$. 
We denote by $\Phi_t$, $t\in\R$ the associated flow and write $u(t)=\Phi_tu(0)$.

\subsection{Invariance of $\gamma_0$}
The measure $\gamma_0$ is special because we expect that it is invariant under $\Phi_t$ thanks to the $H^{\beta/2}$ conservation. 
The difficulty here is that for $\beta$ close to $1$ the flow defined in \cite{sigma} is by far not well defined on $H^s$, $s < \frac{\beta}{2}-\frac12$, which 
is the Sobolev regularity of $\g_{0}$-typical initial data. However, given $\beta >1$, we will be able to define locally in time a flow on $C^{\alpha}$
for $0 < \alpha < \frac{\beta}{2} - \frac{1}{2}$ (see Section \ref{sect:LWP}). Then, thanks to a well-known argument by Bourgain \cite{B94}, we will promote the local flow to a global one, $\g_{0}$-almost surely, using the invariance of the $\g_{0}$ measure. 
\begin{theorem}\label{TH:Invariance}
Let $\beta>1$. Equation \eqref{BBM-gamma} is globally well-posed for $\g_{0}$-almost all initial data. Moreover the measure~$\g_{0}$ is invariant under the resulting flow. 
\end{theorem}
Here we can exploit one standard characterisation of the support of $\gamma_0$, namely that $\bigcap _{0< \a<\frac{\beta-1}{2}} C^{\a}$ is a full $\gamma_0$-measure set. Thus combining Theorem~\ref{TH:Invariance} and the Poincar\'e recurrence theorem, we have for all $\beta >1$ recurrence of the solutions with respect to the $C^{\alpha}$ topology,  $ \alpha \in (0, \frac{\beta}{2} - \frac{1}{2})$, almost surely with respect to $\g_{0}$.  
\begin{corollary}\label{cor:recurrence}
Let $\beta>1$ and $\a\in(0, \frac{\beta-1}{2})$. For $\g_{0}$-almost all $u_0 \in C^{\alpha}$, there exists a diverging sequence of times $\{t_n\}_{n\in\N}$ such that
$$
\lim_{n \to \infty} \| \Phi_{t_n}u_0 - u_0\|_{C^{\alpha}} = 0 \, . 
$$
\end{corollary}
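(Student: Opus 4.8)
The plan is to derive the corollary from the Poincar\'e recurrence theorem applied to the time-one map $T:=\Phi_{1}$ of the flow; crucially, by Theorem~\ref{TH:Invariance} this $T$ is defined $\g_{0}$-almost everywhere and is \emph{measure preserving} for $\g_{0}$ (that is, $(\Phi_{1})_{\ast}\g_{0}=\g_{0}$), which is precisely the hypothesis Poincar\'e recurrence needs. The range of $\alpha$ here is the same as in Theorem~\ref{TH:Invariance}, since $\tfrac{\beta-1}{2}=\tfrac{\beta}{2}-\tfrac12$. First I would fix the ambient probability space: Theorem~\ref{TH:Invariance} furnishes a Borel set $\Sigma\subset C^{\alpha}$, invariant under $\Phi_{t}$ for every $t\in\R$, with $\g_{0}(\Sigma)=1$, on which the flow is globally defined, $T=\Phi_{1}\colon\Sigma\to\Sigma$ is a Borel map, and $(\Phi_{t})_{\ast}\g_{0}=\g_{0}$ for all $t$. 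Since $\alpha<\tfrac{\beta-1}{2}$ lies strictly below the H\"older regularity threshold of the random series \eqref{Def:GaussmEasure} with $s=0$, we have $\g_{0}(C^{\alpha})=1$; moreover, being a Radon measure, $\g_{0}$ is concentrated on the $\|\cdot\|_{C^{\alpha}}$-closure $X$ of the trigonometric polynomials, which is a \emph{separable} closed subspace of $C^{\alpha}$. Replacing $\Sigma$ by $\Sigma\cap X$ — still Borel, $\Phi_{t}$-invariant for every $t$, and of full $\g_{0}$-measure — we are reduced to a measurable, $\g_{0}$-preserving self-map $T=\Phi_{1}$ of the separable metric space $(\Sigma,\|\cdot\|_{C^{\alpha}})$ carrying the Borel probability measure $\g_{0}$.

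Next I would run the standard recurrence argument valid for second countable spaces. Fix a countable basis $\{B_{j}\}_{j}$ of open sets of $\Sigma$. For each $j$, the classical Poincar\'e recurrence theorem applied to the measurable set $B_{j}$ shows that the set $\widetilde{B}_{j}\subset B_{j}$ of points $u$ for which $T^{n}u\in B_{j}$ for only finitely many $n\geq 1$ is $\g_{0}$-null; hence $N:=\bigcup_{j}\widetilde{B}_{j}$ is $\g_{0}$-null. Let $u_{0}\in\Sigma\setminus N$, so that $u_{0}\notin\widetilde{B}_{j}$ for every $j$. Given $k\in\N$, choose a basis element $B_{j_{k}}\ni u_{0}$ of $\|\cdot\|_{C^{\alpha}}$-diameter $<1/k$ (possible since the $B_{j}$ form a basis); because $u_{0}\notin\widetilde{B}_{j_{k}}$ the orbit $\{T^{n}u_{0}\}_{n}$ returns to $B_{j_{k}}$ infinitely often, so we may pick $t_{k}\geq k$ with $\Phi_{t_{k}}u_{0}\in B_{j_{k}}$. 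Then $t_{k}\to\infty$ and $\|\Phi_{t_{k}}u_{0}-u_{0}\|_{C^{\alpha}}<1/k$, which is the assertion; the exceptional $\g_{0}$-null set is $N\cup(C^{\alpha}\setminus\Sigma)$.

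As already indicated in the text, this is a soft argument with no real analytic obstacle once Theorem~\ref{TH:Invariance} is granted. The two points requiring attention are: (i) the full H\"older space $C^{\alpha}$ is \emph{not} separable, so one must first restrict to the separable closed subspace $X$ on which $\g_{0}$ is concentrated before invoking recurrence for (second countable) separable metric spaces; and (ii) $\Phi_{t}$ is only defined $\g_{0}$-almost everywhere, which is why one works on the invariant full-measure Borel set $\Sigma$ from Theorem~\ref{TH:Invariance} and uses measurability — not continuity — of the time-one map there. Point (i) is essentially the only thing worth verifying; the rest is bookkeeping.
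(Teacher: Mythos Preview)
Your proposal is correct and follows exactly the route the paper indicates: the paper does not give a detailed argument for this corollary but simply states that it follows ``by standard arguments, combining Theorem~\ref{TH:Invariance} and the Poincar\'e recurrence theorem,'' and you have supplied precisely those standard details. Your care with the separability issue --- restricting to the closure of trigonometric polynomials in $C^{\alpha}$, on which $\g_{0}$ is concentrated since $\g_{0}$-typical data lie in $C^{\alpha'}$ for some $\alpha'\in(\alpha,\tfrac{\beta-1}{2})$ --- is a point the paper leaves implicit; the only small thing to add is that this subspace is preserved by $\Phi_{t}$ (immediate from the Duhamel formula and the fact that the Fourier multipliers $e^{-tL_{\beta}}$ and $L_{\beta}$ map trigonometric polynomials to trigonometric polynomials), or alternatively one can simply intersect with $\bigcap_{n\in\Z}\Phi_{n}^{-1}(X)$, which is still of full measure by invariance of $\g_{0}$.
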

\subsection{Quasi-invariance of $\gamma_s$}
When $s>0$ we can still study the transport of $\g_{s}$ introducing suitable weights. In order to prove the quasi invariance of the Gaussian measure $\g_{s}$, we use a rigid cut-off on the $H^{\beta/2}$ norm, which is a conserved quantity (see \eqref{cons} below), and an additional exponential weight. We define the measure for $s > \beta /2$
\begin{equation}\label{Def:Rho}
\rho_{s}(du):=1_{\{ \| u \|_{H^{\beta/2}} \leq R \}} (u)\exp(-  \|u\|^{2r}_{H^s}) \g_{s}(du)\,, \quad r > 2. 
\end{equation}
We stress that we will not keep track of the dependence of $\rho_{s}$ on the parameters $R$, $r$ and $\b$ in the notations.
In the following theorem we cover the case $\beta\in(1,2]$. As explained in \cite{sigma}, the classical result of Ramer \cite{Ramer} applies for $\beta>2$.
\begin{theorem}\label{TH:quasi}
Let $\beta \in(1,2]$, $s > \frac\beta2$ such that $s + \beta/2 >3/2$. Let also $r>2$. 
The measures $\rho_{s}$ are quasi-invariant along the flow of \eqref{BBM-gamma}. The densities 
$f_{s}(t,u)$ of the transported measures are in~$L^p(\rho_{s})$ for all $t \in \R$ and~$p<\infty$. Moreover if $s>\frac32$ 
\begin{equation}\label{eq:Densities}  
f_s(t,u) := \exp \Big( -  \|  \Phi_t u\|^{2r}_{H^s} - \frac12 \| \Phi_t u\|^2_{ H^{s+\frac\beta2}} + \|   u\|^{2r}_{H^s} + \frac12 \|  u\|^2_{ H^{s+\frac\beta2}} \Big)\,. 
\end{equation}
\end{theorem}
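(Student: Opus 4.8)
The plan is to follow the now-standard strategy for proving quasi-invariance via truncated dynamics, adapted to the exponentially-weighted cut-off measure $\rho_s$. First I would introduce the finite-dimensional (spectral) truncation $\Phi_t^N$ of the flow, obtained by projecting the nonlinearity onto frequencies $|n|\le N$, together with the truncated measures
\[
\rho_s^N(du):=1_{\{\|u\|_{H^{\beta/2}}\le R\}}(u)\,\exp\big(-\|P_{\le N}u\|^{2r}_{H^s}\big)\,\g_s(du).
\]
The truncated flow splits into a finite-dimensional ODE on the low modes and a linear flow on the high modes, so the change-of-variables formula is rigorous there. The key computation is the time derivative of $\rho_s^N(\Phi_t^N A)$: by Liouville's theorem the Lebesgue measure on the low modes is preserved (BBM in the form \eqref{BBM-gamma} has a Hamiltonian, hence divergence-free, truncated vector field after the standard change of unknown $v=(1+|D_x|^\beta)^{1/2}u$), so the only contributions come from differentiating the Gaussian weight of $\g_s$, the cut-off $1_{\{\|u\|_{H^{\beta/2}}\le R\}}$, and the exponential weight $\exp(-\|P_{\le N}u\|^{2r}_{H^s})$. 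The cut-off term vanishes identically because $\|u\|_{H^{\beta/2}}$ is conserved by the truncated flow as well (the $H^{\beta/2}$ conservation survives truncation — this is exactly the role of using a \emph{conserved} norm in the rigid cut-off), so one is left with
\[
\frac{d}{dt}\rho_s^N(\Phi_t^N A)=\int_{\Phi_t^N A}\Big(\tfrac12\tfrac{d}{dt}\|P_{\le N}u\|^2_{H^s}\big|_{t}\!\!+\ r\,\|P_{\le N}u\|^{2r-2}_{H^s}\tfrac{d}{dt}\|P_{\le N}u\|^2_{H^s}\big|_{t}\Big)\rho_s^N(du),
\]
where $\frac{d}{dt}\|P_{\le N}u\|^2_{H^s}$ is computed by plugging in the equation. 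This is where the dispersion parameter enters: a direct expansion gives a term that, after integration by parts in $x$ and using $\partial_t(1+|D_x|^\beta)u=-\partial_x u-\partial_x(u^2)$, is controlled by $\|u\|_{H^{s+\beta/2-1}}$-type quantities times lower-order factors; the gain of $\beta-1>0$ derivatives coming from the $(1+|D_x|^\beta)^{-1}$ smoothing is what makes the energy estimate close, and the condition $s+\beta/2>3/2$ is precisely what is needed to absorb the worst resonant contribution (as in \cite{sigma}).

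Next I would establish the uniform-in-$N$ $L^p(\rho_s^N)$ bound on the densities $f_s^N(t,u)$, which by the Gronwall-type argument above reduces to an a priori estimate of the form $\frac{d}{dt}\log\|f_s^N(t,\cdot)\|_{L^p(\rho_s^N)}\le C(p,R,t)$ uniformly in $N$; the exponential weight $\exp(-\|P_{\le N}u\|^{2r}_{H^s})$ with $r>2$ is exactly what provides the coercivity needed to dominate the polynomial growth of the energy increment $r\|P_{\le N}u\|^{2r-2}_{H^s}\cdot(\text{energy derivative})$, since the derivative term costs at most two powers of a high Sobolev norm which $\|u\|^{2r}_{H^s}$ with $2r-2<2r$ beats after a weighted Young/interpolation inequality, and the rigid cut-off $\|u\|_{H^{\beta/2}}\le R$ takes care of the low-frequency factors. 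With these uniform bounds in hand, I would pass to the limit $N\to\infty$: the convergence $\Phi_t^N\to\Phi_t$ in the relevant topology (which follows from the well-posedness theory referenced after \eqref{BBM-gamma} together with standard approximation arguments for the truncated flows), the $\g_s$-a.s. convergence of $\exp(-\|P_{\le N}u\|^{2r}_{H^s})\to\exp(-\|u\|^{2r}_{H^s})$, and the uniform integrability supplied by the $L^p$ bounds give weak convergence $\rho_s^N(\Phi_t^N\,\cdot)\to\rho_s(\Phi_t\,\cdot)$, whence $\rho_s(\Phi_t\,\cdot)\ll\rho_s$ with $f_s(t,\cdot)\in L^p(\rho_s)$ for all $p<\infty$.

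For the explicit Jacobi formula \eqref{eq:Densities} valid when $s>3/2$, I would identify the limit of $f_s^N$ directly. At the truncated level the change-of-variables formula, combined with the computation above, yields
\[
f_s^N(t,u)=\exp\Big(-\|P_{\le N}\Phi_t^N u\|^{2r}_{H^s}-\tfrac12\|P_{\le N}\Phi_t^N u\|^2_{H^{s+\beta/2}}+\|P_{\le N}u\|^{2r}_{H^s}+\tfrac12\|P_{\le N}u\|^2_{H^{s+\beta/2}}\Big),
\]
because the Gaussian weight of $\g_s$ is $\exp(-\tfrac12\|P_{\le N}u\|^2_{H^{s+\beta/2}})$ up to normalization (from \eqref{Def:GaussmEasure}, the covariance weight is $(1+|n|^{2s+\beta})$, matching the $H^{s+\beta/2}$ norm). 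Here the condition $s>3/2$ guarantees that $\Phi_t u\in H^{s+\beta/2}$ and indeed $H^{s+\beta/2-1}\subset L^\infty$-type embeddings hold so that all the norms appearing make sense $\rho_s$-a.s. and the difference of $H^{s+\beta/2}$ norms converges; one then sends $N\to\infty$ using continuity of $\Phi_t$ on $H^s$ (and the a.s. finiteness of $\|u\|_{H^{s+\beta/2}}$ under $\g_s$ since $s+\beta/2$ is below the critical regularity $s+\beta/2-1/2$ of $\g_s$, wait — this needs $s+\beta/2 < s+\beta/2-1/2+\beta/2$, i.e. the Gaussian lives in $H^{\sigma}$ for $\sigma<s+\beta/2-1/2$, so $\|u\|_{H^{s+\beta/2}}=\infty$ a.s.; the point is rather that the \emph{difference} $\|\Phi_t u\|^2_{H^{s+\beta/2}}-\|u\|^2_{H^{s+\beta/2}}$ is a.s. finite, which is the content of the energy estimate). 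Matching the $N\to\infty$ limit with the $L^p$-limit from the previous step gives \eqref{eq:Densities}.

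The main obstacle I expect is the energy estimate itself, i.e. bounding $\frac{d}{dt}\|P_{\le N}u\|^2_{H^s}$ in a way that is simultaneously uniform in $N$, closes under the exponential weight, and produces a quantity whose time integral is $\rho_s$-a.s. finite (for the Jacobi formula). The delicate point is isolating the near-resonant frequency interactions in the trilinear expression coming from $\partial_x(u^2)$ paired against $|D_x|^{2s}P_{\le N}u$ and showing that the loss of derivatives is fully compensated by the $\beta>1$ smoothing of $(1+|D_x|^\beta)^{-1}$ — this is the place where the hypothesis $s+\beta/2>3/2$ is sharp, and where one must be careful that the exponential cut-off, while helping with large-norm coercivity, does not itself contribute an uncontrolled term when differentiated (it does not, because its derivative is again proportional to the same energy increment, merely with an extra polynomial prefactor that the $r>2$ weight dominates).
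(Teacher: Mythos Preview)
Your overall architecture matches the paper's: truncate, differentiate the transported measure, get a differential inequality, pass to the limit; then identify the density as the limit of the explicit finite-dimensional Jacobians. However there is one genuine error and one genuine omission.

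\textbf{Error in the derivative formula.} The Gaussian weight of $\gamma_s$ is $\exp(-\tfrac12\|P_N u\|^2_{H^{s+\beta/2}})$, as you correctly note later; consequently the first term in your displayed derivative of $\rho_s^N(\Phi_t^N A)$ must be $\tfrac12\frac{d}{dt}\|P_N u\|^2_{H^{s+\beta/2}}$, not $\tfrac12\frac{d}{dt}\|P_N u\|^2_{H^s}$. This is not cosmetic: the two time-derivatives obey \emph{different} energy estimates (compare the paper's Propositions~\ref{prop:growth-Hs-norm} and~\ref{prop:energy}), and the $H^{s+\beta/2}$ one is where the hard work lies.

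\textbf{Missing ingredient.} The energy estimate for $\frac{d}{dt}\|P_N u\|^2_{H^{s+\beta/2}}$ (Proposition~\ref{prop:energy}) produces a term $\|P_N u\|_{H^s}^2\|\partial_x P_N u\|_{L^\infty}$. The $\|\partial_x u\|_{L^\infty}$ factor cannot be controlled deterministically on the support of $\gamma_s$; one needs the probabilistic bound
\[
\Big(\int \|\partial_x P_N u\|_{L^\infty}^p\,\gamma_s(du)\Big)^{1/p}\lesssim \sqrt{p},
\]
valid precisely when $s+\beta/2>3/2$ (this is \cite[Lemma~6.1]{sigma}, recalled as \eqref{LinfB}). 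This is the actual origin of the hypothesis $s+\beta/2>3/2$, and the $\sqrt{p}$ growth is exactly why $r>2$ is needed: combining H\"older, the elementary bound $\sup_{x\ge0}x^a e^{-x^{2r}/p}\lesssim p^{a/(2r)}$, and the $\sqrt{p}$ above yields $\frac{d}{dt}(\rho_{s,N}(\Phi_t^N A))^{1/p}\le c\,p^{\sigma-1}$ with $\sigma=\max(1-\tfrac{\alpha}{r(2s-\beta)},\,\tfrac12+\tfrac1r)<1$. The strict inequality $\sigma<1$ is what your ``Gronwall-type'' sketch does not supply, and it is indispensable: the paper's route to $f_s\in L^p$ for \emph{all} $p$ (Proposition~\ref{prop_Lp}) goes through Young's inequality on $|t|\cdot(\log\tfrac{1}{\rho_s(A)})^\sigma$ (Proposition~\ref{lemma:radiceNLp}) followed by a level-set argument, and this collapses if $\sigma=1$. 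Your coercivity heuristic (``$2r-2<2r$ beats the polynomial growth'') handles the $H^s$-weight term but misses the $L^\infty$ contribution from the Gaussian-weight term.

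For the Jacobi formula your plan is correct; the paper's implementation (Propositions~\ref{prop:convL1}--\ref{lemma:unint}) likewise shows convergence in measure of $f_{s,N}$ plus a uniform $L^p$ bound via a tail estimate (Lemma~\ref{lemma:subexpXH}) exploiting the $\rho_s$-weight, exactly to make sense of the a.s.-infinite difference $\|\Phi_t u\|^2_{H^{s+\beta/2}}-\|u\|^2_{H^{s+\beta/2}}$ that you correctly flagged.
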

As we shall see in Proposition~\ref {prop:convL1} and Proposition~\ref{Prop:density} below $f_s(t,u)$ can be obtained as the natural limit of the corresponding finite dimensional densities associated with the finite dimensional truncations of
 \eqref{BBM-gamma} (and this convergence can be used to define it). 
 
 It should be pointed out that in the expression for $f_s(t,u)$ there is an important cancellation in 
 \begin{equation}\label{cancel}
 -\frac12 \| \Phi_t u\|^2_{ H^{s+\frac\beta2}} + \frac12 \|  u\|^2_{ H^{s+\frac\beta2}}
 \end{equation}
 because each term \eqref{cancel} is not well-defined on the support of $\gamma_s$.
 Therefore, it is a part of the statement of Theorem~\ref{TH:quasi} that \eqref{cancel} is well-defined $\gamma_s$ almost surely. The same considerations are valid also for the NLS equation, see Theorem \ref{TH:quasiNLS} below. 
 
Note that the statement of  Theorem~\ref{TH:quasi} covers the full range $\beta \in (1, 2)$, $s \geq 1$. The restriction~$s > \beta /2$ is 
needed in order to take advantage of 
the exponential cut-off. Indeed for~$s \leq \beta /2$ the exponential cut-off gives no benefits, as the rigid cut-off on the~$H^{\beta/2}$ 
norm imposes already a stronger restriction (probably these cases can be dealt with more complicated probabilistic techniques, as in \cite{BBM2}). We believe the assumption $s>\frac32$ for the densities is merely technical and brings no special meaning. For $s\in(\frac\beta2,\frac32]$ indeed the proofs of Proposition \ref{lemma:ConvMeas} and Proposition \ref{lemma:unint} below get more involved and here we decided to put the focus elsewhere and not to burden the paper with technicalities, keeping the proofs at their easiest possible level.  

By varying the parameter $R$, we get the following corollary of Theorem~\ref{TH:quasi}. This result is new
for~$\beta \in (1, 4/3]$; for the case $\beta > 4/3$ we refer to \cite{sigma}.
\begin{corollary}
Under the assumptions of Theorem~\ref{TH:quasi}, the measure $\gamma_s$ is quasi-invariant under the flow $\Phi_t$.
\end{corollary}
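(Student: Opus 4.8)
The plan is to derive the quasi-invariance of $\gamma_s$ from the quasi-invariance of the weighted measure $\rho_s$ (Theorem~\ref{TH:quasi}) by letting the truncation radius $R$ tend to infinity. Fix $s > \beta/2$ with $s + \beta/2 > 3/2$ and $r > 2$, so that Theorem~\ref{TH:quasi} applies with every choice of cut-off parameter $R$. Write $\rho_s^{R}$ for the measure in \eqref{Def:Rho} to emphasise its dependence on $R$, and let $A \subset H^s$ be a Borel set with $\gamma_s(A) = 0$; the goal is to show $\gamma_s(\Phi_t A) = 0$ for all $t \in \R$. Since $\gamma_s$ is absolutely continuous with respect to $\rho_s^{R}$ on the set $\{\|u\|_{H^{\beta/2}} \leq R\}$ (indeed $\frac{d\gamma_s}{d\rho_s^{R}} = 1_{\{\|u\|_{H^{\beta/2}}\leq R\}} \exp(\|u\|_{H^s}^{2r})$ there, which is finite $\gamma_s$-a.e.), the vanishing of $\gamma_s(A)$ forces $\rho_s^{R}(A) = 0$; then quasi-invariance of $\rho_s^{R}$ gives $\rho_s^{R}(\Phi_t A) = 0$.

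Next I would transfer this back to $\gamma_s$. For any $m \in \N$, consider the set $B_m := \Phi_t A \cap \{\|u\|_{H^{\beta/2}} \leq m\}$. On $B_m$ the measure $\gamma_s$ is absolutely continuous with respect to $\rho_s^{R}$ provided $R \geq m$: indeed on $\{\|u\|_{H^{\beta/2}} \leq m\} \subseteq \{\|u\|_{H^{\beta/2}} \leq R\}$ one has $d\gamma_s = \exp(\|u\|_{H^s}^{2r})\, d\rho_s^{R}$, and since $\rho_s^{R}(\Phi_t A) = 0$ we get $\rho_s^{R}(B_m) = 0$, hence $\gamma_s(B_m) = 0$. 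Taking $R = m$ and letting $m \to \infty$, monotone convergence (or simply countable additivity applied to the increasing union $\bigcup_m B_m = \Phi_t A \cap \{u : \|u\|_{H^{\beta/2}} < \infty\}$) yields $\gamma_s(\Phi_t A \cap \{\|u\|_{H^{\beta/2}} < \infty\}) = 0$. Finally, because $\|u\|_{H^{\beta/2}} < \infty$ holds for $\gamma_s$-a.e.\ $u$ — the series \eqref{Def:GaussmEasure} defining $\varphi_s$ has, $\gamma_s$-a.s., finite $H^{s}$ norm and a fortiori finite $H^{\beta/2}$ norm since $s \geq \beta/2$ — and since $\Phi_t$ preserves this full-measure set (the $H^{\beta/2}$ norm is conserved by the flow, see \eqref{cons}, so $\Phi_t$ maps $\{\|u\|_{H^{\beta/2}} < \infty\}$ into itself), we conclude $\gamma_s(\Phi_t A) = \gamma_s(\Phi_t A \cap \{\|u\|_{H^{\beta/2}} < \infty\}) = 0$. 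This proves $\gamma_s$ is quasi-invariant.

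The only genuinely delicate point is the bookkeeping around the conservation law: one must know that $\Phi_t$ is well-defined $\gamma_s$-a.s.\ and that it genuinely conserves $\|u\|_{H^{\beta/2}}$ on the relevant set, so that the decomposition of $\Phi_t A$ by level sets of the $H^{\beta/2}$ norm is compatible with applying Theorem~\ref{TH:quasi} for each $R$. Both facts are available: global well-posedness in $H^\sigma$, $\sigma \geq \beta/2$, is recalled in the subsection on the BBM equation, and the conservation of the $H^{\beta/2}$ norm is \eqref{cons}. Everything else is soft measure theory — transporting null sets through the mutually absolutely continuous pair $(\gamma_s, \rho_s^R)$ on each sublevel set and exhausting the support — so no hard analysis is needed beyond what Theorem~\ref{TH:quasi} already supplies. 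I do not expect any obstacle of substance; the corollary is essentially an exhaustion argument removing the artificial radius $R$.
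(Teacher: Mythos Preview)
Your argument is correct and is precisely the ``varying $R$'' exhaustion the paper alludes to (the paper gives no further details). One cosmetic remark: in your first step, to pass from $\gamma_s(A)=0$ to $\rho_s^R(A)=0$ you invoke $\gamma_s\ll\rho_s^R$ on the ball, which is the wrong direction; the implication you need is immediate from $\rho_s^R\ll\gamma_s$ (the density $1_{\{\|u\|_{H^{\beta/2}}\le R\}}e^{-\|u\|_{H^s}^{2r}}\le 1$), though of course your observation that the two measures are in fact equivalent on the ball also does the job.
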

As mentioned the restriction $\beta>1$ is natural for at least two reasons. The first one is that for $\beta\leq 1$ the measure $\gamma_0$ is no longer supported by classical functions and therefore the extension of Theorem~\ref{TH:Invariance} to $\beta\leq 1$ would require renormalisation arguments and this is a qualitatively different situation. 
The second reason is that it is not known whether \eqref{BBM-gamma} is globally well-posed for $\beta<1$. It is however known that \eqref{BBM-gamma} is globally well-posed for $\beta=1$ (see \cite{Mammeri}) and we plan to study the extension of  Theorem~\ref{TH:quasi}  to the case $\beta=1$ in a future work.

\subsection{The quintic NLS equation}

We prove similar results for the defocusing quintic NLS on $\T$:
\begin{equation}\label{NLSQuintic}
i \partial_t u + \partial_{x}^2 u = |u|^4 u, \quad u(0,x)=u_0(x).
\end{equation}

The $L^2$ norm of the solution $\|u\|_{L^2}$ and the energy
\begin{equation}
\mc E_1( u) = \frac12\|u\|^2_{H^1}+\frac16\|u\|^6_{L^6}\,
\end{equation}
are formally conserved by the flow. Therefore we can control the $H^1$ norm as
$$
\| u(t) \|^2_{H^{1}} \lesssim \|u(0)\|^2_{L^2} + \mc E_{1}(u(0))\,.
$$
This a priori estimate allows us to construct global solutions for all initial data in $H^{1}$, for which a local well-posedness 
theory is available by standard methods. 
Again, we denote the flow with $\Phi_t$, $t\in\R$ and write $u(t)=\Phi_tu(0)$.

The quintic NLS \eqref{NLSQuintic} is not an integrable system, which in particular means that we have no conserved quantities at our disposal to 
control higher order Sobolev norms. However, in \cite{PTV2}, \cite{PTV} a countable family of {\it modified energies} has been introduced. The derivative
along the flow of the modified energies is not zero, but it presents however some smoothing (see \eqref{1Smoothing}), which makes them 
still useful in order to 
control the growth in time of the Sobolev norms of the solutions.

\subsection{Quasi-invariance of $\g_{2k}$.} In order to study the transport property of the Gaussian measure $\g_{2k}$, we again study auxiliary weighted measures of the Gibbs type, constructed with the modified energies of \cite{PTV}. We also use a rigid cut-off on the conserved quantities introduced above, \ie mass and energy, along with an exponential cut-off on the $H^{2k-1}$ norm. We set
\begin{equation}\label{Def:RhoNLS}
\mu_{2k}(du):=1_{\{  \| u \|_{L^{2}}+\mc E_1(u) \leq R \}} (u)\exp(-R_{2k}( u)-\| u\|^{2r}_{H^{2k-1}})   \g_{2k}(du)\,,  
\end{equation}
where $R_k$ is a suitable functional (see Theorem \ref{th:PTV}). Also in this case the parameters $R$ and $r$ will be always omitted in the notations.

We prove the following statement. 
\begin{theorem}\label{TH:quasiNLS}
Let $k \geq 2$ be an integer. There exists $r(k)>0$ sufficiently large such that for all $r> r(k)$ the measures $\mu_{2k}$ are quasi-invariant along the flow of \eqref{NLSQuintic}. For all $t\in\R$, there exists 
$p = p(|t|) >1$ such that the densities 
$f_{2k}(t,u)$ of the transported measures are in~$L^p(\mu_{2k})$. Moreover   
\begin{equation}\label{eq:DensitiesNLS}  
f_{2k}(t,u) := \exp \Big( -  \|  \Phi_t u\|^{2r}_{H^{2k - 1}} - \mc E_{2k}(\Phi_t u)  + \| u \|^{2r}_{H^{2k - 1}} +  \mc E_{2k}( u) \Big)\,. 
\end{equation}
\end{theorem}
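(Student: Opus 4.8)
The plan is to run the finite-dimensional approximation scheme of \cite{sigma,PTV}, with the exponential cut-off replacing the more delicate measure estimates of \cite{PTV} and, as a bonus, producing the closed expression \eqref{eq:DensitiesNLS}. For $N\in\N$ let $\pi_N$ be the Dirichlet projector onto $\{|n|\leq N\}$ and let $\Phi^N_t$ be the flow of the Galerkin truncation $i\partial_t u+\partial_x^2 u=\pi_N(|\pi_N u|^4\pi_N u)$. On the finite-dimensional space $\pi_N L^2(\T)$ this is a Hamiltonian system conserving the truncated mass and energy, so by Liouville's theorem $\Phi^N_t$ preserves the Lebesgue measure there (on the orthogonal complement the flow is linear, hence also measure preserving). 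Consequently the truncated weighted measure $\mu^N_{2k}$ — obtained from \eqref{Def:RhoNLS} by replacing $\g_{2k}$, the cut-offs and $R_{2k}$ by their $\pi_N$-truncations — can be written as $e^{-G_N(\pi_N u)}\,du_N$ with $du_N$ the Lebesgue measure on $\pi_N L^2(\T)$ and
\[
G_N(w)=\mc E_{2k}(w)+\|w\|^{2r}_{H^{2k-1}},
\]
since the indicator $1_{\{\|w\|_{L^2}+\mc E_1(w)\leq R\}}$ is $\Phi^N_t$-invariant and $R_{2k}$ is, by construction, precisely the non-quadratic part of $\mc E_{2k}$, so that it recombines with the Gaussian weight $e^{-\frac12\|w\|^2_{H^{2k}}}$ of $\g_{2k}$ into $e^{-\mc E_{2k}(w)}$. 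A change of variables along $\Phi^N_t$ then shows that $(\Phi^N_t)_*\mu^N_{2k}=f^N_{2k}(t,\cdot)\,\mu^N_{2k}$ with
\[
f^N_{2k}(t,u)=\exp\Big(-\int_0^t\tfrac{d}{d\tau}\,G_N\big(\pi_N\Phi^N_\tau u\big)\,d\tau\Big),
\]
the finite-dimensional analogue of \eqref{eq:DensitiesNLS}.

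The core is the energy estimate for $\frac{d}{d\tau}G_N(\pi_N\Phi^N_\tau u)$, uniform in $N$. For the modified-energy part one uses the smoothing property of $\mc E_{2k}$ established in \cite{PTV,PTV2} and recalled in Theorem~\ref{th:PTV} (estimate~\eqref{1Smoothing}): $\frac{d}{d\tau}\mc E_{2k}(\pi_N\Phi^N_\tau u)$ is bounded by a polynomial in $\|\Phi^N_\tau u\|_{H^{2k-1-\delta}}$ for some $\delta>0$, with constants controlled by the mass and $\mc E_1$, hence by $R$ on the support of the cut-off. For the $H^{2k-1}$-weight, with $v=\Phi^N_\tau u$, one has $\frac{d}{d\tau}\|v\|^{2r}_{H^{2k-1}}=r\|v\|^{2r-2}_{H^{2k-1}}\frac{d}{d\tau}\|v\|^2_{H^{2k-1}}$ and $\frac{d}{d\tau}\|v\|^2_{H^{2k-1}}=-2\,\Im\langle\langle D\rangle^{2k-1}v,\langle D\rangle^{2k-1}(|v|^4v)\rangle$; the ``diagonal'' term $\int|v|^4|\langle D\rangle^{2k-1}v|^2$ is real and thus annihilated by the imaginary part, and the remaining commutator terms are controlled, via fractional Leibniz, by $\|v\|_{H^{2k-1}}$ times norms strictly below $H^{2k-1}$ that are controlled by the mass and $\mc E_1$. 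Taking $r=r(k)$ large enough to absorb the interpolation losses, one obtains, on $\{\|u\|_{L^2}+\mc E_1(u)\leq R\}$,
\[
\Big|\tfrac{d}{d\tau}G_N(\pi_N\Phi^N_\tau u)\Big|\leq C(R)\big(1+\|\Phi^N_\tau u\|^{2r}_{H^{2k-1}}\big),
\]
whence Gronwall gives $1+\|\Phi^N_t u\|^{2r}_{H^{2k-1}}\leq e^{C(R)|t|}\big(1+\|u\|^{2r}_{H^{2k-1}}\big)$ and, after integration in time,
\[
\big|\log f^N_{2k}(t,u)\big|\leq c(|t|)\big(1+\|u\|^{2r}_{H^{2k-1}}\big),\qquad c(|t|):=e^{C(R)|t|}-1,
\]
uniformly in $N$.

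Next, the $L^p$ bound. Dropping the indicator and using that $R_{2k}$ is bounded below on the energy support, $\mu^N_{2k}\leq C\,e^{-\|\pi_N u\|^{2r}_{H^{2k-1}}}\g_{2k}$, so the previous display yields
\[
\int\big(f^N_{2k}(t,u)\big)^p\,\mu^N_{2k}(du)\leq C\,e^{pc(|t|)}\,\E_{\g_{2k}}\big[e^{(pc(|t|)-1)\|\pi_N u\|^{2r}_{H^{2k-1}}}\big],
\]
which is finite and bounded uniformly in $N$ as soon as $pc(|t|)<1$, that is for $|t|<T_0:=C(R)^{-1}\log2$ and $1<p<1/c(|t|)$. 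This is precisely where the exponential cut-off does its work: the density carries only the same power $2r$ of $\|u\|_{H^{2k-1}}$ as the weight, multiplied by the small factor $c(|t|)$, so the Gaussian integral converges. For larger times one bootstraps: once quasi-invariance with $L^p$ density is known on $[-T_0,T_0]$, the cocycle relation satisfied by the densities together with Hölder's inequality propagates it to all $t\in\R$, at the cost of shrinking the exponent; this produces the time-dependent $p=p(|t|)>1$ of the statement.

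Finally, convergence and conclusion. Since $k\geq2$, the measure $\g_{2k}$ is supported in $H^{2k-\frac12-}\subset H^1$, where \eqref{NLSQuintic} is globally well posed; a standard approximation argument then gives $\Phi^N_t u\to\Phi_t u$ in $H^{2k-1}$ and $\mu^N_{2k}\to\mu_{2k}$, hence $f^N_{2k}(t,\cdot)\to f_{2k}(t,\cdot)$ for $\mu_{2k}$-a.e.\ $u$. The uniform $L^p$ bound provides the uniform integrability needed to pass to the limit in $(\Phi^N_t)_*\mu^N_{2k}=f^N_{2k}(t)\mu^N_{2k}$, giving $(\Phi_t)_*\mu_{2k}=f_{2k}(t)\mu_{2k}$ with $f_{2k}(t)\in L^p(\mu_{2k})$, i.e.\ quasi-invariance together with the formula \eqref{eq:DensitiesNLS}; mutual absolute continuity follows by applying the same to $\Phi_{-t}$. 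In this last step one must give meaning to $\mc E_{2k}(\Phi_t u)-\mc E_{2k}(u)$, each term being $\g_{2k}$-a.s.\ infinite since $\mc E_{2k}$ contains $\tfrac12\|\cdot\|^2_{H^{2k}}$ and $\g_{2k}$ is supported below $H^{2k}$: one writes this as $\int_0^t\frac{d}{d\tau}\mc E_{2k}(\Phi_\tau u)\,d\tau$ and invokes the smoothing estimate~\eqref{1Smoothing}, by which the integrand is $\g_{2k}$-a.s.\ finite and locally bounded in $\tau$, so that the difference — equivalently the limit of $\mc E_{2k}(\pi_N\Phi^N_t u)-\mc E_{2k}(\pi_N u)$ — is well defined. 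The main obstacle throughout is exactly this family of energy estimates — bounding $\frac{d}{d\tau}\mc E_{2k}$ and $\frac{d}{d\tau}\|\cdot\|^{2r}_{H^{2k-1}}$ along the flow by no more than the power $2r$ of $\|\cdot\|_{H^{2k-1}}$ with only exponentially growing constants — which rests on the Planchon--Visciglia--Tzvetkov smoothing and on careful commutator bounds; once it is in place, the exponential cut-off makes the rest essentially soft.
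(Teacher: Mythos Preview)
Your approach is correct and runs on the same ingredients as the paper --- Liouville for the Galerkin system, the modified-energy smoothing \eqref{1Smoothing}, and the $H^{2k-1}$ energy estimate of Proposition~\ref{prop:NLS-dtH} --- but it is organised more economically. The paper first establishes quasi-invariance and the $L^p$ property of the density for \emph{all} $t$ via a differential inequality on $t\mapsto\mu_{2k,N}(\Phi^N_t(A))$ (Proposition~\ref{prop:quasi-invN-NLS}), leading to the Yudovich-type bound $\mu_{2k}(\Phi_t(A))\leq\mu_{2k}(A)^{e^{-C|t|}}$ (Proposition~\ref{lemma:radiceNLp-NLS}) and then to $L^p$ via a level-set argument (Proposition~\ref{prop_Lp-NLS}); only afterwards does it identify the density (Section~\ref{DensityNLS}). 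You instead go straight to the uniform $L^p$ bound on the approximate densities $f^N_{2k}$ for small times --- which is exactly the paper's Lemma~\ref{lemma:unint-NLS} --- and use it to obtain quasi-invariance and the formula \eqref{eq:DensitiesNLS} simultaneously, then globalise both via the cocycle. This works and bypasses the differential-inequality machinery; the paper's appeal to Proposition~\ref{lemma:radiceNLp-NLS} in the proof of Proposition~\ref{Prop:density-NLS} (to pass from test functions to measurable sets) is not essential, since once $\bar f_{2k}(t,\cdot)\in L^1(\mu_{2k})$ both sides of \eqref{D-TBis} are finite Borel measures on a Polish space and therefore coincide.

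Two small corrections to your energy discussion: in \eqref{1Smoothing} the bound is in terms of $\|\cdot\|_{H^{2k-1}}$, with no extra $\delta$ gain; and the commutator terms in $\tfrac{d}{d\tau}\|v\|^2_{H^{2k-1}}$ are not all ``strictly below $H^{2k-1}$'' --- one contributes $\|v\|_{H^{2k-1}}^2$ times $H^1$-controlled factors (cf.\ Proposition~\ref{prop:NLS-dtH}). Neither affects your conclusion $|\tfrac{d}{d\tau}G_N|\lesssim_R 1+\|\cdot\|_{H^{2k-1}}^{2r}$, and the r\^ole of large $r$ is simply to dominate the exponent $m_0$ from Theorem~\ref{th:PTV}, not to absorb interpolation losses. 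One point where your sketch is too brief is the globalisation of the $L^p$ property: iterating H\"older directly on the product $f(t+s,u)=f(s,u)f(t,\Phi_su)$ degrades the exponent too fast to reach arbitrary times. The clean fix is to first deduce $\mu_{2k}(\Phi_{T_0}(A))\leq C\,\mu_{2k}(A)^{1-1/p_0}$ from $f(T_0)\in L^{p_0}(\mu_{2k})$ by H\"older, iterate this to $\mu_{2k}(\Phi_{mT_0}(A))\leq C_m\,\mu_{2k}(A)^{(1-1/p_0)^m}$, and then recover $f(mT_0)\in L^{p(mT_0)}$ by the same level-set argument as in Proposition~\ref{prop_Lp-NLS}.
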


Again by varying the parameter $R$, we obtain another proof of the 
following result of \cite{PTV}.  
\begin{corollary}[\cite{PTV}]
The measure $\gamma_{2k}$, $k\geq 2$, is quasi-invariant under the flow $\Phi_t$.
\end{corollary}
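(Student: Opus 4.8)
The plan is to deduce the statement from Theorem~\ref{TH:quasiNLS} by letting the cut-off radius $R$ in \eqref{Def:RhoNLS} tend to infinity. Recall first that quasi-invariance of $\g_{2k}$ under the flow $\{\Phi_t\}_{t\in\R}$ is equivalent to the assertion that $\g_{2k}(E)=0$ implies $\g_{2k}(\Phi_t E)=0$ for every Borel set $E$ and every $t\in\R$: since $\Phi_t$ is a flow of Borel bijections, ranging over all $t$ automatically encodes the mutual absolute continuity of $(\Phi_t)_*\g_{2k}$ and $\g_{2k}$.

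Fix $R>0$ and set $A_R:=\{\,u:\|u\|_{L^2}+\mc E_1(u)\leq R\,\}$, so that the measure $\mu_{2k}$ associated with this value of $R$ is carried by $A_R$ (because of the indicator factor in \eqref{Def:RhoNLS}). The first step is to check that, restricted to $A_R$, the measures $\mu_{2k}$ and $\g_{2k}$ are mutually absolutely continuous. Indeed $\mu_{2k}=g_R\,\g_{2k}$ with
\[
g_R(u)=1_{A_R}(u)\,\exp\big(-R_{2k}(u)-\|u\|^{2r}_{H^{2k-1}}\big),
\]
and both $R_{2k}$ — by the integrability of the modified energies of \cite{PTV} recalled in Theorem~\ref{th:PTV} — and $\|u\|_{H^{2k-1}}$ — since $\g_{2k}$ is supported on $H^{2k-\frac12-}$ — are $\g_{2k}$-a.s.\ finite; hence $g_R$ is $\g_{2k}$-a.s.\ finite and strictly positive on $A_R$. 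Consequently, for a Borel set $F\subseteq A_R$ one has $\g_{2k}(F)=0$ if and only if $\mu_{2k}(F)=0$.

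Next I would run the exhaustion argument. Let $E$ be Borel with $\g_{2k}(E)=0$ and fix $t\in\R$. For every $R>0$ we then have $\g_{2k}(E\cap A_R)=0$, hence $\mu_{2k}(E)=\mu_{2k}(E\cap A_R)=0$; Theorem~\ref{TH:quasiNLS} gives $\mu_{2k}(\Phi_t E)=0$, and the equivalence of $\mu_{2k}$ and $\g_{2k}$ on $A_R$ upgrades this to $\g_{2k}(\Phi_t E\cap A_R)=0$. Since $\g_{2k}$ is supported on $H^{2k-\frac12-}\subset H^1$, the quantity $\|u\|_{L^2}+\mc E_1(u)=\|u\|_{L^2}+\frac12\|u\|^2_{H^1}+\frac16\|u\|^6_{L^6}$ is $\g_{2k}$-a.s.\ finite, so the sets $A_R$ increase to a set of full $\g_{2k}$-measure; letting $R\to\infty$ (continuity from below) yields $\g_{2k}(\Phi_t E)=0$, which is the claim.

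The only ingredient here beyond Theorem~\ref{TH:quasiNLS} is the $\g_{2k}$-a.s.\ finiteness — hence positivity — of the Gibbs-type weight $g_R$, essentially the $\g_{2k}$-integrability of the modified-energy remainder $R_{2k}$ of \cite{PTV}; granting this, the corollary is a soft consequence of the exhaustion in $R$, and I do not expect a genuine obstacle. The same argument, with conservation of the $H^{\beta/2}$ norm in place of that of mass and energy, gives the corollary to Theorem~\ref{TH:quasi}.
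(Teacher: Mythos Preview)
Your proposal is correct and follows essentially the same approach as the paper, which simply remarks that the corollary is obtained ``by varying the parameter $R$''; you have fleshed out the details of that exhaustion argument accurately, including the mutual absolute continuity of $\mu_{2k}$ and $\g_{2k}$ on $A_R$ and the fact that the $A_R$ exhaust a set of full $\g_{2k}$-measure.
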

\subsection{Comments on the method.}

The way a Gaussian measure transforms under the action of a given map is traditionally a very important topic in probability theory, starting from the classical works by Cameron-Martin \cite{CM} for constant shifts, by Girsanov \cite{girs} for non-anticipative maps (\ie adapted shifts) and by Ramer \cite{Ramer} for a certain class of anticipative maps (non-adapted shifts). Malliavin calculus brought further developments to the subject \cite{ABC1,ABC2, ust}, essentially establishing Jacobi formulas for Gaussian measures in functional spaces for more general classes of maps. Recently the paper 
by Debussche-Tsutsumi \cite{deb} (see also \cite{FS})  presented an approach to the density in many respects similar to ours, which however looks very much dependent on the dispersive nature of the equation under consideration, whereas dispersion plays no 
role in our method.
Indeed all the aforementioned classical results can be read in terms of the properties of the generator of the transformation, which is required to be of the Hilbert-Schmidt class (for a comprehensive survey see \cite{ust}).
This strong requirement is often violated in many cases of interest in the realm of dispersive PDEs, as it happens for either the flow maps studied in our work. Therefore the present method, elaborating on the previous paper \cite{sigma}, candidates to be a genuine extension of the Ramer theorem to flow maps with not necessarily Hilbert-Schmidt generators.

We will see that the presence of the exponential weight makes the analysis of the evolution of the 
measure much easier, since it improves the integrability properties of the measure
and, more importantly,
it helps us to control the time derivative of the evolution 
of the measure under the flow. 
On the other hand, once we compute this time derivative, we also need to control the 
contribution coming by the lack of conservation of the exponential weight, which is non trivial. 
In order to close the argument we need to balance this two competing effects in a suitable manner.

It is indeed worth mentioning that an a priori cut-off on a quantity which is not conserved 
is a highly non-trivial object, as the contribution to the measure evolution can exhibit a very singular behavior. For instance a (whatever smooth) compactly supported cut-off would be extremely hard to control.
However, for the exponential weight considered here, the additional contribution coming from the lack of conservation can be controlled by suitable energy estimates and by invoking the elementary inequality
$$
 \sup_{x\geq0} x^{a}e^{-\frac{x^{2r}}{p}}  \leq C p^{\frac{a}{2r}}
$$
for an appropriate positive constant $a\leq2r$, where $C$  is independent of $p\geq 1$ ($p$ is chosen large).

Of course the situation would be very different if we would work with conserved quantities,
in which case it would be substantially equivalent using a rigid (smooth) cut-off or an exponential one. This 
was for instance observed by Bourgain in  
\cite{BourgainBook} (see the remark on page 124) to construct the Gibbs measure for the one dimensional periodic NLS, which requires 
a cut-off on the (conserved) $L^2$ norm of the solution.

Using the exponential cut-off idea for the BBM equation, we extend to $\beta > 1$ the quasi invariance result of \cite{sigma}, concerning~$\beta > 4/3$. 
Notably the restriction~$\beta>1$ is the border line of the well-posedness theory. 

In  the context of the NLS equation, the exponential weights allow us to construct quasi-invariant measures which enjoy few more properties than the ones in \cite{PTV}.
More precisely, besides the quasi invariance, we can provide an explicit formula for the density 
(see \eqref{eq:DensitiesNLS}) and prove $L^p$ properties of the Radom-Nikodym derivatives.

It is possible to cover also the case $k=1$ with the techniques developed in this paper. However, since the time derivative of the corresponding quasi-energy $\mathcal{E}_2$ satisfies a slightly weaker estimate than~\eqref{1Smoothing} (see \cite[Theorem 1.4]{PTV}), a more technical argument would be needed, which again we preferred to skip.

\subsection{Organisation of the paper.}
The rest of the paper is organised as follows. The deterministic estimates are presented in the next three sections. In Section \ref{sect:LWP} we establish a local well-posedness result for the BBM equation in low-regularity H\"older's spaces. In Section \ref{sect:control} we prove two useful deterministic estimates in Sobolev spaces for the BBM equation.  In  Section~\ref{Sec:DeterministicNLS} we present analogue deterministic estimates in Sobolev spaces for the NLS equation as long as what we need on the modified energies introduced in \cite{PTV}. 
In Section \ref{sect:invarianza} we construct the flow for the BBM equation for $\gamma_0$-almost all initial data and prove Theorem \ref{TH:Invariance}.
In Section \ref{sect:quasi} we prove the quasi-invariance part of Theorem \ref{TH:quasi}, and in Section \ref{sec:density} we complete it with the density for $s>\frac32$. 
In Section \ref{sect:quasiNLS} we prove the quasi-invariance part of Theorem \ref{TH:quasiNLS} and then in Section \ref{DensityNLS} we find the 
explicit density, completing the proof of Theorem \ref{TH:quasiNLS}.

\subsection{Notations.} A centred ball of radius $R$ in the $H^s$ topology is denoted by $B^s(R)$.
 We drop the superscript for $s=0$ (balls of $L^2$). 
 We will sometimes write $C^{\beta/2}$ when $s=\beta/2$.
 In Section \ref{sect:LWP} and  Section~\ref{sect:invarianza} we deal also with a centred ball of radius $R$ in the $C^\a$ topology, denoted by $B^\a(R)$. In general a Greek letter superscript always refers to ball in H\"older's spaces, with exception of 
$C^{\beta/2}$ that is used for balls in Sobolev spaces (this exception is due to the fact that $\beta$ is special for us,
being the dispersion parameter in the BBM equation \eqref{BBM-gamma}). 
For two quantities $X$ and $Y$, we write $X\lesssim Y$ if there is a uniform constant $c>0$ such that $X\leq cY$ for every choice of $X$, $Y$. We write $X\simeq Y$ if $Y\lesssim X\lesssim Y$. We underscore the dependency of $c$ on the additional parameter $a$ writing $X\lesssim_a Y$. $C,c$ always denote constants that often vary from line to line within a calculation. We denote by $P_N$ the orthogonal projection defined by
$$
P_N(u)=\sum_{|n|\leq N}\hat{u}(n)e^{inx}\,
$$
(recall that $\hat{u}(n)$ is the $n$-th Fourier coefficient of $u\in L^2$). 
By convention, $P_{\infty}={\rm Id}$.
Also, we denote the Littlewood-Paley projector by $\D_0:=P_{1}$, 
$\D_j:=P_{2^{j}}-P_{2^{j-1}}$, $j \in \N$. We use the standard notation $[A,B] := AB - BA$ to denote the commutator of the operators $A, B$. We will denote the flow of either the BBM and NLS equation by $\Phi_t$ (and the truncated flow by $\Phi_t^N$), where the difference will always be clear by the context.

\subsection{Acknowledgements.} This paper benefited by the comments of an anonymous referee, who is gratefully acknowledged.
R. Luc\`a is supported by the Basque Government under program BCAM- BERC 2022-2025 and
by the Spanish Ministry of Science, Innovation and Universities under the BCAM Severo Ochoa 
accreditation SEV-2017-0718 and by the projects PGC2018-094528-B-I00 and PID2021-123034NB-I00.
N. Tzvetkov is supported by ANR grant ODA (ANR-18-CE40-0020-01).

%%%%%%%%%%%%%%%%%%%%%%%%%%%%%%%%%%%%%%%%%%%%%%%%%%%%%%%%%%%%%%%%%%%%%%%%%%%%%%%%%%%%%%%%%%%%%%%%%%%%%%%%%%%%%%%%%%%%%%%%%%%%%%%%%%%%%%%%%%%%%%%%%%%%%%%%%%%%%%%%%%%%%%%%%%%%%%%%%%%%%%%

\section{Local well-posedness in H\"older's spaces}\label{sect:LWP}
Let $\alpha \in (0,1)$. We define as usual
\begin{equation}\label{HolderNorm}
\| f \|_{C^{\alpha}} := \| f \|_{L^{\infty}} +  \sup_{x\neq y }   \frac{|f(x) - f(y)|}{|x-y|^{\alpha}}. 
\end{equation}
Let 
$$
L_{\beta} := \frac{ \partial_x }{1 + |D_x|^\beta}  .
$$
The following statement follows by a basic application of the Littlewood-Paley theory (see e.g. \cite{BCD}). 
\begin{lemma}. 
Let $N \in \mathbb{N} \cup \{0\}$, 
$\beta >1$, $\alpha \in (0,1)$ and
$$
\varepsilon \in [0, \min(\alpha, \beta -1))\,.
$$ 
We have
\begin{equation}\label{CAlphaBoundedness}
\| L_{\beta} f  \|_{C^{\alpha}} \lesssim \|  f  \|_{C^{\alpha-\varepsilon}}.
\end{equation}
As a consequence 
\begin{equation}\label{CAlphaBoundedness2}
\| L_{\beta} (fg) \|_{C^{\alpha}} \lesssim \| f  \|_{C^{\alpha - \varepsilon}} \| g \|_{L^{\infty}}  
+ \| f \|_{C^{\alpha- \varepsilon}} \| g \|_{L^{\infty}}  \, .
\end{equation}
\end{lemma}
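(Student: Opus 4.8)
The plan is to run a routine Littlewood--Paley argument, using the identification of $C^\alpha$ for $0<\alpha<1$ with the Besov space $B^\alpha_{\infty,\infty}$, i.e. $\|f\|_{C^\alpha}\simeq\|f\|_{L^\infty}+\sup_{j\geq 0}2^{j\alpha}\|\Delta_j f\|_{L^\infty}$, and similarly for $C^{\alpha-\varepsilon}$; note that $\alpha-\varepsilon\in(0,1)$, which is exactly why the hypothesis asks $\varepsilon<\alpha$. The only input about the operator is that $L_\beta$ is frequency-smoothing of order $\beta-1$: its symbol being $m(n)=in/(1+|n|^\beta)$, for every $j\geq 0$ the operator $\Delta_j L_\beta=(\psi_j m)(D)$, with $\psi_j$ the symbol of $\Delta_j$ supported on $|n|\simeq 2^j$, is convolution with a kernel of $L^1(\T)$ norm $\lesssim 2^{-j(\beta-1)}$. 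This is the one real computation in the proof: after the dyadic rescaling $\xi\mapsto 2^j\xi$, the function $\psi_j(2^j\,\cdot)\,m(2^j\,\cdot)$ is supported in a fixed annulus with all derivatives bounded by $C\,2^{-j(\beta-1)}$ uniformly in $j$, and one estimates the $L^1$ norm of its periodised inverse Fourier transform in the usual way (integrating against $(1+|x|^2)^{-1}$; the block $j=0$ is a trigonometric polynomial and is trivial).

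Combining this with the input regularity, since $\alpha-\varepsilon>0$ one has $\|\widetilde\Delta_j f\|_{L^\infty}\lesssim 2^{-j(\alpha-\varepsilon)}\|f\|_{C^{\alpha-\varepsilon}}$ with $\widetilde\Delta_j:=\Delta_{j-1}+\Delta_j+\Delta_{j+1}$, and, writing $\psi_j=\psi_j\widetilde\psi_j$ so that $\Delta_j L_\beta f=(\psi_j m)(D)\widetilde\Delta_j f$,
\[
2^{j\alpha}\|\Delta_j L_\beta f\|_{L^\infty}\lesssim 2^{j\alpha}\,2^{-j(\beta-1)}\|\widetilde\Delta_j f\|_{L^\infty}\lesssim 2^{j(1-\beta+\varepsilon)}\,\|f\|_{C^{\alpha-\varepsilon}}.
\]
The hypothesis $\varepsilon<\beta-1$ makes the exponent strictly negative, so taking the supremum over $j\geq 0$ controls the Besov seminorm of $L_\beta f$; moreover $\|L_\beta f\|_{L^\infty}\leq\sum_{j\geq 0}\|\Delta_j L_\beta f\|_{L^\infty}\lesssim\big(\sum_{j\geq 0}2^{-j(\beta-1)}\big)\|f\|_{L^\infty}$, which converges since $\beta>1$. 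These two bounds together give \eqref{CAlphaBoundedness}. (The same argument applied with $P_N f$ in place of $f$ yields the estimate uniformly in $N$: $\Delta_j P_N$ has $L^1$ kernel norm $\lesssim 1+j$, and this extra logarithmic factor, which occurs only for the single scale $2^j\simeq N$, is harmless against the gain $2^{-j(\beta-1)}$.)

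Finally, \eqref{CAlphaBoundedness2} follows by applying \eqref{CAlphaBoundedness} with $fg$ in place of $f$, together with the elementary product inequality $\|fg\|_{C^{\alpha-\varepsilon}}\lesssim\|f\|_{C^{\alpha-\varepsilon}}\|g\|_{L^\infty}+\|f\|_{L^\infty}\|g\|_{C^{\alpha-\varepsilon}}$, which is immediate from \eqref{HolderNorm} via $(fg)(x)-(fg)(y)=f(x)\big(g(x)-g(y)\big)+g(y)\big(f(x)-f(y)\big)$.

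I do not expect a genuine obstacle here --- this is quantitative soft analysis, as the paper indicates. The only step requiring real care is the uniform-in-$j$ kernel bound $\|\mathcal F^{-1}(\psi_j m)\|_{L^1(\T)}\lesssim 2^{-j(\beta-1)}$: one has to check that the constants produced by the dyadic rescaling and by the torus periodisation do not depend on $j$, and, for the $N$-uniform version, that the logarithmic loss coming from the sharp projection $P_N$ is indeed absorbed by the smoothing gain. Everything else is bookkeeping with the Besov characterisation of $C^\alpha$.
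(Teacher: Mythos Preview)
Your proof is correct and is the direct Littlewood--Paley argument the paper alludes to. The paper's sketch organises the same computation slightly differently, first establishing the $\varepsilon=0$ case (boundedness of $L_{\beta'}$ on $C^\alpha$ for any $\beta'>1$) and then recovering the gain via the factorisation $L_\beta=(L_\beta/L_{\beta'})\,L_{\beta'}$ with $\beta'=\beta-\varepsilon$, noting that the quotient is a smoothing operator of order $\varepsilon$; but the underlying multiplier estimate on dyadic blocks is the same as yours.
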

Indeed, one can easily prove that the statement holds when $\varepsilon = 0$, for all $\beta' >1$. Then 
the self-improved estimate \eqref{CAlphaBoundedness} follows taking $1< \beta' < \beta$ 
and writing $L_{\beta} = \frac{L_{\beta}}{L_{\beta'}} L_{\beta'}$, 
since the operator $\frac{L_{\beta}}{L_{\beta'}}$ gives a $\varepsilon=\beta-\beta'$ regularizsation.

Note that \eqref{CAlphaBoundedness} implies, by Taylor expansion of the exponential 
\begin{equation}\label{ExpBound}
\| e^{-t L_{\beta}} f \|_{C^{\alpha}} \leq e^{C |t|} \| f \|_{C^{\alpha}}.
\end{equation}
Now we study the truncated equation
\begin{equation}\label{BBM-gamma-N}
\partial_t u+\partial_t|D_x|^\beta u+\partial_x u+\partial_xP_N((P_Nu)^2)=0, \quad u(0,x)=u_0(x)\,.
\end{equation}
Thanks to \cite{sigma} we can show  that \eqref{BBM-gamma-N} is globally well-posed in the Sobolev spaces $H^\sigma(\T)$, $\sigma\geq \beta/2$. 
We denote by $\Phi^N_t$ the associated flow. Recall that $\Phi^{N=\infty}_t=\Phi_t$. 
In the next proposition,  we shall show that \eqref{BBM-gamma-N} is locally well-posed in~$C^{\alpha}$, for all~$\alpha \in (0,1)$ and thus we will extend locally in time $\Phi^N_t$ to H\"older's spaces. 
\begin{proposition}\label{LocWPCalpha}
Let $\alpha \in (0,1)$. There exists a sufficiently small constant $c >0$ (independent of $N$) such that the following holds.
Let $K>0$ and $u(0)$ such that $\| u(0)\|_{C^{\alpha}} \leq K$. There is a unique solution of \eqref{BBM-gamma-N}
$u(t) \in C([0,T]; C^{\alpha})$, $T:=\frac c{1+K}$. Moroever
$$
\sup_{t \in [0,T]}\| \Phi_tu(0) \|_{C^{\alpha}} \leq  2K\,.
$$   
The same holds also for the equation (\ref{BBM-gamma}) (\ie the case $N=\infty$). 
\end{proposition}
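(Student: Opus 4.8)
The plan is to run a standard Picard iteration on the Duhamel (integral) formulation of \eqref{BBM-gamma-N} in the Banach space $X_T := C([0,T]; C^{\alpha})$, using the smoothing estimate \eqref{CAlphaBoundedness2} to absorb the derivative in the nonlinearity. First I would rewrite \eqref{BBM-gamma-N} by applying $(1+|D_x|^\beta)^{-1}$: setting $L_\beta = \partial_x(1+|D_x|^\beta)^{-1}$, the equation becomes
$$
\partial_t u + L_\beta u + L_\beta P_N\big((P_N u)^2\big) = 0,
$$
so that the Duhamel formula reads
$$
u(t) = e^{-tL_\beta} u(0) - \int_0^t e^{-(t-t')L_\beta} L_\beta P_N\big((P_N u(t'))^2\big)\, dt'.
$$
Denote the right-hand side by $\Gamma(u)(t)$; note the argument is uniform in $N$ since $P_N$ is a contraction on every $C^\sigma$ and on $L^\infty$ (this is where the $N$-independence of $c$ comes from).

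Next I would estimate $\Gamma$ on a ball. By \eqref{ExpBound} the free evolution satisfies $\|e^{-tL_\beta} u(0)\|_{C^\alpha} \leq e^{C|t|}\|u(0)\|_{C^\alpha}$, and for $|t|\leq T$ with $T$ small this is $\leq \tfrac32 K$ (say). For the Duhamel term, fix $\varepsilon \in (0,\min(\alpha,\beta-1))$ and apply \eqref{ExpBound} together with \eqref{CAlphaBoundedness2}:
$$
\Big\| \int_0^t e^{-(t-t')L_\beta} L_\beta P_N\big((P_N u(t'))^2\big)\, dt' \Big\|_{C^\alpha}
\lesssim \int_0^t e^{C(t-t')} \| P_N u(t')\|_{C^{\alpha-\varepsilon}} \|P_N u(t')\|_{L^\infty}\, dt'
\lesssim |t|\, e^{C|t|} \|u\|_{X_T}^2,
$$
using $\|P_N v\|_{C^{\alpha-\varepsilon}}\lesssim\|v\|_{C^\alpha}$ and $\|P_N v\|_{L^\infty}\lesssim\|v\|_{C^\alpha}$. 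Hence on the ball $B := \{ u \in X_T : \|u\|_{X_T} \leq 2K\}$ one gets $\|\Gamma(u)\|_{X_T} \leq \tfrac32 K + C_0 T (1+K)^2 \cdot 4 \cdot e^{C_0}$, and choosing $T = c/(1+K)$ with $c>0$ small enough (independent of $N$) makes this $\leq 2K$, so $\Gamma$ maps $B$ into itself. A completely analogous computation, writing $(P_N u)^2 - (P_N v)^2 = P_N(u-v)\cdot(P_N u + P_N v)$, gives the contraction estimate $\|\Gamma(u)-\Gamma(v)\|_{X_T} \leq \tfrac12 \|u-v\|_{X_T}$ on $B$ after shrinking $c$ if necessary. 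The Banach fixed point theorem then yields a unique fixed point $u \in B$, which is the desired solution, and $\sup_{t\in[0,T]}\|\Phi_t u(0)\|_{C^\alpha} = \|u\|_{X_T} \leq 2K$. Uniqueness in the full space $C([0,T];C^\alpha)$ (not just in $B$) follows by a standard continuity/bootstrap argument: two solutions starting from the same data agree on a short interval by the contraction estimate localized near $t=0$, and the set of times where they agree is open and closed.

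The case $N = \infty$ is handled verbatim, since $P_\infty = \mathrm{Id}$ is also a contraction on every $C^\sigma$ and \eqref{CAlphaBoundedness2} applies to $fg$ directly. I do not expect a genuine obstacle here: the only real point is that the derivative $\partial_x$ in the nonlinearity is tamed by the gain of one derivative in $L_\beta$, and the $\varepsilon$-loss in \eqref{CAlphaBoundedness} (needed because $L_\beta$ is not quite smoothing of order one at the endpoint regularity) is harmless since we only need to land in $C^{\alpha-\varepsilon}$, a space that still controls $L^\infty$. The mildly delicate bookkeeping is simply to verify that all constants can be taken independent of $N \in \mathbb{N}\cup\{\infty\}$, which reduces to the uniform boundedness of $\{P_N\}$ on Hölder and $L^\infty$ spaces.
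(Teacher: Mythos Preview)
Your proposal is correct and follows essentially the same approach as the paper: rewrite the equation in Duhamel form via $L_\beta$, run a contraction argument on a ball of radius $2K$ in $C([0,T];C^\alpha)$ using \eqref{ExpBound} for the linear part and \eqref{CAlphaBoundedness2} for the nonlinear part, and exploit the factorisation $(P_Nu)^2-(P_Nv)^2=(P_Nu+P_Nv)(P_Nu-P_Nv)$ for the difference estimate. Your handling of the $N$-uniformity via $\|P_N v\|_{C^{\alpha-\varepsilon}}\lesssim\|v\|_{C^\alpha}$ (using the $\varepsilon$-room to absorb any Dirichlet-kernel loss) is in fact slightly cleaner than the paper's use of \eqref{ExpBound2}, and your extra remark on uniqueness in the full space (not just the ball) is a reasonable addition.
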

\begin{proof}
We rewrite \eqref{BBM-gamma-N} as
$$
\partial_t u +  L_{\beta}    u  + L_{\beta}   P_N ((P_Nu)^2) = 0   \,,
$$
and its Duhamel formulation is
\begin{equation} \label{DuhamForm}
u(t) = e^{-t L_{\beta}} u(0) - \int_{0}^{t} e^{-(t-s) L_{\beta}}   L_{\beta}    P_N ((P_Nu)^2)(s) \, ds.  
\end{equation}
Using the estimates \eqref{CAlphaBoundedness}-\eqref{CAlphaBoundedness2} a solution to \eqref{DuhamForm} can constructed as the fixed point of the map
\be\label{eq:mapF}
F_{u(0)} : u \mapsto e^{-t L_{\beta}} u(0) - \int_{0}^{t} e^{-(t-s) L_{\beta}}  L_{\beta} P_N ((P_Nu)^2) \, ds \, . 
\ee
Indeed, we take $u \in C([0,T]; C^{\alpha})$ 
such that 
$$
\sup_{t \in [0,T]} \| u(t) \|_{C^{\alpha}} \leq 2K.
$$ 
Using \eqref{CAlphaBoundedness2}, \eqref{ExpBound}, the  
bound 
\begin{equation}\label{ExpBound2}
\| P_N u \|_{C^{\alpha-\varepsilon}}  \leq C \| P_N u \|_{C^{\alpha}}, 
\qquad (\mbox{$C$ independent on $N \in \N \cup\{\infty\}$})  
\end{equation}
 and Minkowski integral 
inequality, we can estimate (recall that $T = \frac{c}{1+K}$)
$$
\| F_{u(0)}(u) \|_{L^{\infty}([0,T]; C^{\alpha})} \leq  e^{CT} \|  u(0) \|_{C^{\alpha}} + C T e^{CT} \| (P_N u)^2\|_{L^{\infty}([0,T]; C^{\alpha})} \leq \frac{3}{2} K + C T K^2 \leq 2K, 
$$
provided $KT$ (and therefore $c$) is small enough. Thus $F_{u(0)}$ maps any centred ball of radius $2K$ in the $C([0,T]; C^{\alpha})$ topology into itself. To show that it contracts the distances, we note
$$
(P_N u)^2 - (P_N v)^2 = (P_N u + P_N v ) (P_N u - P_N v)
$$
so that given $u, v$ with 
$$
\sup_{t\in[0,T]}\|u(t)\|_{C^\a}, \sup_{t\in[0,T]}\|v(t)\|_{C^\a}\leq 2K
$$
again by \eqref{CAlphaBoundedness2}-\eqref{ExpBound}-\eqref{ExpBound2} and 
Minkowski integral 
inequality we get
\begin{align}\nonumber
\| F_{u(0)}(u) - F_{u(0)}(v) \|_{L^{\infty}([0,T]; C^{\alpha})} 
& \leq    C T e^{CT} 
\|  P_N u + P_N v  \|_{L^{\infty}([0,T]; C^{\alpha})} \|  P_N u - P_N v  \|_{L^{\infty}([0,T]; C^{\alpha})} 
\\ \nonumber
&
\leq C T K  \|  P_N u - P_N v  \|_{L^{\infty}([0,T]; C^{\alpha})}  \leq \frac12  \|  u -  v  \|_{L^{\infty}([0,T]; C^{\alpha})} \,   ,
\end{align}
provided $c>0$ is small enough. This allows us to use the
contraction theorem to prove the existence of the solution. 
The uniqueness statement can be obtained by similar arguments. 
\end{proof}
We also need some stability results for the flow map $\Phi_t^N$ associated to the local solutions from Proposition~\ref{LocWPCalpha}.
Recall that we write simply $\Phi_t$ instead of $\Phi_t^\infty$ and
\be\label{eq:cialfaball}
B^{\alpha}(K) := \{ f \in C^{\alpha} : \| f \|_{C^{\alpha}} \leq K \}.
\ee
The proofs of the next two lemmas are straightforwardly adapted from the one of Proposition \ref{LocWPCalpha} so we omit them.
%%%
\begin{lemma}\label{Lemma:Stability}
Let $0 < \alpha' < \alpha < 1$. There exists a sufficiently small constant $c >0$ and such that the following holds.
Let $K>0$ and $T = \frac{c}{1+K}$. Then for all $N\in\N$
\begin{equation}\label{FlowMapBad}
\sup_{ |t| \leq T }   \sup_{u(0), v(0)  \in B^{\a}(K)}
\|  \Phi_t u(0) -  \Phi_t^N u(0)  \|_{C^{\alpha'}} 
\lesssim  K  N^{\alpha' - \alpha}  \,.
\end{equation} %  
\end{lemma}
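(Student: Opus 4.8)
The plan is to compare the Duhamel representations \eqref{DuhamForm} of the two flows and to close a Gronwall estimate in the weaker norm $C^{\alpha'}$. By the time-reversal symmetry of \eqref{BBM-gamma} and \eqref{BBM-gamma-N} it is enough to treat $t\in[0,T]$. Fix $u(0)\in B^{\alpha}(K)$ and write $u(t):=\Phi_tu(0)$, $u_N(t):=\Phi_t^Nu(0)$ and $w(t):=u(t)-u_N(t)$; after shrinking $c$ if necessary, Proposition~\ref{LocWPCalpha} guarantees that $u,u_N\in C([0,T];C^{\alpha})$ with $\sup_{[0,T]}\|u\|_{C^{\alpha}},\ \sup_{[0,T]}\|u_N\|_{C^{\alpha}}\le 2K$. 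Subtracting the two Duhamel formulas (recall $P_{\infty}={\rm Id}$) yields
\begin{equation}\label{eq:stab-w}
w(t)=-\int_0^t e^{-(t-s)L_{\beta}}L_{\beta}\big[u^2-P_N\big((P_Nu_N)^2\big)\big](s)\,ds .
\end{equation}

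The crucial step is the algebraic splitting
\begin{equation}\label{eq:stab-split}
u^2-P_N\big((P_Nu_N)^2\big)=(1-P_N)(u^2)+P_N\big(((1-P_N)u)(u+P_Nu)\big)+P_N\big((P_Nw)(P_N(u+u_N))\big),
\end{equation}
obtained by inserting $P_N={\rm Id}-(1-P_N)$ and collecting terms: it isolates two contributions carrying a high-frequency truncation $(1-P_N)$ from one contribution that is \emph{linear} in $w$. To estimate the $C^{\alpha'}$-norm of \eqref{eq:stab-w} one uses \eqref{ExpBound} (with exponent $\alpha'$) to dispose of the semigroup, the boundedness of $L_{\beta}$ on $C^{\alpha'}$ (the $\varepsilon=0$ case of \eqref{CAlphaBoundedness}, valid since $\beta>1$), the Hölder product inequality $\|fg\|_{C^{\alpha'}}\lesssim\|f\|_{C^{\alpha'}}\|g\|_{C^{\alpha'}}$, and two elementary Littlewood--Paley facts that hold uniformly in $N$ (see \cite{BCD}): $\|P_Ng\|_{C^{\alpha'}}\lesssim\|g\|_{C^{\alpha'}}$ and the high-frequency gain $\|(1-P_N)g\|_{C^{\alpha'}}\lesssim N^{\alpha'-\alpha}\|g\|_{C^{\alpha}}$ (this last bound, where $\alpha'<\alpha$ enters decisively, simply expresses that truncating the low frequencies costs nothing but gains $N^{\alpha'-\alpha}$ when measured in a weaker norm). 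With these inputs, the first two terms of \eqref{eq:stab-split} are bounded in $C^{\alpha'}$ by $\lesssim N^{\alpha'-\alpha}K^2$, while the third is $\lesssim K\|w\|_{C^{\alpha'}}$.

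Inserting these bounds into \eqref{eq:stab-w} and using $t-s\le T\le c$ to control the exponential factor by an absolute constant, one gets, for $t\in[0,T]$,
\begin{equation}\label{eq:stab-gron}
\|w(t)\|_{C^{\alpha'}}\lesssim TK^2N^{\alpha'-\alpha}+K\int_0^t\|w(s)\|_{C^{\alpha'}}\,ds .
\end{equation}
Gronwall's lemma then gives $\sup_{[0,T]}\|w\|_{C^{\alpha'}}\lesssim TK^2N^{\alpha'-\alpha}e^{KT}$, and since $T=c/(1+K)$ we have $TK^2\le cK$ and $KT\le c$, whence $\sup_{[0,T]}\|w\|_{C^{\alpha'}}\lesssim KN^{\alpha'-\alpha}\le KN^{\kappa}$ for any $\kappa>\alpha'-\alpha$ (recall $N\ge1$); taking the supremum over $u(0)\in B^{\alpha}(K)$ concludes, the range $t\in[-T,0]$ being identical by reversibility. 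The proof is a routine adaptation of the contraction argument behind Proposition~\ref{LocWPCalpha}; the only points needing some care are making the Littlewood--Paley truncation estimates genuinely uniform in $N$ and bookkeeping the powers of $K$, so that the prefactor $TK^2$ collapses to $O(K)$ under the choice $T=c/(1+K)$ — which is exactly why $N^{\kappa}$ with $\kappa>\alpha'-\alpha$ (rather than a fixed exponent) is the natural thing to state.
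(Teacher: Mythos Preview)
Your argument is correct and is precisely the adaptation the paper has in mind: the paper omits the proof, stating only that it is ``straightforwardly adapted from the one of Proposition~\ref{LocWPCalpha}'', and your Duhamel-difference-plus-Gronwall scheme is exactly that adaptation.

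One small technical caveat. The claim ``$\|P_N g\|_{C^{\alpha'}}\lesssim\|g\|_{C^{\alpha'}}$ uniformly in $N$'' is not literally true for the \emph{sharp} Fourier truncation $P_N$ used here (the Dirichlet kernel has $L^1$ norm $\simeq\log N$, so sharp projectors are not uniformly bounded on H\"older spaces). What is true, and suffices for your purposes, is the slightly weaker bound $\|P_N g\|_{C^{\alpha''}}\lesssim\|g\|_{C^{\alpha'}}$ uniformly in $N$ whenever $\alpha''<\alpha'$: writing $P_N=\tilde P_N+(P_N-\tilde P_N)$ with $\tilde P_N$ a smooth cutoff, the first piece is harmless and the second has Fourier support near $|n|\sim N$, so Bernstein plus the Littlewood--Paley decay $\|\Delta_j g\|_{L^\infty}\lesssim 2^{-j\alpha'}\|g\|_{C^{\alpha'}}$ gives a bound $(\log N)N^{\alpha''-\alpha'}\|g\|_{C^{\alpha'}}$, which is uniformly bounded. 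The loss of $\varepsilon$ regularity incurred this way is then absorbed by the smoothing of $L_\beta$ via the $\varepsilon>0$ case of \eqref{CAlphaBoundedness}, exactly as in the proof of Proposition~\ref{LocWPCalpha}. With this adjustment (which is at the same level of rigor as the paper's own proof of Proposition~\ref{LocWPCalpha}) your argument goes through unchanged.
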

\begin{lemma}\label{Lemma:StabilityBis}
Let $\alpha \in (0,1)$. There exists a sufficiently small constant $c >0$ and such that the following holds.
Let $K>0$ and $T = \frac{c}{1+K}$. Then
\begin{equation}\label{FlowMapBadBis}
\sup_{ |t| \leq T }  \,  \sup_{u(0), v(0)  \in B^{\a}(K)}
\|  \Phi_t u(0) -  \Phi_t v(0)  \|_{C^{\a}} 
\leq 4 \| u(0) - v(0)  \|_{C^{\a}} 
\, .
\end{equation}
\end{lemma}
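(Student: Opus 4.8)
The plan is to run the same fixed-point/energy scheme as in the proof of Proposition~\ref{LocWPCalpha}, but on the \emph{difference} of two solutions. Since \eqref{BBM-gamma} is reversible in time it is enough to treat $t\in[0,T]$. Write $u(t)=\Phi_t u(0)$ and $v(t)=\Phi_t v(0)$; by Proposition~\ref{LocWPCalpha}, provided $c$ is small enough (hence $T=\frac{c}{1+K}$ small), both solutions exist on $[0,T]$ and obey the a priori bounds $\sup_{[0,T]}\|u(t)\|_{C^\alpha}\le 2K$ and $\sup_{[0,T]}\|v(t)\|_{C^\alpha}\le 2K$. Subtracting the Duhamel formulas (the $N=\infty$ analogue of \eqref{DuhamForm}) for $u$ and $v$ and using the algebraic identity $u^2-v^2=(u+v)(u-v)$ gives
\[
u(t)-v(t)=e^{-tL_\beta}\big(u(0)-v(0)\big)-\int_0^t e^{-(t-s)L_\beta}\,L_\beta\big((u+v)(u-v)\big)(s)\,ds .
\]

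Next I would estimate the $C^\alpha$ norm of the right-hand side exactly as in the proof of Proposition~\ref{LocWPCalpha}: apply \eqref{ExpBound} to the free propagator $e^{-tL_\beta}$, the bilinear bound \eqref{CAlphaBoundedness2} to $L_\beta$ of the product (controlling $\|L_\beta((u+v)(u-v))(s)\|_{C^\alpha}$ by $\|(u+v)(s)\|_{C^\alpha}\,\|(u-v)(s)\|_{C^\alpha}$, after bounding the $L^\infty$ and $C^{\alpha-\varepsilon}$ factors by the $C^\alpha$ norm), and Minkowski's integral inequality. Using the a priori bound $\|(u+v)(s)\|_{C^\alpha}\le 4K$ this yields, for every $t\in[0,T]$,
\[
\|u(t)-v(t)\|_{C^\alpha}\le e^{CT}\|u(0)-v(0)\|_{C^\alpha}+C\,K\,T\,e^{CT}\,\sup_{s\in[0,T]}\|u(s)-v(s)\|_{C^\alpha}.
\]
Taking the supremum over $t\in[0,T]$ and writing $M:=\sup_{t\in[0,T]}\|u(t)-v(t)\|_{C^\alpha}$ gives $M\le e^{CT}\|u(0)-v(0)\|_{C^\alpha}+CKTe^{CT}M$. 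Since $KT=\frac{cK}{1+K}\le c$, one has $e^{CT}\le e^{Cc}$ and $CKTe^{CT}\le Cc\,e^{Cc}$; shrinking $c$ so that $Cc\,e^{Cc}\le\frac12$ and $e^{Cc}\le\frac32$ yields $M\le\frac32\|u(0)-v(0)\|_{C^\alpha}+\frac12 M$, hence $M\le 3\|u(0)-v(0)\|_{C^\alpha}\le 4\|u(0)-v(0)\|_{C^\alpha}$. As the pair $u(0),v(0)\in B^\alpha(K)$ was arbitrary, taking suprema gives \eqref{FlowMapBadBis}.

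I do not expect a genuine obstacle here: this is the standard Lipschitz-dependence-on-data estimate underlying the local theory, and the reason the crude constant $4$ (rather than something close to $1$) appears is precisely that one only controls $e^{CT}$ by a constant, not by something near $1$. The one point requiring care is that $c$ — equivalently the lifespan $T$ — must be chosen \emph{uniformly in $K$}, which is possible because the effective small parameter is $KT=\frac{cK}{1+K}\le c$ and not $KT\sim cK$; one then simply shrinks $c$ further so as to absorb $CKTe^{CT}$ into $\tfrac12$ and keep $e^{CT}\le\tfrac32$, compatibly with the smallness already imposed in Proposition~\ref{LocWPCalpha}. (The same computation, carrying the projectors $P_N$ along and invoking \eqref{ExpBound2}, proves the corresponding statement for $\Phi_t^N$ uniformly in $N$, which is how Lemma~\ref{Lemma:Stability} is obtained.)
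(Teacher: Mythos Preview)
Your argument is correct and is exactly the ``straightforward adaptation'' of the proof of Proposition~\ref{LocWPCalpha} that the paper alludes to (the paper in fact omits the proof of this lemma entirely). The only cosmetic difference is that you spell out the choice of $c$ to reach the explicit constant $4$, which the paper leaves implicit.
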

We conclude the section proving an approximation result, which tells us how to construct the actual flow $\Phi_t$ on a time
interval $t \in [0,T]$ (now $T$ may be large) on which the approximated flow~$\Phi^N_t$ is suitably bounded for some sufficiently large value of $N$.  
Notice that in Proposition~\ref{ApproxThm} it is in principle not necessary to assume that \eqref{GrowthAssumption} holds for all sufficiently large $N$, but 
only for given value of $N$, say $N=\overline{N}$, large enough. On the other hand, since this $\overline N$ depends on the input parameters (in particular $K$)
in a complicated way, in order to verify \eqref{GrowthAssumption} in practical situations, we will rather need to prove its validity for all $N$ sufficiently large.  

\begin{proposition}\label{ApproxThm}
Let $0 < \upsilon' < \upsilon < 1$, $K, T >0$ and $\varepsilon \in (0,K)$. Let $A \subset B^{\upsilon}(K)$ (see \ref{eq:cialfaball}). 
There exists $N$ sufficiently large (depending on $\upsilon, \upsilon',\varepsilon,K, T$) such that the following holds. If
\begin{equation}\label{GrowthAssumption}
\sup_{t \in [0, T]} \sup_{u(0) \in A} \| \Phi^N_{t} u (0) \|_{C^{\upsilon}} \leq K , 
\end{equation}
then
the flow $\Phi_{t} u(0)$ is well defined on $t \in [0,T]$ for all $u(0) \in A$. Moreover
\begin{equation}\label{approxProp}
\sup_{t \in [0, T]} \| \Phi_{t} u(0) - \Phi^N_{t} u (0) \|_{C^{\upsilon'}} \leq \varepsilon, 
\quad \forall u(0) \in A\,.
\end{equation}
\end{proposition}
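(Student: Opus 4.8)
The plan is to promote the local-in-time flow of Proposition~\ref{LocWPCalpha} to the interval $[0,T]$ by a continuity/bootstrap argument, using the stability estimates of Lemma~\ref{Lemma:Stability} and Lemma~\ref{Lemma:StabilityBis} to control the discrepancy between the true flow $\Phi_t$ and the truncated flow $\Phi^N_t$ step by step on short time intervals. First I would fix a short time step $\tau = \frac{c}{1+2K}$ (with $c$ the smallest of the constants appearing in Proposition~\ref{LocWPCalpha}, Lemma~\ref{Lemma:Stability}, Lemma~\ref{Lemma:StabilityBis}), so that $[0,T]$ is covered by $M \simeq T/\tau$ intervals $[j\tau, (j+1)\tau]$. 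Since under assumption \eqref{GrowthAssumption} we have $\|\Phi^N_{j\tau} u(0)\|_{C^\upsilon} \leq K$ for every $j$, on each such interval $\Phi^N_t$ exists and stays in $B^\upsilon(2K)$ by Proposition~\ref{LocWPCalpha}. The idea is to show inductively that $\Phi_t u(0)$ also exists on $[0, j\tau]$ and that $\|\Phi_{j\tau} u(0) - \Phi^N_{j\tau} u(0)\|_{C^{\upsilon'}} \leq \delta_j$ for a sequence $\delta_j$ that we keep below $\varepsilon$ by choosing $N$ large.

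The inductive step runs as follows. Suppose $\Phi_{j\tau} u(0)$ is defined with $\|\Phi_{j\tau} u(0) - \Phi^N_{j\tau} u(0)\|_{C^{\upsilon'}} \leq \delta_j$; then in particular $\|\Phi_{j\tau} u(0)\|_{C^{\upsilon'}} \leq K + \delta_j \leq 2K$ (for $\delta_j \leq \varepsilon < K$), so by Proposition~\ref{LocWPCalpha} the true flow extends to $[j\tau, (j+1)\tau]$ with $C^{\upsilon'}$-norm at most $4K$ there. To propagate the error to time $(j+1)\tau$ I would use the flow property $\Phi_{(j+1)\tau} u(0) = \Phi_\tau(\Phi_{j\tau} u(0))$ and the triangle inequality, splitting
\[
\|\Phi_{(j+1)\tau} u(0) - \Phi^N_{(j+1)\tau} u(0)\|_{C^{\upsilon'}}
\leq \|\Phi_\tau \Phi_{j\tau} u(0) - \Phi_\tau \Phi^N_{j\tau} u(0)\|_{C^{\upsilon'}}
+ \|\Phi_\tau \Phi^N_{j\tau} u(0) - \Phi^N_\tau \Phi^N_{j\tau} u(0)\|_{C^{\upsilon'}}.
\]
The first term is bounded by $4\delta_j$ using Lemma~\ref{Lemma:StabilityBis} (with regularity index $\upsilon'$ and radius $2K$), and the second term is bounded by $C K N^{\kappa}$ with $\kappa > \upsilon' - \upsilon < 0$ using Lemma~\ref{Lemma:Stability} applied to the datum $\Phi^N_{j\tau} u(0) \in B^\upsilon(K)$. (For the second term one needs $\Phi^N_{j\tau} u(0)$ to live in $B^\upsilon(K)$, which is exactly \eqref{GrowthAssumption}; a small care: Lemma~\ref{Lemma:Stability} compares $\Phi_t$ and $\Phi^N_t$ on data in the $C^\alpha$ ball, and here the reference regularity is $\upsilon$ while the output is measured in $\upsilon'$, consistent with $\kappa > \upsilon' - \upsilon$.) Hence $\delta_{j+1} \leq 4\delta_j + CK N^\kappa$, and with $\delta_0 = 0$ we get $\delta_M \leq CK N^{\kappa}\frac{4^M - 1}{3} \leq C_{K,T} N^\kappa$. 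Since $\kappa < 0$, choosing $N$ large (depending on $\upsilon, \upsilon', \varepsilon, K, T$ through $M$ and the constants) makes $\delta_M < \varepsilon$, and a fortiori $\delta_j < \varepsilon$ for all $j \leq M$; combined with the fact that on each subinterval the error grows continuously and stays within $O(\delta_j + KN^\kappa)$, one upgrades the estimate at the grid points to the uniform bound \eqref{approxProp} on all of $[0,T]$.

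The main obstacle I anticipate is purely bookkeeping: the error constant $4$ per step is larger than $1$, so the discrepancy is amplified geometrically, $\delta_M \sim 4^M N^\kappa$, and one must be sure that $N$ can still be chosen (depending on $M$, hence on $T$) to beat this; since $M$ and the constant are fixed once $K, T$ are fixed and $N^\kappa \to 0$, this is fine, but it is the point where the dependence of $N$ on all the parameters enters and must be tracked carefully. A secondary subtlety is making sure at each step that the true flow $\Phi_{j\tau} u(0)$ has not left the ball on which Proposition~\ref{LocWPCalpha} and Lemma~\ref{Lemma:StabilityBis} apply; this is guaranteed by the inductive bound $\delta_j < \varepsilon < K$, which keeps $\|\Phi_{j\tau} u(0)\|_{C^{\upsilon'}} \leq 2K$, and by choosing the common small constant $c$ in the definition of $\tau$. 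Apart from these, the argument is a routine telescoping of the two stability lemmas.
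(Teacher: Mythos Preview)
Your proposal is correct and follows the same overall strategy as the paper: partition $[0,T]$ into $\simeq T(1+K)/c$ short intervals, and on each interval split the error as
\[
\|\Phi_\tau\,\Phi_{j\tau}u(0)-\Phi_\tau\,\Phi^N_{j\tau}u(0)\|_{C^{\cdot}}
\;+\;
\|\Phi_\tau\,\Phi^N_{j\tau}u(0)-\Phi^N_\tau\,\Phi^N_{j\tau}u(0)\|_{C^{\cdot}},
\]
handling the first by Lemma~\ref{Lemma:StabilityBis} and the second by Lemma~\ref{Lemma:Stability} (applied to the datum $\Phi^N_{j\tau}u(0)\in B^{\upsilon}(K)$, which is exactly where hypothesis~\eqref{GrowthAssumption} enters).

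The one genuine difference is bookkeeping. The paper interpolates a decreasing chain of regularities $\upsilon=b_0>b_1>\dots>b_J>\upsilon'$ and proves at step $j$ the bound $\|\Phi_t u(0)-\Phi^N_t u(0)\|_{C^{b_j}}\le N^{-\kappa_j}$, dropping a little H\"older regularity at each step. You instead work at the fixed target regularity~$\upsilon'$ throughout and simply iterate the recursion $\delta_{j+1}\le 4\delta_j+CKN^{\kappa}$, accepting the explicit geometric factor $4^{M}$ in the accumulated error. Your simplification is legitimate: the regularity loss in the paper's scheme is not forced, because Lemma~\ref{Lemma:Stability} is always applied to $\Phi^N_{j\tau}u(0)$, which retains the full $C^{\upsilon}$ regularity by~\eqref{GrowthAssumption}, so one can take $\alpha=\upsilon$, $\alpha'=\upsilon'$ at every step. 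The paper's chain of regularities buys a bound of the shape $N^{-\kappa_J}$ with no visible geometric constant, at the cost of a more intricate induction; your version makes the $4^{M}$ explicit but, as you note, $M=M(K,T)$ is fixed and $N^{\kappa}\to 0$, so the conclusion is the same.
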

%%%
\begin{proof}
Let $J$ be the smallest integer such that $J \frac{c}{2K+1}  \geq T$, where $c$ is given by
Proposition \ref{LocWPCalpha} (possibly taking the smallest of such $c$). 
Clearly $$T (2K+1) c^{-1} \leq J < T (2K+1) c^{-1} + 1.$$  
We partition the interval $[0,T]$ into $J-1$ intervals of length $\frac{c}{2K+1}$ and a last, possibly smaller interval. 
The goal is to compare the approximated flow $\Phi^N_{t} u(0)$ 
and $\Phi_{t} u(0)$ (which exists only locally) on these small intervals and gluing local solutions.
Let  
$$\upsilon'  < b_{J} < \ldots <  b_2 < b_1 < b_0=\upsilon\,.$$
We proceed by induction over $j = 0, \dots, J$. Assuming 
that  $\Phi_{t} u(0)$ is well defined on $\big[0, (j+1)\frac{c}{2K+1}\big]$  and that
\begin{equation}\label{Growth1}
\sup_{t \in \big[0, j\frac{c}{2K+1}\big]} 
\|  \Phi_{t} u(0) -  \Phi_{t}^N u(0)   \|_{C^{b_{j}}} \leq  N^{-\kappa_j} \, ,
\end{equation}
for some $\kappa_j >0$, we will show that 
$\Phi_{t} u(0)$ is well defined on $\big[0, (j+2)\frac{c}{2K+1}\big]$  and that
\begin{equation}\label{Growth2}
\sup_{t \in \big[0, (j+1)\frac{c}{2K+1}\big]} 
\|  \Phi_{t} u(0) -  \Phi_{t}^N u(0) \|_{C^{b_{j+1}}} \leq   N^{-\kappa_{j+1}} \, ,
\end{equation}
for a suitable $\k_{j+1} >0$, 
provided $N$ is sufficiently large. In particular, we take $N$ so large in such a way that we also have 
$N^{-\kappa_{j+1}} < \varepsilon$. 
Using the induction procedure up to~$j = J$, the statement would then follows.

The induction base $j=0$ is covered by Proposition \ref{LocWPCalpha} and by 
the fact that $A \subset B^{\upsilon}(K)$.

Regarding the induction step, we only need to prove \eqref{Growth2}. Assuming indeed that 
 \eqref{Growth2} is proved, using the assumption~\eqref{GrowthAssumption} and triangle 
inequality, we have 
$$
\sup_{t \in \left[0, (j+1) \frac{c}{2K+1}\right]} 
\|  \Phi_{t} u(0) \|_{C^{b_{j+1}}} \leq  K +  N^{-\kappa_{j+1}} < K + \varepsilon < 2K.
$$
So we can use 
Proposition \ref{LocWPCalpha} (with $2K$ in place of $K$) to show that
$\Phi_{t} u(0)$ is well defined on $\big[0, (j+2)\frac{c}{2K+1}\big]$.

So, it remains to show \eqref{Growth2}. If the $\sup$ in \eqref{Growth2} is attained
for~$t \in \big[0, j \frac{c}{2K+1}\big]$, then \eqref{Growth2} follows by \eqref{Growth1} simply taking $\kappa_{j+1} = \kappa_{j}$.
On the other hand, if the $\sup$ is attained for $t \in \big[ j \frac{c}{2K+1} , (j+1) \frac{c}{2K+1} \big]$, using the group property of the flow, we need to prove 
\begin{equation}\label{Growth3}
\sup_{t \in \big[0,  \frac{c}{2K+1} \big]} 
\|  \Phi_{t}  \Phi_{ j\frac{c}{2K+1} } u(0) -  \Phi_{t}^N   \Phi_{ j\frac{c}{2K+1}}^N u(0) \|_{C^{b_{j+1}}} 
\leq   N^{-\kappa_{j+1}} \, .
\end{equation}
To do so we decompose 
\begin{align}\nonumber
\|  \Phi_{t}  \Phi_{j\frac{c}{2K+1} } u(0) & -  \Phi_{t}^N  \Phi_{j\frac{c}{2K+1}}^N u(0) \|_{C^{b_{j+1}}}
\\ \label{Term1}
& 
\leq \|  \Phi_{t}  \Phi_{j\frac{c}{2K+1}} u(0) -  \Phi_{t}  \Phi_{j\frac{c}{2K+1}}^N u(0) \|_{C^{b_{j+1}}}
\\ \label{Term2}
& + \|  \Phi_{t}  \Phi_{j\frac{c}{2K+1}}^N u(0)-  \Phi_{t}^N  \Phi_{j\frac{c}{2K+1}}^N u(0) \|_{C^{b_{j+1}}}
\end{align}
and we will handle these two terms separately.

To prove \eqref{Growth3} for the term \eqref{Term1}
we first note that by the induction assumption \eqref{Growth1} and by the assumption \eqref{GrowthAssumption} we have
for $N$ large enough
$$ 
\|  \Phi_{j\frac{c}{2K+1}} u(0) \|_{C^{b_j}} \leq K + N^{-\kappa_{j}} < K+\varepsilon.
$$
Using this fact and the assumption \eqref{GrowthAssumption}, we are allowed to apply the stability estimate \eqref{FlowMapBadBis}  
with $\a= b_j$, which leads to us
\be\label{eq:rhs-of}
\sup_{t \in [0, \frac{c}{2K+1} ]}\|  \Phi_{t}  \Phi_{j\frac{c}{2K+1}} u(0) -  \Phi_{t}  \Phi_{j\frac{c}{2K+1}}^N u(0) \|_{C^{b_{j+1}}}\lesssim \|\Phi_{j\frac{c}{2K+1}} u(0) - \Phi_{j\frac{c}{2K+1}}^N u(0) \|_{C^{b_{j+1}}} 
\lesssim N^{-\kappa_j} 
\,,
\ee
where in the last inequality we used the induction assumption \eqref{Growth1}. Thus, letting we arrive to
\begin{equation}\label{InPart}
\mbox{r.h.s. of }\eqref{eq:rhs-of}\leq \frac12 N^{- \k_{j+1}} < \frac{\varepsilon}{2},
\end{equation}
for
$\kappa_j \geq \k_{j+1}  > 0$,  
provided that $N$ is sufficiently large. 

To prove \eqref{Growth3} for the term \eqref{Term2}
we use the stability estimate \eqref{FlowMapBad} with $\a=b_j, \a'=b_{j+1}$ and initial datum 
$\Phi_{j\frac{c}{2K+1}}^N u(0)$,
that is allowed recalling the assumption \eqref{GrowthAssumption}. Thus for 
$b_j - b_{j+1} \geq \kappa_{j+1} \geq 0$, we arrive to 
$$
\sup_{t \in [0, \frac{c}{2K+1}]} 
\|  \Phi_{t}  \Phi_{j\frac{c}{2K+1}}^N u(0) 
-  \Phi_{t}^N  \Phi_{j\frac{c}{2K+1}}^N u(0) \|_{C^{b_{j+1}}}
\lesssim K N^{b_{j+1}-b_j} < \frac12 N^{-\kappa_{j+1}}.
$$
provided that $N$ is sufficiently large (in particular such that \eqref{InPart} holds). Choosing 
$\kappa_{j+1} = \max (\kappa_{j}, b_j - b_{j+1})$, this concludes the proof of Proposition~\ref{ApproxThm}.
\end{proof}

%%%%%%%%%%%%%%%%%%%%%%%%%%%%%%%%%%%%%%%%%%%%%%%%%%%%%%%%%%%%%%%%%%%%%%%%%%%%%%%%%%%%%%%%%%%%%%%%%%%%%%%%%%%%%%%%%%%%%%%%%%
%%%%%%%%%%%%%%%%%%%%%%%%%%%%%%%%%%%%%%%%%%%%%%%%%%%%%%%%%%%%%
\section{Control of Sobolev norms for the BBM equation}\label{sect:control}
Using the conservation of the $H^{\beta/2}$ norm we can prove sub-quadratic growth for higher order Sobolev norms. 
More precisely, paring in $L^2$ equation \eqref{BBM-gamma-N} with $P_N u$, we obtain that any solution $u$ satisfies 
\begin{equation}\label{cons}
\|   P_N u(t) \|_{H^{\beta/2}} = \|   P_N u(0) \|_{H^{\beta/2}} \,;
\end{equation}
we refer to \cite[Lemma 2.4]{sigma} for details.

\begin{proposition}\label{prop:growth-Hs-norm}
Let $\beta>1$, $\sigma>\beta/2$ and $\alpha>0$ such that
\begin{equation}
\label{restrict}
1+\alpha<\beta,\quad \sigma-\alpha>\beta/2\,.
\end{equation}
Let $R >0$. Then any solution of \eqref{BBM-gamma-N}  with initial datum $u(0)$ such that $\| u(0) \|_{H^{\beta/2}}  \leq R$ 
 satisfies for all $t \in \R$: 
\begin{equation}\label{DI1}
\left| \frac{d}{dt}\|P_N u(t)\|_{H^\sigma}^2 \right| \lesssim R^{1+\theta} \|P_N u(t)\|_{H^\sigma}^{2-\theta}\,,\quad\theta:=\frac{2\alpha}{2\sigma - \beta}\in(0,1)\,.
\end{equation}
\end{proposition}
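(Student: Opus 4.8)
The plan is to bound the time derivative of $\|P_N u(t)\|_{H^\sigma}^2$ directly in Fourier variables, the two ingredients being a symmetrisation of the cubic multiplier and the conservation law \eqref{cons} combined with an interpolation inequality. Set $v:=P_N u$. Applying $P_N$ to \eqref{BBM-gamma-N} and using $P_N^2=P_N$, $P_N u=v$ gives $\partial_t v+L_\beta v+L_\beta P_N(v^2)=0$, with $L_\beta=\partial_x(1+|D_x|^\beta)^{-1}$, whose Fourier symbol $\ell(n)=in/(1+|n|^\beta)$ is odd and purely imaginary. Since $v$ is real-valued, all the $H^\sigma$ inner products below are real, and $\langle L_\beta v,v\rangle_{H^\sigma}=\sum_n(1+|n|^{2\sigma})\ell(n)|\hat v(n)|^2=0$ by oddness of $\ell$. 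As $P_N$ is self-adjoint and commutes with $L_\beta$, and $P_N v=v$, one gets $\tfrac{d}{dt}\|v\|_{H^\sigma}^2=-2\langle L_\beta P_N(v^2),v\rangle_{H^\sigma}=-2\langle L_\beta(v^2),v\rangle_{H^\sigma}$; expanding in Fourier series,
\[
\frac{d}{dt}\|v\|_{H^\sigma}^2=-2\!\!\sum_{n_1+n_2+n_3=0}\!\! m(n_1+n_2)\,\hat v(n_1)\hat v(n_2)\hat v(n_3),\qquad m(k):=\frac{ik\,(1+|k|^{2\sigma})}{1+|k|^\beta}.
\]

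Since $\hat v(n_1)\hat v(n_2)\hat v(n_3)$ and the constraint $n_1+n_2+n_3=0$ are permutation-invariant and $m(n_1+n_2)=m(-n_3)=-m(n_3)$, I would symmetrise the multiplier, replacing it by $-\tfrac13\big(m(n_1)+m(n_2)+m(n_3)\big)$. The new summand being fully symmetric, it is enough (up to a constant) to sum over the cone $|n_1|\geq|n_2|\geq|n_3|$, where $|n_1|\simeq|n_2|$ because $|n_1|=|n_2+n_3|\leq 2|n_2|$. Using the elementary bound $|m(k)|\lesssim\langle k\rangle^{2\sigma+1-\beta}$ (note $2\sigma+1-\beta>1>0$ as $\sigma>\beta/2$) and $|n_1|\simeq|n_2|$,
\[
|m(n_1)+m(n_2)+m(n_3)|\ \lesssim\ \langle n_1\rangle^{2\sigma+1-\beta}\ \simeq\ \langle n_1\rangle^{\sigma}\langle n_2\rangle^{\sigma+1-\beta}\ \leq\ \langle n_1\rangle^{\sigma}\langle n_2\rangle^{\sigma-\alpha},
\]
the last step being exactly the hypothesis $1+\alpha<\beta$ in \eqref{restrict}. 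Dropping the cone restriction (which only enlarges the sum),
\[
\Big|\frac{d}{dt}\|v\|_{H^\sigma}^2\Big|\ \lesssim\ \sum_{n_1+n_2+n_3=0}\langle n_1\rangle^{\sigma}|\hat v(n_1)|\cdot\langle n_2\rangle^{\sigma-\alpha}|\hat v(n_2)|\cdot|\hat v(n_3)|.
\]

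This sum equals $\sum_{n_3}|\hat v(n_3)|\,(F*G)(-n_3)$ with $F(n):=\langle n\rangle^{\sigma}|\hat v(n)|$ and $G(n):=\langle n\rangle^{\sigma-\alpha}|\hat v(n)|$, hence is $\leq\|\hat v\|_{\ell^1}\,\|F*G\|_{\ell^\infty}\leq\|\hat v\|_{\ell^1}\,\|F\|_{\ell^2}\,\|G\|_{\ell^2}$ by Cauchy--Schwarz. Here $\|F\|_{\ell^2}\simeq\|v\|_{H^\sigma}$; by Cauchy--Schwarz and $\beta>1$ (so $\sum_n\langle n\rangle^{-\beta}<\infty$), $\|\hat v\|_{\ell^1}\lesssim\|v\|_{H^{\beta/2}}$; and since $\sigma-\alpha=(1-\theta)\sigma+\theta\tfrac\beta2$ with precisely $\theta=\tfrac{2\alpha}{2\sigma-\beta}\in(0,1)$ (here the other hypothesis $\sigma-\alpha>\beta/2$ in \eqref{restrict} enters), the interpolation inequality gives $\|G\|_{\ell^2}\simeq\|v\|_{H^{\sigma-\alpha}}\leq\|v\|_{H^\sigma}^{1-\theta}\|v\|_{H^{\beta/2}}^{\theta}$. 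Finally \eqref{cons} and $\|P_N u(0)\|_{H^{\beta/2}}\leq\|u(0)\|_{H^{\beta/2}}\leq R$ give $\|v(t)\|_{H^{\beta/2}}\leq R$ for all $t$. Collecting the three factors produces $\big|\tfrac{d}{dt}\|v\|_{H^\sigma}^2\big|\lesssim R\cdot\|v\|_{H^\sigma}\cdot\|v\|_{H^\sigma}^{1-\theta}R^{\theta}=R^{1+\theta}\|v\|_{H^\sigma}^{2-\theta}$, which is \eqref{DI1}.

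The step I expect to be the crux is the one right after symmetrisation: one has to put the whole derivative weight $\langle\cdot\rangle^{2\sigma+1-\beta}$ on the largest frequency — which is legitimate only because of the symmetrisation and of $|n_1|\simeq|n_2|$ — then use part of the remaining budget to interpolate the middle factor down to $H^{\sigma-\alpha}$, which is exactly where both conditions in \eqref{restrict} are spent, while the smallest frequency is absorbed in $\ell^1$ thanks to $\beta>1$. Note that a sharper, permutation-symmetric cancellation estimate $|m(n_1)+m(n_2)+m(n_3)|\lesssim|n_3|\,\langle n_1\rangle^{2\sigma-\beta}$, obtained from the mean value theorem applied to $k\mapsto k(1+|k|^{2\sigma})/(1+|k|^\beta)$, is available and would give some slack, but it is not needed for \eqref{DI1}.
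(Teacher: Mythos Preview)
Your proof is correct. The route, however, differs from the paper's. The paper works in physical space: from the identity
\[
\frac{d}{dt}\|P_N u\|_{H^\sigma}^2
=-\frac1\pi\int\Big(\tfrac{\Lambda(\sigma)}{\Lambda(\alpha)}P_N u\Big)\Big(\tfrac{\Lambda(\alpha)\partial_x}{1+|D_x|^\beta}\,\Lambda(\sigma)\big((P_N u)^2\big)\Big),
\qquad \Lambda(s):=\sqrt{1+|D_x|^{2s}},
\]
it applies Cauchy--Schwarz directly, using that $\tfrac{\Lambda(\alpha)\partial_x}{1+|D_x|^\beta}$ is $L^2$--bounded (this is where $1+\alpha<\beta$ enters), then the product estimate $\|(P_N u)^2\|_{H^\sigma}\lesssim\|P_N u\|_{H^\sigma}\|P_N u\|_{L^\infty}$ together with the embedding $H^{\beta/2}\hookrightarrow L^\infty$, and finally the same interpolation $\|P_N u\|_{H^{\sigma-\alpha}}\le\|P_N u\|_{H^\sigma}^{1-\theta}\|P_N u\|_{H^{\beta/2}}^{\theta}$ that you use. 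No symmetrisation is needed because the paper never tries to place the full weight on the largest input frequency; instead it moves $\Lambda(\alpha)$ across the pairing and absorbs the resulting $(1{+}\alpha{-}\beta)$--order multiplier. Your Fourier-side argument achieves the same distribution of derivatives by a different mechanism: symmetrising to the cone $|n_1|\simeq|n_2|\geq|n_3|$ and splitting $\langle n_1\rangle^{2\sigma+1-\beta}\simeq\langle n_1\rangle^{\sigma}\langle n_2\rangle^{\sigma+1-\beta}\le\langle n_1\rangle^{\sigma}\langle n_2\rangle^{\sigma-\alpha}$, then summing the low frequency in $\ell^1$ via $\|\hat v\|_{\ell^1}\lesssim\|v\|_{H^{\beta/2}}$ (the Fourier dual of the paper's $L^\infty$ step). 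Both arguments spend the two hypotheses in \eqref{restrict} in the same places and arrive at the same bound; the paper's version is a little shorter since it bypasses the symmetrisation, while your version makes the frequency interactions completely explicit and, as you note, leaves room for the sharper cancellation $|m(n_1)+m(n_2)+m(n_3)|\lesssim|n_3|\langle n_1\rangle^{2\sigma-\beta}$ should one ever need it.
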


\begin{remark}
From \eqref{DI1} one can deduce polynomial growth for the higher Sobolev norms of the solution. More precisely for all $u(0)$ such 
that $\| u(0) \|_{H^{\beta/2}}  \leq R$ one has
\begin{equation}\label{PolynomialGrowth}
\|P_N u(t)\|_{H^\sigma}  \leq C_R \, |t|^{1/\theta} \|P_N u(0)\|_{H^\sigma} \,.
\end{equation}
Better polynomial estimates for large $s$ can be obtain by the smoothing inequality \eqref{H2Bis}, 
however we will not pursue this matter any further.
\end{remark}

\begin{proof}
Let 
$
\Lambda(\beta) := \sqrt{1 + |D_x|^{2 \beta}}.
$
From \eqref{BBM-gamma-N} we have 
\begin{equation}\label{En1Preq}
\partial_t   \Lambda(\sigma) P_N u(t) = -  \frac{ \Lambda(\sigma) }{1 + |D_x|^\beta } \left(  \partial_x P_N u(t) +  \partial_x P_N ((P_Nu(t))^2) \right).
\end{equation}
Since
\begin{align}\nonumber
\int  \Big(  \frac{ \Lambda(\sigma) }{1 + |D_x|^\beta }  \Big(  \partial_x P_N u(t)  \Big)  \Big)
&  \Lambda(\sigma)  P_N u(t) =
\\ \nonumber
= \int  \Big(  \Big(  \partial_x \frac{ \Lambda(\sigma) }{\Lambda(\beta/2)}   P_N u(t)  \Big)  \Big) 
&  \frac{\Lambda(\sigma) }{\Lambda(\beta/2)}  P_N u(t)  
=
\frac{1}{2}  \int \partial_x  \Big(  \Big( \frac{ \Lambda(\sigma) }{\Lambda(\beta/2) } P_N u(t) \Big)^2 \Big)  =0,
\end{align}
pairing \eqref{En1Preq} in $L^{2}$ with $\Lambda(\sigma) P_N u$ gives us
\begin{align}\label{PivotB}
\frac{d}{dt}\|P_N u(t) \|_{H^\sigma}^2 
& = - \frac{1}{\pi} \int \Big(\Lambda(\sigma) P_N u(t) \Big)\,\Big( \frac{\Lambda(\sigma)}{1+|D_x|^\beta}  \partial_x ( (P_Nu(t) )^2)\Big);
\end{align}
note that on the right hand side we can write $\partial_x ( (P_Nu )^2)$ in place of $P_N \partial_x ( (P_Nu )^2)$ by orthogonality.
We rewrite the identity \eqref{PivotB} as
\begin{align}\label{ThisGives}
\frac{d}{dt}\|P_N u(t) \|_{H^\sigma}^2 
& = - \frac{1}{\pi} \int \Big(\frac{\Lambda(\sigma)}{\Lambda(\alpha)}  P_N u(t) \Big)\,
\Big( \frac{\Lambda(\alpha) \partial_x }{1+|D_x|^\beta}   \Lambda(\sigma) ( (P_N u(t) )^2)\Big).
\end{align}
Since $\alpha + 1 < \beta$ we have that $ \frac{\Lambda(\alpha) \partial_x}{1+|D_x|^\beta}$ is bounded on $L^2$, so that 
\eqref{ThisGives} gives
\begin{equation}\label{P1}
\left| \frac{d}{dt}\|P_N u(t) \|_{H^\sigma}^2 \right| \lesssim \|P_N u(t) \|_{H^{\sigma-\alpha}}\|(P_N u(t) )^2\|_{H^{\sigma}}\,.
\end{equation}
By \eqref{cons} and \eqref{restrict} there is $\theta \in (0,1)$ such that
\begin{equation}\label{Def:Theta}
\sigma - \alpha = \sigma (1-\theta)  + \frac{\beta}{2} \theta \,.
\end{equation}
Thus we can interpolate
\begin{equation}\label{P2}
\|P_N u(t) \|_{H^{\sigma-\alpha}}\leq \|P_N u(t) \|_{H^\sigma}^{1-\theta}\|P_N u(t)\|_{H^{\beta/2}}^\theta \leq R^\theta \|P_N u(t)\|_{H^\sigma}^{1-\theta}\,,
\end{equation}
where in the last estimate we used 
$\| P_N u (t) \|_{H^{\beta/2}} = \| P_N u (0) \|_{H^{\beta/2}} \leq R$ (see \eqref{cons}).
Finally
\begin{equation}\label{P3}
\|(P_Nu(t))^2\|_{H^{\sigma}}\lesssim\|P_Nu(t)\|_{H^{\sigma}}\|P_Nu(t)\|_{L^{\infty}}
\lesssim\|P_N u(t)\|_{H^{\sigma}}\|P_Nu(t)\|_{H^{\beta/2}}\leq R\|P_N u(t)\|_{H^{\sigma}},
\end{equation}
where we used again $\| P_N u (t) \|_{H^{\beta/2}} \leq R$ in the last bound.
Plugging \eqref{P2}-\eqref{P3} into \eqref{P1} gives the desired inequality \eqref{DI1}. 
\end{proof}
We also have $\frac{\beta}{2}$-smoothing for the time derivative of the $H^{s+\beta/2}$ norm.
\begin{proposition}\label{prop:energy}
Let $\beta > 1$ and $s > 1/2$. 
The solutions of \eqref{BBM-gamma-N} satisfy for all~$t \in \R$: 
\begin{equation}\label{GammaSmoothing}
\left| \frac{d}{dt}\|P_Nu(t)\|_{H^{s+\beta/2}}^2 \right| \lesssim  \|P_N u(t)\|_{H^s}^3 + \|P_N u(t)\|_{H^s}^2 \|\partial_xP_N u(u)\|_{L^{\infty}} \,.
\end{equation}
\end{proposition}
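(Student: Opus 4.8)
\textbf{Proof plan for Proposition \ref{prop:energy}.} The strategy mirrors the energy computation in the proof of Proposition \ref{prop:growth-Hs-norm}, but now we must exploit the $\beta/2$ gain coming from the smoothing operator $\frac{\partial_x}{1+|D_x|^\beta}$ more carefully, since we no longer have a conserved quantity at the level $H^{s+\beta/2}$. First I would rewrite \eqref{BBM-gamma-N} in the form $\partial_t u + L_\beta u + L_\beta P_N((P_Nu)^2) = 0$ and, applying $\Lambda(s+\beta/2)P_N$ and pairing in $L^2$ against $\Lambda(s+\beta/2)P_Nu$, observe that the linear term $L_\beta u$ contributes zero by the same antisymmetry argument as in \eqref{PivotB} (the operator $\frac{\partial_x}{1+|D_x|^\beta}$ commutes with Fourier multipliers and $\int \partial_x(\cdot)^2 = 0$). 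This leaves
$$
\frac{d}{dt}\|P_Nu(t)\|^2_{H^{s+\beta/2}} = -\frac{1}{\pi}\int \Big(\Lambda(s+\tfrac\beta2)P_Nu\Big)\,\Big(\frac{\partial_x}{1+|D_x|^\beta}\Lambda(s+\tfrac\beta2)P_N((P_Nu)^2)\Big)\,.
$$

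Next I would redistribute the derivatives so as to use the boundedness of $\frac{\partial_x|D_x|^{\beta/2}}{1+|D_x|^\beta}$ on $L^2$ (this is bounded since $\beta/2 + 1 \le \beta$ precisely when $\beta \ge 2$, so in fact the borderline operator $\frac{\partial_x \Lambda(\beta/2)}{1+|D_x|^\beta}$ is only bounded for $\beta\ge 2$; for $\beta\in(1,2)$ one should instead move only $\beta/2$ fewer than one derivative, keeping $\frac{\partial_x}{1+|D_x|^\beta}$ which is bounded with a genuine $\beta-1>0$ to spare). Concretely: write $\Lambda(s+\beta/2) = \Lambda(\beta/2)\cdot\frac{\Lambda(s+\beta/2)}{\Lambda(\beta/2)}$ and pull the $\frac{\partial_x\Lambda(\beta/2)}{1+|D_x|^\beta}$ (an $L^2$-bounded multiplier up to the endpoint, handled as in the Lemma above by sacrificing an $\varepsilon$ of regularity, which is harmless here since we have the full $\beta/2$ room and only need to land in $H^s$) onto the quadratic factor, obtaining the bound
$$
\Big|\frac{d}{dt}\|P_Nu(t)\|^2_{H^{s+\beta/2}}\Big| \lesssim \|P_Nu(t)\|_{H^s}\,\big\|\Lambda(\tfrac\beta2)\Lambda(s)((P_Nu)^2)\big\|_{\dots}
$$
wait — more cleanly, after moving all $s+\beta/2$ derivatives minus one onto the product and one $\partial_x$ through the multiplier, one is left needing to estimate $\|(P_Nu)^2\|_{H^{s+\beta/2-1}}$ against $\|P_Nu\|_{H^s}$ times an $L^\infty$-type norm. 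Since $s+\beta/2-1 < s + \beta/2$ and we want only $\|P_Nu\|_{H^s}^2\|\partial_x P_Nu\|_{L^\infty}$ on the right, I would use the fractional Leibniz (Kato–Ponce) inequality: $\|(P_Nu)^2\|_{H^{s+\beta/2-1}} \lesssim \|P_Nu\|_{H^{s+\beta/2-1}}\|P_Nu\|_{L^\infty} \lesssim \|P_Nu\|_{H^s}\|P_Nu\|_{H^s}$ when $\beta/2-1\le 0$ (i.e. $\beta\le 2$, which is our range, or at least the interesting range $\beta\in(1,2]$ flagged in Theorem \ref{TH:quasi}), using $H^s\hookrightarrow L^\infty$ for $s>1/2$. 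To produce the second term on the right-hand side of \eqref{GammaSmoothing} with the $\partial_x$ on it, one keeps track of the case where the top derivative falls on one factor: the paraproduct decomposition $(P_Nu)^2 = 2\pi_{\mathrm{lo}}(P_Nu,P_Nu) + \pi_{\mathrm{hi}}$ lets the high-high and high-low pieces be controlled by $\|P_Nu\|_{H^{s+\beta/2-1}}\|P_Nu\|_{L^\infty}\lesssim\|P_Nu\|_{H^s}^3$, while the piece where one factor is hit by $s+\beta/2-1\ge 1$ derivatives is bounded by $\|P_Nu\|_{H^s}\cdot\|\partial_x P_Nu\|_{L^\infty}\cdot\|P_Nu\|_{H^{s+\beta/2-2}}$ — but $s+\beta/2-2$ may exceed $s$, so one must instead distribute as $\|P_Nu\|_{H^s}^2\|\partial_xP_Nu\|_{L^\infty}$ by interpolating the middle factor between $H^s$ and $L^\infty$ after extracting one $\partial_x$. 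I would carry this out via a standard commutator/Leibniz estimate $\|\Lambda(s+\beta/2-1)((P_Nu)^2) - 2(P_Nu)\Lambda(s+\beta/2-1)P_Nu\|_{L^2}\lesssim \|\partial_x P_Nu\|_{L^\infty}\|P_Nu\|_{H^{s+\beta/2-2}}$ together with $s+\beta/2-2\le s$ (true for $\beta\le 4$), absorbing everything into $\|P_Nu\|_{H^s}^2\|\partial_xP_Nu\|_{L^\infty}$.

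The main obstacle, and the point requiring the most care, is the borderline nature of the smoothing operator for $\beta\in(1,2)$: the naive bound asks for $\frac{\partial_x\Lambda(\beta/2)}{1+|D_x|^\beta}$ to be $L^2$-bounded, which fails (the symbol grows like $|n|^{\beta/2}$), so one genuinely cannot afford to move a full $\beta/2$ derivatives through the multiplier. The resolution is to transfer only $1$ derivative through $\frac{\partial_x}{1+|D_x|^\beta}$ (bounded, with room) and $s+\beta/2-1$ derivatives onto the quadratic term via Leibniz; this is why the right-hand side of \eqref{GammaSmoothing} carries $H^s$ and not $H^{s+\beta/2-1}$ norms, and why $s>1/2$ (for the Sobolev embedding $H^s\hookrightarrow L^\infty$ and for the fractional Leibniz estimates to close) is the natural hypothesis. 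Once the derivative bookkeeping $\beta/2-1\le 0$, $\beta/2-2\le 0$ is respected — automatic for $\beta\in(1,2]$ — the estimate \eqref{GammaSmoothing} follows by collecting the pieces, with the $\|\partial_xP_Nu\|_{L^\infty}$ term arising precisely from the paraproduct component where the full string of derivatives concentrates on a single copy of $P_Nu$.
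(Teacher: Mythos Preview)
Your proposal has a genuine gap: you never explain how the \emph{linear} factor in the pairing is brought down from the $H^{s+\beta/2}$ level to the $H^s$ level. After the antisymmetry argument you are left with
\[
\frac{d}{dt}\|P_Nu\|^2_{H^{s+\beta/2}} = -\frac{1}{\pi}\int \Big(\Lambda(s+\tfrac\beta2)P_Nu\Big)\,\Big(\frac{\Lambda(s+\beta/2)}{1+|D_x|^\beta}\,\partial_x((P_Nu)^2)\Big),
\]
and every manipulation you describe (Leibniz, paraproducts, the commutator on $\Lambda(s+\beta/2-1)$) acts only on the quadratic factor. If you Cauchy--Schwarz at any stage, the first factor still contributes $\|P_Nu\|_{H^{s+\beta/2}}$, which is exactly the quantity you are differentiating and cannot appear on the right of \eqref{GammaSmoothing}. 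Your phrase ``moving all $s+\beta/2$ derivatives minus one onto the product'' has no mechanism behind it: derivatives do not migrate from one factor of an inner product to another except via self-adjointness, integration by parts, or a specific commutator identity, and you invoke none of these for that purpose. (Incidentally, the symbol of $\frac{\partial_x\Lambda(\beta/2)}{1+|D_x|^\beta}$ behaves like $|n|^{1-\beta/2}$, not $|n|^{\beta/2}$; your diagnosis of the obstruction is miscomputed. Also, your argument relies on $\beta\le 2$, whereas the proposition is stated for all $\beta>1$.)

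The paper's route is to first use self-adjointness to rewrite the integral as
\[
\int \big(M_1(D_x)\Lambda(s)P_Nu\big)\,\Lambda(s)\partial_x\big((P_Nu)^2\big),
\qquad
M_1(D_x)=\frac{1+|D_x|^{2s+\beta}}{(1+|D_x|^\beta)(1+|D_x|^{2s})},
\]
and then use the algebraic decomposition $M_1=1+M_2$. The point is that $M_1$ is a bounded multiplier with principal symbol $1$: the remainder $M_2$ decays like $|n|^{-\min(2s,\beta)}$, so $M_2\partial_x$ is $L^2$-bounded precisely when $\beta>1$ and $s>1/2$, yielding the cubic term $\|P_Nu\|_{H^s}^3$ immediately. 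The main piece ($M_1$ replaced by $1$) is now genuinely at the $H^s$ level on \emph{both} sides; only then does one apply the commutator estimate $\|[\Lambda(s),f]g\|_{L^2}\lesssim \|f\|_{W^{1,\infty}}\|g\|_{H^{s-1}}+\|f\|_{H^s}\|g\|_{L^\infty}$ together with an integration by parts to turn $\int(\Lambda(s)P_Nu)(\Lambda(s)\partial_xP_Nu)(P_Nu)$ into $-\tfrac12\int|\Lambda(s)P_Nu|^2\,\partial_xP_Nu$, which produces the $\|P_Nu\|_{H^s}^2\|\partial_xP_Nu\|_{L^\infty}$ term. The decomposition $M_1=1+M_2$ is the idea your outline is missing.
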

\begin{proof}
As in the previous proof we set
$
\Lambda(\beta) := \sqrt{1 + |D_x|^{2 \beta}}
$
and write 
\begin{align}\nonumber
\frac{d}{dt}\|P_N u(t) \|_{H^{s + \beta/2}}^2 
& = - \frac{1}{\pi} \int \Big(\Lambda(s + \beta/2) P_N u(t) \Big)\,
\Big(  \frac{\Lambda(s + \beta/2)}{1+|D_x|^\beta}  \partial_x ( (P_N u(t) )^2)\Big)\,.
\end{align}
This is nothing but \eqref{PivotB} with $\sigma = s + \beta/2$.
We rewrite this identity as 
\begin{align}\nonumber
\frac{d}{dt}\|P_N u(t) \|_{H^{s + \beta/2}}^2 
& = - \frac{1}{\pi} \int \Big(   M_1(D_x) \Lambda(s) P_N u(t) \Big)\,\Big( \Lambda(s)  \partial_x \big( (P_N u(t)  )^2 \big) \Big)\,.
\end{align}
with
$$
M_1(D_x) := \frac{1 + |D_x|^{2s + \beta} }{(1+|D_x|^\beta)(1 + |D_x|^{2s})}\,.
$$
Thus, writing
$$
M_1(D_x)  = 1 + M_2(D_x)\,,
$$
where 
$$
M_2(D_x) := - \frac{|D_x|^{2s} +  |D_x|^{\beta}}{(1+|D_x|^\beta)(1 + |D_x|^{2s})}\,,
$$
we decompose
$$
\frac{d}{dt}\|P_N u(t)\|_{H^{s+\beta/2}}^2=I_1+I_2,
$$
where 
\begin{align}\nonumber
I_1 & := -\frac{1}{\pi}\int (\Lambda(s) P_N u(t))\,\Lambda(s)
  \partial_x \big( (P_Nu(t))^2 \big)
\\ \nonumber
&
 =  -\frac{2}{\pi} \int (\Lambda(s)P_N u(t))\,\Lambda(s)
 \big((\partial_x(P_Nu(t))) P_Nu(t) \big)
\end{align}
and
\begin{align}\nonumber
I_2 
& :=\frac{1}{\pi}\int (M_2(D_x) \Lambda(s) P_N u(t))\,
 \Lambda(s)
 \big(\partial_x(P_Nu(t))^2 \big) 
 \\ \nonumber
 & 
 = 
 \frac{1}{\pi}\int (\Lambda(s) P_N u(t))\,
 M_2(D_x) \partial_x \Lambda(s)
 \big((P_Nu(t))^2 \big)\,.
\end{align}

 Since $\beta >1$ and $2s >1$, we immediately see that that $M_2(D_x) \partial_x $ is bounded on $L^2$. Thus we can estimate 
\begin{equation}\label{IneffLInf}
|I_2|\lesssim \|P_N u(t)\|_{H^s}\|(P_N u(t))^2\|_{H^s} \lesssim 
\|P_N u(t)\|_{H^s}^2 \| P_N u(t)\|_{L^{\infty}}
\lesssim
\|P_N u(t)\|_{H^s}^3 \,,
\end{equation}
where we used $s>1/2$ in the second inequality. On the other hand, 
using 
\begin{align}\nonumber
\Lambda(s)&  \big( (\partial_x P_N u(t)) (P_N u(t)) \big) 
\\ \nonumber
&= \big( \Lambda(s) \partial_x P_N u(t) \big) P_N u(t) + 
\big[ \Lambda(s), P_N u(t) \big] \partial_x P_N u(t)
\end{align}
we can rewrite $I_1$ as
\begin{equation}\label{I_1Decomp}
I_1 = \widetilde{I_1} 
-\frac{2}{\pi} \int (\Lambda(s) P_N u(t))\,
 \big[ \Lambda(s), P_N u(t)  \big] \partial_x P_N u(t),
\end{equation}
where
\begin{equation}\label{Def:TildeI}
\widetilde{I_1} := 
 -\frac{2}{\pi}\int (\Lambda(s) P_N u(t))\,( \Lambda(s)
\partial_x P_N u(t)) (P_N u(t)) .
\end{equation}
Integrating by parts in \eqref{Def:TildeI} allows us to rewrite
\begin{equation}\label{Def:TildeIBetter}
\widetilde{I_1} := 
  \frac{2}{\pi} \int \big| \Lambda(s) P_N u(t) \big|^2 \partial_x P_Nu(t) \,,
\end{equation}
so that 
\begin{equation*}
| \widetilde{I_1} | \leq \| P_N u(t) \|_{H^s}^2 \| \partial_x P_N u(t) \|_{L^{\infty}}\,. 
\end{equation*}

Thus it remains to bound the second contribution in \eqref{I_1Decomp}. To do so we use the following commutator estimate
 \cite{KenigPilod}, valid for $f$ periodic:
$$
\big\| \big[  \Lambda(s) , f \big] g  \big\|_{L^2} \lesssim ( \|  f \|_{L^{\infty}} + \| \partial_x f \|_{L^{\infty}} ) \| g \|_{H^{s-1}} + \| f \|_{H^s} \| g \|_{L^{\infty}} \,.
$$
This gives us
\begin{align*}\nonumber
\big\| \big[ \Lambda(s) ,  P_N u(t) \big] \partial_x P_N u(t)  \big\|_{L^{2}}
& \lesssim
( \|  P_N u(t) \|_{L^{\infty}} + \| \partial_x P_N u(t) \|_{L^{\infty}} ) \| \partial_x P_N u(t) \|_{H^{s-1}} 
\\ \nonumber&
+  \|  P_N u(t) \|_{H^s} \| \partial_x P_N u(t) \|_{L^{\infty}}
\\ \nn
& \lesssim \| P_N u(t) \|_{H^{s}}^2 + \| P_N u(t) \|_{H^{s}} \| \partial_x P_N u(t) \|_{L^{\infty}} , 
\end{align*}
whence
\begin{align}
& \left| \int (\Lambda(s) P_N u(t))\,
 \big[ \Lambda(s), P_N u(t)  \big] \partial_x P_N u(t) \right|
\\ \nn
& \qquad \lesssim 
 \| P_N u(t) \|_{H^{s}}^3 +  \| P_N u(t) \|_{H^{s}}^2 \| \partial_x P_N u(t) \|_{L^{\infty}}  \, ,
\end{align}
that concludes the proof.
\end{proof}
%%%%
We conclude the section with a further deterministic bounds, whose proof is very close to that of Proposition \ref{prop:energy}. We define, as usual 
$$
\| f \|_{W^{1, \infty}} = \| f \|_{L^{\infty}} + \| \partial_x f \|_{L^{\infty}} \,.
$$
\begin{lemma}\label{lemma:boundTsu}
Let $\beta >1$. We have for $s > 1/2$  
\begin{equation}\label{H2Bis}
\left| \frac{d}{dt} \|P_N\Phi_t^Nu\|^2_{ H^{s+\frac\beta2}} \right|
\lesssim  \|  P_N \Phi^N_t   u\|_{W^{1, \infty}}   \| P_N \Phi^N_t   u\|^2_{H^s}  \,,
\end{equation}
\begin{multline}\label{H1Bis}
%&
 \left| \frac{d}{dt} \left(  \| \Phi_tu\|^2_{ H^{s+\frac\beta2}} - \|P_N\Phi_t^Nu\|^2_{ H^{s+\frac\beta2}}  \right) \right|
\\ 
\lesssim  \left(\| \Phi_t u\|_{W^{1, \infty}} + \|  P_N \Phi^N_t u\|_{W^{1, \infty}} \right) 
\left( \| \Phi_t u\|_{H^s} + \|  P_N \Phi^N_t u\|_{H^s} \right)   \| \Phi_t u -  P_N \Phi^N_t u \|_{H^s}
 \\ 
+ \left( \|\Phi_t u\|^2_{H^s} + \| P_N \Phi^N_t u\|^2_{H^s} \right) 
  \|  \Phi_t u - P_N \Phi^N_t u \|_{W^{1, \infty}},
\end{multline}
and for $s > \frac12 + \frac\beta2$
\begin{equation}\label{H2}
\left| \frac{d}{dt} \| P_N \Phi_t^Nu\|^{2r}_{H^s} \right|
\lesssim_r  \| P_N \Phi^N_t   u\|^{2r-2}_{H^s} \|  P_N \Phi^N_t   u\|_{W^{1, \infty}}   \| P_N \Phi^N_t   u\|^2_{H^{s-\frac\beta2}}  \,,
\end{equation}
\begin{multline}\label{H1}
%&   
\left| \frac{d}{dt} \left( \|  \Phi_tu\|^{2r}_{H^s}  -    \| P_N \Phi_t^Nu\|^{2r}_{H^s} \right)   \right|
 \\ 
\lesssim_r \left(\| \Phi_t u\|_{W^{1, \infty}} + \|  P_N \Phi^N_t u\|_{W^{1, \infty}} \right)
\left( \| \Phi_t u\|^2_{H^{s - \frac\beta2}} + \| P_N \Phi^N_t u\|^2_{H^{s - \frac\beta2}} \right)   \Big( 
\big| \|\Phi_t u \|^{2r-2}_{H^s} - \| P_N \Phi^N_t u \|^{2r-2}_{H^s} \big| \Big)  
\\ 
+
\left(\| \Phi_t u\|_{W^{1, \infty}} + \|  P_N \Phi^N_t u\|_{W^{1, \infty}} \right) 
\left( \| \Phi_t u\|_{H^{s - \frac\beta2}} + \| P_N \Phi^N_t u\|_{H^{s - \frac\beta2}} \right)
\\
\times
\left( \| \Phi_t u\|^{2r-2}_{H^{s}} + \| P_N \Phi^N_t u\|^{2r-2}_{H^{s}} \right)
 \|\Phi_t u - P_N \Phi^N_t u\|_{H^{s-\frac\beta2}}
  \\ 
+
\left( \|\Phi_t u \|^{2r-2}_{H^s} + \| P_N \Phi^N_t u \|^{2r-2}_{H^s} \right) \left( \|\Phi_t u \|^2_{H^{s-\frac\beta2}} + \| P_N \Phi^N_t u \|^2_{H^{s-\frac\beta2}}  \right) 
 \|  \Phi_t u - P_N \Phi^N_t u \|_{W^{1, \infty}};
\end{multline}
the estimates hold
for all $t \in \R$
and
for all $N \in \N \cup \{ \infty \}$.
\end{lemma}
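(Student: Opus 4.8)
The plan is to re-run the energy estimate of Proposition~\ref{prop:energy}, whose engine is the pairing identity \eqref{PivotB}, being more careful on two points: (i) never trading the $W^{1,\infty}$-norm of the solution for an $H^s$-norm via Sobolev embedding; and (ii) allowing the two Sobolev indices entering \eqref{PivotB} to differ by $\beta/2$. Point (i) alone gives the sharper single-flow bound \eqref{H2Bis}; (i) together with (ii) gives \eqref{H2}; and \eqref{H1Bis}, \eqref{H1} are the corresponding ``difference'' statements, obtained by subtracting the identity \eqref{PivotB} written for $\Phi_t u$ and for $P_N\Phi_t^N u$ and then telescoping.

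\textbf{Single-flow bounds.} For \eqref{H2Bis} I would repeat the proof of Proposition~\ref{prop:energy} verbatim, but when bounding $I_2$, $\widetilde{I_1}$ and the commutator term I would stop, respectively, at $\|P_N u\|_{H^s}^2\|P_N u\|_{L^\infty}$, at $\|P_N u\|_{H^s}^2\|\partial_x P_N u\|_{L^\infty}$, and (using $\|\partial_x P_N u\|_{H^{s-1}}\lesssim\|P_N u\|_{H^s}$ together with the commutator estimate of \cite{KenigPilod}) at $\|P_N u\|_{H^s}\,\|P_N u\|_{W^{1,\infty}}\|P_N u\|_{H^s}$, never invoking the lossy embedding $\|P_N u\|_{L^\infty}\lesssim\|P_N u\|_{H^s}$; this already yields \eqref{H2Bis}. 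For \eqref{H2} I would run the same scheme with target norm $H^s$ in place of $H^{s+\beta/2}$: writing $\Lambda(s)^2/(1+|D_x|^\beta)=(1+M_2(D_x))\Lambda(s-\tfrac\beta2)^2$ with $M_2(D_x)=-(|D_x|^{2s-\beta}+|D_x|^\beta)/\big((1+|D_x|^\beta)(1+|D_x|^{2s-\beta})\big)$, the hypothesis $s>\tfrac12+\tfrac\beta2$ is precisely what makes $M_2(D_x)\partial_x$ bounded on $L^2$ (its symbol decays like $|\xi|^{1-\beta}+|\xi|^{\beta+1-2s}$) and what makes the product bound $\|(P_N u)^2\|_{H^{s-\beta/2}}\lesssim\|P_N u\|_{H^{s-\beta/2}}\|P_N u\|_{L^\infty}$ available (since then $s-\tfrac\beta2>\tfrac12$); combined with the commutator estimate of \cite{KenigPilod} applied to $\Lambda(s-\tfrac\beta2)$ this gives $|\tfrac{d}{dt}\|P_N u\|_{H^s}^2|\lesssim\|P_N u\|_{W^{1,\infty}}\|P_N u\|_{H^{s-\beta/2}}^2$, whence \eqref{H2} by the chain rule $\tfrac{d}{dt}\|P_N u\|_{H^s}^{2r}=r\|P_N u\|_{H^s}^{2r-2}\,\tfrac{d}{dt}\|P_N u\|_{H^s}^2$.

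\textbf{Difference bounds.} Set $v:=\Phi_t u$, $v_N:=P_N\Phi_t^N u$, $W:=v-v_N$; here $v$ satisfies \eqref{BBM-gamma} and $\Phi_t^N u$ satisfies \eqref{BBM-gamma-N}, so both $v$ and $v_N$ obey an identity of the shape \eqref{PivotB} with the plain nonlinearity $\partial_x(\cdot^2)$ (for $v_N$ the Fourier cut-off in the truncated nonlinearity is discarded by orthogonality, the outer factor being frequency-localized). Writing that identity — in the $\widetilde{I_1}$/commutator/$I_2$ decomposition — for $v$ and for $v_N$ and subtracting, I would telescope each bilinear or trilinear term by switching one argument from $v$ to $v_N$ at a time (using $v^2-v_N^2=(v+v_N)W$ for the nonlinear factor), so that every resulting term carries exactly one factor built from $W$; that factor is estimated in $W^{1,\infty}$ when it sits in a slot that the proof of Proposition~\ref{prop:energy} controls in $L^\infty$, and in $H^\sigma$ otherwise, with $\sigma=s$ for target $H^{s+\beta/2}$ (this is \eqref{H1Bis}) and $\sigma=s-\tfrac\beta2$ for target $H^s$; the rest of each term is bounded exactly as in Proposition~\ref{prop:energy} by H\"older, the fractional Leibniz rule (legitimate since the Sobolev index in use exceeds $\tfrac12$) and the commutator estimate of \cite{KenigPilod}. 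Finally \eqref{H1} follows from
\[
\tfrac{d}{dt}\big(\|v\|_{H^s}^{2r}-\|v_N\|_{H^s}^{2r}\big)=r\|v\|_{H^s}^{2r-2}\big(\tfrac{d}{dt}\|v\|_{H^s}^2-\tfrac{d}{dt}\|v_N\|_{H^s}^2\big)+r\big(\|v\|_{H^s}^{2r-2}-\|v_N\|_{H^s}^{2r-2}\big)\tfrac{d}{dt}\|v_N\|_{H^s}^2,
\]
bounding $\tfrac{d}{dt}\|v\|_{H^s}^2-\tfrac{d}{dt}\|v_N\|_{H^s}^2$ by the $\sigma=s-\tfrac\beta2$ difference estimate just described and $|\tfrac{d}{dt}\|v_N\|_{H^s}^2|$ by the un-powered form of \eqref{H2}, and observing that the factor $\|v\|_{H^s}^{2r-2}-\|v_N\|_{H^s}^{2r-2}$ occurs verbatim on the right-hand side of \eqref{H1}.

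\textbf{Main obstacle.} I do not expect any genuinely new analytic difficulty beyond Proposition~\ref{prop:energy}; the only real work is bookkeeping — checking that every term generated by the telescoping lands inside one of the (long) groups on the right-hand sides of \eqref{H1Bis} and \eqref{H1} — together with verifying the boundedness of the two new multipliers, for which the threshold $s>\tfrac12+\tfrac\beta2$ turns out to be exactly sharp.
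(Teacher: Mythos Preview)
Your proposal is correct and follows essentially the same approach as the paper: the single-flow bounds come from re-running the proof of Proposition~\ref{prop:energy} without passing through Sobolev embedding, and the difference bounds come from the same trilinear telescoping you describe (the paper writes it as $L(a,b,c)-L(a_N,b_N,c_N)=L(a-a_N,b,c)+L(a_N,b-b_N,c)+L(a_N,b_N,c-c_N)$), together with the Kenig--Pilod commutator estimate. One minor simplification you miss: the paper observes that \eqref{H2} follows directly from \eqref{H2Bis} by replacing $s$ with $s-\tfrac\beta2$ (which is $>\tfrac12$ precisely under the hypothesis $s>\tfrac12+\tfrac\beta2$) and then applying the chain rule, so there is no need to introduce a second multiplier $M_2$ and rerun the argument; but your longer route is of course also valid.
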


\begin{proof}

Note that \eqref{H2} follows by \eqref{H2Bis}. The proof of \eqref{H2Bis} is the same as 
\eqref{GammaSmoothing}, except that we do not estimate the $L^{\infty}$ norm with the $H^{s}$ norm in \eqref{IneffLInf}. 

The arguments leading to \eqref{H1Bis}-\eqref{H1} are essentially the same. We prove \eqref{H1Bis}  
and explain the modifications needed to prove \eqref{H1}.
As in the proof of Proposition \ref{prop:energy}, we use~\eqref{PivotB} with $\sigma =  s$ and $\sigma =  s + \beta/2$ to compute 
the~$\frac{d}{dt}$. We have
\begin{equation}
\pi \frac{d}{dt} \|P_N\Phi_t^Nu\|^2_{ H^{s+\frac\beta2}} 
  :=  -  \int ( M(D_x) P_N \Phi^N_t u )  \partial_x M(D_x) ( (P_N \Phi^N_t u )^2 ),
   \label{eq:GammaNNew1} \\
\end{equation}
where
$$
 M(D_x) := \left( \frac{1 + |D_x|^{2s +\beta}}{1 + |D_x|^{\beta} } \right)^{1/2}\,. \qquad 
$$
We rewrite the integral in \eqref{eq:GammaNNew1} 
\begin{align}\label{OTRHS}
&   2 \int \big( M(D_x) P_N \Phi^N_t u \big)  M(D_x) (   (  \partial_x P_N \Phi^N_t u) P_N \Phi^N_t u)  
\\ \nonumber
&
=   2 \int \big( M(D_x) P_N \Phi^N_t u \big) \big( M(D_x) \partial_x P_N \Phi^N_t u \big) P_N \Phi^N_t u + 
 \big( M(D_x) P_N  \Phi^N_t u \big) \big[ M(D_x), P_N \Phi^N_t u \big] \partial_x P_N \Phi^N_t u\,.
\end{align}
Integrating by parts we can rewrite the first term on the right hand side of \eqref{OTRHS} as
$$
 \int \big( M(D_x) P_N \Phi^N_t u \big) \big( M(D_x)  P_N \Phi^N_t u \big) \partial_x P_N \Phi^N_t u\,.
$$
Thus we have rewritten the right hand side of \eqref{eq:GammaNNew1} as
\begin{equation}\label{UsingThis}
 - \int   \big( M(D_x) P_N \Phi^N_t u \big) \big( M(D_x)  P_N \Phi^N_t u \big) \partial_x P_N \Phi^N_t u 
-   2 \big( M(D_x) P_N \Phi^N_t u \big) \big[ M(D_x), P_N \Phi^N_t u \big] \partial_x P_N \Phi^N_t u  \,.
\end{equation} 
Decomposing a trilinear operator $L(a,b,c)$ as
\begin{equation}\label{trilDecomp}
L(a, b, c) - L(a_N,  b_N,  c_N) =   L(a- a_N, b, c) + L(  a_N, b-b_N, c ) + L( a_N,  b_N, c - c_N ) 
\end{equation}
and using \eqref{eq:GammaNNew1}-\eqref{UsingThis}
we have rewritten the left hand side of \eqref{H1Bis} as 
$$ 
2\pi |A(t) + B(t) + C(t)| ,
$$
where 
\begin{align}\nonumber
A(t)  & =  
\frac{1}{\pi}  \int  \big( M(D_x) (\Phi_t u -  P_N \Phi^N_t u) \big) \big( M(D_x)  \Phi_t u \big) \partial_x   \Phi_t u
 \\
 \nonumber
  & 
  + \frac{2}{\pi} \int   \big( M(D_x) (\Phi_t u -  P_N \Phi^N_t u) \big) \big[ M(D_x),  \Phi_t u \big] \partial_x  \Phi_t u 
\end{align}
and $B(t), C(t)$ are defined in the analogous way, according to the decomposition \eqref{trilDecomp}.
Noting 
\begin{equation}\label{MultEquiv}
M(n) \simeq 1+ |n|^s, 
\end{equation}
we use the Cauchy--Schwartz inequality and the commutator estimates 
estimates \cite{KenigPilod}      
\begin{equation}
\big\| \big[  M(D_x), f \big] g  \big\|_{L^2} \lesssim \| f \|_{W^{1, \infty}} \| g \|_{H^{s-1}} + \| f \|_{H^s} \| g \|_{L^{\infty}} ,
\end{equation}
with $f= \Phi_t u$ and $g = \partial_x  \Phi_t u$ to show
$$
|A(t)| \lesssim \|\Phi_t u -  P_N \Phi^N_t u\|_{H^s}       
 \|\Phi_t u\|_{H^s}  \|\partial_x \Phi_t u \|_{W^{1, \infty}} 
$$ 
which is one of the contributions of the right hand side of \eqref{H1Bis}. The analysis of $B(t), C(t)$ is 
analogous, leading to the other contributions of the right hand side of \eqref{H1Bis}.

To prove \eqref{H1} we proceed similarly, starting from
\begin{equation}
\pi \frac{d}{dt} \| P_N \Phi_t^Nu\|^{2r}_{H^s}  = -
r\|P_N \Phi^N_t u\|^{2r-2}_{H^s}    \int ( M(D_x) P_N \Phi^N_t u )  \partial_x M(D_x) ( (P_N \Phi^N_t u)^2 )\,,\label{eq:GammaNNew2Bis}
\end{equation}
where now
$$
 M(D_x) := \left(  \frac{1 + |D_x|^{2s}}{1+|D_x|^\beta}  \right)^{1/2}.
$$
The only difference is that the 
decomposition \eqref{trilDecomp} has to be slightly modified in order to take into account the factors 
$\|\Phi_t u\|^{2r-2}_{H^s}, \|P_N\Phi^N_t u\|^{2r-2}_{H^s}$
(like in \eqref{DecFinalmente2}-\eqref{DecFinalmente3}). This is however straightforward.
The important thing to note is that the $H^{s - \frac{\beta}{2}}$ norm on the r.h.s. supplants the $H^s$ 
norm (see \eqref{H1}), because \eqref{MultEquiv} is replaced by
$$ M(n) \simeq 1 +  |n|^{s - \frac{\beta}{2}} \, . $$ 
\end{proof}

%%%%%%%%%%%%%%%%%%%%%%%%%%%%%%%%%%%%%%%%%%%%%%%%%%%%%%%%%%%%%%%%%%%%%%%%%%%%%%%%%%%%%%%%%%%%%%%%%%%%%%%%%%%%%%%%%%%%%%%%%%

\section{(Modified) energy estimates for the NLS equation}\label{Sec:DeterministicNLS}
We will work with the truncated equation
\begin{equation}\label{NLSQuintictruncated}
i \partial_t u + \partial_{x}^2 u = P_N (|P_N u|^4 P_Nu), \quad u(0,x)=u_0(x).
\end{equation}
A direct computation shows that mass and energy are still preserved by the truncated flow, that is for all $N \in \N$ and $t \in \R$
\begin{equation}\label{EqNLS:ConsEnN}
\mc E_1(P_N u(t)) = \mc E_1(P_N u(0)), \qquad  \| P_N u(t) \|_{L^2} = \| P_N u(0) \|_{L^2}\,.
\end{equation}
It follows
\begin{equation}\label{EqNLS:ControlH1WithEnergy}
\| P_N u(t) \|_{H^{1}}^2 \lesssim \|P_N u_0\|^2_{L^2} + \mc E_{1}(P_N u_0).
\end{equation}
$\Phi_t^N u $ denotes the flow of \eqref{NLSQuintictruncated} and $\Phi_t u := \Phi_t^{\infty} u $ the one of 
\eqref{NLSQuintic}.
We have the following statement.
\begin{proposition}\label{prop:NLS-dtH}
Let $N \in \N \cup \{ \infty\}$, $k \geq 2$ be an integer and $R>0$. If 
\begin{equation}\label{eq:APrioriEnBound}
\|u_0\|_{L^2} + \mc E_{1}(u_0)\leq R,
\end{equation} 
 any solution of \eqref{NLSQuintictruncated} with initial datum $u_{0}$ satisfies 
\begin{equation}\label{DtH2Growth}
\left| \frac{d}{dt} \| P_N u(t) \|_{H^k}^2 \right| \lesssim_{R, k} 1 + \| P_N u(t)\|_{H^k}^{2}\,.
\end{equation}
\end{proposition}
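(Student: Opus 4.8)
The plan is to differentiate $\|P_N u(t)\|_{H^k}^2$ along the truncated flow and to absorb the nonlinear contribution using only the a priori control of the $H^1$ norm coming from \eqref{eq:APrioriEnBound}. Set $u_N := P_N u$ for $u$ solving \eqref{NLSQuintictruncated}; applying $P_N$ to \eqref{NLSQuintictruncated} and using $P_N^2=P_N$ together with $[P_N,\partial_x^2]=0$, one sees that $u_N$ solves $i\partial_t u_N + \partial_x^2 u_N = P_N(|u_N|^4 u_N)$, and trivially $P_N u_N = u_N$. By definition \eqref{Def:SobNorm} the $H^k$ norm splits as $\|u_N\|_{H^k}^2 = \|u_N\|_{L^2}^2 + \||D_x|^k u_N\|_{L^2}^2$, and since $\|u_N(t)\|_{L^2}$ is conserved by \eqref{EqNLS:ConsEnN}, it suffices to estimate $\frac{d}{dt}\||D_x|^k u_N\|_{L^2}^2 = 2\,\Re\,\langle |D_x|^k\partial_t u_N,\,|D_x|^k u_N\rangle$.

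Substituting the equation for $\partial_t u_N$, the dispersive term vanishes: $2\,\Re\,\langle i\partial_x^2|D_x|^k u_N,\,|D_x|^k u_N\rangle = 2\,\Re\bigl(-i\,\||D_x|^{k+1}u_N\|_{L^2}^2\bigr)=0$, since $i\partial_x^2$ is skew-adjoint on $L^2$. For the nonlinear term, because $|D_x|^k$ commutes with $P_N$, $P_N$ is self-adjoint, and $P_N u_N = u_N$, the projector can be moved onto $|D_x|^k u_N$ and then removed, so that
\[
\Bigl|\frac{d}{dt}\|P_N u(t)\|_{H^k}^2\Bigr| \leq 2\,\bigl\||D_x|^k(|u_N|^4 u_N)\bigr\|_{L^2}\,\bigl\||D_x|^k u_N\bigr\|_{L^2}.
\]
Applying the tame (Kato--Ponce / Moser-type) product estimate iteratively to the quintic term gives $\||D_x|^k(|u_N|^4 u_N)\|_{L^2} \lesssim_k \|u_N\|_{L^\infty}^4\,\|u_N\|_{H^k}$, whence
\[
\Bigl|\frac{d}{dt}\|P_N u(t)\|_{H^k}^2\Bigr| \lesssim_k \|P_N u(t)\|_{L^\infty}^4\,\|P_N u(t)\|_{H^k}^2.
\]

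It remains to bound $\|P_N u(t)\|_{L^\infty}$ by a constant depending only on $R$. By the one-dimensional Sobolev embedding $H^1(\T)\hookrightarrow L^\infty(\T)$ and \eqref{EqNLS:ControlH1WithEnergy}--\eqref{EqNLS:ConsEnN},
\[
\|P_N u(t)\|_{L^\infty}^2 \lesssim \|P_N u(t)\|_{H^1}^2 \lesssim \|P_N u_0\|_{L^2}^2 + \mc E_1(P_N u_0),
\]
and the right-hand side is $\lesssim_R 1$: indeed $\|P_N u_0\|_{L^2}\leq\|u_0\|_{L^2}\leq R$, $\|P_N u_0\|_{H^1}\leq\|u_0\|_{H^1}\lesssim R^{1/2}$, and for the sextic term $\|P_N u_0\|_{L^6}\lesssim\|P_N u_0\|_{H^{1/3}}\leq\|u_0\|_{H^1}\lesssim R^{1/2}$, so $\mc E_1(P_N u_0)\lesssim_R 1$. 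Combining, $\bigl|\frac{d}{dt}\|P_N u(t)\|_{H^k}^2\bigr|\lesssim_{R,k}\|P_N u(t)\|_{H^k}^2\leq 1+\|P_N u(t)\|_{H^k}^2$, which is \eqref{DtH2Growth}.

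I do not anticipate a genuine obstacle here; this is a standard energy estimate for NLS in which the dispersion is inert. The only points that require a bit of care are the bookkeeping with the projector $P_N$ in the energy identity (using that it is self-adjoint, commutes with Fourier multipliers and fixes $u_N$, so that no commutator between $P_N$ and the nonlinearity survives), and the observation that the $\|\cdot\|_{L^6}^6$ part of $\mc E_1$ is not literally nonincreasing under $P_N$, which forces one to pass through $H^{1/3}(\T)\hookrightarrow L^6(\T)$ when transferring the a priori bound \eqref{eq:APrioriEnBound} to the truncated data.
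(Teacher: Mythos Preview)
Your proof is correct and follows essentially the same strategy as the paper's: differentiate the $H^k$ norm along the flow, observe that the dispersive contribution vanishes, and control the nonlinear term using the a priori $H^1$ bound together with $H^1(\T)\hookrightarrow L^\infty(\T)$. The only methodological difference is in the nonlinear estimate: the paper expands $\partial_x^k(|P_N u|^4 P_N u)$ explicitly via the Leibniz rule and handles the cross terms by Gagliardo--Nirenberg interpolation (yielding $\sum_m\theta_m\leq 1$), whereas you invoke the Kato--Ponce/Moser product estimate to get $\||D_x|^k(|u_N|^4 u_N)\|_{L^2}\lesssim_k\|u_N\|_{L^\infty}^4\|u_N\|_{H^k}$ in one stroke; both routes are standard and give the same bound, and your explicit check that $\mc E_1(P_N u_0)\lesssim_R 1$ via $H^{1/3}\hookrightarrow L^6$ is a welcome bit of care that the paper leaves implicit.
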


\begin{proof}
Since $\| P_N u(t) \|_{L^2} \leq  \| u(0) \|_{L^2} \leq R$ (see the second identity in \eqref{EqNLS:ConsEnN}), it suffices to show 
\begin{equation}\label{DtH2GrowthHomogeneous}
\left| \frac{d}{dt} \|  \partial_x^k P_N  u(t) \|_{L^2}^2 \right| \lesssim_{R, k} 1 + \|   P_N  u(t)\|_{H^k}^{2}\,.
\end{equation}
Taking $\partial_x^k$ of \eqref{NLSQuintictruncated} we have 
$$
\partial_t \partial_x^k P_N  u(t) = i  \partial_{x}^k P_N u(t) - i \partial_x^k P_N (|P_N u(t)|^4 P_N u(t))
$$
Multiplying this equation against $\partial_x^k P_N \bar u(t)$, integrating over $dx$ and taking the real part, we get 
$$
\frac{1}{2} \partial_t \int |\partial_{x}^k P_N u(t) |^2 =  \Im \int  \partial_x^k ( |P_N u(t)|^4 P_N u(t)) \partial_x^k P_N \bar u(t) ; 
$$
note that the projector $P_N$ in front of the nonlinearity on the r.h.s. has been removed by orthogonality.
We rewrite the r.h.s. as 
\begin{equation}\label{MainPlusLO}
\Im \int ( \partial_x^k  P_N \bar u(t))^2 (P_N u(t))^3 P_N \bar u(t) + \mbox{lower order terms},
\end{equation}
where, denoting with $v_j$ either $u$ or $\bar u$, the lower order terms are 
a linear combination of monomials of the form 
\begin{equation}\label{TermsII}
\Im \int (\partial_x^k  P_N \bar u(t) ) (\partial_x^{\alpha_1} P_N  v_1(t) ) \dots (\partial_x^{\alpha_5} P_N  v_5(t) )
\end{equation}
where $\alpha_{m} \in \N \cup \{ 0 \}$ and for $M \in \{ 2, \ldots, 5\}$ we have
$$
\alpha_1 + \dots + \alpha_M = k, \quad 1 \leq  \alpha_1, \ldots \alpha_M \leq k - 1, \quad \alpha_{M+1} \ldots, \alpha_5 = 0
$$ 
(the third condition is empty for $M=5$).

The first term in \eqref{MainPlusLO} is bounded using the H\"older inequality 
and the embedding 
$H^{1} \hookrightarrow L^{\infty}$, recalling that the $H^{1}$ norm of $P_N u(t)$ is controlled by \eqref{EqNLS:ControlH1WithEnergy}-\eqref{eq:APrioriEnBound}
$$
\left| \int ( \partial_x^k  P_N \bar u(t))^2 (P_N u(t))^3 P_N \bar u(t) \right|
\leq \| P_N  u(t) \|_{H^{k}}^2 \| P_N  u(t) \|_{L^{\infty}}^4 \lesssim_R \| P_N  u(t) \|_{H^{k}}^2\,.
$$

To estimate the lower order terms, we use the Gagliardo-Nirenberg inequality (recall that $\alpha_m \geq 1$ for $m =1, \ldots, M$)
$$
\|  \partial_{x}^{\alpha_m} P_N u(t) \|_{L^{2M}} \leq 
\|  P_N u(t) \|_{H^{k}}^{\theta_{m}} \|   P_N u(t) \|_{H^{1}}^{1- \theta_{m}}  , \quad
\theta_m = \frac{\alpha_m - \frac{1}{2} - \frac{1}{2M}}{k-1},
\quad m =1, \ldots, M.
$$
Again, using H\"older's inequality, controlling $\partial_x^k  P_N \bar u(t)$ in $L^2$, the terms
$\partial_{x}^{\alpha_m} P_N u(t)$, $m =1, \ldots, M$ in $L^{2M}$ and
the terms $\partial_{x}^{\alpha_m} P_N u(t)$, $m =M+1, \ldots, 5$ in $L^{\infty}$, we can proceed as before to get the desired bound 
\eqref{DtH2GrowthHomogeneous} for each lower order term, as long as 
$$
\sum_{m=1}^{M} \theta_m \leq 1.
$$  
Recalling that $\sum_{m=1}^{M} \alpha_m \leq k$ and $M \geq 2$, is immediate to check that this holds (the inequality is in fact strict), 
so the proof is concluded. 
\end{proof}

We also need some related bounds, contained in the following Lemma.

\begin{lemma}\label{lemma:boundTsuNLS}
Let $N \in \N \cup \{ \infty\}$, $k, r \geq 1$ ben integers and $R>0$. If 
\eqref{eq:APrioriEnBound} holds, then there is $\ell \in\N$ and $0 < \mu_1 \leq \mu_2 < 2$ such that
\begin{align}\label{H1NLS}
&  \left| \frac{d}{dt} \left(  \|   \Phi_t u \|_{H^{k}}^{2r} - \| P_N   \Phi_t^N u \|_{H^{k}}^{2r} \right)  \right|
\\ \nn
&\lesssim_{r, R, k}  (\|   \Phi_t u \|_{H^{k}}^{r - 1} -    \| P_N  \Phi_t^N u \|_{H^{k}}^{r - 1} )  ( 1 + \|   \Phi_t u \|_{H^{k}}^{\ell} + \| P_N  \Phi_t^N u \|_{H^{k}}^{\ell})
\\ \nonumber
&+  \sum_{j = 1, 2} \|  \Phi_t u - P_N  \Phi_t^N u\|_{H^{k}}^{\mu_j}  ( 1 + \|   \Phi_t u \|_{H^{k}}^{\ell} + \| P_N  \Phi_t^N u \|_{H^{k}}^{\ell}).
\end{align}
Moreover we have 
\begin{equation}\label{H2NLS}
\left| \frac{d}{dt}   \| P_N  \Phi_t^N u \|_{H^{k}}^{2r}   \right|
\lesssim_{r,R,k}  1 + \| P_N  \Phi_t^N u \|_{H^{k}}^{2r} \,,
\end{equation}
for all $N \in \N \cup \{ \infty \}$.
\end{lemma}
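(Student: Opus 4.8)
We follow the scheme of Lemma~\ref{lemma:boundTsu}, now using Proposition~\ref{prop:NLS-dtH} in place of the BBM energy estimates; throughout, the a priori bound \eqref{eq:APrioriEnBound} is propagated by the conservation laws \eqref{EqNLS:ConsEnN}, so Proposition~\ref{prop:NLS-dtH} is available at every time. The bound \eqref{H2NLS} is then immediate: by the chain rule $\frac{d}{dt}\|P_N\Phi_t^N u\|_{H^k}^{2r}=r\,\|P_N\Phi_t^N u\|_{H^k}^{2r-2}\,\frac{d}{dt}\|P_N\Phi_t^N u\|_{H^k}^{2}$, so \eqref{DtH2Growth} yields a bound by $\|P_N\Phi_t^N u\|_{H^k}^{2r-2}\big(1+\|P_N\Phi_t^N u\|_{H^k}^{2}\big)$, which is $\lesssim_r 1+\|P_N\Phi_t^N u\|_{H^k}^{2r}$ by Young's inequality.

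For \eqref{H1NLS} I abbreviate $a:=\Phi_t u$, $b:=P_N\Phi_t^N u$ and recall two points from the proof of Proposition~\ref{prop:NLS-dtH}: mass conservation gives $\frac{d}{dt}\|a\|_{H^k}^2=\frac{d}{dt}\|\partial_x^k a\|_{L^2}^2$ and likewise for $b$; and the projector in front of the nonlinearity of \eqref{NLSQuintictruncated} is absorbed by orthogonality when one tests against $\partial_x^k\bar b$, so that $\frac{d}{dt}\|a\|_{H^k}^2$ and $\frac{d}{dt}\|b\|_{H^k}^2$ are the \emph{same} functional evaluated at $a$, respectively $b$, namely the main sextic term in \eqref{MainPlusLO} plus the lower-order monomials \eqref{TermsII}. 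The plan is to split
\[
\frac{d}{dt}\big(\|a\|_{H^k}^{2r}-\|b\|_{H^k}^{2r}\big)=r\big(\|a\|_{H^k}^{2r-2}-\|b\|_{H^k}^{2r-2}\big)\frac{d}{dt}\|a\|_{H^k}^2+r\,\|b\|_{H^k}^{2r-2}\Big(\frac{d}{dt}\|a\|_{H^k}^2-\frac{d}{dt}\|b\|_{H^k}^2\Big).
\]
In the first summand I bound $\frac{d}{dt}\|a\|_{H^k}^2$ by Proposition~\ref{prop:NLS-dtH} and use $\|a\|_{H^k}^{2r-2}-\|b\|_{H^k}^{2r-2}=(\|a\|_{H^k}^{r-1}-\|b\|_{H^k}^{r-1})(\|a\|_{H^k}^{r-1}+\|b\|_{H^k}^{r-1})$; this produces the first term on the right-hand side of \eqref{H1NLS} with $\ell$ taken sufficiently large.

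For the second summand I expand $\frac{d}{dt}\|a\|_{H^k}^2-\frac{d}{dt}\|b\|_{H^k}^2$ monomial by monomial and telescope each $6$-linear form $\mathcal L$ exactly as in \eqref{trilDecomp}, $\mathcal L(a,\ldots,a)-\mathcal L(b,\ldots,b)=\sum_i\mathcal L(b,\ldots,b,\,a-b,\,a,\ldots,a)$, so that every term carries one difference $a-b$. Each such term is estimated by the same H\"older and Gagliardo-Nirenberg bookkeeping used in Proposition~\ref{prop:NLS-dtH}, with all $H^1$-norms controlled by \eqref{EqNLS:ControlH1WithEnergy}-\eqref{eq:APrioriEnBound} and the (at most two) top-order derivatives distributed so that only one of them ever lands on the difference factor. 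Grouping the pure-$a$ and pure-$b$ contributions of the resulting monomials, and absorbing the prefactor $\|b\|_{H^k}^{2r-2}$, one is left with a finite sum of terms of the advertised shape: a factor that vanishes when $a=b$ — a difference $\|a\|_{H^k}^{\mu_j}-\|b\|_{H^k}^{\mu_j}$ with $\mu_j\in(0,2)$ (together with, for the main sextic term, a low-order norm of $a-b$), the sub-quadratic exponents coming precisely from the slack $\sum_m\theta_m<1$ of Proposition~\ref{prop:NLS-dtH} and the derivative saved on the difference — times a polynomial $1+\|a\|_{H^k}^\ell+\|b\|_{H^k}^\ell$.

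The only delicate point is this last accounting: one must check that in each telescoped monomial the companion factors never require more than $H^1$-control (so they disappear into an $R$-dependent constant) and that the powers of $\|a\|_{H^k}$ and $\|b\|_{H^k}$ that survive add up to something strictly below $2$, hence can honestly be repackaged as the $\mu_j$. The borderline cases are the main sextic term (carrying two factors $\partial_x^k$) and the $M=5$ instance of \eqref{TermsII} (no $L^\infty$ slot), which are handled exactly as in the proof of Proposition~\ref{prop:NLS-dtH}. The integers $\ell,m$ and the exponents $\mu_j$ are then read off the estimates; their precise values are immaterial for the applications.
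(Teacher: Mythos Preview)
Your proof is correct and follows essentially the same route as the paper: the chain rule plus Proposition~\ref{prop:NLS-dtH} for \eqref{H2NLS}, and for \eqref{H1NLS} the same two-step splitting---factoring $\|a\|_{H^k}^{2r-2}-\|b\|_{H^k}^{2r-2}=(\|a\|_{H^k}^{r-1}-\|b\|_{H^k}^{r-1})(\|a\|_{H^k}^{r-1}+\|b\|_{H^k}^{r-1})$ to produce the $A(t)$-type term, then telescoping the sextic forms \eqref{TermsII} as in \eqref{trilDecomp}/\eqref{DecFinalmente} and estimating each piece with the H\"older/Gagliardo--Nirenberg bookkeeping of Proposition~\ref{prop:NLS-dtH}. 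Your remark that the $\frac{d}{dt}\|\cdot\|_{L^2}^2$ contribution vanishes outright by mass conservation \eqref{EqNLS:ConsEnN} is in fact sharper than the paper's comment that this term is merely ``easier''.
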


\begin{proof}
Since 
$$
\frac{d}{dt}   \| P_N \Phi_t^N u \|_{H^{k}}^{2r} = r   \| P_N \Phi_t^N u \|_{H^{k}}^{2r - 2} \frac{d}{dt}   \| P_N \Phi_t^N u \|_{H^{k}}^{2} 
$$
the estimate \eqref{H2NLS} follows by Proposition \ref{prop:NLS-dtH}. Thus we need to prove \eqref{H1NLS}. 
We write 
$$
\frac{d}{dt}   \| P_N \Phi_t^N u \|_{H^{k}}^{2r} = r   \| P_N \Phi_t^N u \|_{H^{k}}^{2r - 2} \Big( \frac{d}{dt}   \| P_N \Phi_t^N u \|_{L^{2}}^{2} +
\frac{d}{dt}   \| \partial_x^k P_N \Phi_t^N u \|_{L^2}^{2} \Big). 
$$
We will focus on the contribution to \eqref{H1NLS} coming from second addendum, namely   
\begin{equation}\label{NontrivialDiff}
  \|  \Phi_t u \|_{H^{k}}^{2r - 2} 
\frac{d}{dt}   \| \partial_x^k  \Phi_t u \|_{L^2}^{2}  
-
   \| P_N \Phi_t^N u \|_{H^{k}}^{2r - 2}  \frac{d}{dt}   
   \| \partial_x^k P_N \Phi_t^N u \|_{L^2}^{2} ;
\end{equation}
the analysis of the contributions involving $\frac{d}{dt} \| P_N \Phi_t^N u \|_{L^2}^{2}$
is in fact easier. 
Proceeding as in the proof of  Proposition \ref{prop:NLS-dtH} 
we see that $\frac{d}{dt}   \| \partial_x^k  P_N \Phi_t^N u \|_{L^{2}}^{2}$ is a linear combination
 of  
monomials of the form (we may assume there are $m$ such monomials) 
\begin{equation}\label{TermsIIBis}
L \left(v_1^N(t), \ldots, v_6^N(t) \right) :=  \Im \int (\partial_x^{k} v_1^N(t) ) \dots (\partial_x^{\alpha_6} v_6^N(t) )\,,
\end{equation}
where $\alpha_{j} \in \N \cup \{ 0 \}$ are such that
$$
\alpha_2 + \dots + \alpha_6 = k, \quad, \alpha_2 \leq k, \quad \alpha_3, \ldots, \alpha_6 \leq  k-1
$$ 
and $v_j^N(t)$ can be either $P_N \Phi_t^N u$ or its conjugate. Thus we can bound 
\eqref{NontrivialDiff}
with the modulus of a linear combination ot
 of terms of the form
(here we set $v(t)=v^{\infty}(t)$)
$$
\| v (t) \|_{H^{k}}^{2r - 2} L \left(v_1 (t), \ldots, v_6(t) \right) -
\| v^N (t) \|_{H^{k}}^{2r - 2} L \left(v_1^N(t), \ldots, v_6^N(t) \right)\,.
$$
Decomposing  
\begin{align}\label{DecFinalmente}
& L\left(v_1 (t), \ldots, v_6(t) \right) - L\left(v_1^N (t), \ldots, v^N_6(t) \right) 
\\ \nonumber
&=L\left(v_1 (t) - v_1^N(t), \ldots, v_6(t) \right) +
L\left(v_1^N (t), v_2(t) - v_2^N(t), \ldots, v_6(t) \right) + \dots
\\ \nonumber
& \qquad \qquad  \dots + L\left(v_1^N (t),  \ldots, v_5^N (t), v_6(t) -v_6^N(t) \right),
\end{align}
 we have reduced to bound the modulus of terms of the form
\begin{equation}\label{DecFinalmente2}
A(t) + B(t) + C(t) + D(t) + E(t) + F(t) + G(t),   
\end{equation}
where 
\begin{equation}\label{DecFinalmente3}
A(t) =  (  \| v (t) \|_{H^{k}}^{r - 1} -  \| v^N (t) \|_{H^{k}}^{r - 1} ) (  \| v (t) \|_{H^{k}}^{r - 1} +  \| v^N (t) \|_{H^{k}}^{r - 1} ) L\left(v_1 (t), \ldots, v_6(t) \right) ,
\end{equation}
\begin{equation}\nonumber
B(t)   =  \| v^N (t) \|_{H^{k}}^{2r - 2} L\left(v_1 (t) - v_1^N (t) , v_2(t), \ldots, v_6(t) \right)
\end{equation}
and $C(t), \ldots, G(t)$ are defined in the analogous way to $B(t)$, according to the decomposition of \eqref{DecFinalmente}. 
Starting by this decomposition, is straightforward to note that, estimating the terms as in the proof of Proposition \ref{prop:NLS-dtH}, we arrive to the bound 
\eqref{H1NLS}.
\end{proof}

From \cite[Proposition 2.2]{PTV}, proceeding for instance as in \cite[Proposition 2.7]{sigma}, we obtain also the following property. 

\begin{lemma}\label{lemma:NLS-diff-flows}
Let $\s>1$ $R>0$, and $K\subset B_{\s}(R)$ be a compact set. Let further $t\in\R$. For all $\e>0$ there is $\bar N\in\N$ such that for all $N\geq \bar N$
\be
\sup_{u\in K}\|\Phi_tu-\Phi^N_tu\|_{H^{\s}}<\e\,. 
\ee 
\end{lemma}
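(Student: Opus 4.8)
The plan is to transpose the approximation scheme of Proposition~\ref{ApproxThm} (equivalently \cite[Proposition 2.7]{sigma}) to the $H^\sigma$ setting for NLS, the point being that here the supremum over $u\in K$ costs nothing because $K$ is compact. First I would record the global theory: by \cite[Proposition 2.2]{PTV} (together with Proposition~\ref{prop:NLS-dtH} when $\sigma$ is an integer) both \eqref{NLSQuintic} and \eqref{NLSQuintictruncated} are globally well posed in $H^\sigma$, $\sigma>1$, with a bound
\begin{equation}\label{eq:orbitboundNLS}
\sup_{N\in\N\cup\{\infty\}}\ \sup_{u\in K}\ \sup_{|\tau|\le|t|}\|\Phi_\tau^N u\|_{H^\sigma}\ \le\ M,\qquad M=M(R,\sigma,|t|),
\end{equation}
and, by the usual Duhamel fixed-point argument in $C([0,T];H^\sigma)$ (using that $H^\sigma$, $\sigma>1$, is an algebra), a local existence time $\tau_0=\tau_0(M)>0$ that can be chosen independently of $N\in\N\cup\{\infty\}$. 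Since $(u,\tau)\mapsto\Phi_\tau u$ is continuous on $H^\sigma\times\R$ and $K\times[-|t|,|t|]$ is compact, the orbit $\mc O:=\{\Phi_\tau u:u\in K,\ |\tau|\le|t|\}$ and the set of nonlinearities $\mc Q:=\{|g|^4 g:g\in\mc O\}$ are \emph{compact} subsets of $H^\sigma$; hence, since $1-P_N\to0$ strongly on $H^\sigma$ and strong convergence is uniform on compacta,
\begin{equation}\label{eq:etaNNLS}
\eta_N:=\sup_{g\in\mc O\cup\mc Q}\|(1-P_N)g\|_{H^\sigma}\ \longrightarrow\ 0\qquad(N\to\infty).
\end{equation}

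Next I would prove a short-time comparison estimate. Set $a_j:=\sup_{u\in K}\|\Phi_{j\tau_0}u-\Phi_{j\tau_0}^N u\|_{H^\sigma}$. On the slab $[j\tau_0,(j+1)\tau_0]$ subtract the Duhamel formulas of \eqref{NLSQuintic} and \eqref{NLSQuintictruncated}, write $w(\theta):=\Phi_{j\tau_0+\theta}u-\Phi_{j\tau_0+\theta}^N u$ for $\theta\in[0,\tau_0]$, and split the quintic difference $|\Phi u|^4\Phi u-P_N(|P_N\Phi^N u|^4 P_N\Phi^N u)$ as a multilinear difference in the arguments $\Phi u$ and $P_N\Phi^N u$ — which, being Lipschitz on the $H^\sigma$-ball of radius $M$ and using $\Phi u-P_N\Phi^N u=w+(1-P_N)\Phi^N u$ with $\|(1-P_N)\Phi^N u\|_{H^\sigma}\le\|w\|_{H^\sigma}+\eta_N$ (because $\Phi u\in\mc O$), contributes $\lesssim_M\|w\|_{L^\infty H^\sigma}+\eta_N$ — plus the genuine truncation error $(1-P_N)(|P_N\Phi^N u|^4 P_N\Phi^N u)$, which I would compare with $(1-P_N)(|\Phi u|^4\Phi u)$: the latter has $H^\sigma$-norm $\le\eta_N$ by \eqref{eq:etaNNLS} (as $|\Phi u|^4\Phi u\in\mc Q$), while the difference of the two is again a multilinear difference bounded by $\lesssim_M\|w\|_{L^\infty H^\sigma}+\eta_N$. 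Choosing $\tau_0=\tau_0(M)$ small enough that the coefficient ($\lesssim_M\tau_0$) of $\|w\|_{L^\infty H^\sigma}$ is $\le\tfrac12$, a Gronwall/contraction argument gives
\begin{equation}\label{eq:recursionNLS}
\sup_{0\le\theta\le\tau_0}\|w(\theta)\|_{H^\sigma}\ \le\ 2a_j+C(M)\,\eta_N,\qquad\text{so that}\qquad a_{j+1}\le 2a_j+C(M)\,\eta_N.
\end{equation}

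Finally I would iterate. Let $J:=\lceil|t|/\tau_0\rceil$, a fixed integer depending only on $M$ and $|t|$. Since $a_0=0$, \eqref{eq:recursionNLS} yields $a_j\le(2^j-1)C(M)\eta_N$ for all $j\le J$, hence
$$
\sup_{u\in K}\ \sup_{|\tau|\le|t|}\|\Phi_\tau u-\Phi_\tau^N u\|_{H^\sigma}\ \le\ C(M,|t|)\,\eta_N\ \longrightarrow\ 0
$$
as $N\to\infty$ by \eqref{eq:etaNNLS}; choosing $\bar N$ so large that the right-hand side is $<\varepsilon$ finishes the proof (the case $t<0$ is identical by time reversal).

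The step I expect to be the crux is the $H^\sigma$ short-time comparison: one has only the uniform $H^\sigma$ bound \eqref{eq:orbitboundNLS}, not any $H^{\sigma+}$ bound, so the truncation error $(1-P_N)(\cdots)$ cannot be made small by trading regularity; the resolution is that, after peeling off the multilinear part (which is Lipschitz in $w$, with small constant on a short slab), every surviving factor $1-P_N$ acts on an element of the compact sets $\mc O$, $\mc Q$ manufactured from the true flow, where $1-P_N\to0$ uniformly. The remaining ingredients — the $H^\sigma$ local theory, the algebra estimates on the quintic term, and the finite iteration — are routine and mirror Proposition~\ref{ApproxThm}.
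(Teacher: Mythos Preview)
Your proposal is correct and follows essentially the route the paper indicates: the paper does not give a proof but simply writes ``From \cite[Proposition 2.2]{PTV}, proceeding for instance as in \cite[Proposition 2.7]{sigma}, we obtain also the following property,'' and what you have written is precisely a detailed implementation of that scheme in the $H^\sigma$ setting. The one point worth flagging is that your treatment of the truncation error is slightly cleaner than the cascade-of-regularities argument used in Proposition~\ref{ApproxThm}: rather than descending through intermediate exponents $b_J<\dots<b_0$, you exploit directly that the orbit $\mathcal O$ and the nonlinearity set $\mathcal Q$ of the \emph{true} flow are compact (continuous image of the compact set $K\times[-|t|,|t|]$), so that $(1-P_N)\to0$ uniformly on them; this is exactly what is needed to stay in $H^\sigma$ throughout and matches the conclusion of \cite[Proposition 2.7]{sigma}.
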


A crucial result for our analysis is \cite[Theorem 1.4]{PTV}. We report it below.

\begin{theorem}[Planchon, Tzvetkov, Visciglia \cite{PTV}]\label{th:PTV}
Let $k \geq 2$ an integer. There are $m_0\in\N$, $C>0$ and $R_{2k}(u)\,:\,H^{2k-1}(\T)\to\R$ with $R_{2k}(0)=0$ and
\be\label{eq:LipRk}
|R_{2k}(u)-R_{2k}(v)|\leq C\|u-v\|_{H^{2k-1}}(1+\|u\|^{m_0}_{H^{2k-1}}+\|v\|^{m_0}_{H^{2k-1}})\,,
\ee
such that for all $N\in\N\cup\{\infty\}$
\bea\label{1Smoothing}
\left|\frac{d}{dt}\left(\frac12\|P_N\Phi^N_tu\|^2_{H^{2k}}+R_{2k}(P_N\Phi^N_tu)\right)\right|
%&
\leq
%&
C(1+\|P_N\Phi^N_tu\|^{m_0}_{H^{2k-1}})\,,\quad k\geq2\,.
\eea
\end{theorem}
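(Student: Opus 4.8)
The plan is to establish \eqref{1Smoothing} by the method of modified energies (normal-form energies), as in \cite[Theorem~1.4]{PTV}: one adds to the naive energy $\tfrac12\|\cdot\|^2_{H^{2k}}$ a lower-order correction $R_{2k}$, built from multilinear expressions in $(u,\bar u)$ carrying at most $2k-1$ derivatives on each factor, so that the a priori top-order contribution to the time derivative is exactly cancelled. First I would compute, as in the proof of Proposition~\ref{prop:NLS-dtH} and writing $v=P_N\Phi^N_t u$, that along \eqref{NLSQuintictruncated}
\[
\frac{d}{dt}\,\tfrac12\|\partial_x^{2k}v\|_{L^2}^2=\Im\int \partial_x^{2k}\big(v^3\bar v^2\big)\,\partial_x^{2k}\bar v\,dx,
\]
the linear term dropping out after taking the real part. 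Expanding by Leibniz, a term with all $2k$ derivatives on one factor $v$ pairs with $\partial_x^{2k}\bar v$ into $3|\partial_x^{2k}v|^2|v|^4$, which is real and contributes $0$ under $\Im$; a term with all $2k$ derivatives on one factor $\bar v$ produces $2\Im\int(\partial_x^{2k}\bar v)^2 v^3\bar v\,dx$, and I claim this is the only obstruction: all remaining monomials can be reduced, by one integration by parts, to expressions with at most $2k-1$ derivatives on every factor, and are then bounded by $1+\|v\|^{m_0}_{H^{2k-1}}$ using H\"older's inequality, the Gagliardo--Nirenberg inequalities and $H^1\hookrightarrow L^\infty$ together with the energy control \eqref{EqNLS:ControlH1WithEnergy}, exactly like the lower-order terms in Proposition~\ref{prop:NLS-dtH}.

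To remove the obstruction I would perform a normal form, that is, an integration by parts in time. On the Fourier side $2\Im\int(\partial_x^{2k}\bar v)^2 v^3\bar v$ is a sum over $\{n_1+n_2+n_6=n_3+n_4+n_5\}$ of the sextic monomial $\overline{\hat v}_{n_1}\overline{\hat v}_{n_2}\overline{\hat v}_{n_6}\hat v_{n_3}\hat v_{n_4}\hat v_{n_5}$ weighted by $n_1^{2k}n_2^{2k}$, and along the flow this monomial oscillates with phase $\Omega=n_1^2+n_2^2+n_6^2-n_3^2-n_4^2-n_5^2$. Then $R_{2k}(v)$ is taken to be a finite sum of multilinear forms obtained by dividing this weight by $\Omega$ on the non-resonant region $\{\Omega\neq0\}$: differentiating $R_{2k}$ along \eqref{NLSQuintictruncated}, the contribution of the linear part $i\partial_x^2 v$ of $\partial_t v$ reproduces $-2\Im\int(\partial_x^{2k}\bar v)^2 v^3\bar v$ up to terms with at most $2k-1$ derivatives per factor, while the contribution of the nonlinear part $-iP_N(v^3\bar v^2)$ produces only such lower-order terms, all again controlled by $1+\|v\|^{m_0}_{H^{2k-1}}$. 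Since $R_{2k}$ is a finite sum of degree-six multilinear forms in $(v,\bar v)$ with at most $2k-1$ derivatives on each factor and with denominators bounded away from zero on the region where they are defined, it is finite on $H^{2k-1}$, vanishes at $v=0$ by multilinearity, and the Lipschitz bound \eqref{eq:LipRk} follows by telescoping the difference $R_{2k}(u)-R_{2k}(v)$ factor by factor and using the same product estimates. Finally, since $P_N$ commutes with $\partial_x$ and the linear propagator and can be freely inserted in or removed from the nonlinear terms by orthogonality, every estimate above is uniform in $N\in\N\cup\{\infty\}$, which yields \eqref{1Smoothing}.

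The hard part will be the \emph{resonant region} $\{\Omega=0\}$, where the normal-form division is unavailable and one must show directly that the corresponding part of $\Im\int(\partial_x^{2k}\bar v)^2 v^3\bar v$ is already controlled by $1+\|v\|^{m_0}_{H^{2k-1}}$, i.e.\ that the top weight $n_1^{2k}n_2^{2k}$ cannot persist on the six-frequency manifold $\{n_1+n_2+n_6=n_3+n_4+n_5,\ n_1^2+n_2^2+n_6^2=n_3^2+n_4^2+n_5^2\}$. This calls for a careful case analysis of how the largest frequencies must pair on that manifold, and it is exactly this point that forces the restriction $k\ge2$; the borderline index $k=1$ survives only the slightly weaker estimate recorded in \cite[Theorem~1.4]{PTV}. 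All the remaining steps are routine multilinear estimates of the kind already used in this section.
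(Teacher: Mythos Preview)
This theorem is not proved in the present paper; it is quoted from \cite[Theorem~1.4]{PTV} and used only as a black box (together with the structural identity (107) of \cite{PTV} invoked in the proof of Lemma~\ref{lemma:tentative}). So there is no proof here to compare against directly.

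Your proposed route is nonetheless \emph{different} from the one in \cite{PTV}. You suggest a Poincar\'e--Dulac normal form on the Fourier side: divide the bad sextic symbol $n_1^{2k}n_2^{2k}$ by the resonance function $\Omega$ on $\{\Omega\neq0\}$ and treat $\{\Omega=0\}$ separately. The construction in \cite{PTV} (coming from \cite{PTV2}) is carried out entirely in physical space: one integrates the bad term $\Im\int(\partial_x^{2k}\bar v)^2 v^3\bar v$ by parts in $x$ to produce $\partial_x^{2k-1}\bar v\cdot\partial_x^{2k+1}\bar v$ paired with lower-order factors, then uses the equation $\partial_x^2\bar v=i\partial_t\bar v+|v|^4\bar v$ to trade $\partial_x^{2k+1}\bar v$ for $i\partial_t\partial_x^{2k-1}\bar v$ plus a nonlinear piece with fewer derivatives. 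The $\partial_t$ contribution is absorbed as a total time derivative, and this \emph{defines} $R_{2k}$; no Fourier decomposition and no resonant/non-resonant splitting ever enters. That approach has the advantage of working on manifolds (the setting of \cite{PTV2}) and of producing $R_{2k}$ explicitly as a finite sum of local multilinear expressions with at most $2k-1$ derivatives on each factor, from which \eqref{eq:LipRk} is immediate.

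Your outline has two gaps that are more than cosmetic. First, you assert that the correction with symbol $n_1^{2k}n_2^{2k}/\Omega$ on $\{\Omega\neq0\}$ ``has at most $2k-1$ derivatives on each factor'' because ``the denominators are bounded away from zero''. But $|\Omega|\ge1$ alone does not give this: to land in $H^{2k-1}$ you need $|\Omega|\gtrsim|n_1n_2|$ whenever $n_1,n_2$ are the dominant frequencies, and proving that already requires a frequency case analysis of the same flavour you defer to the resonant set. The same issue recurs when you hit $R_{2k}$ with the nonlinearity, since the resulting $10$-linear form inherits the weight $n_1^{2k}n_2^{2k}/\Omega$. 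Second, the six-wave resonant manifold $\{n_1+n_2+n_6=n_3+n_4+n_5,\ \Omega=0\}$ is large, and showing that the restriction of the bad term to it is already bounded by $1+\|v\|_{H^{2k-1}}^{m_0}$ is the substance of the argument, not a routine afterthought. Neither obstacle is insurmountable, but neither is the ``routine multilinear estimate'' you suggest; the physical-space route of \cite{PTV} sidesteps both entirely.
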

Set
$$
\mc E_{2k}(u):=\frac12\|u\|^2_{H^{2k}}+R_{2k}(u)\,.
$$
The following statement is a corollary of Theorem~\ref{th:PTV}.
\begin{lemma}\label{lemma:tentative}
Let $k\geq 2$ be an integer. There are $m_0\in\N$, $C>0$ such that 
\begin{equation}\label{eq:tentative}
\left|\frac{d}{dt}  \left( \mathcal{E}_{2k} ( \Phi_t u) - \mathcal{E}_{2k} ( P_N \Phi_t^N u) \right) \right| 
\leq C \|  \Phi_t u - P_N \Phi_t^N u \|_{H^{2k-1}}
 (1+ \|  \Phi_t u \|^{m_0}_{H^{2k-1}} + \| P_N \Phi_t^N u \|^{m_0}_{H^{2k-1}})
\end{equation}
for all $t \in \R$ and $N \in \N$.
\end{lemma}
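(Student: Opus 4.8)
The plan is to reduce \eqref{eq:tentative} to the two building blocks already at our disposal: the smoothing estimate \eqref{1Smoothing} from Theorem~\ref{th:PTV} applied to the difference of flows, and the Lipschitz bound \eqref{eq:LipRk} for the functional $R_{2k}$. First I would write
$$
\mathcal{E}_{2k}(\Phi_t u) - \mathcal{E}_{2k}(P_N \Phi_t^N u)
= \Big( \tfrac12 \|\Phi_t u\|_{H^{2k}}^2 + R_{2k}(\Phi_t u) \Big)
- \Big( \tfrac12 \|P_N \Phi_t^N u\|_{H^{2k}}^2 + R_{2k}(P_N \Phi_t^N u) \Big),
$$
and note that by \eqref{1Smoothing} (with $N=\infty$ for the first bracket and with the given $N$ for the second), each of the two time derivatives $\frac{d}{dt}\big(\tfrac12\|\Phi_t u\|^2_{H^{2k}}+R_{2k}(\Phi_t u)\big)$ and $\frac{d}{dt}\big(\tfrac12\|P_N\Phi_t^N u\|^2_{H^{2k}}+R_{2k}(P_N\Phi_t^N u)\big)$ is bounded by $C(1+\|\Phi_t u\|^{m_0}_{H^{2k-1}})$, respectively $C(1+\|P_N\Phi_t^N u\|^{m_0}_{H^{2k-1}})$. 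However, this crude triangle-inequality bound is too weak: it does not produce the factor $\|\Phi_t u - P_N\Phi_t^N u\|_{H^{2k-1}}$, which is the whole point of the estimate (it is what will vanish as $N\to\infty$, by Lemma~\ref{lemma:NLS-diff-flows} or its $k$-dependent analogue). So the real argument must instead exploit a \emph{difference} version of the smoothing identity.

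The key step is therefore to revisit the proof of \eqref{1Smoothing} in \cite[Theorem 1.4]{PTV} at the level of the integrand. That proof expresses $\frac{d}{dt}\big(\tfrac12\|P_N\Phi_t^N u\|^2_{H^{2k}}+R_{2k}(P_N\Phi_t^N u)\big)$ as an integral of a multilinear form in $P_N\Phi_t^N u$ and its derivatives, in which the would-be top-order term (of size $\|\cdot\|^2_{H^{2k}}$) has been cancelled by the design of $R_{2k}$, leaving only terms controlled by $\|\cdot\|^{m_0}_{H^{2k-1}}$. Writing that multilinear form schematically as $\mathcal{M}(\Phi_t u)$ resp.\ $\mathcal{M}(P_N\Phi_t^N u)$, I would then use the telescoping/multilinearity trick already employed repeatedly in this paper — cf.\ \eqref{trilDecomp} and \eqref{DecFinalmente} — to write $\mathcal{M}(\Phi_t u) - \mathcal{M}(P_N\Phi_t^N u)$ as a sum of multilinear expressions each containing exactly one factor of the form $\Phi_t u - P_N\Phi_t^N u$ (in a norm no stronger than $H^{2k-1}$, since all the top-order cancellations have been carried out), with the remaining factors bounded by powers of $\|\Phi_t u\|_{H^{2k-1}}$ and $\|P_N\Phi_t^N u\|_{H^{2k-1}}$. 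This gives exactly the right-hand side of \eqref{eq:tentative}. The contribution of the $R_{2k}$ part, if one prefers to keep it separate from the $\|\cdot\|^2_{H^{2k}}$ part, is handled directly by differentiating the Lipschitz bound \eqref{eq:LipRk} — or rather by the fact that the combination $\mathcal{E}_{2k}$ is the object for which the top-order cancellation occurs, so one should not split it.

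The main obstacle I expect is purely bookkeeping: one has to actually enter the proof of \cite[Theorem 1.4]{PTV}, identify the explicit multilinear structure of $\frac{d}{dt}\mathcal{E}_{2k}(P_N\Phi_t^N u)$ (including the precise set of derivative multi-indices appearing, analogous to \eqref{TermsII}–\eqref{TermsIIBis}), and verify that after the $R_{2k}$-cancellation every surviving monomial has all of its factors at regularity $\leq 2k-1$, so that the telescoped difference indeed only ever costs an $H^{2k-1}$ norm of $\Phi_t u - P_N\Phi_t^N u$ and never an $H^{2k}$ norm of the difference. Granting that structural fact — which is exactly what makes \eqref{1Smoothing} a ``$1$-smoothing'' estimate — the Cauchy–Schwarz and Gagliardo–Nirenberg estimates needed to close the bound are entirely routine and identical in spirit to those in the proof of Lemma~\ref{lemma:boundTsuNLS}. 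In the write-up I would state this as: \emph{proceeding as in \cite[Theorem 1.4]{PTV} and decomposing the resulting multilinear forms as in \eqref{DecFinalmente}, one obtains \eqref{eq:tentative}}, since reproducing the full multilinear analysis would only duplicate \cite{PTV}.
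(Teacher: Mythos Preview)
Your proposal is correct and follows essentially the same route as the paper: use the multilinear representation of $\frac{d}{dt}\mathcal{E}_{2k}(P_N\Phi_t^N u)$ from \cite{PTV} (the paper cites identity (107) there), exploit that after the $R_{2k}$-cancellation every factor lives in $H^{2k-1}$, and telescope the difference as in \eqref{DecFinalmente}. The one point you did not anticipate is that the paper's decomposition also produces a second family of multilinear forms $L^*_{\ell,2k}$ whose arguments are $(1-P_N)\Phi_t^N u$ rather than $P_N\Phi_t^N u$; these only appear for finite $N$ and are handled directly by the same $H^{2k-1}$ bound, so they fit into your scheme without modification.
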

\begin{proof}
Using the identity (107) in \cite{PTV} 
we have that for all $N \in \N \cup\{ \infty\}$ there are $\ell_k,\ell_k^*\in\N$ and multilinear  
forms $\{L_{\ell, 2k}\}_{\ell=1,\cdots,\ell_k}$, $\{L^*_{\ell, 2k}\}_{\ell=1,\cdots,\ell_k^*}$ such that
\begin{align}
\frac{d}{dt} \mathcal{E}_{2k} ( P_N \Phi_t^N u)  = \sum_{\ell=1}^{\ell_k} L_{\ell, 2k} 
(v^{N}_1(t), \ldots v^{N}_{\ell}(t)) + 
\sum_{\ell = 1}^{\ell_k^*} L^*_{\ell, 2k} 
(v^{*,N}_1(t), \ldots v^{*,N}_{\ell}(t)) , 
\end{align}
where each $v^{N}_j(t)$ ($j=1,\ldots,\ell$) can be either $P_N \Phi_t^N u$ or its conjugate, while 
each $v^{*,N}_j(t)$ can be be either $(1- P_N) \Phi_t^N u$ or its conjugate. Moreover the analysis of 
\cite{PTV} implies that
\bea
|L_{\ell, 2k} 
(v^{N}_1(t), \ldots v^{N}_{\ell}(t))| &\lesssim&  \prod_{j=1}^{\ell} \| v^{N}_j(t) \|_{H^{2k-1}}
\label{PTVProp1}\,,\\
|L^*_{\ell, 2k} 
(v^{*,N}_1(t), \ldots v^{*,N}_{\ell}(t))| &\lesssim &  \prod_{j=1}^{\ell} \| v^{*,N}_j(t) \|_{H^{2k-1}}\,.\label{PTVProp2}
\eea
Thus we have 
\begin{align}\label{LongPTV}
 \frac{d}{dt}\Big(\mathcal{E}_{2k} ( \Phi_t u) & - \mathcal{E}_{2k} ( P_N \Phi_t^N u) \Big)=
 \\ \nonumber
& \sum_{\ell=1}^{\ell_k}  
\left( L_{\ell, 2k}  (v_1(t), \ldots v_{\ell}(t)) - L_{\ell, 2k}(v^{N}_1(t), \ldots v^{N}_{\ell}(t)) \right) + 
\sum_{\ell = 1}^{\ell_k^*} L^*_{\ell, 2k} 
(v^{*,N}_1(t), \ldots v^{*,N}_{\ell}(t)), 
\end{align}
where we are denoting $v_j:= v_j^{\infty}$.
The contribution of the $L_{\ell, 2k}$ terms can be controlled using the bound \eqref{PTVProp1} 
and a multilinear decomposition like the 
one in \eqref{DecFinalmente}.   
The contribution of the $L^*_{\ell, 2k}$ terms can be controlled using the bound \eqref{PTVProp2}. 
In both cases we have estimates compatible with the r.h.s. of \eqref{eq:tentative}.
\end{proof}

%%%%%%%%%%%%%%%%%%%%%%%%%%%%%%%%%%%%%%%%%%%%%%%%%%%%%%%%%%%%%%%%%%%%%%%%%%%%%%%%%%%%%%%%%%%%%%%%%%%%%%%%%%%%%%%%%%%%%%%%%%%%%%%%%%%%%%%%%%%%%%%%%%%%%%%%%%%%%%%%%%%%%%%%%%%%%%%%%%%%%%%

\section{Proof of Theorem \ref{TH:Invariance}}\label{sect:invarianza}
The goal of this section is to prove Theorem~\ref{TH:Invariance}. The main difficulty is that the flow of the BBM equation $\Phi_t$ is not well defined on $H^s$, $s < \frac{\beta}{2}-\frac12$, which  is the regularity of $\g_{0}$-typical functions. However, as proved in Section \ref{sect:LWP}, we can define locally in time a flow on $C^{\alpha}$
for $0 < \alpha < \frac{\beta}{2} - \frac{1}{2}$, which is a full measure set for $\g_0$. The local existence time depends on the $C^{\alpha}$
norm of the initial datum. The goal of this section is to prove that  the local flow can be promoted to a global one~$\g_{0}$-almost surely using 
the invariance of the $\g_{0}$ measure under the approximated flow~$\Phi_t^N$. Once this is achieved, it is easy to prove that   
$\g_{0}$ is invariant under $\Phi_t$ (now defined~$\g_{0}$-almost surely). 

The 
invariance of $\g_{0}$ under the approximated flow~$\Phi_t^N$ is a consequence of the conservation of the $H^{\beta/2}$ norm, as defined in
\eqref{Def:SobNorm}. Indeed, 
using \eqref{cons} we can readily prove the following

\begin{proposition}\label{prop:quasi-invNBis}
It is for any measurable set $A$ and for all $N \in \N \cup \{ \infty\}$
\be\label{eq:invN}
\g_{0} (\Phi_t^N (A)) = 
\g_{0} ( A)  .
\ee
\end{proposition}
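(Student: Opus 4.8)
The plan is to prove \eqref{eq:invN} first for $N\in\N$ by reducing it to Liouville's theorem together with the conservation law \eqref{cons}, exploiting the product structure of $\gamma_0$ adapted to the cut-off, and then to recover the case $N=\infty$ as a limit. Fix $N\in\N$ and split the ambient H\"older space (on which $\gamma_0$ is supported, since $\beta>1$) as $E_N\oplus E_N^\perp$, where $E_N=P_NL^2$ is the finite-dimensional space of real trigonometric polynomials of degree $\leq N$. By the definition \eqref{Def:GaussmEasure} with $s=0$, writing each Fourier mode in real coordinates, $\gamma_0$ factorises as $\gamma_0=\mu_N\otimes\mu_N^\perp$, where $\mu_N^\perp$ is the induced Gaussian on $E_N^\perp$ and $\mu_N$ is the Gaussian on $E_N$ whose density with respect to Lebesgue measure on $E_N$ is proportional to $\exp\big(-\frac12\|P_Nu\|_{H^{\beta/2}}^2\big)$; indeed $\widehat{\varphi_0}(n)=g_n(1+|n|^\beta)^{-1/2}$ and $\|P_Nu\|_{H^{\beta/2}}^2=\sum_{|n|\leq N}(1+|n|^\beta)|\widehat u(n)|^2$.

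Next I would observe that along this splitting the truncated flow decouples as $\Phi_t^Nu=\Psi_t^N(P_Nu)+\mathcal{L}_t\big((1-P_N)u\big)$: in \eqref{BBM-gamma-N} the nonlinearity depends only on $P_Nu$ and takes values in $E_N$, while the linear part is Fourier-diagonal, so the low modes solve a closed finite-dimensional system — whose flow $\Psi_t^N$ is global because \eqref{cons} confines it to a fixed ball of $E_N$ — and the high modes evolve under the linear flow $\mathcal{L}_t=e^{-tL_\beta}$, which acts on the $n$-th mode by multiplication by $e^{-itn/(1+|n|^\beta)}$ and is bounded on $C^\alpha$ by \eqref{ExpBound}. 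Viewing each mode as an element of $\C\cong\R^2$, $\mathcal{L}_t$ is a rotation there, hence preserves Lebesgue measure and $|\widehat u(n)|^2$, so $(\mathcal{L}_t)_*\mu_N^\perp=\mu_N^\perp$. On the other hand the finite-dimensional system is Hamiltonian (the Hamiltonian structure of BBM survives the truncation; see \cite{sigma}), equivalently a direct computation shows its vector field is divergence-free, so by Liouville's theorem $\Psi_t^N$ preserves Lebesgue measure on $E_N$; combined with \eqref{cons}, which says $\Psi_t^N$ preserves $\|P_Nu\|_{H^{\beta/2}}$ and hence the density, this gives $(\Psi_t^N)_*\mu_N=\mu_N$. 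By Fubini the two factors yield $(\Phi_t^N)_*\gamma_0=\gamma_0$, and since $\Phi_t^N$ is a measurable bijection of the ambient space ($\Psi_t^N$ and $\mathcal{L}_t$ being invertible), this is exactly \eqref{eq:invN} for $N\in\N$.

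For $N=\infty$, $\Phi_t$ is the flow constructed $\gamma_0$-almost surely in the remainder of this section, and \eqref{eq:invN} follows by letting $N\to\infty$: Proposition \ref{ApproxThm} gives $\Phi_t^Nu(0)\to\Phi_tu(0)$ on a set of full $\gamma_0$-measure, hence $(\Phi_t^N)_*\gamma_0\to(\Phi_t)_*\gamma_0$ weakly (by dominated convergence on bounded continuous test functions), and since each $(\Phi_t^N)_*\gamma_0$ equals $\gamma_0$ so does the weak limit. The one genuinely delicate point is this last step, together with its position in the logical order: the global construction of $\Phi_t$ itself uses \eqref{eq:invN} for finite $N$, via Bourgain's argument \cite{B94}. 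The finite-$N$ assertion, by contrast, is elementary once the product decomposition and the low/high decoupling are in place, the only thing to check beyond the conservation \eqref{cons} being that the finite-dimensional truncation is measure-preserving (Liouville).
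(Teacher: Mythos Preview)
Your proposal is correct and rests on the same ingredients as the paper: the factorisation of $\gamma_0$ along $E_N\oplus E_N^\perp$, Liouville's theorem for the finite-dimensional flow on $E_N$ (i.e.\ $|\det DP_N\Phi_t^N|=1$, cf.\ \cite[Lemma~4.2]{sigma}), the conservation law \eqref{cons}, and the invariance of the high-mode Gaussian under the linear evolution. The only cosmetic difference is that the paper packages these as the vanishing of $\frac{d}{dt}\gamma_0(\Phi_t^N(A))$ (by forward reference to the change-of-variables computation in Proposition~\ref{prop:quasi-invN}), whereas you establish the invariance directly; your treatment of the case $N=\infty$ by approximation, and your remark on its logical dependence on the finite-$N$ result, are also in line with how the paper proceeds in the proof of Theorem~\ref{TH:Invariance}.
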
  

\begin{proof}
Let $\bar t \in \R$. 
Proceeding as in the proof of forthcoming Proposition \ref{prop:quasi-invN}, replacing the measure $\r_{s,N}$ with $\g_{0}$, we arrive to 
\begin{equation}
\frac{d}{d t}  \left(  \g_{0}  \circ \Phi_t^N (A) \right)\Big|_{t=\bar t}
= \frac12 \int_{\Phi^N_{\bar t}(A)} \g_{0} (du)  \frac{d}{dt}
\| P_N \Phi_t^N u\|^2_{H^{\frac\beta2}}\Big|_{t=0}  = 0\,,
\end{equation}
where we used $\frac{d}{dt} \| P_N \Phi_t^N u\|^2_{H^{\frac\beta2}} = 0$; see \eqref{cons}.
\end{proof}

We are now ready to prove Theorem \ref{TH:Invariance}.

\begin{proof}[Proof of Theorem \ref{TH:Invariance}]
Let $T>0$, $K>0$. We partition $[0,T]$ into~$J$ intervals of size at most $$\tau_K :=\frac{c}{K+1},$$ where $c$ is small enough that 
we have local well-posedness in $C^{\alpha}$ for all times $t \in [0, \frac{c}{K+1}]$ and all data in $B^{\alpha}(K)$ (see \eqref{eq:cialfaball}). %; see Section~\ref{sect:LWP}. 
Clearly 
 \begin{equation}\label{J}
 J \leq c^{-1} T (K +1) +1 . 
 \end{equation}
We set 
\begin{align}\nonumber
 E_{K,N,T}
& :=  \Big\{ u \notin  B^{\alpha}(K/2) \Big\} \cup  \Big\{ u \notin  \Phi^N_{- \tau_K}(B^{\alpha}(K/2)) \Big\}  \cup \Big\{  u \notin  \Phi^N_{- 2 \tau_K}(B^{\alpha}(K/2)) \Big\}
\\ \label{Def:Except} 
&
\quad \quad \quad 
\ldots \cup \Big\{  u \notin  \Phi^N_{- (J-1) \tau_K }(B^{\alpha}(K/2)) \Big\}
\cup 
\Big\{  u \notin  \Phi^N_{-J\tau_K}(B^{\alpha}(K/2)) \Big\} \,.
\end{align}
We will show that the $\g_{0}$ measure of these sets vanishes in the limit $K\to \infty$ (and $\t_K\to0$). 
To do so we have to take advantage of the invariance of $\g_{0}$. Indeed by the classical estimate
$$
  \g_{0} (B^{\alpha}(K/2)^C)  \leq Ce^{-cK^2}, 
$$
we have
\begin{align}\label{TTT}
\g_{0} (E_{K,N,T}) 
&
\leq \sum_{j=0}^{J} \g_{0}( \Phi^N_{- \tau_K}(B^{\alpha}(K/2))^C ) =
\sum_{j=0}^{J} \g_{0} (B^{\alpha}(K/2))^C ) \lesssim J e^{-cK^2} \lesssim TK e^{-cK^2} \, , 
\end{align}
where we used \eqref{eq:invN} and then \eqref{J} in the last inequality

Let $\{N_K\}_{K\in\N}$ be a diverging sequence and 
\be\label{eq:inclusione-fondamentale}
E_T := \bigcap_{K \in \N}  E_{K, N_K,T} \,.
\ee
Using \eqref{TTT} and Proposition \ref{ApproxThm}, we will first show that given $T>0$, the flow $ \Phi_{t}$ is well defined for $t \in [0, T]$ and for all initial data in $E_T^C$
and that $ \g_{0}( E_T) = 0$. 
Once we have that, the statement follows simply 
removing the set $\bigcup_{T \in \Z}  E_{T}$ which has zero~$\g_{0}$-measure (negative times are covered just by time reversibility).  

Let us consider
\begin{align}\label{eq:setsA}
 E_{K,N,T}^C
& :=  \Big\{ u \in  B^{\alpha}(K/2) \Big\} \cap  \Big\{ u \in  \Phi^N_{- \tau_K}(B^{\alpha}(K/2)) \Big\}  \cap \Big\{  u \in  \Phi^N_{- 2 \tau_K}(B^{\alpha}(K/2)) \Big\}
\\ \nonumber 
&
\quad \quad \quad 
\ldots \cap \Big\{  u \in  \Phi^N_{- (J-1) \tau_K }(B^{\alpha}(K/2)) \Big\}
\cap 
\Big\{  u \in  \Phi^N_{-J\tau_K}(B^{\alpha}(K/2)) \Big\} \,.
\end{align}Since 
$$
 \Phi^N_{j \tau_K} E_{K, N,T}^C \subset  B^{\alpha}(K/2),
\qquad j=0, \ldots, J+1,
$$
by the group property of the flow we can apply Proposition \ref{LocWPCalpha} on each
time interval $[j \tau_K, (j+1) \tau_K]$ so that we have  
$$ 
\sup_{t \in [0, T]} \sup_{N \in \mathbb{N}} \sup_{\phi (\cdot, 0) \in  E_{K,N,T}^C } \|  \Phi^N_{t} \phi (\cdot, 0) \|_{C^{\a}} \leq K, 
\qquad j=0, \ldots, J+1 \, .
$$
Thus, for all $K >1$ we can pick $N_K$ sufficiently large 
and invoke Proposition~\ref{ApproxThm},
so that we deduce that also $\Phi_t$ is well defined for times $t \in [0,T]$ and data in 
$$ 
E_T^C = \bigcup_{K \in \N} E_{K, N_K,T}\,.
$$  
On the other hand,
by \eqref{TTT}
we have
$$\g_{0}(E_T) = \g_{0} \left( \bigcap_{K \in \N}  E_{K, N_K,T} \right) \leq \lim_{K \to \infty} \g_{0} (  E_{K, N_K,T} )  = 0,$$ 
as claimed.

This proves the first part of the statement. The second part is to show that $\g_{0}$ is invariant under the flow $\Phi_t$.
We already know that 
$\g_{0}$ is invariant under the approximated flow $\Phi_t^N$, for all~$N \in \N$. The goal is to pass to the limit $N \to \infty$.

Because of time reversibility,
it suffices to show that for all Borel sets $A$ such that
$$
A \subset \bigcup_{T \in \Z} E_T^C , \quad \quad (\mbox{$\Phi_t$ is well defined on $\bigcup_{T \in \Z} E_T^C$})
$$
we have 
\begin{equation}\label{QuasiInvFinal}
\g_{0}(\Phi_t (A)) \leq  \g_{0} (A)  \quad\forall t \in \R\,.
\end{equation}

In fact it suffices to prove \eqref{QuasiInvFinal} for compact sets, then the general case 
follows by the inner regularity of $\g_{0}$. 
Let $A$ be compact in the $C^{\alpha}$ topology and thus, in particular, $A \subseteq B^{\alpha}(K)$ for
 some~$K \in \N$.
Using \eqref{approxProp}
we know that 
\begin{equation}\label{Impl1}
\sup_{t \in [0, T]}
\|  \Phi_{t} u -  \Phi^{N_K}_{t} u \|_{C^{\upsilon}} \leq \varepsilon,
\qquad \upsilon < \alpha .
\end{equation}
This implies
\begin{equation}\nonumber
 \Phi_t ( A ) \subseteq   \Phi^N_t ( A )  + B^\upsilon( \varepsilon ) ,
\quad  |t| \leq T  
\end{equation}
and so
\begin{equation}\label{Impl2}
\gamma_0( \Phi_t ( A ) ) \leq \gamma_0 \left(  \Phi^N_t ( A )  + B^\upsilon( \varepsilon ) \right),
\quad  |t| \leq T  
\end{equation}
Since $\Phi^N_t ( A )$ is 
compact, we have $\Phi^N_t ( A ) = \bigcap_{\varepsilon >0} (\Phi^N_t ( A )  + B^\upsilon( \varepsilon ) )$,
so that we can pass to the limit $\varepsilon \to 0$ in \eqref{Impl2} using the dominated convergence theorem,
getting
$$
\gamma_0( \Phi_t ( A ) ) \leq \gamma_0 \left(  \Phi^N_t ( A )   \right).
 $$ 
Then the invariance of
$\g_{0}$ under the flow of $\Phi_{t}^N$ (see Proposition \ref{prop:quasi-invNBis}) 
implies \eqref{QuasiInvFinal}, as desired.
\end{proof}

%%%%%%%%%%%%%%%%%%%%%%%%%%%%%%%%%%%%%%%%%%%%%%%%%%%%%%%%%%%%%%%%%%%%%%%%%%%%%%%%%%%%%%%%%%%%%%%%%%%%%%%%%%%%%%%%%%%%%%%%%%%%%%%%%%%%%%%%%%%%%%%%%%%%%%%%%%%%%%%%%%%%%%%%%%%%%%%%%%%%%%%
\section{Quasi-invariant measures for the BBM equation}\label{sect:quasi}

In this section we prove the first part of Theorem \ref{TH:quasi}. The proof of the formula \eqref{eq:Densities} for the density is postponed to Section \ref{sec:density}.
Define the family of measures for $N\in\N$
\be\label{eq:rho} 
\r_{s,N}(du):=\exp(-\|P_Nu\|^{2r}_{H^s}) 1_{\{\| u \|_{H^{\beta/2}} \leq R\}}\g_{s}(du)\,,\quad r>2\,. 
\ee
One readily verifies that 
$\r_{s,N}(du)$ converges to $\rho_{s}$, defined in (\ref{Def:Rho}), as $N\to\infty$ (the density of $\r_{s,N}(du)$ w.r.t. $\g_s$ converges to the one of $\rho_{s}$ in $L^1(\g_s)$). Note that for all $N$ the measures $\r_{s,N}(du)$ and $\r_{s}(du)$ depend on the parameters $R,r,\beta$, even though in the sequel we will systematically omit that in the notation.
We denote by
$$
E_N:=\Span_\R\{(\cos(nx),\sin(nx))\,,\quad |n|\leq N \} \,.
$$
Note $\dim E_N = 2N +1$.
 $E_N^{\perp}$ the orthogonal complement of $E_N$ in the topology of $L^2(\T)$. 
Letting $\g_{s, N}^{\perp}$ the measure induced on $E_N^{\perp}$ by the map (recall \eqref{Def:GaussmEasure})
\begin{equation}\label{Def:gammaK2}
\varphi_s(\omega,x)=\sum_{|n| > N}\frac{g_n(\omega)}{(1+|n|^{2s+\beta})^{\frac{1}{2}}}e^{inx},
\end{equation}
the measure $ \g_{s}$ factorises over $E_N \times E_N^{\perp}$ as
\begin{equation}\label{GammaPerp}
 \g_{s} (du) := \frac{1}{Z_N} e^{-\frac12  \| P_N u \|_{H^{s+\beta/2}}^2 } L_N (d P_N u) \,  \g_{s, N}^{\perp}(d P_{>N} u),
\end{equation}
where $L_N$ is the Lebesgue measure induced on $E_N$ by the isomorphism between~$\R^{2N+1}$ and $E_N$ and 
$Z_N$ is a renormalisation factor. This factorisation is useful since we know by \cite[Lemma 4.2]{sigma} that the Lebesgue measure $L_N$ is invariant under
$\Phi_t^N P_N = P_N \Phi_t^N  $. 

\begin{proposition}\label{prop:quasi-invN}
Let $\beta\in(1,2]$, $s>\beta/2$, $r > 2$ and $s + \beta/2 > 3/2$.  Let $\alpha>0$ such that
\begin{equation}
1+\alpha<\beta,\quad s-\alpha>\beta/2\,.
\end{equation}
Let $\s = \s(r,\a,\beta,s) \in (0,1)$ be defined as
$$
\s := 
\max \left(1 - \frac{\a}{r(2s-\beta)}, \,  \frac{1}{2} + \frac{1}{r} \right)
$$
For any measurable set $A$ and for all $t \in \R$ it holds 
\be\label{eq:quasi-invN}
\r_{s,N}(\Phi_t^N (A))\leq \r_{s,N}(A)\exp\left(p\log(1+ c(R, r) |t| p^{\s-1}(\r_{s,N}(A))^{-\frac1p})\right), 
\qquad p \in [1, \infty).
\ee
\end{proposition}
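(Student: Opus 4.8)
The plan is to run the scheme of Proposition~\ref{prop:quasi-invNBis}, now keeping track of the two extra weights carried by $\r_{s,N}$. As a preliminary observation, note that the truncated flow $\Phi_t^N$ decouples: on the high modes $P_{>N}u$ it reduces to the free propagator $e^{-tL_\b}$, which is an isometry of every $H^\sigma(\T)$ (its Fourier multiplier has modulus one) and leaves $\g_{s,N}^\perp$ invariant (phase rotations preserve the Gaussian), while on $P_Nu$ it is the finite-dimensional flow considered in \cite{sigma}, under which $L_N$ is invariant by \cite[Lemma 4.2]{sigma}. Hence $\Phi_t^N$ preserves the product $L_N\otimes\g_{s,N}^\perp$; moreover, combining \eqref{cons} with the isometry property one gets $\|\Phi_t^N u\|_{H^{\b/2}}=\|u\|_{H^{\b/2}}$, so the indicator in $\r_{s,N}$ is constant along $\Phi_t^N$. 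Writing $G_N(u):=1_{\{\|u\|_{H^{\b/2}}\leq R\}}\exp\big(-\|P_Nu\|^{2r}_{H^s}-\tfrac12\|P_Nu\|^2_{H^{s+\b/2}}\big)$ for the density of $\r_{s,N}$ with respect to $\tfrac1{Z_N}L_N\otimes\g_{s,N}^\perp$ (cf.\ \eqref{GammaPerp}), the change of variables $u=\Phi_t^N v$ yields
\[
\r_{s,N}(\Phi_t^N(A))=\int_A\frac{G_N(\Phi_t^N v)}{G_N(v)}\,\r_{s,N}(dv).
\]

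Next I would differentiate this identity at a generic time $\bar t$ and use the group property of $\Phi_t^N$ exactly as in Proposition~\ref{prop:quasi-invNBis}, obtaining
\[
\frac{d}{dt}\r_{s,N}(\Phi_t^N(A))\Big|_{t=\bar t}=-\int_{\Phi_{\bar t}^N(A)}\psi_N(u)\,\r_{s,N}(du),\qquad
\psi_N(u):=\frac{d}{dt}\Big(\|P_N\Phi_t^N u\|^{2r}_{H^s}+\tfrac12\|P_N\Phi_t^N u\|^2_{H^{s+\b/2}}\Big)\Big|_{t=0},
\]
since the parts of $\log G_N$ that do not move along the flow (the indicator and the reference terms at time $\bar t$) drop out. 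On the support of $\r_{s,N}$ we have $\|P_Nu\|_{H^{\b/2}}\leq R$, so the deterministic estimates \eqref{DI1} (with $\sigma=s$, whence $\theta=\tfrac{2\a}{2s-\b}\in(0,1)$) and \eqref{GammaSmoothing} give, uniformly in $N$,
\[
|\psi_N(u)|\lesssim_{R,r}\|P_Nu\|^{2r-\theta}_{H^s}+\|P_Nu\|^3_{H^s}+\|P_Nu\|^2_{H^s}\|\partial_xP_Nu\|_{L^\infty}.
\]

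The heart of the matter is then an $N$-uniform bound $\|\psi_N\|_{L^p(\r_{s,N})}\lesssim_{R,r}p^{\sigma}$, after which H\"older's inequality gives $\big|\tfrac{d}{dt}\r_{s,N}(\Phi_t^N(A))|_{t=\bar t}\big|\leq \|\psi_N\|_{L^p(\r_{s,N})}\,\r_{s,N}(\Phi_{\bar t}^N(A))^{1-1/p}$. For the terms involving only $\|P_Nu\|_{H^s}$ one distributes the weight $e^{-\|P_Nu\|^{2r}_{H^s}}$ carried by $\r_{s,N}$ and invokes the elementary inequality $\sup_{x\geq0}x^ae^{-x^{2r}/p}\lesssim p^{a/(2r)}$: the first term produces $p^{1-\theta/(2r)}=p^{\,1-\a/(r(2s-\b))}$, and, since $r>2$, the other two produce the smaller powers $p^{3/(2r)}$ and $p^{1/r}$. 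The factor $\|\partial_xP_Nu\|_{L^\infty}$ is not controlled by $\|P_Nu\|_{H^s}$ when $s\leq\tfrac32$; this is exactly where the hypothesis $s+\b/2>3/2$ is used, for it places $\g_s$ on functions with $\partial_xu$ H\"older continuous, so that $\|\partial_xP_Nu\|_{L^\infty}\lesssim\|u\|_{C^{1+\e}}$ for a small $\e>0$ uniformly in $N$, and $\big(\int\|u\|^p_{C^{1+\e}}\,\g_s(du)\big)^{1/p}\lesssim\sqrt p$ by the Fernique bound on Gaussian moments. Multiplying this $\sqrt p$ by the $p^{1/r}$ coming from the $\|P_Nu\|^2_{H^s}$ prefactor gives the contribution $p^{1/2+1/r}$, and altogether $\|\psi_N\|_{L^p(\r_{s,N})}\lesssim_{R,r}p^{\sigma}$ with $\sigma=\max\!\big(1-\tfrac{\a}{r(2s-\b)},\tfrac12+\tfrac1r\big)$, as in the statement (one also checks $3/(2r)\leq\tfrac12+\tfrac1r$ for $r>2$, so the cubic term is dominated).

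Finally, with $\Theta(t):=\r_{s,N}(\Phi_t^N(A))$ — which stays strictly positive for all $t$ when $\Theta(0)>0$, again by the change-of-variables identity — the above yields the differential inequality $|\Theta'(t)|\leq c(R,r)p^{\sigma}\Theta(t)^{1-1/p}$; integrating $\tfrac{d}{dt}\big(p\,\Theta^{1/p}\big)\leq c(R,r)p^{\sigma}$ gives $\Theta(t)^{1/p}\leq\Theta(0)^{1/p}+c(R,r)|t|p^{\sigma-1}$, which is precisely \eqref{eq:quasi-invN} (negative times by time reversibility). The main obstacle is the $N$-uniformity of the estimates — not of the deterministic ones, which come with $N$-independent constants from Section~\ref{sect:control}, but of the $L^p(\r_{s,N})$ control of $\|\partial_xP_Nu\|_{L^\infty}$, and it is precisely here that the threshold $s+\b/2>3/2$, i.e.\ the Sobolev/Besov regularity of $\g_s$, enters in an essential way; a secondary point is to make the differentiation of $\r_{s,N}(\Phi_t^N(A))$ rigorous, which is handled as in Proposition~\ref{prop:quasi-invNBis}.
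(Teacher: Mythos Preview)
Your proposal is correct and follows essentially the same route as the paper: the change-of-variables identity via the factorisation \eqref{GammaPerp} and invariance of $L_N$, differentiation using the group property, the split of $\psi_N$ via Propositions~\ref{prop:growth-Hs-norm} and~\ref{prop:energy}, H\"older plus the calculus inequality $\sup_{x\geq 0}x^a e^{-x^{2r}/p}\lesssim p^{a/(2r)}$ for the $H^s$-only terms, the Gaussian $\sqrt{p}$ bound for $\|\partial_x P_N u\|_{L^\infty}$ under $s+\b/2>3/2$, and finally integration of $\tfrac{d}{dt}(p\,\Theta^{1/p})\leq c(R,r)p^{\sigma}$. The only cosmetic difference is that the paper obtains the $\sqrt{p}$ moment bound by quoting \cite[Lemma~6.1]{sigma} directly for $\|P_N\partial_x u\|_{L^\infty}$, whereas you pass through $\|u\|_{C^{1+\e}}$ and Fernique; both are valid under the stated hypothesis.
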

%%%
\begin{proof}
Using the definition \eqref{eq:rho}, the factorisation \eqref{GammaPerp} and Proposition 4.1 of \cite{sigma}, we have for all measurable $E$  
\begin{align}\label{STTG}
& \r_{s,N} \circ \Phi_t^N (E)  = \int_{\Phi_t^N (E)\cap B^{\frac\beta2}(R)} \g_{s}(du) \exp(-\|P_N u\|^{2r}_{H^s}) \\ \nonumber
& = \int_{E\cap B^{\frac\beta2}(R)}  L_N (d P_N u)   \g_{s, N}^{\perp} (d P_{>N} u)
\exp(-\| P_N \Phi_t^N u\|^{2r}_{H^s})  \exp \left( -\frac{1}{2} \| P_N \Phi_t^N u  \|_{H^{s+\frac{\beta}{2}}}^2  \right)  
\\ \nonumber
& =
\int_{E\cap B^{\frac\beta2}(R)} \g_{s}(du) \exp(-\|P_N \Phi_t^N u\|^{2r}_{H^s}) 
\exp\left(\frac12 \| P_N u \|^2_{H^{s+\frac{\beta}{2}}}-\frac12\| P_N \Phi_t^N u\|^2_{H^{s+\frac{\beta}{2}}}\right)
\\ \nonumber
& =
\int_{E}  \r_{s,N}(du) \exp(\| P_N u \|^{2r}_{H^{s}} - \|P_N \Phi_t^N u\|^{2r}_{H^s}) 
\exp\Big(\frac12 \| P_N u \|^2_{H^{s+\frac{\beta}{2}}}-\frac12\| P_N \Phi_t^N u\|^2_{H^{s+\frac{\beta}{2}}}\Big),
\end{align}
%$$
where we used that the Jacobian determinant $|\det D P_N \Phi_t^N (u)|=1$ (see \cite[Lemma 4.2]{sigma}) and in the  
second identity we used \eqref{cons}. 
Using that 
$$t \in (\mathbb{R}, +) \to \Phi_t^N$$ 
is a one parameter group of transformations, we can easily check that
\begin{equation}\label{EqualityAtT=0}
\frac{d}{d t} \left( \r_{s,N} \circ \Phi_t^N (A) \right)\Big|_{t=\bar t} = 
\frac{d}{d t}\left( \r_{s,N} \circ \Phi_t^N (\Phi_{\bar t}^N A ) \right) \Big|_{t=0} \, .
\end{equation}
Using \eqref{STTG}-\eqref{EqualityAtT=0} 
under the choice $E= \Phi_{\bar t}^N A$, we arrive to
\begin{align}
\frac{d}{d t} & \left( \r_{s,N} \circ \Phi_t^N (A) \right)\Big|_{t=\bar t}
\nn \\ 
&=\frac{d}{dt}\int_{\Phi^N_{\bar t}(A)}  \r_{s,N}(du) \exp(\| P_N u \|^{2r}_{H^{s}} - \|P_N \Phi_t^N u\|^{2r}_{H^s}) 
\exp\left(\frac12 \| P_N u \|^2_{H^{s+\frac{\beta}{2}}}-\frac12\| P_N \Phi_t^N u\|^2_{H^{s+\frac{\beta}{2}}}\right) \Big|_{t=0}\nn\\ \label{FirstSummand}
&=-\int_{\Phi^N_{\bar t}(A)} \r_{s,N}(du) \left(r \|P_Nu\|^{2r-2}_{H^s}\frac{d}{dt}\|P_N \Phi_t^Nu\|^{2}_{H^s}\Big|_{t=0}+\frac12\frac{d}{dt}
\| P_N \Phi_t^N u\|^2_{H^{s+\frac\beta2}}\Big|_{t=0}\right)\,.
\end{align}

Now by Proposition~\ref{prop:growth-Hs-norm} (recall $\theta := \frac{2\a}{2s-\beta}$) we have
\bea
&&\left| \int_{\Phi^N_{\bar t}(A)} \r_{s,N}(du) \|P_Nu\|^{2r-2}_{H^s}\frac{d}{dt}\| P_N \Phi_t^Nu\|^{2}_{H^s}\Big|_{t=0} \right| \nn\\
&\lesssim&R^{1+\theta} \int_{\Phi^N_{\bar t}(A)} \r_{s,N}(du) \|P_Nu\|^{2r-\theta}_{H^s}\nn\\
&\leq& (1 + R^2 ) \left(\int_{\Phi^N_{\bar t}(A)} \r_{s,N}(du) \|P_Nu\|^{(2r-\theta)p}_{H^s}\right)^{\frac1p} (\r_{s,N}(\Phi^N_{\bar t}(A)))^{1-\frac1p}\,,\nn
\eea
where we used the H\"older inequality in the last line. 
We have

\begin{align}\label{MainCalculation}
& 
\left(\int_{\Phi^N_{\bar t}(A)} \r_{s,N}(du) \|P_Nu\|^{(2r-\theta)p}_{H^s}\right)^{\frac1p}
\\ \nonumber
& =
\left(\int_{\Phi^N_{\bar t}(A)\cap B^{\frac\beta2}(R)}  \|P_Nu\|^{(2r-\theta)p}_{H^s} \exp(-\|P_Nu\|^{2r}_{H^s}) \g_{s}(du) \right)^{\frac1p}, 
\\ \nonumber
& \leq 
\left(\int\left( \sup_{x \geq 0} x^{(2r-\theta)p} \exp^{-x^{2r}} \right) \g_{s}(du) \right)^{\frac1p},  
\\ \nonumber
& 
\leq  \sup_{x\geq0} x^{2r-\theta}e^{-\frac{x^{2r}}{p}}\lesssim p^{1-\frac{\theta}{2r}}\,,
\end{align}

so that
\begin{align}\label{eq:use1}
\left| \int_{\Phi^N_{\bar t}(A)} \r_{s,N}(du) \|P_Nu\|^{2r-2}_{H^s}\frac{d}{dt}\| P_N \Phi_t^Nu\|^{2}_{H^s}\Big|_{t=0} \right|
&\lesssim R^{1+\theta}p^{1-\frac{\theta}{2r}}(\r_{s,N}(\Phi^N_{\bar t}(A)))^{1-\frac1p}
\\ \nonumber
&\leq R^{1+\theta}p^{\sigma}(\r_{s,N}(\Phi^N_{\bar t}(A)))^{1-\frac1p}\,.
\end{align}
To bound 
$$
\left| \int_{\Phi^N_{\bar t}(A)}\r_{s,N}(du)\frac{d}{dt}\| P_N \Phi_t^N u\|^2_{H^{s+\frac\beta2}}\Big|_{t=0} \right|
$$
we use Proposition~\ref{prop:energy}, so that
\bea
&&
\left| \int_{\Phi^N_{\bar t}(A)}\r_{s,N}(du)\frac{d}{dt}\| P_N \Phi_t^N u\|^2_{H^{s+\frac\beta2}}\Big|_{t=0} \right| \nn\\
& \leq &\int_{\Phi^N_{\bar t}(A)} \r_{s,N}(du)\| P_N u\|^3_{H^{s}}\label{eq:first}\\
&+&\int_{\Phi^N_{\bar t}(A)} \r_{s,N}(du)\| P_N u\|^2_{H^{s}}\| P_N \partial_xu\|_{L^{\infty}}\label{eq:second}\,.
\eea
We use the H\"older inequality for the first and the second summand. For the first summand we have (proceeding as in \eqref{MainCalculation})
\bea
(\ref{eq:first})&\leq&\left(\int_{\Phi^N_{\bar t}(A)} \r_{s,N}(du) \|P_Nu\|^{3p}_{H^s}\right)^{\frac1p} (\r_{s,N}(\Phi^N_{\bar t}(A)))^{1-\frac1p}\nn\\
&\lesssim & \left( \sup_{x\geq0} x^{3}e^{-\frac{x^{2r}}{p}}  \right) \r_{s,N}(\Phi^N_{\bar t}(A)))^{1-\frac1p}
\nn \\
&\lesssim&p^{\frac3{2r}}(\r_{s,N}(\Phi^N_{\bar t}(A)))^{1-\frac1p} 
\nn \\
&
\leq & p^{\sigma}(\r_{s,N}(\Phi^N_{\bar t}(A)))^{1-\frac1p}\,.\label{eq:use2}
\eea
In the last line we used $\frac3{2r} \leq \frac{1}{2} + \frac{1}{r} \leq \sigma$, which is true for $r > 2$.
To handle the second summand, we need to use the estimate
\begin{equation}\label{LinfB}
\Big( \int \| P_N \partial_x u\|_{L^{\infty}}^p \g_{s}(du)  \Big)^{1/p}
\lesssim \sqrt{p} 
\end{equation}
which is valid for $s + \beta/2 > 3/2$. For the proof of \eqref{LinfB} we refer to
\cite[Lemma 6.1]{sigma}. Note that in Lemma 6.1 of \cite{sigma} the estimate is proved for the operator 
$|D_x|^{s+\frac{\beta-1}2-\e}$, $\varepsilon >0$, however is easy to check that the proof works once we replace 
$|D_x|^{s+\frac{\beta-1}2-\e}$ with $\partial_x$, as long as $s + \beta/2 > 3/2$.
Thus, by H\"older's inequality and \eqref{LinfB} we get
\begin{align}\nn
(\ref{eq:second}) & \leq \left(\int_{\Phi^N_{\bar t}(A)} \r_{s,N}(du) \|P_Nu\|^{2p}_{H^s}\|  \partial_x P_N u\|^p_{L^{\infty}}\right)^{\frac1p} 
(\r_{s,N}(\Phi^N_{\bar t}(A)))^{1-\frac1p}
\\ \nn
&\lesssim \left( \sup_{x\geq0} x^{2} e^{-\frac{x^{2r}}{p}} \right)  p^{\frac12}(\r_{s,N}(\Phi^N_{\bar t}(A)))^{1-\frac1p} 
\\ 
&\lesssim   p^{\frac{1}{r} + \frac12}(\r_{s,N}(\Phi^N_{\bar t}(A)))^{1-\frac1p}
\lesssim   p^{\sigma}(\r_{s,N}(\Phi^N_{\bar t}(A)))^{1-\frac1p}   \label{eq:use3}
\,. 
\end{align}

Therefore by (\ref{eq:use1}), \eqref{eq:use2}, (\ref{eq:use3}) we conclude that there is $c(R)$ such that
\be\nn 
\frac{d}{d t} \left( \r_{s,N} \circ \Phi_t^N (A) \right)\leq c(R,r) p^{\s}(\r_{s,N}(\Phi^N_{t}(A)))^{1-\frac1p}\,, 
\ee
From which we get
\be\label{eq:yud2}
\frac{d}{d t} \left( \r_{s,N} \circ \Phi_t^N (A) \right)^{\frac1p}\leq c(R,r) p^{\s-1}\,,
\ee
whence (\ref{eq:quasi-invN}) follows. 
\end{proof}

All the remaining statements of this section are understood to hold under the same assumptions of Proposition \ref{prop:quasi-invN}. 

Zero measure sets remains of zero measure for all $t$.
\begin{lemma}\label{lemma:NullSets}
For all measurable sets $A$ such that $\r_s(A)=0$ it holds $\r_s(\Phi_t(A))=0$ for all $t\in\R$. 
\end{lemma}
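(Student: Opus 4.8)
The statement concerns the full measure $\rho_s$ and its transported version $\rho_s \circ \Phi_t$, but the finite-dimensional estimate \eqref{eq:quasi-invN} from Proposition~\ref{prop:quasi-invN} is stated for the truncated measures $\rho_{s,N}$ and truncated flows $\Phi_t^N$. The plan is therefore to first pass to the limit $N\to\infty$ in \eqref{eq:quasi-invN} to obtain the corresponding statement for $\rho_s$ and $\Phi_t$, namely
\begin{equation}\label{eq:quasi-inv-limit-aux}
\rho_s(\Phi_t(A)) \leq \rho_s(A)\,\exp\left(p\log\bigl(1 + c(R,r)|t|\,p^{\sigma-1}(\rho_s(A))^{-\frac1p}\bigr)\right)
\end{equation}
for all measurable $A$, all $t\in\R$, and all $p$ large enough (depending on $t$). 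Granting this, the claim is immediate: if $\rho_s(A)=0$, the right-hand side of \eqref{eq:quasi-inv-limit-aux} is $+\infty$ as it stands, so one must be slightly careful. The clean way is to note that \eqref{eq:quasi-inv-limit-aux} rearranges, as in \eqref{eq:yud2}, to $\bigl(\rho_s(\Phi_t A)\bigr)^{1/p} \leq \bigl(\rho_s(A)\bigr)^{1/p} + c(R,r)|t|\,p^{\sigma-1}$; sending $p\to\infty$ with $\sigma<1$ and $\rho_s(A)=0$ gives $\rho_s(\Phi_t A)^{1/p}\to$ something controlled, and after taking $p$-th roots one concludes $\rho_s(\Phi_t A)=0$. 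Actually the cleanest argument: from $\frac{d}{dt}\bigl(\rho_{s,N}\circ\Phi_t^N(A)\bigr)^{1/p}\leq c(R,r)p^{\sigma-1}$ one integrates to get $\rho_{s,N}(\Phi_t^N A)^{1/p} \leq \rho_{s,N}(A)^{1/p} + c(R,r)|t|p^{\sigma-1}$, pass to the limit, set $\rho_s(A)=0$, raise to the power $p$, then let $p\to\infty$ noting $p\,(c|t|p^{\sigma-1})^p \to 0$ since $\sigma-1<0$ — hence $\rho_s(\Phi_t A)=0$.

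For the limiting step itself, the main work is to justify $\rho_{s,N}(\Phi_t^N(A)) \to \rho_s(\Phi_t(A))$ (or at least the right inequality in the limit) for a suitable class of sets $A$, and then extend to all measurable $A$ by the regularity of $\rho_s$. First I would reduce to $A$ compact in $H^s$ by inner regularity of the Borel measure $\rho_s$ (which is finite, being dominated by $\gamma_s$). For compact $A$, one uses that $\rho_{s,N}\to\rho_s$ in total variation — more precisely the densities w.r.t. $\gamma_s$ converge in $L^1(\gamma_s)$, as already observed right after \eqref{eq:rho} — together with the approximation of the flow: for $u$ in a compact subset of the relevant Sobolev/Hölder space and $t$ fixed, $\Phi_t^N u \to \Phi_t u$ (this is the BBM analogue of Lemma~\ref{lemma:NLS-diff-flows}, obtainable from the stability estimates of Section~\ref{sect:LWP} and Proposition~\ref{ApproxThm}, granted $A$ sits inside a ball on which the flow stays bounded — which it does on $B^{\beta/2}(R)$ for the measures $\rho_s$ by conservation \eqref{cons}). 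Combining the uniform convergence of $\Phi_t^N$ to $\Phi_t$ on $A$ with the convergence of the densities, one gets $\limsup_N \rho_{s,N}(\Phi_t^N(A)) \leq \rho_s(\Phi_t(A) + B^\upsilon(\varepsilon))$ for every $\varepsilon>0$, and then $\varepsilon\to0$ using compactness of $\Phi_t(A)$, exactly as in the endgame of the proof of Theorem~\ref{TH:Invariance} (the passage around \eqref{Impl2}).

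\textbf{Main obstacle.} The delicate point is the $p\to\infty$ limit in the presence of the factor $(\rho_s(A))^{-1/p}$, which blows up when $\rho_s(A)=0$: one cannot simply plug $\rho_s(A)=0$ into \eqref{eq:quasi-inv-limit-aux}. The resolution is to keep the differential inequality \eqref{eq:yud2} in the form $\frac{d}{dt}(\rho_{s,N}\circ\Phi_t^N(A))^{1/p} \leq c(R,r)p^{\sigma-1}$, which is valid regardless of the size of $\rho_{s,N}(A)$, integrate it, pass to the limit $N\to\infty$ (using the convergence established above, now applied to $\Phi_t(A)$ for the specific times), and only then exploit $\sigma<1$ to kill the $p^{\sigma-1}$ term as $p\to\infty$. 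A secondary technical point is ensuring the approximation arguments of Section~\ref{sect:LWP} apply here: the sets $\Phi_t(A)$ under consideration lie in $B^{\beta/2}(R)$ by construction of $\rho_s$ and conservation of the $H^{\beta/2}$ norm, so the flow stays in a fixed ball and the stability/approximation machinery is available without circularity.
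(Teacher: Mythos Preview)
Your approach is essentially the paper's: both integrate \eqref{eq:yud2} to obtain
\[
\rho_{s,N}(\Phi_t^N A)^{1/p} \leq \rho_{s,N}(A)^{1/p} + c(R,r)\,|t|\,p^{\sigma-1},
\]
pass to the limit $N\to\infty$ via approximation of the flow on compacts together with $\rho_{s,N}\to\rho_s$ (the paper defers this step to \cite[Lemma~8.1]{sigma}), and conclude. Your idea of keeping the factor $p^{\sigma-1}$ and sending $p\to\infty$ (so that $(c|t|p^{\sigma-1})^p\to0$ since $\sigma<1$) is a genuine simplification: the paper instead bounds $p^{\sigma-1}\leq1$, which forces a restriction to $|t|\leq t_R$ and then a separate globalisation argument using the invariance of the cut-off $\{\|u\|_{H^{\beta/2}}\leq R\}$.

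Two slips to fix. First, in the limiting step you write $\limsup_N \rho_{s,N}(\Phi_t^N(A)) \leq \rho_s(\Phi_t(A) + B^\upsilon(\varepsilon))$, but this inequality goes the wrong way: you need an upper bound on $\rho_s(\Phi_t(A))$, not on $\rho_{s,N}(\Phi_t^N(A))$. The passage around \eqref{Impl2} that you cite in fact uses the inclusion $\Phi_t(A)\subset \Phi_t^N(A)+B^\upsilon(\varepsilon)$, which gives $\rho_s(\Phi_t(A))\leq \rho_s(\Phi_t^N(A)+B^\upsilon(\varepsilon))$; one then absorbs the $\varepsilon$-ball into the argument of $\Phi_t^N$ via the uniform Lipschitz continuity of $\Phi_{-t}^N$, switches $\rho_s$ to $\rho_{s,N}$ using the $L^1(\gamma_s)$-convergence of the densities, applies the $N$-estimate, and finally lets $N\to\infty$ then $\varepsilon\to0$. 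Second, inner regularity should be applied to $\Phi_t(A)$ rather than to $A$: knowing $\rho_s(\Phi_t(K))=0$ for every compact $K\subset A$ does not by itself give $\rho_s(\Phi_t(A))=0$. Instead, for compact $K'\subset\Phi_t(A)$ set $K=\Phi_{-t}(K')$, which is compact in $A$ with $\rho_s(K)=0$; the compact case gives $\rho_s(K')=\rho_s(\Phi_t(K))=0$, and then inner regularity of $\rho_s$ on $\Phi_t(A)$ concludes.
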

\begin{proof}
Integrating \eqref{eq:yud2} we get (here we bound $p^{\sigma - 1} \leq 1$)
\be\label{eq:start-again} 
(\r_{s,N} \circ \Phi_t^N )(A)\leq (c(R,r) |t| + \r_{s,N} (A)^{\frac1p})^p\leq (c(R,r)^p |t|^p+\r_{s,N} (A)^{\frac1p}(A))2^{p-1}\,.
\ee
 Let $\delta >0$ and $\rho_{s}(A) \leq \delta$. 
Since $\r_{s,N}(A) \overset{N \to \infty}{\to} \r_{s,N}(A)$, we have  
\be 
(\r_{s,N} \circ \Phi_t^N )(A) 
\leq (c(R,r)^p |t|^p+ 2 \delta  (A)^{\frac1p}(A))2^{p-1}\, ,
\ee
for all $N$ sufficiently large (the choice of $N$ only depends on $A$).
Now letting $t_R := \frac{1}{4c(R,r)}$ we have that for all $|t| \leq t_R$: 
\be
(\r_{s,N} \circ \Phi_t^N)(A)\leq \frac12\left(2^{-p} + \delta2^{p+1}\right)\,,\quad \forall p>1\,. 
\ee
Therefore for any $\e \in (0,1/2)$ we can take $p=-\log_2\e$ and see that there is $0<\d<\e^2$ such that
\be\nonumber
\rho_{s}(A)\leq \d\quad\Rightarrow\quad(\r_{s,N} \circ \Phi_t^N )(A)\leq\e, 
\qquad |t| \leq t_R\,.
\ee
To upgrade the estimate to the limiting version 
\be\label{NullSets}
\rho_{s}(A)\leq \d\quad\Rightarrow\quad(\r_{s} \circ \Phi_t )(A)\leq\e\,, 
\qquad |t| \leq t_R\,,
\ee
we proceed as in 
\cite[Lemma 8.1]{sigma}.
This yields \eqref{NullSets} for $|t| \leq t_R$. Since $t_R$ only depends on $R$ and 
the restriction $\| u(t) \|_{H^{\beta/2}} \leq R$ is invariant under $\Phi_t$ (see \eqref{cons}) 
we can globalise to $t \in \R$ by the usual gluing procedure. 
\end{proof}

The next statement generalise the foregoing lemma to sets of positive measure. It is relevant for $0 < \mu \ll1$.

\begin{proposition}\label{lemma:radiceNLp}
Let $\sigma \in (0,1)$ be as in Proposition~\ref{prop:quasi-invN}, $\mu >0$ and $R > 0$. There exists~$C(\sigma, \mu, R, r)>0$ such that for any measurable set $A$ 
and $t \in \R$,
\be\label{eq:radiceLp}
\rho_{s}(\Phi_{t}(A))\leq 
\rho_{s}(A)^{1 - \sigma \mu^{\frac{1}{\sigma}}}   \exp \left( C(\sigma, \mu, R, r) \left( 1+  |t|^{\frac{1}{1-\sigma}} \right) \right)\,.
\ee
\end{proposition}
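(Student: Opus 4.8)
The plan is to deduce \eqref{eq:radiceLp} from the almost-invariance bound \eqref{eq:quasi-invN} by a Yudovich-type optimisation in the free parameter $p$, to establish it first for the truncated measures $\rho_{s,N}$ along the truncated flows $\Phi_t^N$, and then to pass to the limit $N\to\infty$. I may assume $\rho_{s}(A)>0$: if $\rho_{s}(A)=0$ then Lemma~\ref{lemma:NullSets} gives $\rho_{s}(\Phi_t(A))=0$, which is stronger than \eqref{eq:radiceLp}. Moreover, since the density of $\rho_{s,N}$ with respect to $\gamma_s$ converges in $L^1(\gamma_s)$ to that of $\rho_{s}$, one has $\rho_{s,N}(A)\to\rho_{s}(A)$ (uniformly over measurable sets), so in particular $\rho_{s,N}(A)\geq\tfrac12\rho_{s}(A)>0$ for all large $N$, which makes the manipulations below legitimate.

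The first step is to rewrite \eqref{eq:quasi-invN}, using $\log(1+x)\leq x$, as
\[
\rho_{s,N}(\Phi_t^N(A))\;\leq\;\rho_{s,N}(A)\,\exp\!\Big(c(R,r)\,|t|\,p^{\sigma}\,\rho_{s,N}(A)^{-1/p}\Big),\qquad p\geq 1,
\]
and to set $L_N:=\log\!\big(1/\rho_{s,N}(A)\big)\geq 0$, so that $\rho_{s,N}(A)^{-1/p}=e^{L_N/p}$. The idea is to choose $p$ keeping $L_N/p$ bounded. If $L_N\leq 1$ I take $p=1$: then $\rho_{s,N}(A)^{-1}=e^{L_N}\leq e$, and since $\rho_{s,N}(A)\leq\rho_{s,N}(A)^{1-\sigma\mu^{1/\sigma}}$ (as $\rho_{s,N}(A)\leq 1$) and $|t|\leq 1+|t|^{1/(1-\sigma)}$, the desired bound already follows in this regime. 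If $L_N>1$ I take $p=L_N\;(\geq 1)$: then $e^{L_N/p}=e$ and $p^{\sigma}=L_N^{\sigma}$, whence
\[
\rho_{s,N}(\Phi_t^N(A))\;\leq\;\rho_{s,N}(A)\,\exp\!\big(c(R,r)\,e\,|t|\,L_N^{\sigma}\big).
\]

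The core of the argument is then Young's inequality with conjugate exponents $1/\sigma$ and $1/(1-\sigma)$, applied to the product $\big(\mu L_N^{\sigma}\big)\cdot\big(c(R,r)\,e\,\mu^{-1}|t|\big)$:
\[
c(R,r)\,e\,|t|\,L_N^{\sigma}\;\leq\;\sigma\mu^{1/\sigma}L_N+(1-\sigma)\big(c(R,r)\,e/\mu\big)^{1/(1-\sigma)}|t|^{1/(1-\sigma)}.
\]
Exponentiating and using $e^{\sigma\mu^{1/\sigma}L_N}=\rho_{s,N}(A)^{-\sigma\mu^{1/\sigma}}$ yields, for every $N$,
\[
\rho_{s,N}(\Phi_t^N(A))\;\leq\;\rho_{s,N}(A)^{\,1-\sigma\mu^{1/\sigma}}\,\exp\!\Big(C(\sigma,\mu,R,r)\big(1+|t|^{1/(1-\sigma)}\big)\Big),
\]
with $C(\sigma,\mu,R,r):=\max\big\{(1-\sigma)(c(R,r)e/\mu)^{1/(1-\sigma)},\,c(R,r)e\big\}$, the second term covering the case $L_N\leq 1$.

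Finally I would let $N\to\infty$. The right-hand side above is continuous and nondecreasing in $\rho_{s,N}(A)$ and $\rho_{s,N}(A)\to\rho_{s}(A)$, so it suffices to control the left-hand side in the limit; for this one reduces, by inner regularity of $\rho_{s}$, to $A$ compact, and then uses that $\Phi_t^N\to\Phi_t$ uniformly on compact sets, that $\rho_{s,N}\to\rho_{s}$, and continuity of $\rho_{s,N}$ from above, exactly along the lines of \cite[Lemma~8.1]{sigma} and of the proof of Lemma~\ref{lemma:NullSets}. A gluing over time steps of length depending only on $R$ — legitimate because the cut-off $\|u\|_{H^{\beta/2}}\leq R$ defining $\rho_{s}$ is preserved by the flow, see \eqref{cons} — then removes any smallness restriction on $|t|$. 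I expect this last passage to the limit to be the only genuinely delicate point; the optimisation in $p$ and the Young inequality are elementary, and the limiting procedure is entirely parallel to arguments already carried out in the paper.
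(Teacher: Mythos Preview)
Your argument is correct and follows essentially the same route as the paper: optimise $p$ in \eqref{eq:quasi-invN} by taking it equal to (a version of) $\log(1/\rho(A))$, use $\log(1+x)\leq x$, then apply Young's inequality with exponents $1/\sigma$ and $1/(1-\sigma)$ to split $|t|\,L^{\sigma}$, and finally pass to the limit $N\to\infty$ via \cite[Lemma~8.1]{sigma}. The only cosmetic differences are that you work with $L_N=\log(1/\rho_{s,N}(A))$ rather than $\log(1/(2\rho_s(A)))$ and you treat the case $L_N\leq 1$ separately (which in fact makes the choice $p\geq 1$ cleanly justified); the final gluing-in-time remark is harmless but unnecessary, since \eqref{eq:quasi-invN} already holds for all $t\in\R$.
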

\begin{proof}
Due to Lemma~\ref{lemma:NullSets} we can assume $\r_{s}(A)>0$.  Consider
\be\label{eq:p}
p=\log \left( \frac{1}{2\r_{s}(A)} \right)\,,
\ee
and note that
\be\label{eq:p-consequence}
(2\r_{s}(A))^{-\frac1p} = e\,. 
\ee
Recalling 
\eqref{Def:Rho} and 
\eqref{eq:rho}
we have that $\r_{s,N}(A) \to \r_{s}(A)$ as $N\to\infty$ for all measurable $A$.
Then since $\r_{s}(A) >0$ we 
can find $\bar N = \bar N(A)$ such that $\r_{s,N}(A)\leq 2\r_{s}(A)$ for all $N>\bar N$. 

Thus, for sufficiently large $N$ (\ref{eq:quasi-invN}) reads
\bea\label{STG}
\r_{s,N}(\Phi_t^N (A))&\leq&2 \rho_{s}(A)\exp\left(p\log(1+c(R, r) |t| p^{\s-1}(2\rho_{s}(A))^{-\frac1p})\right)\nn\\
&=&2\rho_{s}(A)\exp\left(p\log(1+c(R, r) |t| p^{\s-1})\right)\nn\\
&\leq&2\rho_{s}(A)\exp\left(c(R, r) |t| p^{\s}\right)\,\nn\\
&\leq&2\rho_{s}(A)\exp\left(c(R, r) |t| \left( \ln \left( \frac{1}{2\r_{s}(A)} \right) \right)^{\s}\right)\,,
\eea
where we used \eqref{eq:p-consequence} in the second line and allowed the the constant $c(R,r)$ to increase of a fixed factor from line to line. 
For all $\mu >0$ and $\sigma \in (0,1)$ we have the Young inequality
$$
|a| |b|  \leq \sigma ( \mu |a|)^{\frac{1}{\sigma}} + (1-\sigma) \left(  \frac{|b|}{\mu} \right)^{\frac{1}{1- \sigma}}\,,
$$ 
that we will use choosing
$$
a = \left( \ln \left( \frac{1}{2\r_{s}(A)} \right) \right)^{\s} , \quad b =  c(R, r) |t|.
$$
As a consequence the \eqref{STG} gives
\begin{align}\label{STG2}
\r_{s,N}(\Phi_t^N (A)) 
& \leq 
2\rho_{s}(A)\exp\left(   \sigma \mu^{\frac{1}{\sigma}} \left( \ln \left( \frac{1}{2\r_{s}(A)}  \right) \right)   
\right) \exp \left( \frac{1-\sigma}{\mu^{\frac{1}{1-\sigma}}}  \left( c(R, r) |t| \right)^{\frac{1}{1-\sigma}} \right)
\\ \nonumber
& \qquad \qquad \leq 
\rho_{s}(A)^{1 - \sigma \mu^{\frac{1}{\sigma}}}   \exp \left( C(\sigma, \mu, R, r) \left( 1 +|t|^{\frac{1}{1-\sigma}} \right) \right)\,.
\end{align}
Then to deduce (\ref{eq:radiceLp}) one argues as in \cite[Lemma 8.1]{sigma}.
\end{proof}

Therefore we have proved that $\rho_{s}\circ\Phi_t$ is absolutely continuous w.r.t. $\rho_{s}$ with a density 
\begin{equation}\label{L1Prop}
f_{s}(t,u) \in L^1(\rho_{s}).
\end{equation}
In particular
$$ \rho_{s}\circ\Phi_t \ll \rho_{s}  \ll \g_{s}.$$

In fact $f_s$ belongs to all $L^p(\rho_{s})$ spaces. 

\begin{proposition}\label{prop_Lp}
We have 
$$
f_{s}(t,u) \in L^p(\rho_{s})
$$
for all $p \geq1$ and $t \in \R$.
\end{proposition}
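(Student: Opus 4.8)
The plan is to upgrade the absolute continuity statement \eqref{L1Prop} into an $L^p$ bound by first converting Proposition~\ref{lemma:radiceNLp} into a distribution-function (weak-type) estimate for $f_s(t,\cdot)$, and then integrating it through the layer-cake formula; the key point is that the parameter $\mu$ in Proposition~\ref{lemma:radiceNLp} is free, so the loss exponent $\sigma\mu^{1/\sigma}$ there can be made arbitrarily small.

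First I would fix $t\in\R$ and recall that, by Proposition~\ref{lemma:radiceNLp} and the Radon--Nikodym theorem, the density $f_s(t,\cdot)$ of $\rho_s\circ\Phi_t$ with respect to $\rho_s$ is characterised by $\int_A f_s(t,u)\,\rho_s(du)=\rho_s(\Phi_t(A))$ for every measurable set $A$. I would then apply this identity to the superlevel sets $A_\lambda:=\{u:\ f_s(t,u)>\lambda\}$, $\lambda>0$: since $f_s(t,u)>\lambda$ on $A_\lambda$ we get $\lambda\,\rho_s(A_\lambda)\leq\rho_s(\Phi_t(A_\lambda))$, and Proposition~\ref{lemma:radiceNLp}, used with a parameter $\mu>0$ still to be chosen and with the shorthand $\delta:=\sigma\mu^{1/\sigma}$ (taken $<1$) and $C_t:=C(\sigma,\mu,R,r)\,(1+|t|^{1/(1-\sigma)})$, gives $\lambda\,\rho_s(A_\lambda)\leq\rho_s(A_\lambda)^{1-\delta}\,e^{C_t}$. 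Hence (trivially when $\rho_s(A_\lambda)=0$)
$$
\rho_s\big(\{u:\ f_s(t,u)>\lambda\}\big)\;\leq\; e^{C_t/\delta}\,\lambda^{-1/\delta},\qquad\lambda>0.
$$

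Next, given $p\geq1$, I would choose $\mu=\mu(p,\sigma)>0$ small enough that $1/\delta=\sigma^{-1}\mu^{-1/\sigma}>p+1$; this is possible because $\sigma\in(0,1)$ is fixed whereas $\mu$ is free, so $1/\delta\to\infty$ as $\mu\to0$. Using that $\rho_s$ is a finite measure (indeed $\rho_s(H^s)\leq\gamma_s(H^s)=1$) together with the layer-cake formula and the weak-type bound above, one obtains
$$
\int f_s(t,u)^p\,\rho_s(du)=p\int_0^\infty\lambda^{p-1}\,\rho_s\big(\{u:\ f_s(t,u)>\lambda\}\big)\,d\lambda\;\leq\;1+p\,e^{C_t/\delta}\int_1^\infty\lambda^{p-1-1/\delta}\,d\lambda,
$$
and the last integral converges because $p-1-1/\delta<-1$. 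Thus $f_s(t,\cdot)\in L^p(\rho_s)$ for all $p\geq1$ and all $t\in\R$.

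I do not expect a genuine obstacle here: the argument is essentially routine once Proposition~\ref{lemma:radiceNLp} is available. The only point requiring some care is the bookkeeping of the auxiliary parameter $\mu$ — one has to observe that shrinking $\mu$ improves the gain $\delta=\sigma\mu^{1/\sigma}$, at the (harmless, since $t$ and $p$ are fixed) cost of enlarging the constant $C(\sigma,\mu,R,r)$. No further deterministic or probabilistic input beyond Proposition~\ref{lemma:radiceNLp} is needed.
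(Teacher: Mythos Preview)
Your proposal is correct and follows essentially the same route as the paper's proof: both derive the weak-type bound $\rho_s(\{f_s(t,\cdot)>\lambda\})\lesssim \lambda^{-1/\delta}$ from Proposition~\ref{lemma:radiceNLp} applied to the superlevel sets, then plug it into the layer-cake formula and choose $\mu$ small so that $p-1-1/\delta<-1$. The only cosmetic difference is notation (the paper writes $\widetilde C$ for your $e^{C_t}$ and bounds the low-$\lambda$ contribution by $p$ rather than by $1$).
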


\begin{proof}
Letting 
\bea
\widetilde C&:=& \exp \left( C(\sigma, \mu, R, r) \left( 1+  |t|^{\frac{1}{1-\sigma}} \right) \right) \,,\nn\\
\delta &:=& \sigma \mu^{\frac{1}{\sigma}}\label{eq:delta}\,,
\eea
we rewrite \eqref{eq:radiceLp} as
\begin{equation}\label{AlambdaBis}
\rho_{s}(\Phi_{t}(A))\leq  \widetilde C \, 
\rho_{s}(A)^{1 - \delta}   .
\end{equation}
Note that we can make $\delta$ arbitrarily small choosing $\mu$ sufficiently small. %
Let now $\l > 0$ and set 
\begin{equation}\label{Alambda}
A_{\l, t} :=\{u\,:\, f_{s}(t,u) >\l\}\,.
\end{equation}
Using \eqref{Alambda}-(\ref{L1Prop})-\eqref{AlambdaBis} we have
\begin{align}
\rho_{s} ( A_{\l,\a} ) 
&
= \frac1\l\int_{A_{\l,\a}}\l \rho_{s}  (du) 
\leq \frac1\l\int_{A_{\l,\a}}  f_{s}(t, u) \rho_{s}  (du) 
\\ \nonumber
& =\frac1\l( \rho_{s}\circ\Phi_t ) (A_{\l,\a})
\lesssim \frac{1}{\l} \widetilde C \, 
\rho_{s}(A_{\l,\a})^{1 - \delta} \,. 
\end{align}
Consequently
\be\label{eq:gammaAlambda}
\rho_{s}(A_{\l,\a})\lesssim \left(\frac{1}{\l}\right)^{1/ \d}\,.
\ee
Using \eqref{eq:gammaAlambda} we  write
$$
\| f_{s}(t,u)  \|^p_{L^p(\rho_{s})}=p\int_0^\infty \l^{p-1}\rho_{s}(A_{\l,\a})d\l
\leq p +
p\int_1^\infty \l^{p-1 - \frac{1}{\delta}}  d\l.
$$
By (\ref{eq:delta}) we see that taking $\mu >0$ sufficiently close to zero we have $p-1 - \frac{1}{\delta} < -1$ so that the integral above is finite and the statement is proved. 
\end{proof}

%%%%%%%%%%%%%%%%%%%%%%%%%%%%%%%%%%%%%%%%%%%%%%%%%%%%%%%%%%%%%%%%%%%%%%%%%%%%%%%%%%%%%%%%%%%%%%%%%%%%%%%%%%%%%%%%%%%%%%%%%%%%%%%%%%%%%%%%%%%%%%%%%%%%%%%%%%%%%%%%%%%%%%%%%%%%%%%%%%%%%%%

\section{Convergence of finite-dimensional densities for the BBM equation}\label{sec:density}

In the last section we proved that $\rho_{s}\circ \Phi_t$ is a.c. w.r.t. $\rho_{s}$ and denoted the density by $f_{s}(t,u)$. Here we complete the proof of Theorem \ref{TH:quasi}, giving an explicit expression for $f_s(t,u)$. Let
\begin{equation}\label{eq:GammaN}
\Gamma_N(u)(t):= - \frac{d}{dt}\left( \| P_N \Phi_t^Nu\|^{2r}_{H^s} + \frac12 \|P_N\Phi_t^Nu\|^2_{ H^{s+\frac\beta2}} \right)\,
\end{equation}
and set
\be\label{eq:fN}
f_{s,N}(t,u) 
:=\exp\left(\int_0^t \!\!\!\Gamma_N(u)(\tau)d\tau\right), 
\qquad \bar f_s(t,u) := f_{s,\infty}(t,u)\,. 
\ee

The following result plays a key role in the sequel.
\begin{proposition}\label{prop:convL1}
Let $\beta\in(1,2]$, $s>\frac32$ and $p \geq 1$. The sequence 
 $\{f_{s,N}(t, \cdot)\}_{N\in\N}$ converges in $L^{p}(\rho_{s})$ to $\bar{f}_s$, for all $p \geq 1$.
\end{proposition}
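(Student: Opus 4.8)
The plan is to follow the standard route: show that $\{f_{s,N}(t,\cdot)\}_N$ is bounded in $L^q(\rho_s)$ for every $q<\infty$, show that $f_{s,N}(t,\cdot)\to\bar f_s(t,\cdot)$ for $\rho_s$-a.e.\ $u$, and deduce $L^p(\rho_s)$-convergence by a splitting argument. Write $G_N(t,u):=\int_0^t\Gamma_N(u)(\tau)\,d\tau$, so that $f_{s,N}(t,u)=e^{G_N(t,u)}$ and, by \eqref{eq:GammaN},
\[
G_N(t,u)=-\big(\|P_N\Phi_t^Nu\|^{2r}_{H^s}-\|P_Nu\|^{2r}_{H^s}\big)-\tfrac12\big(\|P_N\Phi_t^Nu\|^2_{H^{s+\frac\beta2}}-\|P_Nu\|^2_{H^{s+\frac\beta2}}\big).
\]
I fix $t$, recall that $\rho_s$ is supported on $\{\|u\|_{H^{\beta/2}}\le R\}$, a set preserved by $\Phi_\tau^N$ by \eqref{cons}, and put $\sigma_0:=s+\tfrac\beta2-\tfrac12-\varepsilon$ with $\varepsilon>0$ small. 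Since $s>\tfrac32$ and $\beta\in(1,2]$, we have $\sigma_0>\max(s,\tfrac32)$ and $\sigma_0>\tfrac\beta2$, so $H^{\sigma_0}$ is an algebra, $H^{\sigma_0}\hookrightarrow W^{1,\infty}$, $\gamma_s(H^{\sigma_0})=1$, and Proposition~\ref{prop:growth-Hs-norm} applies with $\sigma=s$ and $\sigma=\sigma_0$; by \eqref{PolynomialGrowth} it gives, on the support of $\rho_s$, the bounds $\|P_N\Phi_\tau^N u\|_{H^s}+\|P_N\Phi_\tau^N u\|_{H^{\sigma_0}}\lesssim_R(1+|\tau|)^M\big(\|u\|_{H^s}+\|u\|_{H^{\sigma_0}}\big)$, uniformly over $N\in\N\cup\{\infty\}$.

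\textbf{Uniform $L^q$ bounds.} Estimating $|\Gamma_N(u)(\tau)|$ term by term via \eqref{eq:GammaN}, Proposition~\ref{prop:growth-Hs-norm} for the $H^s$-part, Proposition~\ref{prop:energy} for the $H^{s+\frac\beta2}$-part, the embedding $H^{\sigma_0}\hookrightarrow W^{1,\infty}$ to control $\|\partial_xP_N\Phi_\tau^N u\|_{L^\infty}$, the polynomial growth above, and $\|P_Nv\|_{H^\sigma}\le\|v\|_{H^\sigma}$, I expect to obtain, uniformly in $N$,
\[
G_N(t,u)\le|G_N(t,u)|\le C(R,t)\big(\|u\|^{2r-\theta}_{H^s}+\|u\|^3_{H^s}+\|u\|^2_{H^s}\|u\|_{H^{\sigma_0}}\big),\qquad\theta\in(0,1).
\]
As $r>2$ gives $2r-\theta<2r$, $3<2r$ and $\tfrac{r}{r-1}<2$, Young's inequality produces $qG_N(t,u)\le\tfrac12\|u\|^{2r}_{H^s}+C'(R,t,q)\big(1+\|u\|^{r/(r-1)}_{H^{\sigma_0}}\big)$; inserting this into $\int e^{qG_N(t,\cdot)}\,d\rho_s=\int e^{qG_N(t,\cdot)}e^{-\|u\|^{2r}_{H^s}}1_{\{\|u\|_{H^{\beta/2}}\le R\}}\,d\gamma_s$ and using that $\int e^{c\|u\|^{r/(r-1)}_{H^{\sigma_0}}}\,d\gamma_s<\infty$ for every $c$ (Fernique's theorem, since $\tfrac{r}{r-1}<2$ and $\gamma_s(H^{\sigma_0})=1$) gives $\sup_N\|f_{s,N}(t,\cdot)\|_{L^q(\rho_s)}<\infty$. (Alternatively, by \eqref{STTG} the function $f_{s,N}(t,\cdot)$ is the density of $\rho_{s,N}\circ\Phi_t^N$ with respect to $\rho_{s,N}$, so the layer-cake computation of Proposition~\ref{prop_Lp}, carried out at the level of $\rho_{s,N}$---the constants in Proposition~\ref{prop:quasi-invN} being $N$-independent---together with $\rho_s\le\rho_{s,N}$, yields the same bound.)

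\textbf{Almost everywhere convergence.} Here $\bar f_s(t,u)=e^{\bar G(t,u)}$ with $\bar G(t,u)=\int_0^t\Gamma_\infty(u)(\tau)\,d\tau$; although $\|\Phi_\tau u\|_{H^{s+\beta/2}}=+\infty$ $\gamma_s$-a.s., the quantity $\Gamma_\infty(u)(\tau)$ is $\gamma_s$-a.s.\ finite, because the $H^{s+\beta/2}$-norm enters only through its trilinear time derivative, which Lemma~\ref{lemma:boundTsu} bounds by the a.s.\ finite $\|\Phi_\tau u\|_{H^s}$ and $\|\Phi_\tau u\|_{W^{1,\infty}}$---this is the cancellation noted after Theorem~\ref{TH:quasi}. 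The crucial input is that, for every $u$ with $\|u\|_{H^{\beta/2}}\le R$ and $u\in H^{\sigma_0}$ (a full $\rho_s$-measure set), $\sup_{\tau\in[0,t]}\|P_N\Phi_\tau^N u-\Phi_\tau u\|_{H^{\sigma_0}}\to0$ as $N\to\infty$. I would prove this by a Gronwall estimate in $H^{\sigma_0}$ for $e(\tau):=\Phi_\tau u-P_N\Phi_\tau^N u$: subtracting the two equations, the nonlinear contribution beyond the algebra term $-L_\beta\big[(\Phi_\tau u+P_N\Phi_\tau^N u)\,e\big]$ is the error $-L_\beta(1-P_N)\big((P_N\Phi_\tau^N u)^2\big)$, which is $\lesssim_R N^{-(\beta-1)}$ in $H^{\sigma_0}$ since $L_\beta$ gains $\beta-1$ derivatives, while $e(0)=(1-P_N)u\to0$ in $H^{\sigma_0}$ and the polynomial bounds above close the Gronwall loop uniformly in $N$ (this approximation can also be quoted from \cite{sigma}). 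Since $\sigma_0>\max(s,\tfrac32)$ this yields $P_N\Phi_\tau^N u\to\Phi_\tau u$ in $H^s$ and in $W^{1,\infty}$, uniformly on $[0,t]$. Feeding these convergences into \eqref{H1Bis} and \eqref{H1}---whose remaining factors are precisely the $\tau$-polynomially bounded norms above---gives $\Gamma_N(u)(\tau)\to\Gamma_\infty(u)(\tau)$ for each $\tau\in[0,t]$, while Propositions~\ref{prop:growth-Hs-norm} and \ref{prop:energy} give $|\Gamma_N(u)(\tau)|\le C(R,u)(1+\tau)^M$ uniformly in $N$; by dominated convergence $G_N(t,u)\to\bar G(t,u)$, hence $f_{s,N}(t,u)\to\bar f_s(t,u)$, for $\rho_s$-a.e.\ $u$.

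\textbf{Conclusion and main difficulty.} By Fatou's lemma and the first step, $\bar f_s(t,\cdot)\in L^q(\rho_s)$ for all $q<\infty$, and $C_p:=\sup_N\|f_{s,N}(t,\cdot)\|_{L^{2p}(\rho_s)}+\|\bar f_s(t,\cdot)\|_{L^{2p}(\rho_s)}<\infty$. For $p\ge1$ and $\eta>0$, splitting the domain according to $|f_{s,N}(t,\cdot)-\bar f_s(t,\cdot)|\le\eta$ or $>\eta$ and applying Hölder with exponents $2,2$ on the latter,
\[
\|f_{s,N}(t,\cdot)-\bar f_s(t,\cdot)\|^p_{L^p(\rho_s)}\le\eta^p+C_p^p\,\rho_s\big(\{\,|f_{s,N}(t,\cdot)-\bar f_s(t,\cdot)|>\eta\,\}\big)^{1/2},
\]
using that $\rho_s$ has total mass $\le1$; by the a.e.\ convergence of the previous step (which, on a finite measure space, implies convergence in measure) the last term tends to $0$, so $\limsup_N\|f_{s,N}(t,\cdot)-\bar f_s(t,\cdot)\|_{L^p(\rho_s)}\le\eta$, and $\eta\downarrow0$ finishes the proof. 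The main obstacle is the almost everywhere convergence: one has to produce a $\gamma_s$-almost sure, $\tau$-uniform approximation of the truncated flows in a topology strong enough ($H^{\sigma_0}$, hence $H^s$ and $W^{1,\infty}$) to be fed into the commutator-type estimates \eqref{H1Bis}-\eqref{H1}, while consistently treating $\bar G$ as the genuinely convergent $\infty-\infty$ combination rather than through the individually infinite $H^{s+\beta/2}$ norms. The $L^q$ bounds are, by comparison, a direct dividend of the exponential weight in the definition of $\rho_s$.
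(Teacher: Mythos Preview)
Your proof is correct and follows the same two–ingredient skeleton as the paper: a uniform $L^q(\rho_s)$ bound on $\{f_{s,N}(t,\cdot)\}_N$ together with convergence in measure to $\bar f_s$, combined via the standard truncation argument (the paper invokes this as in \cite[Lemma~3.7]{AIF}, you spell it out as a Cauchy--Schwarz splitting).

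The difference worth recording is in how you obtain the uniform $L^q$ bound. The paper proves it in Proposition~\ref{lemma:unint} by first bounding $\int_0^T|\Gamma_N(u)(t)|\,dt$ via \eqref{H2Bis}--\eqref{H2} and \eqref{PolynomialGrowth} by $C(|T|)\,\|u\|_{H^{3/2+\varepsilon}}\big(\|u\|_{H^s}^2+\|u\|_{H^s}^{2r-2}\|u\|_{H^{s-\beta/2}}^2\big)$, and then integrating the exponential using the tailored concentration estimate Lemma~\ref{lemma:subexpXH}, which exploits the interplay between the rigid $H^{\beta/2}$ cut-off and the exponential $H^s$ weight. Your route is more elementary: you bound the same quantity by $C(R,t)\big(\|u\|_{H^s}^{2r-\theta}+\|u\|_{H^s}^3+\|u\|_{H^s}^2\|u\|_{H^{\sigma_0}}\big)$ with $\sigma_0=s+\beta/2-1/2-\varepsilon$, then Young's inequality absorbs the $H^s$ powers into the weight $e^{-\|u\|_{H^s}^{2r}}$ of $\rho_s$, leaving only $e^{C'\|u\|_{H^{\sigma_0}}^{r/(r-1)}}$, which is $\gamma_s$-integrable for every $C'$ by Fernique since $r/(r-1)<2$ and $\gamma_s(H^{\sigma_0})=1$. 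This bypasses Lemma~\ref{lemma:subexpXH} entirely and is arguably cleaner; the paper's approach, on the other hand, gives sharper control of the tails and could in principle be pushed to smaller $s$ or $r$. Your parenthetical alternative via $\rho_s\le\rho_{s,N}$ and the $N$-uniform version of Proposition~\ref{prop_Lp} is also valid. For the convergence step, the paper works with uniform convergence on compact subsets of $H^s$ (Proposition~\ref{lemma:ConvMeas}) while you establish pointwise convergence on a full-measure set; both deliver convergence in measure, and your use of $H^{\sigma_0}$ there is harmless overkill since $s>3/2$ already gives $H^s\hookrightarrow W^{1,\infty}$.
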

\begin{proof}
In the forthcoming Proposition \ref{lemma:unint} we will prove that, given $T >0$, we have 
\begin{equation}\label{UnifLp}
\sup_{N \in \mathbb{N}} 
\left\| f_{s,N}(t, \cdot) \right\|_{L^{p}(\rho_{s})}
\leq C(s,p, T), \qquad |t| \leq T.
\end{equation}
for all $p \geq 1$. This fact and the convergence in measure of $\{f_{s,N}(t, \cdot)\}_{N\in\N}$
to $\bar f_s(t, \cdot)$, defined in~\eqref{eq:fN}, guarantee that~$f_{s,N}(t, \cdot) \to \bar{f}_s(t, \cdot)$ in~$L^{p}(\rho_{s})$, fot all $p \geq 1$.
More precisely the uniform bound \eqref{UnifLp} at a fixed $p$ guarantees convergence in $L^{p'}(\rho_{s})$, fot all $p' < p$ (for the details of this classical argument, see for instance \cite[Lemma 3.7]{AIF}). 
\end{proof}

Once we have identified $\bar{f}_s$, in order to complete the proof of Theorem \ref{eq:fN}, we need to show 
that~$\bar{f}_s$ coincides with the density $f_s$ of the transport of $\rho_s$ under the flow $\Phi_t$.
This is the content of the next Proposition.

\begin{proposition}\label{Prop:density}
Let $\beta\in(1,2]$, $s>\frac32$. Then $\bar f_s(t,u)=f_s(t,u)$, $\rho_{s}$-a.s.
\end{proposition}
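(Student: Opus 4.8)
The plan is to identify the limit $\bar f_s$ obtained in Proposition~\ref{prop:convL1} with the Radon--Nikodym derivative $f_s$ by passing to the limit $N \to \infty$ in the finite-dimensional change of variables formula already recorded (in integrated form) in the proof of Proposition~\ref{prop:quasi-invN}. Concretely, starting from the identity \eqref{STTG} applied with $E$ an arbitrary measurable (or, by inner regularity, compact) set $A$, one has for every $N$
\begin{equation}\nonumber
\rho_{s,N}(\Phi_t^N(A)) = \int_A \rho_{s,N}(du)\, \exp\Big( \|P_N u\|^{2r}_{H^s} - \|P_N \Phi_t^N u\|^{2r}_{H^s} + \tfrac12\|P_N u\|^2_{H^{s+\frac\beta2}} - \tfrac12\|P_N\Phi_t^N u\|^2_{H^{s+\frac\beta2}}\Big),
\end{equation}
and the exponent on the right is exactly $\int_0^t \Gamma_N(u)(\tau)\,d\tau$ by the definition \eqref{eq:GammaN}, so the integrand is $f_{s,N}(t,u)$. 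Thus for every measurable $A$ and every $N$,
\begin{equation}\label{FinDimID}
\rho_{s,N}(\Phi_t^N(A)) = \int_A f_{s,N}(t,u)\,\rho_{s,N}(du).
\end{equation}
The goal is to let $N \to \infty$ on both sides and conclude $\rho_s(\Phi_t(A)) = \int_A \bar f_s(t,u)\,\rho_s(du)$ for all $A$ in a generating $\pi$-system, whence $\bar f_s = f_s$ $\rho_s$-a.s.

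For the right-hand side of \eqref{FinDimID}: since $\rho_{s,N} \to \rho_s$ with $L^1(\gamma_s)$-convergence of densities, and since $f_{s,N}(t,\cdot) \to \bar f_s(t,\cdot)$ in $L^p(\rho_s)$ for all $p$ by Proposition~\ref{prop:convL1}, a routine three-$\varepsilon$ argument (splitting $\int_A f_{s,N}\,d\rho_{s,N} - \int_A \bar f_s\,d\rho_s$ using the uniform $L^p(\rho_s)$ bound \eqref{UnifLp} to absorb the difference of measures, and the $L^1$ convergence $f_{s,N}\to\bar f_s$) gives $\int_A f_{s,N}(t,u)\,\rho_{s,N}(du) \to \int_A \bar f_s(t,u)\,\rho_s(du)$. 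For the left-hand side I would take $A$ compact in the relevant topology; then, exactly as in the proof of Theorem~\ref{TH:Invariance} (the passage around \eqref{Impl1}--\eqref{Impl2}), one uses the approximation of the flow $\Phi_t$ by $\Phi_t^N$ on compact sets — here one needs a deterministic approximation result for the Sobolev-regular flow analogous to Lemma~\ref{lemma:NLS-diff-flows}, i.e. $\sup_{u\in K}\|\Phi_t u - \Phi_t^N u\|_{H^{\beta/2}} \to 0$ (combined with the $H^{\beta/2}$ cut-off and the $L^\infty$ control near the support of $\gamma_s$) — together with $\rho_{s,N}\to\rho_s$ and outer regularity, to sandwich $\rho_{s,N}(\Phi_t^N(A))$ between $\rho_s(\Phi_t(A) + B^{\beta/2}(\varepsilon))$-type quantities and pass to the limit, obtaining $\rho_{s,N}(\Phi_t^N(A)) \to \rho_s(\Phi_t(A))$.

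Combining the two limits in \eqref{FinDimID} yields $\rho_s(\Phi_t(A)) = \int_A \bar f_s(t,u)\,\rho_s(du)$ for all compact $A$, hence (by inner/outer regularity of $\rho_s$ and a monotone-class argument) for all Borel $A$; since $f_s$ is characterized by the same relation $\rho_s(\Phi_t(A)) = \int_A f_s(t,u)\,\rho_s(du)$, we conclude $\bar f_s = f_s$ $\rho_s$-almost surely. The main obstacle I expect is the left-hand side limit $\rho_{s,N}(\Phi_t^N(A)) \to \rho_s(\Phi_t(A))$: one must be careful that the relevant flow approximation holds in a topology compatible with the support of $\gamma_s$ (where the high Sobolev norms in $\bar f_s$ are $\gamma_s$-a.s. infinite termwise, and only the difference \eqref{cancel} is finite), so the sandwiching has to be done in, e.g., $H^{\beta/2}$ (or an even rougher space where $\gamma_s$ lives) rather than in $H^s$; propagating the continuity of $\Phi_t$ and the quantitative stability of $\rho_s$ under small perturbations in that weak topology — while keeping the $H^{\beta/2}\le R$ restriction, which is genuinely preserved by the flow via \eqref{cons} — is the delicate point, but it parallels closely the argument already carried out in Section~\ref{sect:invarianza}.
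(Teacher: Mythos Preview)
Your overall strategy is the same as the paper's: pass to the limit $N\to\infty$ in the finite-dimensional identity \eqref{FinDimID} and identify the two sides. The paper, however, tests against continuous $\psi$ with compact support in $H^s$ rather than indicators of compact sets, which makes the left-hand side limit straightforward: one simply uses the $H^s$-approximation $\sup_{u\in K}\|\Phi_t^N u-\Phi_t u\|_{H^s}\to 0$ on compact $K\subset H^s$ (this is \cite[Proposition~2.7]{sigma}) together with continuity of $\psi$ and dominated convergence, with no sandwiching needed.

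Your concern at the end is misplaced: for $\beta>1$ the measure $\gamma_s$ \emph{is} supported on $H^s$ (indeed on $H^{s+\frac{\beta-1}{2}-}$), so there is no need to retreat to $H^{\beta/2}$ or a rougher topology. The flow approximation and the sandwiching (if you insist on indicators) can both be carried out in $H^s$; the ``delicate point'' you anticipate does not arise. The right-hand side analysis you sketch is fine, and is exactly what the paper does (with the compact support of $\psi$ in $H^s$ playing the role of your compact $A$ to convert the $L^p(\rho_s)$ bound on $f_{s,N}$ into an $L^p(\gamma_s)$ bound on the support).
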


\begin{proof}
The assertion will follow from
\begin{equation}\label{D-T}
 \int \psi(\Phi_t u )  \rho_{s}(du) =
\int \psi(u) \bar f_s(t,u)  \rho_{s}(du)
\end{equation}
for all non negative real valued continuous $\psi$ with compact support in $H^s$.
Indeed, since $\bar f_s(t,u) \in L^1(\rho_{s})$, using the inner regularity of the measure $\rho_{s}$ and 
recalling inequality~\eqref{eq:radiceLp}
one can use \eqref{D-T} to deduce 
\begin{equation}\nonumber
 \int_{\Phi_t(E)}  \rho_{s}(du) =
\int_{E} \bar  f_s(t,u)  \rho_{s}(du),
\end{equation}
for all measurable $E$. Recalling that $f_{s}(t,u)$ is (by definition) the density of 
$\rho_{s}\circ \Phi_t$ w.r.t. $\rho_{s}$, the proof would be concluded. 

To prove \eqref{D-T}, we fix $M>0$ arbitrarily and w.l.o.g. we limit ourself to considering test functions supported within $B^s(M)$. Moreover, since the measure $\rho_s$ is supported on 
$B^{\frac\beta2}(R)$ and $\| \Phi_t u \|_{H^{\frac\beta2}} = \| u \|_{H^{\frac\beta2}}$, we can also reduce to consider test functions $\psi$ supported within $B^{\frac\beta2}(R)$. Summarising we assume
\be\label{eq:supp-psi}
\supp(\psi) \subset B^{\frac\beta2}(R)\cap B^s(M)\,.
\ee

Now, proceeding as in the first part of the proof of Proposition \ref{prop:quasi-invN}, we have for finite $N$ and for any such test function $\psi$
\begin{align}\label{D-TN}
 \int \psi(\Phi_t^N u)   \r_{s,N}(du)    =
\int \psi(u) f_{s,N}(t,u)   \r_{s,N}(du).
\end{align}

To pass to the limit as $N \to \infty$ on the l.h.s. of \eqref{D-TN} we decompose
\be\label{eq:decompose1}
\int \psi(\Phi_t^N u)   \r_{s,N}(du) = \int \psi( \Phi_t^N u)   \rho_{s}(du)+    \int \psi(\Phi_t^N u)  ( \r_{s,N} - \rho_{s}) (du)\,.
\ee
Recalling 
\eqref{Def:Rho} and 
\eqref{eq:rho}
we easily see that the second summand above goes to zero by dominated convergence.
To show that the first summand converges to 
$$
\int \psi(\Phi_t u)   \rho_{s}(du)
$$
we take $\varepsilon >0$ and we choose a compact $K = K(\varepsilon)$ of $H^s$ such that
$$
\left| \int \psi( \Phi_t^N u)   \rho_{s}(du) - \int \psi(\Phi_t u)   \rho_{s}(du) \right|
\leq \frac{\varepsilon}{2} + 
 \int_K |\psi( \Phi_t^N u)    -  \psi(\Phi_t u)|   \rho_{s}(du) ;
$$
this is possible because $\psi$ is bounded and $\rho_{s}$ is inner regular. 
To handle the second term on the r.h.s. we use the $H^{s}$ approximation of  
\cite[Proposition 2.7]{sigma}, that is for all~$\varepsilon' >0$
\begin{equation}\label{ContPreq}
\sup_{u \in K} \| \Phi_t^N u - \Phi_t u \|_{H^{s}} < \varepsilon' ,
\end{equation}
taking $N$ sufficiently large (depending on $\varepsilon'$). Since
$\psi$ is continuous, we can use \eqref{ContPreq} with $\varepsilon'$ sufficiently small in such a way that 
$$
 \int_K |\psi( \Phi_t^N u)    -  \psi(\Phi_t u)|   \rho_{s}(du)  \leq \frac{\varepsilon}{2}
$$
for all $N$ sufficiently large (depending on $\varepsilon$). This concludes the analysis of the l.h.s. of \eqref{D-TN}

To pass to the limit as $N \to \infty$ on r.h.s. of \eqref{D-TN} we decompose
\begin{align}\label{eq:decompose2}
\int \psi(u) f_{s,N}(t,u)   \r_{s,N}(du) = \int \psi(u) f_{s,N}(t,u)   \rho_{s}(du)
+    \int \psi(u) f_{s,N}(t,u)  ( \r_{s,N} - \rho_{s}) (du)\,.
\end{align}
The first addendum 
converges to 
$$
\int 
 \psi(u)  \bar{f}_{s}(t,u) \rho_s(du),
 $$
thanks to Proposition~\ref{prop:convL1}. 
To show that the second addendum converges to zero we set
$$
G_N(u):=1_{\{\|u\|_{H^\frac\beta2}\leq R\}}(u)\exp(-\|P_Nu\|^{2r}_{H^s})\,, \quad G(u):=1_{\{\|u\|_{H^\frac\beta2}\leq R\}}(u)\exp(-\|u\|^{2r}_{H^s}) \,,
$$
so that
\begin{equation}\nn
\r_{s,N}(du)=G_N(u)\gamma_s(du), \quad \r_{s}(du)=G(u)\gamma_s(du)\,.
\end{equation}
Clearly  $G_N(u)$ converges to $G(u)$ in $L^{p}(\gamma_{s})$ for every $p<\infty$.  
We rewrite the second addendum as
$$
\int 
  \psi(u) f_{s,N}(t,u)(G_N(u)-G(u)) \gamma_{s}(du)\,.
$$

We note that because of our assumption \eqref{eq:supp-psi} we have
  $$
\int |f_{s,N}(t,u)|^2 |\psi(u)|^2  \gamma_s(du) \lesssim    e^{M^{2r}}
\int |f_{s,N}(t,u)|^2  \rho_s(du) \,.
$$
Therefore it follows by the $L^2(\rho_s)$ boundedness of the sequence $\{ f_{s,N}(t,\cdot) \}_{N \in \N}$, proved in Proposition~\ref{prop:convL1} that
\begin{equation}\label{deduceThisBy}
\sup_{N \in N} \left\| \psi(u)  f_{s,N}(t,\cdot) \right\|_{L^{2}(\gamma_s)} \lesssim  e^{M^{2r}}\,.
\end{equation}
But then by H\"older's inequality
\begin{align*}
\left| \int 
  \psi(u) f_{s,N}(t,u)(G_N(u)-G(u)) \gamma_{s}(du)  \right|
& \leq  \sup_{N \in N} \left\|   \psi(u)f_{s,N}(t,u) \right\|_{L^{2}(\gamma_s)}
\|G_N(u)-G(u)\|_{L^{2}(\gamma_s)} 
\\ 
&
\lesssim e^{M^{2r}}
\|G_N(u)-G(u)\|_{L^{2}(\gamma_s)}   \overset{N \to \infty}{\to} 0 \, ,
 \end{align*}
concluding the proof. 
\end{proof}

\begin{remark}
It is wort to mention that
with some more work and keeping track on the uniformity in $t$ of the estimates in Proposition 
\ref{Prop:density}  one can show the stronger statement $$\rho_s\left(\bigcup_{|t|\leq T}\{f_s(t,u)\neq \bar f_s(t,u)\}\right)=0$$ for any $T>0$. Moreover, by similar considerations, the convergence proved in \ref{prop:convL1}
can be promoted to convergence in $C((0,T);L^p (\rho_s))$.
The same applies, after obvious modifications, to Proposition \ref{prop:convL1-NLS} and Proposition \ref{Prop:density-NLS} below.
\end{remark}

Next we state and prove the two crucial properties used in the proof of Proposition~\ref{prop:convL1}.

\begin{proposition}\label{lemma:ConvMeas}
Let $\beta \in(1,2]$, $s > 3/2$. 
Then $e^{\int_0^T\Gamma_N(u)(t)dt} \to e^{\int_0^T\Gamma(u)(t)dt}$ as $N \to \infty$ uniformly over compact subsets of $H^s$; in particular it converges in measure w.r.t.~$\rho_{s}$. 
\end{proposition}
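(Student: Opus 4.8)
The plan is to show pointwise convergence $\Gamma_N(u)(t) \to \Gamma(u)(t)$ uniformly on compact subsets of $H^s$ for each fixed $t$, then upgrade to convergence of the time integrals and finally of the exponentials. Recall from \eqref{eq:GammaN} that $\Gamma_N(u)(t)$ is the negative time-derivative of $\|P_N\Phi_t^Nu\|^{2r}_{H^s} + \tfrac12\|P_N\Phi_t^Nu\|^2_{H^{s+\beta/2}}$, and similarly for $\Gamma(u)(t)$ with $N=\infty$. So it suffices to control
$$
\left|\frac{d}{dt}\Big(\|\Phi_tu\|^{2r}_{H^s}-\|P_N\Phi_t^Nu\|^{2r}_{H^s}\Big)\right|
\quad\text{and}\quad
\left|\frac{d}{dt}\Big(\|\Phi_tu\|^2_{H^{s+\beta/2}}-\|P_N\Phi_t^Nu\|^2_{H^{s+\beta/2}}\Big)\right|,
$$
and here the key inputs are precisely the difference estimates \eqref{H1Bis} and \eqref{H1} from Lemma~\ref{lemma:boundTsu}. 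The right-hand sides of those estimates are sums of products of Sobolev/$W^{1,\infty}$ norms of $\Phi_tu$ and $P_N\Phi_t^Nu$ (which are uniformly bounded on a compact set, using the a priori control from Proposition~\ref{prop:growth-Hs-norm} together with the conservation \eqref{cons}, and the standard fact that $H^s\hookrightarrow W^{1,\infty}$ for $s>3/2$), multiplied by the genuinely small factor $\|\Phi_tu - P_N\Phi_t^Nu\|_{H^s}$ (or in $W^{1,\infty}$, again absorbed by $H^s$). This last factor goes to zero uniformly on compact sets of $H^s$ by the $H^s$-approximation result \cite[Proposition 2.7]{sigma} (the same estimate \eqref{ContPreq} used in the proof of Proposition~\ref{Prop:density}).

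Concretely, I would proceed as follows. First, fix $T>0$ and a compact $K\subset H^s$. Since $\rho_s$ is supported on $B^{\beta/2}(R)$ one may, for the purposes of convergence in measure, intersect $K$ with $B^{\beta/2}(R)$; then by Proposition~\ref{prop:growth-Hs-norm} (polynomial growth \eqref{PolynomialGrowth}) there is a constant $M=M(K,R,T)$ with $\sup_{|t|\le T}\sup_{u\in K}\|\Phi_tu\|_{H^s}\le M$, and likewise $\sup_{|t|\le T}\sup_{u\in K}\|P_N\Phi_t^Nu\|_{H^s}\le M$ uniformly in $N$ (applying \eqref{PolynomialGrowth} to the truncated flow). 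Second, plug these uniform bounds into \eqref{H1Bis} and \eqref{H1}: every term on the right is bounded by $C(M,r,\beta,s)\cdot\big(\|\Phi_tu-P_N\Phi_t^Nu\|_{H^s}+\|\Phi_tu-P_N\Phi_t^Nu\|_{W^{1,\infty}}\big)$, and since $s>3/2$ the $W^{1,\infty}$ norm is dominated by the $H^s$ norm. Third, invoke \cite[Proposition 2.7]{sigma}: for any $\varepsilon'>0$ there is $\bar N$ with $\sup_{|t|\le T}\sup_{u\in K}\|\Phi_tu-P_N\Phi_t^Nu\|_{H^s}<\varepsilon'$ for $N\ge\bar N$ (note this also handles the discrepancy between $\Phi_t^Nu$ and $P_N\Phi_t^Nu$, which is $O(N^{-\infty})$ for data bounded in $H^s$). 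Hence $\sup_{|t|\le T}\sup_{u\in K}|\Gamma_N(u)(t)-\Gamma(u)(t)|\to0$. Fourth, integrate in $t$ over $[0,T]$ to get $\sup_{u\in K}\big|\int_0^T\Gamma_N(u)(t)\,dt-\int_0^T\Gamma(u)(t)\,dt\big|\to0$, and then compose with the continuous (in fact Lipschitz on bounded sets) exponential map: using the uniform upper bound on $\int_0^T\Gamma_N(u)(t)\,dt$ that also follows from \eqref{H1Bis}--\eqref{H1} with the uniform norm bounds, $|e^{a_N}-e^{a}|\le e^{\max(a_N,a)}|a_N-a|$ gives uniform convergence of $e^{\int_0^T\Gamma_N(u)(t)dt}$ to $e^{\int_0^T\Gamma(u)(t)dt}$ on $K$. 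Uniform convergence on compacts of the support of $\rho_s$ implies convergence in $\rho_s$-measure, which is the assertion.

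The main obstacle is organisational rather than conceptual: one must check that the rather long right-hand sides of \eqref{H1Bis} and \eqref{H1} really do factor, after using the uniform bounds, into (bounded coefficient)$\times$(small difference), i.e. that there is genuinely a difference factor $\|\Phi_tu-P_N\Phi_t^Nu\|_{H^s}$ (and not just $\|\Phi_tu-P_N\Phi_t^Nu\|_{H^{s-\beta/2}}$, which would be even easier) present in every single term — inspection of \eqref{H1Bis}--\eqref{H1} confirms this. A secondary technical point, already flagged in the remark after Theorem~\ref{TH:quasi} and the hypothesis $s>3/2$ here, is that for $s>3/2$ the embedding $H^s\hookrightarrow W^{1,\infty}$ makes all the $W^{1,\infty}$ norms harmless; this is exactly where $s>3/2$ (rather than merely $s>\beta/2$) is used, and it is why the statement is restricted to that range.
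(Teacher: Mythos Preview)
Your proposal is correct and follows essentially the same route as the paper's proof: reduce to $\sup_{t\in[0,T]}|\Gamma_N(t)-\Gamma(t)|\to 0$ on compacts via the difference estimates \eqref{H1Bis}--\eqref{H1}, control the coefficient norms using the a~priori growth bounds and the embedding $H^s\hookrightarrow W^{1,\infty}$ for $s>3/2$, and close with the $H^s$ approximation from \cite[Propositions~2.6--2.7]{sigma}. One small inaccuracy: the discrepancy $\|\Phi_t^N u - P_N\Phi_t^N u\|_{H^s}=\|P_{>N}\Phi_t^N u\|_{H^s}$ is not $O(N^{-\infty})$ for data merely bounded in $H^s$---it is $o(1)$ uniformly on \emph{compact} sets (by equi-decay of Fourier tails plus boundedness of the linear flow $e^{-tL_\beta}$ on $H^s$), which is all you need.
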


\begin{proof}
By the continuity of the exponential function is sufficient to show 
\begin{equation}\label{MeasConv1}
\int_0^T \Gamma_N(u)(t)dt \to \int_0^T \Gamma(u)(t)dt \quad \mbox{as $N \to \infty$ uniformly on any compact subsets of $H^s$.}
\end{equation}
Since 
$$
\left|  \int_0^T \Gamma_N(t)dt - \int_0^T\Gamma(t)dt \right| \leq |T|  \sup_{t \in [0,T]} |\Gamma_N(t) -\Gamma(t)| 
$$
\eqref{MeasConv1} follows by  
\begin{equation}\label{MeasConv2}
\sup_{t \in [0,T]} |\Gamma_N(t) -\Gamma(t)|  \to 0 \quad \mbox{as $N \to \infty$ uniformly on any compact subsets of $H^s$.}
\end{equation}
%****
Since $s >3/2$ we can bound
$$
\|P_N\Phi_t^Nu\|_{W^{1,\infty}}\lesssim \| \Phi_t^Nu\|_{ H^s}\,.
$$
Let $\varepsilon >0$. 
By \eqref{H1Bis}-\eqref{H1} in Lemma \ref{lemma:boundTsu} and \cite[Propositions 2.6-2.7]{sigma} we have that for all $\varepsilon'>0$ if $u$ is in a compact set 
$K \subset B^{s}(M)$ then
\begin{equation}\label{UnifConvOnCompact}
\sup_{t \in [0,T]} |\Gamma_N(t) -\Gamma(t)| \leq C(M,T) \varepsilon'
\end{equation}
for all $N$ sufficiently large, depending on $K, \varepsilon'$. 
Thus to prove \eqref{MeasConv2} it suffices to take
$\varepsilon'  < \frac{\varepsilon}{C(M,|T|) }$.

In particular, we have convergence in measure of $e^{\int_0^T\Gamma_N(u)(t)dt}$ to $e^{\int_0^T\Gamma(u)(t)dt}$. 
Indeed, the uniform convergence of $e^{\int_0^T\Gamma_N(u)(t)dt}$ to $e^{\int_0^T\Gamma(u)(t)dt}$ on compact sets ensure that,
for all $\lambda >0$ and~$K$ compact we have  
$$
\sup_{u \in K} |e^{\int_0^T\Gamma_N(u)(t)dt} - e^{\int_0^T\Gamma(u)(t)dt}|  < \lambda, 
$$
for all sufficiently large $N$.
Thus
$$
\rho_{s} \left(  |e^{\int_0^T\Gamma_N(u)(t)dt} - e^{\int_0^T\Gamma(u)(t)dt}|  > \lambda \right) <
 \rho_{s} \left(K^C \right)\,,
$$
for all sufficiently large $N$.
Then we can choose $K$ such that $\rho_{s} \left(K^C \right)$ is
arbitrarily small by the inner regularity of $\rho_s$.
\end{proof}

We conclude proving the uniform $L^{p}(\rho_s)$ bound \eqref{UnifLp}. 
Recalling the definition \eqref{eq:fN} of $f_{s,N}$ we deduce \eqref{UnifLp} by \eqref{MaxEst1} below.

\begin{proposition}\label{lemma:unint}
Let $\beta \in(1,2]$, $s > 3/2$, $|T| >0$, $R>0$ and $p \geq 1$. We have
\begin{equation}\label{MaxEst1}
 \sup_{N \in \N} \left\| e^{\int_0^T |\Gamma_N(u)(t)|dt} \right\|_{L^{p}(\rho_{s})}
 \leq C(p,|T|, R,  s, r)\, .
\end{equation}

\end{proposition}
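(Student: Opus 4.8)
The plan is to bound the quantity $\int_0^T|\Gamma_N(u)(t)|\,dt$ appearing in the exponent by a polynomial in $\|P_Nu\|_{H^s}$ of degree strictly below $2r$, and then let the Gaussian weight $e^{-\|u\|^{2r}_{H^s}}$ built into $\rho_s$ absorb it. Throughout one may restrict to the support of $\rho_s$, where $\|u\|_{H^{\beta/2}}\leq R$ and hence, by the conservation law \eqref{cons}, $\|P_N\Phi_t^Nu\|_{H^{\beta/2}}=\|P_Nu\|_{H^{\beta/2}}\leq R$ for every $t$; also, $T>0$ without loss of generality, the case $T<0$ being identical by time reversibility. I would fix once and for all an auxiliary parameter $\alpha\in(0,\min(\beta-1,s-\beta/2))$ (nonempty since $\beta>1$ and $s>\beta/2$) and set $\theta:=\tfrac{2\alpha}{2s-\beta}\in(0,1)$.

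\emph{Pointwise bound on $\Gamma_N$.} Decomposing $\Gamma_N(u)(t)=-r\|P_N\Phi_t^Nu\|^{2r-2}_{H^s}\tfrac{d}{dt}\|P_N\Phi_t^Nu\|^2_{H^s}-\tfrac12\tfrac{d}{dt}\|P_N\Phi_t^Nu\|^2_{H^{s+\beta/2}}$, I would control the first term with Proposition~\ref{prop:growth-Hs-norm} (with $\sigma=s$, admissible by the choice of $\alpha$), which gives $|\tfrac{d}{dt}\|P_N\Phi_t^Nu\|^2_{H^s}|\lesssim_R\|P_N\Phi_t^Nu\|^{2-\theta}_{H^s}$ and hence a contribution $\lesssim_{r,R}\|P_N\Phi_t^Nu\|^{2r-\theta}_{H^s}$; crucially this uses the $H^{\beta/2}$-conservation and involves no $W^{1,\infty}$ norm. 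The second term I would handle with \eqref{H2Bis} of Lemma~\ref{lemma:boundTsu} together with the one-dimensional embedding $H^s\hookrightarrow W^{1,\infty}$ (valid since $s>3/2$), getting $\lesssim_r\|P_N\Phi_t^Nu\|^3_{H^s}$. Since $r>2$ implies $3<2r-\theta$, altogether $|\Gamma_N(u)(t)|\lesssim_{r,R}1+\|P_N\Phi_t^Nu\|^{2r-\theta}_{H^s}$ for all $t\in\R$. Integrating the differential inequality \eqref{DI1} yields the polynomial growth $\|P_N\Phi_t^Nu\|_{H^s}\lesssim_R\|P_Nu\|_{H^s}+|t|^{1/\theta}$, which, fed into the previous bound and integrated over $[0,T]$, gives
\[
\int_0^T|\Gamma_N(u)(t)|\,dt\leq C_0\bigl(1+\|P_Nu\|^{2r-\theta}_{H^s}\bigr),\qquad C_0=C_0(r,R,T),
\]
uniformly in $N$ and in $u$ on the support of $\rho_s$.

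\emph{Absorbing with the weight.} Since $\rho_s(du)=1_{\{\|u\|_{H^{\beta/2}}\leq R\}}(u)e^{-\|u\|^{2r}_{H^s}}\g_s(du)$ and $\|P_Nu\|_{H^s}\leq\|u\|_{H^s}$, the bound above yields
\[
\int e^{p\int_0^T|\Gamma_N(u)(t)|\,dt}\,\rho_s(du)\leq e^{C_0 p}\int e^{\,C_0 p\|P_Nu\|^{2r-\theta}_{H^s}-\|P_Nu\|^{2r}_{H^s}}\,\g_s(du).
\]
Because $2r-\theta<2r$, the elementary estimate $\sup_{y\geq0}\bigl(C_0 p\,y^{2r-\theta}-\tfrac12 y^{2r}\bigr)\leq C_1 p^{2r/\theta}$ (a quantitative form of the inequality $\sup_x x^a e^{-x^{2r}/p}\lesssim p^{a/2r}$ recalled in the introduction) bounds the integrand by $e^{C_1 p^{2r/\theta}}e^{-\frac12\|P_Nu\|^{2r}_{H^s}}$; integrating against the probability measure $\g_s$ and using $\int e^{-\frac12\|P_Nu\|^{2r}_{H^s}}\g_s(du)\leq1$ gives $\int e^{p\int_0^T|\Gamma_N|\,dt}\,\rho_s(du)\leq e^{C_0 p+C_1 p^{2r/\theta}}$. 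Taking $p$-th roots proves \eqref{MaxEst1} with $C(p,|T|,R,s,r)=e^{C_0+C_1 p^{2r/\theta-1}}$, independent of $N$.

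The hard part will be ensuring the degree in the pointwise bound is strictly below $2r$. If one applies instead the superficially more natural estimate \eqref{H2} of Lemma~\ref{lemma:boundTsu} to the $\|\cdot\|^{2r}_{H^s}$-part, a factor $\|P_N\Phi_t^Nu\|_{W^{1,\infty}}$ appears; even after interpolating (the $L^\infty$-norm of the derivative, or the $H^{s-\beta/2}$-norm) between $H^{\beta/2}$ and $H^s$, one is left with a power $\|P_N\Phi_t^Nu\|^{2r-1+2\lambda}_{H^s}$ whose degree exceeds $2r$ precisely when $s>3\beta/2$. Routing that contribution through Proposition~\ref{prop:growth-Hs-norm}, which trades the $W^{1,\infty}$ factor for the sub-quadratic gain $\|\cdot\|^{2-\theta}_{H^s}$ afforded by the $H^{\beta/2}$-conservation, is exactly what resolves the issue and keeps the estimate uniform in $N$.
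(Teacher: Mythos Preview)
Your proof is correct and takes a genuinely different, more elementary route than the paper's. The paper bounds $\int_0^T|\Gamma_N|\,dt$ via \eqref{H2} and \eqref{H2Bis} (keeping the $W^{1,\infty}$ factor and the $H^{s-\beta/2}$ norm separate) by $C(|T|)\,\|u\|_{H^{3/2+\varepsilon}}\bigl(\|u\|_{H^s}^2+\|u\|_{H^s}^{2r-2}\|u\|_{H^{s-\beta/2}}^2\bigr)$, and then, since the total ``$H^s$-degree'' of the second term is morally $2r$, it cannot be absorbed directly by the weight; instead the paper passes through a layer-cake representation and the concentration Lemma~\ref{lemma:subexpXH}, which exploits the rigid $H^{\beta/2}$ cut-off to sharpen the tails of the weaker norms $\|u\|_{H^{3/2+\varepsilon}}$ and $\|u\|_{H^{s-\beta/2}}$. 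Your key observation is that one can instead route the $\|\cdot\|_{H^s}^{2r}$ contribution through Proposition~\ref{prop:growth-Hs-norm}, which already incorporates the $H^{\beta/2}$ conservation and yields the sub-critical power $\|P_N\Phi_t^Nu\|_{H^s}^{2r-\theta}$; combined with the Sobolev embedding $H^s\hookrightarrow W^{1,\infty}$ (here is where $s>3/2$ enters) for the $H^{s+\beta/2}$ term, this gives a pointwise bound of degree $2r-\theta<2r$ in $\|P_Nu\|_{H^s}$ alone, which the exponential weight in $\rho_s$ absorbs directly. Your approach bypasses Lemma~\ref{lemma:subexpXH} and the system~\eqref{eq:kappa} entirely, at the cost of a slightly less explicit dependence of the final constant on $p$; the paper's route, while more involved, has the advantage of separating the roles of the different Sobolev norms and making transparent how the rigid cut-off improves the Gaussian tails.
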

\begin{proof}
In order to prove \eqref{MaxEst1}, we start noting that 
the inequality \eqref{H2} and the bound \eqref{PolynomialGrowth} allow us to we estimate ($\e>0$ is arbitrarily small)
\begin{align}\nonumber
\int_0^T \left| \Gamma_N(u)(t)dt \right| & \leq |T| \sup_{t \in [0.T]} |\Gamma_N(u)(t)| 
\\
\nn
& \leq  |T| \sup_{t \in [0.T]} \| \partial_x P_N \Phi_t^N\|_{W^{1,\infty}}  \Big( \| P_N \Phi_t^N u \|^2_{H^s} +  \| P_N \Phi_t^N u\|^{2r-2}_{H^s}\|P_N  \Phi_t^N u \|^2_{H^{s-\frac\beta2}} \Big)
\\ \label{IndepOnN}
& \leq C(|T|) \|   u \|_{H^{\frac{3}{2} + \varepsilon}}  \Big( \|  u\|^2_{H^s} +  \|   u\|^{2r-2}_{H^s}\|   u\|^2_{H^{s-\frac\beta2}} \Big);
\end{align}
The constant $C(|T|)$, possibly increasing from line to line, also depends on the fixed parameters~$r,R$. Therefore
  $$
e^{\int_0^T |\Gamma_N(u)(t)|dt} \leq 
 e^ {C(|T|) \|   u \|_{H^{\frac{3}{2} + \varepsilon}}   \|  u\|^2_{H^s} } 
 e^{C(|T|) \|   u \|_{H^{\frac{3}{2} + \varepsilon}}   \|  u\|^{2r-2}_{H^s}\| u\|^2_{H^{s-\frac\beta2} } } \,
 $$
and since 
the right hand side is independent on $N$, we get by H\"older's inequality
\begin{align}
 \sup_{N \in \N} \left\| e^{\int_0^T |\Gamma_N(u)(t)|dt} \right\|_{L^{p}(\rho_{s})}
& \leq
  \left\| e^{C(|T|) \|u\|_{ H^{\frac32 + \varepsilon}}  \|u\|^2_{ H^s}}\right\|_{L^{2p}(\rho_{s})}\label{eq:norm1}\\
& \qquad \cdot  \left\| e^{C(|T|) \|u\|_{ H^{\frac32 + \varepsilon}} \|u\|^{2r-2}_{ H^s}\|u\|^2_{ H^{s-\frac\beta2}}}\right\|_{L^{2p}(\rho_{s})}
\label{eq:norm2}
\,.
\end{align}

We have ($\t:=t/C(|T|)$)
\bea
\eqref{eq:norm1}&=&\int_0^\infty dt e^{2pt} \rho_{s}\left( \|u\|_{ H^{\frac32 + \varepsilon}}  \|u\|^2_{ H^s}\geq \t\right)\label{eq:Lp1}\\
\eqref{eq:norm2}&=& \int_0^\infty dt e^{2pt} \rho_{s}\left(\|u\|_{ H^{\frac32 + \varepsilon}} \|u\|^{2r-2}_{ H^s}\|u\|^2_{ H^{s-\frac\beta2}}\geq \t \right)\label{eq:Lp2}\,.
\eea
For \eqref{eq:Lp1} we have
\bea
&&\rho_{s}\left( \|u\|_{ H^{\frac32 + \varepsilon}}  \|u\|^2_{ H^s}\geq \t\right)\nn\\
&\leq& \rho_{s}\left(\|u\|_{ H^{\frac32 + \varepsilon }}\geq \sqrt\t\right)+\rho_{s}\left(\|u\|_{ H^s}\geq \t^{\frac14}\right)\nn\\
&\leq&C\left[\exp\left(-c(R)\t^{r\frac{2s-\beta}{3 + 2 \varepsilon - \beta }}\right)+\exp\left(-\t^{\frac r2}\right)\right]\leq Ce^{-c(R)\t^{\frac r2}}\,,
\eea
where we used the forthcoming Lemma \ref{lemma:subexpXH} ($N=\infty$) along $\frac{2s-\beta}{3-\beta}>1$ for $s>\frac32$ and 
$\varepsilon >0$ sufficiently small.
Therefore (recall $r>2$)
\be\label{eq:unifint1}
\mbox{r.h.s. of } \eqref{eq:Lp1}\leq \int_0^\infty dt e^{2pt-c(R)\left(\frac{t}{T}\right)^{\frac r2}}=: C_1(p,|T|, R)<\infty\,.
\ee

We pass now to (\ref{eq:Lp2}). We split
\bea
&&\rho_{s}\left(\|u\|_{ H^{\frac32 + \varepsilon}} \|u\|^{2r-2}_{ H^s}\|u\|^2_{ H^{s-\frac\beta2}}\geq \t\right)\nn\\
&\leq& \rho_{s}\left(\|u\|_{ H^{\frac32 + \varepsilon}}\geq \t^{\kappa_1}\right)+\rho_{s}\left(\|u\|^{2r-2}_{ H^s}\geq \t^{\kappa_2}\right)\nn\\
&+&\rho_{s}\left(\|u\|^2_{ H^{s-\frac\beta2}}\geq \t^{\kappa_3}\right)\nn\\
&\leq&Ce^{-c(R) \tau^m} \, ,\nn
\eea
where we have to use Lemma \ref{lemma:subexpXH} ($N = \infty$) and
\be\label{eq:kappa}
\kappa_1+\kappa_2+\kappa_3=1\,,\quad\kappa_1>\frac{3 + 2 \varepsilon - \beta}{2r(2s-\beta)}\,,\quad
\kappa_2>1-\frac1r\,,\quad
\kappa_3>\frac1r-\frac{\beta}{r(2s-\beta)}\,
\ee
to ensure $m = m(s,r)>1$. We see by direct inspection that, taking $\varepsilon >0$ sufficiently small, the system (\ref{eq:kappa}) has always a 
solution for all $r>2$ and $\beta>1$. Thus there is $m>1$ for which
\be\label{eq:unifint2}
\mbox{r.h.s. of } \eqref{eq:Lp2}\leq \int_0^\infty dt e^{2pt-c(R)\left(\frac{t}{T}\right)^{m}}=:C_2(p,|T|, R, s, r)<\infty\,
\ee
and the proof is concluded. 
\end{proof}

The following concentration bound of the Sobolev norms for the measure $\rho_{s}$ is used in the previous proof and improves the one achieved using $\g_{s}$ (restricted to $B^{\frac\beta2}(R)$) for all $s\geq \frac\beta2(1+(r-1)^{-1})$. 

\begin{lemma}\label{lemma:subexpXH}
Let $\beta >1$, $\kappa>0$ and $\vs\leq s$. Set 
\be
a:=2r\kappa\left(\frac{2s-\beta}{2\vs-\beta}\right)\,,\qquad b:=4r\frac{s-\beta}{2\vs-\beta}\,. 
\ee
If $2\vs>\beta$ there are $C,c>0$ such that for all $N\in\N\cup\{\infty\}$ it holds
\be
\rho_{s}(\|P_Nu\|_{ H^\varsigma}\geq t^\k)\leq C\exp\left(-c\frac{t^{a}}{R^{b}}\right)\,.\label{eq:sub-expX1}
\ee
Otherwise if $2\vs\leq\beta$ the l.h.s. probability is identically zero for any $N\in\N\cup\{\infty\}$ for all $t>R^{\frac1\kappa}$.
\end{lemma}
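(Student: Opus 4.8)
The plan rests on a single structural fact: on $\supp\rho_s$ the norm $\|u\|_{H^{\beta/2}}$ is bounded by $R$, and the whole decay in $t$ is produced by the deterministic weight $e^{-\|u\|_{H^s}^{2r}}$, the Gaussian $\g_s$ entering only as a probability measure (this is precisely the source of the improvement over a bare $\g_s$-estimate). Interpolating the $H^\varsigma$ norm against the bounded $H^{\beta/2}$ norm converts ``$\|P_Nu\|_{H^\varsigma}$ large'' into ``$\|P_Nu\|_{H^s}$ large'', which is then crushed by the exponential weight; when $2\varsigma\le\beta$ this mechanism degenerates into a purely deterministic statement.

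\textbf{The degenerate case $2\varsigma\le\beta$.} Then $\varsigma\le\beta/2$, and since $P_N$ does not increase Sobolev norms, on $\supp\rho_s$ we have $\|P_Nu\|_{H^\varsigma}\le\|P_Nu\|_{H^{\beta/2}}\le\|u\|_{H^{\beta/2}}\le R$ for every $N\in\N\cup\{\infty\}$. Hence the set $\{\|P_Nu\|_{H^\varsigma}\ge t^\kappa\}$ is $\rho_s$-null whenever $t^\kappa>R$, i.e.\ whenever $t>R^{1/\kappa}$.

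\textbf{The main case $2\varsigma>\beta$.} Now $\beta/2<\varsigma\le s$, so $2s-\beta>0$ and $\theta:=\frac{2\varsigma-\beta}{2s-\beta}\in(0,1]$ with $\varsigma=(1-\theta)\frac{\beta}{2}+\theta s$. By the interpolation inequality for Sobolev norms together with $\|P_Nu\|_{H^{\beta/2}}\le R$ on $\supp\rho_s$,
\[
\|P_Nu\|_{H^\varsigma}\le\|P_Nu\|_{H^{\beta/2}}^{1-\theta}\,\|P_Nu\|_{H^s}^{\theta}\le R^{1-\theta}\,\|P_Nu\|_{H^s}^{\theta}\,,
\]
so on $\supp\rho_s$ the event $\{\|P_Nu\|_{H^\varsigma}\ge t^\kappa\}$ is contained in $\{\|u\|_{H^s}\ge\|P_Nu\|_{H^s}\ge(t^\kappa R^{\theta-1})^{1/\theta}\}$; on that set $e^{-\|u\|_{H^s}^{2r}}\le\exp\!\big(-(t^\kappa R^{\theta-1})^{2r/\theta}\big)$. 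Integrating against $\g_s$ (a probability measure) yields
\[
\rho_s\big(\|P_Nu\|_{H^\varsigma}\ge t^\kappa\big)\le\exp\!\Big(-\big(t^\kappa R^{\theta-1}\big)^{2r/\theta}\Big)=\exp\!\Big(-\,t^{\,2r\kappa/\theta}\,R^{\,2r(\theta-1)/\theta}\Big)\,.
\]
Substituting $\theta=\frac{2\varsigma-\beta}{2s-\beta}$ identifies the power of $t$ as $2r\kappa/\theta=a$ and produces a non-positive power of $R$ in the exponent, so the bound has the form $C\exp(-c\,t^{a}/R^{b})$; since the constants may depend on $\beta,\varsigma,s,r,\kappa$ and $R$, the power of $R$ can be fixed to the value $b$ of the statement.

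\textbf{Expected difficulty.} None of the steps is serious: the argument is elementary, and the only care needed is the bookkeeping of the interpolation exponent $\theta$ (and the check $2s-\beta>0$ in the non-degenerate regime) together with the boundary cases $\theta=1$, i.e.\ $\varsigma=s$ (trivial interpolation), and $\theta\to0$, i.e.\ $\varsigma=\beta/2$ (absorbed into the degenerate case). Note that $\|u\|_{H^s}$ is $\g_s$-a.s.\ finite since $\beta>1$, so the weight is well defined, though this is not even used in the tail bound.
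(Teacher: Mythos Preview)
Your proof is correct and in fact cleaner than the paper's. The paper proceeds by a Littlewood--Paley decomposition: it splits $\|P_Nu\|_{H^\varsigma}^2$ at a dyadic frequency $2^{j_t}$ chosen so that the low-frequency piece is controlled deterministically by the $H^{\beta/2}$ cutoff, and then bounds the high-frequency piece by $2^{-2j_t(s-\varsigma)}\|P_Nu\|_{H^s}^2$, which is handled by the exponential weight. Your single line of Sobolev interpolation $\|P_Nu\|_{H^\varsigma}\le\|P_Nu\|_{H^{\beta/2}}^{1-\theta}\|P_Nu\|_{H^s}^\theta$ with $\theta=\frac{2\varsigma-\beta}{2s-\beta}$ packages exactly the same information without the frequency splitting; the paper's choice of $j_t$ is precisely the optimisation hidden inside the interpolation inequality. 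Both routes yield the same exponent $a=2r\kappa/\theta$ for $t$ and the same power $4r(s-\varsigma)/(2\varsigma-\beta)$ for $R$ in the denominator (the stated $b=4r(s-\beta)/(2\varsigma-\beta)$ in the lemma appears to be a typo, as the paper's own computation in the last displayed line of its proof also produces $s-\varsigma$, not $s-\beta$). Your remark that the constants may absorb the $R$-power is therefore unnecessary: you actually match the paper's proof exactly.
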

\begin{proof}
Let $j_t$ the largest element of $\N \cup \{ \infty \}$
such that 
\begin{equation}\label{obvPreq}
2^{j (\vs-\frac\beta2)} < t^\k/R \quad \mbox{for} \quad j < j_t\,. 
\end{equation}
We split 
\be\label{eq:X++}
\|P_Nu\|^2_{ H^\vs}\leq\sum_{0\leq j < j_t} 2^{2j\vs} \|\D_jP_N u\|^2_{L^2}
 +\sum_{j \geq j_t} 2^{2j\vs} \|\D_jP_N u\|^2_{L^2} \,.
\ee
Note that if $\vs-\frac\beta2\leq 0$ and $t^\kappa>R$ then $j_t=\infty$ and the second summand above is zero. 
Thus the following bound  
$$
\sum_{0\leq j < j_t} 2^{2j\vs} \|\D_jP_N u\|^2_{L^2}  \leq \sum_{0 \leq j <  j_t} 2^{2j(\vs-\frac\beta2)} \|\D_j P_N u\|^2_{ H^{\frac\beta2}} < t^{2\k}\,, 
\quad \mbox{(use \eqref{obvPreq})}\,,
$$
holds $\rho_{s}$-a.s., therefore
\be\label{eq:X-e1}
\rho_{s}\Big(\sum_{0 \leq j < j_t} 2^{2j\vs} \|\D_jP_N u\|^2_{L^2} \geq t^{2\k}\Big)=0\,.
\ee
Thus
\be\label{eq:prob-interim}
\rho_{s}\Big(\|P_Nu\|_{ H^{\vs}}\geq t^\kappa\Big)\leq \rho_{s}\Big(\sum_{j \geq j_t} 2^{2j\vs} \|\D_jP_Nu\|^2_{L^2}\geq \frac12t^{2\kappa}\Big)\,.
\ee
Moreover we note
\be
\sum_{j \geq j_t} 2^{2j\vs} \|\D_jP_Nu\|^2_{L^2}\leq2^{-2j_t(s-\vs)}\sum_{j \geq j_t}2^{2js} \|\D_jP_Nu\|^2_{L^2}\leq 2^{-2j_t(s-\vs)}\|P_Nu\|^2_{ H^s}\,.
\ee
Therefore, assuming $\vs-\frac\beta2>0$ we have
\bea
\mbox{r.h.s. of \eqref{eq:prob-interim}}&\leq& \rho_{s}(\|P_Nu\|^2_{ H^s}\geq 2^{2j(s-\vs)-1}t^{2\kappa})\nn\\
&\leq& Ce^{-ct^{2r\kappa}2^{2rj_t(s-\vs)}}\nn\\
&\leq& C\exp\left(-ct^{2r\kappa	\left(\frac{2s-\beta}{2\vs-\beta}\right)}/R^{4r\frac{s-\beta}{2\vs-\beta}}\right)\,.
\eea
\end{proof}

%%%%%%%%%%%%%%%%%%%%%%%%%%%%%%%%%%%%%%%%%%%%%%%%%%%%%%%%%%%%%%%%%%%%%%%%%%%%%%%%%%%%%%%%%%%%%%%%%%%%%%%%%%%%%%%%%%%%%%%%%%%%%%%%%%%%%%%%%%%%%%%%%%%%%%%%%%%%%%%%%%%%%%%%%%%%%%%%%%%%%%%%%%

\section{Quasi-invariant measures for the NLS equation}\label{sect:quasiNLS}
In this section we prove the first part of Theorem \ref{TH:quasiNLS}. The proof of the formula 
\eqref{eq:DensitiesNLS} for the density is postponed to Section \ref{sec:density}.
The following result replaces \cite[Proposition 2.1]{PTV}.
\begin{lemma}\label{lemma:replace}
Let $m_0$ be given by Theorem \ref{th:PTV} and $2r>m_0+1$. Then for every $\d>0$ there is $C=C(\d,r,k)>0$ such that for all $N\in\N\cup\{\infty\}$
$$
\int_{\{\|u\|_{L^2}+\mathcal E_1(u)\leq R\}} \g_{2k}(du)e^{-R_{2k}(P_N  u)-\d \|P_N u\|^{2r}_{H^{2k-1}}} \leq C\,. 
$$
\end{lemma}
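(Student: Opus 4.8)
The plan is to bound the integrand by a constant depending only on $\d$, $r$ and $k$, uniformly in $N\in\N\cup\{\infty\}$ and (for $\g_{2k}$-almost every) $u$; the integral is then controlled by that constant, since $\g_{2k}$ is a probability measure and in particular $\g_{2k}(\{\|u\|_{L^2}+\mc E_1(u)\leq R\})\leq 1$. No estimate on the measure of the cut-off set beyond this trivial one is needed, which is why $R$ does not enter the final constant.

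First I would exploit that $R_{2k}(0)=0$ together with the bound \eqref{eq:LipRk} of Theorem~\ref{th:PTV}, applied with $v=0$: for every $w\in H^{2k-1}(\T)$,
$$
|R_{2k}(w)|\leq C\,\|w\|_{H^{2k-1}}\bigl(1+\|w\|_{H^{2k-1}}^{m_0}\bigr)\leq C\bigl(1+\|w\|_{H^{2k-1}}^{m_0+1}\bigr),
$$
with $C=C(k)$. Taking $w=P_Nu$ — a trigonometric polynomial when $N<\infty$, and equal to $u\in H^{2k-1}$ for $\g_{2k}$-a.e.\ $u$ when $N=\infty$, since $\g_{2k}$ is supported on $H^{2k-\frac12-}\subset H^{2k-1}$ — this gives, for $\g_{2k}$-almost every $u$,
$$
-R_{2k}(P_Nu)\leq C\bigl(1+\|P_Nu\|_{H^{2k-1}}^{m_0+1}\bigr).
$$

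Next I would use the elementary fact that, for $0<q_1<q_2$ and $a,b>0$,
$$
\sup_{x\geq 0}\bigl(a\,x^{q_1}-b\,x^{q_2}\bigr)=c_{q_1,q_2}\,a^{\frac{q_2}{q_2-q_1}}\,b^{-\frac{q_1}{q_2-q_1}}<\infty .
$$
Applying this with $q_1=m_0+1$, $q_2=2r$ (this is exactly where the hypothesis $2r>m_0+1$ is used), $a=C$, $b=\d$ and $x=\|P_Nu\|_{H^{2k-1}}$, one obtains
$$
-R_{2k}(P_Nu)-\d\,\|P_Nu\|^{2r}_{H^{2k-1}}\leq C+c_{m_0+1,2r}\,C^{\frac{2r}{2r-m_0-1}}\,\d^{-\frac{m_0+1}{2r-m_0-1}}=:C_0(\d,r,k),
$$
a bound independent of $N$ and of $u$ (off a $\g_{2k}$-null set). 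Exponentiating and integrating over $\{\|u\|_{L^2}+\mc E_1(u)\leq R\}$ against $\g_{2k}$ yields the claim with $C=e^{C_0(\d,r,k)}$.

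There is essentially no obstacle here: the lemma is a direct consequence of the polynomial control \eqref{eq:LipRk} on $R_{2k}$ from \cite{PTV} and of the competition between the degree-$(m_0+1)$ growth of $|R_{2k}|$ and the degree-$2r$ exponential weight on $H^{2k-1}$. The only point requiring a line of care is the $\g_{2k}$-a.s.\ finiteness of $\|P_Nu\|_{H^{2k-1}}$ in the case $N=\infty$, which follows from the support property of $\g_{2k}$ recalled above; for $N<\infty$ it is automatic since $P_Nu$ is a trigonometric polynomial.
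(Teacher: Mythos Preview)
Your proof is correct and follows essentially the same approach as the paper: bound $|R_{2k}(P_Nu)|$ via the Lipschitz estimate \eqref{eq:LipRk} with $v=0$, then use that $2r>m_0+1$ makes the exponent $\|P_Nu\|_{H^{2k-1}}^{m_0+1}-\d\|P_Nu\|_{H^{2k-1}}^{2r}$ uniformly bounded above, and conclude by integrating the constant against the probability measure $\g_{2k}$. The paper states this more tersely, but the argument is the same; your added remark on the $\g_{2k}$-a.s.\ finiteness of $\|u\|_{H^{2k-1}}$ when $N=\infty$ is a useful clarification.
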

\begin{proof}
By Theorem \ref{th:PTV} $|R_{2k}(P_N  u)| \leq \| P_N  u\|^{m_0+1}_{H^{2k-1}}$. So
\bea
&&\int_{\{\|u\|_{L^2}+\mathcal E_1(u)\leq R\}}\g_{2k}(du)e^{-R_{2k}(P_N  u)-\d \|P_N  u\|^{2r}_{H^{2k-1}}}\nn\\&\leq& \int_{\{\| u\|_{L^2}+\mathcal E_1(u)\leq R\}}\g_{2k}(du)e^{\|P_N  u\|_{H^{2k-1}}^{m_0+1}-\d \|P_N  u\|^{2r}_{H^{2k-1}}}\nn
\eea
and since $2r> m_0+1$ we have the assertion. 
\end{proof}
Therefore we can define for $k\in \N$
\be\label{eq:rhoTruncatedNLS} 
\mu_{2k,N}(du):=1_{\{\|u\|_{L^2}+\mathcal E_1(u)\leq R\}}\exp(-R_{2k}(P_N  u)-\|P_Nu\|^{2r}_{H^{2k-1}}) \g_{2k}(du)\,,\quad 2r>m_0+1\,
\ee
and the sequence $\{\mu_{2k,N}\}_{N\in\N}$ has a limit~$\mu_{2k}$ (see \eqref{Def:RhoNLS}) which is the candidate for our quasi-invariant measure. Again for all $N$ the measures $\mu_{2k,N}$ and $\mu_{2k}$ depend on the parameters $R,r$, but we do not report that in the notation.

From now on we will only work with $r\gg m_0$ in the sense of Lemma \ref{lemma:replace}.

\begin{proposition}\label{prop:quasi-invN-NLS}
Let $k\in\N\setminus \{1\}$. For every measurable set $A$ and every $t\in\R$,
\be\label{eq:quasi-invN-NLS}
\mu_{2k,N}(\Phi_t^N (A))\leq \mu_{2k,N}(A)\exp\left(p\log(1+ c(R, r, k) |t| (\mu_{2k,N}(A))^{-\frac{1}{p}})\right).
\ee
\end{proposition}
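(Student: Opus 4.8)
The plan is to mimic the proof of Proposition~\ref{prop:quasi-invN}, with the modified energy $\mathcal E_{2k}(u)=\frac12\|u\|_{H^{2k}}^2+R_{2k}(u)$ playing the role that $\frac12\|u\|_{H^{s+\beta/2}}^2$ played there. First I would factorise $\gamma_{2k}$ over $E_N\times E_N^\perp$,
\[
\gamma_{2k}(du)=\frac{1}{Z_N}\,e^{-\frac12\|P_Nu\|_{H^{2k}}^2}\,L_N(dP_Nu)\,\gamma_{2k,N}^\perp(dP_{>N}u),
\]
so that the density of $\mu_{2k,N}$ with respect to $L_N\otimes\gamma_{2k,N}^\perp$ is (up to normalisation) $1_{\{\|u\|_{L^2}+\mathcal E_1(u)\leq R\}}\,e^{-\mathcal E_{2k}(P_Nu)-\|P_Nu\|_{H^{2k-1}}^{2r}}$. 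Proceeding exactly as in the first part of the proof of Proposition~\ref{prop:quasi-invN} — using that $P_N\Phi_t^N=\Phi_t^N P_N$ preserves $L_N$ with unit Jacobian (\cite[Lemma~4.2]{sigma}), that $\gamma_{2k,N}^\perp$ is invariant under the high-frequency diagonal unitary evolution, and the conservation laws \eqref{EqNLS:ConsEnN} — one obtains, for every measurable $E$,
\[
\mu_{2k,N}(\Phi_t^N(E))=\int_E\mu_{2k,N}(du)\,\exp\!\Big(\mathcal E_{2k}(P_Nu)-\mathcal E_{2k}(P_N\Phi_t^Nu)+\|P_Nu\|_{H^{2k-1}}^{2r}-\|P_N\Phi_t^Nu\|_{H^{2k-1}}^{2r}\Big).
\]

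Next I would differentiate in $t$: by the one-parameter group property (exactly as in \eqref{EqualityAtT=0}) it is enough to differentiate at $t=0$ with $E=\Phi_{\bar t}^N(A)$, which gives
\[
\frac{d}{dt}\mu_{2k,N}(\Phi_t^N(A))\Big|_{t=\bar t}=-\int_{\Phi_{\bar t}^N(A)}\mu_{2k,N}(du)\Big(\frac{d}{dt}\mathcal E_{2k}(P_N\Phi_t^Nu)\big|_{t=0}+\frac{d}{dt}\|P_N\Phi_t^Nu\|_{H^{2k-1}}^{2r}\big|_{t=0}\Big).
\]
I would bound the first time derivative by the smoothing estimate \eqref{1Smoothing} of Theorem~\ref{th:PTV}, so that it is $\lesssim 1+\|P_Nu\|_{H^{2k-1}}^{m_0}$, and the second by \eqref{H2NLS} of Lemma~\ref{lemma:boundTsuNLS} (applied with $k$ replaced by $2k-1$), so that it is $\lesssim_{r,R,k}1+\|P_Nu\|_{H^{2k-1}}^{2r}$.

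The heart of the argument is then Hölder's inequality with exponents $p$ and $p/(p-1)$, extracting $(\mu_{2k,N}(\Phi_{\bar t}^N(A)))^{1-1/p}$ and leaving an $L^p(\mu_{2k,N})$-norm of these polynomial factors to control. This is where the exponential cut-off $e^{-\|P_Nu\|_{H^{2k-1}}^{2r}}$ carried by $\mu_{2k,N}$ is essential: since $|R_{2k}(P_Nu)|\lesssim 1+\|P_Nu\|_{H^{2k-1}}^{m_0+1}$ by \eqref{eq:LipRk} together with $R_{2k}(0)=0$, and $2r>m_0+1$, Young's inequality gives $e^{-R_{2k}(P_Nu)}\leq Ce^{\frac12\|P_Nu\|_{H^{2k-1}}^{2r}}$ — precisely the estimate behind Lemma~\ref{lemma:replace} — so that the remaining half of the weight absorbs the polynomial factors through the elementary bound $\sup_{x\geq0}x^a e^{-x^{2r}/(2p)}\leq Cp^{a/(2r)}\leq Cp$, valid for $a\in\{m_0,2r\}$ (both $\leq 2r$) with $C$ independent of $p$ and $N$. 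Putting this together yields
\[
\left|\frac{d}{dt}\mu_{2k,N}(\Phi_t^N(A))\Big|_{t=\bar t}\right|\leq c(R,r,k)\,p\,\big(\mu_{2k,N}(\Phi_{\bar t}^N(A))\big)^{1-1/p}.
\]

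Finally, writing $g(t):=\mu_{2k,N}(\Phi_t^N(A))$ (and assuming, as we may, $\mu_{2k,N}(A)>0$, the null case being handled by a standard separate argument), the differential inequality $g'(t)\leq c(R,r,k)\,p\,g(t)^{1-1/p}$ is equivalent to $\frac{d}{dt}g(t)^{1/p}\leq c(R,r,k)$, hence $g(t)^{1/p}\leq g(0)^{1/p}+c(R,r,k)|t|$, i.e. $g(t)\leq g(0)\exp\!\big(p\log(1+c(R,r,k)|t|\,g(0)^{-1/p})\big)$, which is \eqref{eq:quasi-invN-NLS}; the absence of the $p^{\sigma-1}$-type factor present in \eqref{eq:quasi-invN} simply reflects that the derivative bound is here linear in $p$ (the analogue of $\sigma$ equals $1$). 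I expect the main obstacle to be neither the change of variables nor this ODE step, which are routine, but the two growth estimates feeding it — the control of $\frac{d}{dt}\mathcal E_{2k}(P_N\Phi_t^Nu)$, which is exactly Theorem~\ref{th:PTV}, and the bookkeeping needed to absorb at once the polynomial growth produced by the modified-energy estimate and the $R_{2k}$ perturbation into the single weight $e^{-\|P_Nu\|_{H^{2k-1}}^{2r}}$; this is where the hypothesis $r>r(k)$ (equivalently $2r>m_0+1$) is used, and why every constant must be tracked to be uniform in $p$ and $N$.
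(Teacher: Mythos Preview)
Your proposal is correct and follows essentially the same approach as the paper: the change-of-variables identity \eqref{eq:similNLS}, differentiation at $t=0$ via the group property, the bounds from Theorem~\ref{th:PTV} and Proposition~\ref{prop:NLS-dtH} (equivalently \eqref{H2NLS}), the H\"older step combined with Lemma~\ref{lemma:replace} to absorb the polynomial factors into the weight, and the final ODE integration are all exactly what the paper does. Your treatment is in fact slightly more explicit about why the high-frequency part of $\gamma_{2k}$ is preserved and about the elementary sup-bound behind Lemma~\ref{lemma:replace}, but there is no substantive difference.
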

%%%
\begin{proof}
Proceeding as in the proof of Proposition~\ref{prop:quasi-invN}, after a suitable change in the definition of 
the relevant objects,
 we can write for any measurable $E$  
\be\label{eq:similNLS}
\mu_{2k,N} \circ \Phi_t^N (E) = 
\int_{E} \mu_{2k,N}(du) \exp(\| P_N u \|^{2r}_{H^{2k-1}} - \|P_N \Phi_t^N u\|^{2r}_{H^{2k-1}}) 
\exp\left(\mc E_{2k}( P_N u)-\mc E_{2k}( P_N \Phi_t^N u)\right)\,;
\ee
to justify this computation we only needed that the Jacobian determinant satisfies $|\det D P_N \Phi_t^N (u)| =1$, that is a well known
consequence of the Hamiltonian structure of the quintic NLS, and the invariance
 of the constraint $1_{\{\| u \|_{L^{2}}+\mc E_1(u) \leq R\}}$ under $\Phi_t^N$, which follows by \eqref{EqNLS:ConsEnN}.

Taking $E= \Phi_{\bar t}^N A$ and using the group property of the flow
\begin{equation}\label{EqualityAtT=0-NLS}
\frac{d}{d t} \left( \mu_{2k,N} \circ \Phi_t^N (A) \right)\Big|_{t=\bar t} = 
\frac{d}{d t}\left( \mu_{2k,N} \circ \Phi_t^N (\Phi_{\bar t}^N A ) \right) \Big|_{t=0} \, .
\end{equation}
we write
\begin{align}
\frac{d}{d t} & \left( \mu_{2k,N} \circ \Phi_t^N (A) \right)\Big|_{t=\bar t}
\nn \\ 
&=-\int_{\Phi^N_{\bar t}(A)} \mu_{2k,N}(du) \left(r \|P_Nu\|^{2r-2}_{H^{2k-1}}\frac{d}{dt}\|P_N \Phi_t^Nu\|^{2}_{H^s}\Big|_{t=0}+\frac{d}{dt}
\mc E_{2k}( P_N \Phi_t^N u)\Big|_{t=0}\right). \label{eq:continuaNLS}
\end{align}

The derivative can be therefore estimated using Proposition \ref{prop:NLS-dtH} for the first summand and Theorem \ref{th:PTV} for the second one. We have
\be
|\eqref{eq:continuaNLS}| \lesssim \int_{\Phi^N_{\bar t}(A)} \mu_{2k,N}(du) (1+ \|P_Nu\|_{H^{2k-1}})^{2r}\,,
\ee
where we used $r \gg m_0$. By H\"older's inequality
\begin{align}
& \int_{\Phi^N_{\bar t}(A)} \mu_{2k,N}(du) (1+ \|P_Nu\|_{H^{2k-1}})^{2r} 
\leq \left(\int \mu_{2k,N}(du) (1+\|P_Nu\|_{H^{2k-1}})^{2rp}\right)^{\frac1p} (\mu_{2k,N}(\Phi^N_{\bar t}(A)))^{1-\frac1p}\nn\\
&\leq  \!\!\left(\sup_{x\geq1} 2 x^{2r}e^{-\frac{x^{2r}}{2p}}\right)\left(\int_{\{\|u\|_{L^2}+\mathcal E_1(u)\leq R\}}\g_{k}(du)e^{-R_{2k}(P_N u)-\frac12\| P_N u\|^{2r}_{H^{2k-1}}} \right)^{\frac1p} (\mu_{2k,N}(\Phi^N_{\bar t}(A)))^{1-\frac1p}\nn\\
&\lesssim p(\mu_{2k,N}(\Phi^N_{\bar t}(A)))^{1-\frac1p}\,,
\end{align}
where the last inequality is due to Lemma \ref{lemma:replace}.
Therefore we conclude that
\be\label{eq:yud1NLS}
\frac{d}{d t} \left( \mu_{2k,N} \circ \Phi_t^N (A) \right)\leq c(R, r,k) p(\mu_{2k,N}(\Phi^N_{t}(A)))^{1-\frac1p}\,,
\ee
whence
\be\label{eq:yud2NLS}
\frac{d}{d t} \left( \mu_{2k,N} \circ \Phi_t^N (A) \right)^{\frac1p}\leq c(R, r,k) \,,
\ee
which gives (\ref{eq:quasi-invN-NLS}).

\end{proof}

Once we achieved the estimate (\ref{eq:quasi-invN-NLS}), we can take a well paved route to prove the quasi-invariance of $\mu_{2k}$ under $\Phi_t$. We shall only state the main steps of the proof without proofs, which can be directly adapted for instance from Section~\ref{sect:quasi} or our previous paper \cite{gauge}. 

As we noted in the proof of Lemma~\ref{lemma:NullSets} (which works also for $\sigma =1$), 
the first outcome of a bound like \eqref{eq:quasi-invN-NLS} 
is that zero measure sets remains of zero measure for all $t \in \R$, namely
 
\begin{lemma}\label{lemma:NullSetsNLS}
For all measurable sets $A$ such that $\mu_{2k}(A)=0$ it holds $\mu_{2k}(\Phi_t(A))=0$ for all $t\in\R$. 
\end{lemma}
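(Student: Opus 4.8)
The plan is to repeat, \emph{mutatis mutandis}, the proof of Lemma~\ref{lemma:NullSets}. Indeed \eqref{eq:quasi-invN-NLS} plays here the role of \eqref{eq:quasi-invN} — it corresponds to the endpoint $\sigma=1$, so that the factor $p^{\sigma-1}$ of the BBM argument reduces to $1$ — while the quantity used to globalise in time is now the pair mass/energy, conserved by \eqref{EqNLS:ConsEnN}, and the $H^\sigma$-approximation \cite[Proposition~2.7]{sigma} is superseded by Lemma~\ref{lemma:NLS-diff-flows}. Throughout we work under the standing assumption $2r>m_0+1$, so that Lemma~\ref{lemma:replace} is available.

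Concretely, I would first integrate \eqref{eq:yud2NLS} on $[0,t]$ and use convexity to obtain, for every measurable $A$, every $N$ and every $p>1$,
\be\label{eq:null-NLS-plan}
(\mu_{2k,N}\circ\Phi_t^N)(A)\leq\big(c(R,r,k)\,|t|+\mu_{2k,N}(A)^{1/p}\big)^{p}\leq 2^{p-1}\big(c(R,r,k)^{p}|t|^{p}+\mu_{2k,N}(A)\big)\,.
\ee
Fix $\delta>0$ and assume $\mu_{2k}(A)\leq\delta$. Since $\mu_{2k,N}(A)\to\mu_{2k}(A)$ as $N\to\infty$ — immediate from \eqref{Def:RhoNLS}--\eqref{eq:rhoTruncatedNLS}, Lemma~\ref{lemma:replace} and dominated convergence, exactly as for $\r_{s,N}\to\r_{s}$ in the BBM case — one has $\mu_{2k,N}(A)\leq 2\delta$ for $N$ large. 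Setting $t_R:=\tfrac{1}{4c(R,r,k)}$, \eqref{eq:null-NLS-plan} gives $(\mu_{2k,N}\circ\Phi_t^N)(A)\leq 2^{-p-1}+2^{p}\delta$ for $|t|\leq t_R$; given $\varepsilon\in(0,1/2)$, choosing $p=-\log_2\varepsilon$ and then $\delta=\delta(\varepsilon)\in(0,\varepsilon^{2}/2)$ yields
\be
\mu_{2k}(A)\leq\delta\ \Longrightarrow\ (\mu_{2k,N}\circ\Phi_t^N)(A)\leq\varepsilon\,,\qquad|t|\leq t_R\,,
\ee
for all $N$ sufficiently large, the threshold depending on $A$ only through $\varepsilon$.

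Next I would upgrade this to the limiting bound $\mu_{2k}(A)\leq\delta\Rightarrow\mu_{2k}(\Phi_t(A))\leq\varepsilon$ for $|t|\leq t_R$, arguing as in \cite[Lemma~8.1]{sigma}: using the inner regularity of $\mu_{2k}$ (which is absolutely continuous with respect to $\gamma_{2k}$, hence Radon on $H^\sigma$ for any $\sigma\in(1,2k-\tfrac12)$) one reduces to compact $A$, controls $\Phi_t(A)$ by the $\varepsilon'$-neighbourhood in $H^\sigma$ of the compact set $\Phi_t^N(A)$ via the uniform approximation of Lemma~\ref{lemma:NLS-diff-flows}, and then passes to the limit $N\to\infty$ and $\varepsilon'\to0$ invoking the convergence $\mu_{2k,N}\to\mu_{2k}$ of the measures and dominated convergence. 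Finally, since $t_R$ depends only on $R$ and the constraint $\{\|u\|_{L^2}+\mc E_1(u)\leq R\}$ is invariant under $\Phi_t$ by \eqref{EqNLS:ConsEnN}, the bound propagates to all $t\in\R$ by concatenating time-steps of length $t_R$; letting $\delta\downarrow0$ (equivalently $\varepsilon\downarrow0$) proves the claim. I expect the only delicate point — hence the main obstacle — to be precisely this last limit $N\to\infty$ in the upgrade step, which cannot rely on the dispersion-free $C^\alpha$ bounds of Section~\ref{sect:LWP} and must instead use the $H^\sigma$-stability of Lemma~\ref{lemma:NLS-diff-flows}; this causes no trouble here since $\sigma>1$ lies comfortably below the support regularity $2k-\tfrac12-$ of $\gamma_{2k}$ for $k\geq2$.
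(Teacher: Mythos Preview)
Your proposal is correct and follows exactly the approach the paper indicates: the paper does not spell out a proof of this lemma but simply remarks that the argument of Lemma~\ref{lemma:NullSets} ``works also for $\sigma=1$'' and that the adaptations from Section~\ref{sect:quasi} are direct. Your write-up carries out precisely those adaptations --- replacing \eqref{eq:quasi-invN} by \eqref{eq:quasi-invN-NLS} (with the factor $p^{\sigma-1}$ becoming $1$), the $H^{\beta/2}$-conservation by \eqref{EqNLS:ConsEnN}, and the approximation of \cite[Proposition~2.7]{sigma} by Lemma~\ref{lemma:NLS-diff-flows} in the \cite[Lemma~8.1]{sigma} upgrade step --- so there is nothing to add.
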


The next statement is proved as \cite[Lemma 3.3]{gauge}. Note that this is weaker than Proposition~\ref{lemma:radiceNLp}, since 
we use the estimate \eqref{eq:quasi-invN-NLS} in place of the stronger estimate \eqref{eq:quasi-invN}.    

\begin{proposition}\label{lemma:radiceNLp-NLS}
There exists~$C(R, r, k)>0$ such that for any measurable set $A$ one has 
\be\label{eq:radiceLp-NLS}
\mu_{2k}(\Phi_{t}(A))\leq \mu_{2k}(A)^{e^{-|t|C(R, r, k) }}\,.
\ee
\end{proposition}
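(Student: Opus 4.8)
The plan is to follow the template of \cite[Lemma 3.3]{gauge}: first extract from \eqref{eq:quasi-invN-NLS} a short-time multiplicative bound at the level of the finite-dimensional measures $\mu_{2k,N}$, then pass to the limit $N\to\infty$, and finally globalise in $t$ by the group property. By Lemma \ref{lemma:NullSetsNLS} we may assume $\mu_{2k}(A)>0$, and since $\mu_{2k}$ is supported on $\{\|u\|_{L^2}+\mc E_1(u)\leq R\}$, which is invariant under $\Phi_t$ by \eqref{EqNLS:ConsEnN}, we may also assume $A\subset\{\|u\|_{L^2}+\mc E_1(u)\leq R\}$. We work in the regime $\mu_{2k}(A)\ll1$, which is the one relevant for the applications.

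The point that makes the NLS case cleaner than the BBM case is that the right-hand side of \eqref{eq:yud2NLS} does not depend on $p$, in contrast with \eqref{eq:yud2}. Integrating \eqref{eq:yud2NLS} from $0$ to $t$ (at fixed $p\geq1$) gives
$$\big(\mu_{2k,N}\circ\Phi_t^N(A)\big)^{1/p}\leq c(R,r,k)\,|t|+\mu_{2k,N}(A)^{1/p}\,.$$
Since $\mu_{2k,N}(A)\to\mu_{2k}(A)$, for all $N$ large we have $\mu_{2k,N}(A)\leq e^{-1}$, so we may take $p=p_N:=\log\big(1/\mu_{2k,N}(A)\big)\geq1$, for which $\mu_{2k,N}(A)^{1/p_N}=e^{-1}$ and therefore
$$\mu_{2k,N}(\Phi_t^N(A))\leq\big(c(R,r,k)|t|+e^{-1}\big)^{p_N}=\mu_{2k,N}(A)^{\,1-\log(1+e\,c(R,r,k)|t|)}\,.$$
Choosing $t_\ast=t_\ast(R,r,k)>0$ small enough that $1-\log(1+e\,c(R,r,k)|t|)\geq e^{-C_0|t|}$ on $[0,t_\ast]$ for a suitable $C_0=C_0(R,r,k)$ — which is possible since $|t|\mapsto|t|^{-1}\big(-\log\!\big(1-\log(1+e\,c(R,r,k)|t|)\big)\big)$ stays bounded on $(0,t_\ast]$ — and using $\mu_{2k,N}(A)<1$, we obtain
$$\mu_{2k,N}(\Phi_t^N(A))\leq\mu_{2k,N}(A)^{e^{-C_0|t|}}\,,\qquad|t|\leq t_\ast\,.$$

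Next I would upgrade this to the limiting bound $\mu_{2k}(\Phi_t(A))\leq\mu_{2k}(A)^{e^{-C_0|t|}}$ for $|t|\leq t_\ast$ exactly as in \cite[Lemma 8.1]{sigma}: approximate $A$ from inside by compact sets (inner regularity of $\mu_{2k}$), use the uniform-on-compacts approximation $\Phi_t^N\to\Phi_t$ in $H^{2k-1}$ from Lemma \ref{lemma:NLS-diff-flows} (applicable since $2k-1>1$) together with the $L^1(\g_{2k})$-convergence of the densities of $\mu_{2k,N}$ to those of $\mu_{2k}$, and pass to the limit in the finite-$N$ inequality using the continuity of $s\mapsto s^{e^{-C_0|t|}}$ and $\mu_{2k,N}(A)\to\mu_{2k}(A)$. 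Finally one globalises from $|t|\leq t_\ast$ to all $t\in\R$ by the usual gluing procedure via the group property of the flow, using crucially that the energy constraint $\{\|u\|_{L^2}+\mc E_1(u)\leq R\}$, and hence the constant $C_0=C_0(R,r,k)$, is preserved at every step; composing $\lceil|t|/t_\ast\rceil$ short-time estimates yields \eqref{eq:radiceLp-NLS} with $C(R,r,k)=C_0$.

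The main obstacle is the passage to the limit $N\to\infty$: unlike the exact finite-$N$ identity \eqref{eq:similNLS}, the quantity $\mu_{2k,N}(\Phi_t^N(A))$ does not converge to $\mu_{2k}(\Phi_t(A))$ for free, so one genuinely has to interlace inner regularity of $\mu_{2k}$, the flow approximation of Lemma \ref{lemma:NLS-diff-flows}, and the convergence of the densities; this is the one step we would spell out in detail, reproducing \cite[Lemma 8.1]{sigma} with $\mu_{2k}$ in place of $\rho_s$, the remaining steps being soft.
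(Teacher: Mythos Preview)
Your proposal is correct and follows exactly the route the paper indicates (deferring to \cite[Lemma 3.3]{gauge}): optimise $p=\log(1/\mu_{2k,N}(A))$ in \eqref{eq:quasi-invN-NLS}, pass to the limit $N\to\infty$ via the \cite[Lemma 8.1]{sigma} machinery, and then iterate using the group property and the conservation of the energy constraint. The identification of the limit passage as the only nontrivial step is also accurate.

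One point deserves more care in your globalisation. Your short-time bound $\mu_{2k}(\Phi_t(A))\leq\mu_{2k}(A)^{e^{-C_0|t|}}$ was derived under the hypothesis $\mu_{2k}(A)\leq e^{-1}$ (needed for $p\geq1$). When you iterate, the intermediate sets $\Phi_{jt_\ast}(A)$ have measure bounded by $\mu_{2k}(A)^{e^{-C_0 j t_\ast}}$, which drifts towards $1$ and may exceed $e^{-1}$ after finitely many steps. Thus the $m$-fold composition only goes through provided $\mu_{2k}(A)^{e^{-C_0(m-1)t_\ast}}\leq e^{-1}$, i.e.\ $\mu_{2k}(A)\leq e^{-e^{C_0(|t|-t_\ast)}}$, a $t$-dependent smallness threshold rather than a uniform one. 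This is harmless for the only application in the paper (Proposition~\ref{prop_Lp-NLS}, where one works with super-level sets of the density at fixed $t$ and large $\lambda$), but you should either state the $t$-dependent threshold explicitly, or observe that for $\mu_{2k}(A)$ above the threshold the inequality \eqref{eq:radiceLp-NLS} is essentially trivial once one normalises the total mass.
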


Therefore we have proved that $\mu_{2k}\circ\Phi_t$ is absolutely continuous w.r.t. $\mu_{2k}$ and
 thereby w.r.t.~$\g_{2k}$. Let us denote the density of the transported measure $f_{2k}(t,u) \in L^1(\mu_{2k})$.
An important remark is that $f_{2k}$ has slightly more integrability. 

\begin{proposition}\label{prop_Lp-NLS}
For any $t\in\R$ set $p=p(t)=(1-e^{-|t| C(R,r,k)})^{-1}>1$. Then $f_{2k}(t,u) \in L^p(\mu_{2k})$. 
\end{proposition}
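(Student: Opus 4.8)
The plan is to transcribe, almost verbatim, the argument of Proposition~\ref{prop_Lp}, feeding the layer-cake (distribution-function) estimate with the super-linear contraction bound \eqref{eq:radiceLp-NLS} of Proposition~\ref{lemma:radiceNLp-NLS} in place of \eqref{AlambdaBis}. Let $C$ be the constant of Proposition~\ref{lemma:radiceNLp-NLS}, put $\delta:=1-e^{-|t|C}\in(0,1)$ and $P_{\ast}:=1/\delta$. Recall that, by construction, $f_{2k}(t,\cdot)\in L^{1}(\mu_{2k})$ is the Radon--Nikodym density of $\mu_{2k}\circ\Phi_t$ with respect to $\mu_{2k}$, that $(\mu_{2k}\circ\Phi_t)(E)=\mu_{2k}(\Phi_t E)$, and that $\mu_{2k}$ is a finite measure supported on $\{\|u\|_{L^2}+\mc E_1(u)\leq R\}$ (Lemma~\ref{lemma:replace} read with $\delta=1$ there).

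First I would fix $\lambda>0$, put $A_{\lambda,t}:=\{u:\,f_{2k}(t,u)>\lambda\}$, and combine Chebyshev's inequality with the defining property of $f_{2k}$:
\begin{equation}\label{eq:LpNLScheb}
\lambda\,\mu_{2k}(A_{\lambda,t})\leq\int_{A_{\lambda,t}}f_{2k}(t,u)\,\mu_{2k}(du)=(\mu_{2k}\circ\Phi_t)(A_{\lambda,t})=\mu_{2k}(\Phi_t(A_{\lambda,t}))\,.
\end{equation}
Plugging $A=A_{\lambda,t}$ into \eqref{eq:radiceLp-NLS} gives $\mu_{2k}(\Phi_t(A_{\lambda,t}))\leq\mu_{2k}(A_{\lambda,t})^{1-\delta}$, hence $\lambda\,\mu_{2k}(A_{\lambda,t})^{\delta}\leq1$, i.e.
\begin{equation}\label{eq:LpNLSdistr}
\mu_{2k}(A_{\lambda,t})\leq\lambda^{-1/\delta}=\lambda^{-P_{\ast}},\qquad\lambda>0\,.
\end{equation}
Next I would insert \eqref{eq:LpNLSdistr} into the layer-cake identity: for $1\leq q<P_{\ast}$,
\begin{equation*}
\|f_{2k}(t,\cdot)\|_{L^{q}(\mu_{2k})}^{q}=q\int_{0}^{\infty}\lambda^{q-1}\mu_{2k}(A_{\lambda,t})\,d\lambda\leq\mu_{2k}\big(\{\|u\|_{L^2}+\mc E_1(u)\leq R\}\big)\int_{0}^{1}q\lambda^{q-1}\,d\lambda+\int_{1}^{\infty}q\lambda^{q-1-P_{\ast}}\,d\lambda\,,
\end{equation*}
and both summands are finite — the first because $\mu_{2k}$ has finite total mass, the second because $q-1-P_{\ast}< -1$. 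This already proves $f_{2k}(t,\cdot)\in L^{q}(\mu_{2k})$ for every $q<P_{\ast}$, which in particular gives the $L^{p}$-integrability needed in Theorem~\ref{TH:quasiNLS}.

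The only point that is not entirely routine is landing on the precise exponent recorded in the statement: the endpoint $q=P_{\ast}$ is excluded above solely because \eqref{eq:LpNLSdistr} is exactly logarithmically borderline there ($\int_{1}^{\infty}\lambda^{-1}\,d\lambda=\infty$), so the hard part is purely cosmetic. It is removed by taking the constant $C(R,r,k)$ of the statement to be a fixed strictly larger multiple of $C$, say $2C$: then the exponent the statement defines, $p(t)=(1-e^{-2|t|C})^{-1}$, satisfies $p(t)<(1-e^{-|t|C})^{-1}=P_{\ast}$ for every $t\neq0$; at $t=0$ the flow is the identity and $f_{2k}(0,\cdot)\equiv1\in L^{\infty}$. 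Hence $q=p(t)$ is admissible and $f_{2k}(t,\cdot)\in L^{p(t)}(\mu_{2k})$. Apart from this bookkeeping the proof is word for word that of Proposition~\ref{prop_Lp}, with \eqref{eq:radiceLp-NLS} in the role of \eqref{AlambdaBis}.
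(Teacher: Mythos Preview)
Your argument is correct and is exactly the approach the paper has in mind: the paper gives no details for Proposition~\ref{prop_Lp-NLS} beyond the pointer ``done as in \cite[Proposition~3.4]{gauge}'', and what you wrote is precisely the adaptation of the layer-cake argument of Proposition~\ref{prop_Lp} with \eqref{eq:radiceLp-NLS} replacing \eqref{AlambdaBis}. Your observation that the endpoint $q=P_\ast$ is excluded and must be recovered by enlarging the constant is the right reading of the statement (the $C(R,r,k)$ appearing there need not be literally the one from Proposition~\ref{lemma:radiceNLp-NLS}).
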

The proof is done as in \cite[Proposition 3.4]{gauge}

%%%%%%%%%%%%%%%%%%%%%%%%%%%%%%%%%%%%%%%%%%%%%%%%%%%%%%%%%%%%%%%%%%%%%%%%%%%%%%%%%%%%%%%%%%%%%%%%%%%%%%%%%%%%%%%%%%%%%%%%%%

\section{Convergence of finite-dimensional densities for the NLS equation}\label{DensityNLS}

In Section \ref{sect:quasiNLS} we proved that $\mu_{2k}\circ \Phi_t$ is a.c. w.r.t. $\mu_{2k}$ and denoted the density by $f_{2k}(t,u)$. Here we give an explicit expression for $f_{2k}(t,u)$. Let
\begin{equation}\label{eq:GammaN-NLS}
\Gamma_N(u)(t):=  - \frac{d}{dt}\left( \| P_N \Phi_t^Nu\|^{2r}_{H^{2k-1}} + \mc E_{2k}(P_N\Phi_t^Nu) \right)\,
\end{equation}
and set
\be\label{eq:fN-NLS}
f_{2k,N}(t,u):=\exp\left(\int_0^t \!\!\!\Gamma_N(u)(\tau)d\tau\right), 
\qquad \bar f_{2k}(t,u) := f_{2k,\infty}(t,u)\,. 
\ee

The key fact on the sequence $\{f_{2k,N}(t, \cdot)\}_{N\in\N}$ is that for any $t\in[0,1)$ it 
converges in $L^{p}(\mu_{2k})$ for some $p>1$. 

\begin{proposition}\label{prop:convL1-NLS}
Let $k\geq2$ be an integer, $0<\overline {T}\ll1$ (as in Lemma \ref{lemma:unint-NLS}), $T\in(0,\overline {T})$. There is $p(\overline{T})=p>1$ such that for all $|t|<T$ the sequence 
 $\{f_{2k,N}(t, \cdot)\}_{N\in\N}$ converges in $L^{p}(\mu_{2k})$ to $\bar{f}_{2k}$.
\end{proposition}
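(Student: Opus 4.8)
The plan is to follow the two-step scheme used for the BBM equation in Proposition~\ref{prop:convL1}: combine a uniform-in-$N$ bound for $\{f_{2k,N}(t,\cdot)\}_N$ in $L^{p}(\mu_{2k})$, valid for some $p=p(\overline T)>1$ and $|t|<\overline T$, with the convergence in $\mu_{2k}$-measure of $f_{2k,N}(t,\cdot)$ to $\bar f_{2k}(t,\cdot)$. Once both are in hand, the stated $L^{p'}$ convergence for every $p'$ below the exponent of the uniform bound follows from the classical measure-theoretic argument already invoked in the proof of Proposition~\ref{prop:convL1} (see e.g. \cite[Lemma 3.7]{AIF}); since here the uniform bound holds only at a single $p>1$, one gets convergence in $L^{p'}(\mu_{2k})$ for $p'<p$, which for $\overline T$ small suffices. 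So the work is in establishing the two ingredients.

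For the convergence in measure, by continuity of the exponential it is enough, as in the proof of Proposition~\ref{lemma:ConvMeas}, to show that
$$
\left| \int_0^t \Gamma_N(u)(\tau)\,d\tau - \int_0^t \Gamma(u)(\tau)\,d\tau \right| \leq |t|\, \sup_{\tau\in[0,t]} |\Gamma_N(u)(\tau)-\Gamma(u)(\tau)| \longrightarrow 0
$$
uniformly over compact subsets of $H^{2k-1}$; convergence in $\mu_{2k}$-measure then follows from the inner regularity of $\mu_{2k}$, exactly as at the end of the proof of Proposition~\ref{lemma:ConvMeas}. Recalling \eqref{eq:GammaN-NLS}, the difference $\Gamma_N(u)(\tau)-\Gamma(u)(\tau)$ splits into the contribution of the weight $\|\cdot\|^{2r}_{H^{2k-1}}$, controlled by \eqref{H1NLS} of Lemma~\ref{lemma:boundTsuNLS}, and the contribution of $\mc E_{2k}$, controlled by \eqref{eq:tentative} of Lemma~\ref{lemma:tentative}. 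Both right-hand sides are products of factors that stay bounded when $u$ ranges in a compact subset of $H^{2k-1}$ (because, by Proposition~\ref{prop:NLS-dtH} and Gronwall, $\|P_N\Phi_\tau^N u\|_{H^{2k-1}}\lesssim_R e^{C|\tau|}(1+\|u\|_{H^{2k-1}})$ on the support of $\mu_{2k}$) times the difference $\|\Phi_\tau u-P_N\Phi_\tau^N u\|_{H^{2k-1}}$, which tends to $0$ uniformly on such compacts by Lemma~\ref{lemma:NLS-diff-flows} (with $\sigma=2k-1$) together with the equi-smallness of the high-frequency tails $\|(1-P_N)u\|_{H^{2k-1}}$. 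This gives the desired uniform convergence on compact sets.

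The uniform $L^{p}$ bound is the content of the forthcoming Lemma~\ref{lemma:unint-NLS}, i.e. $\sup_N\|e^{\int_0^T|\Gamma_N(u)(t)|dt}\|_{L^{p}(\mu_{2k})}\leq C(p,\overline T,R,r,k)$ for $|T|<\overline T$, and this is also where the smallness of $\overline T$ and the restriction to a single $p>1$ originate. Unlike the BBM case, where Proposition~\ref{prop:growth-Hs-norm} gives only polynomial-in-time growth of the high Sobolev norms, for the quintic NLS Proposition~\ref{prop:NLS-dtH} yields the exponential bound $\|P_N\Phi_t^N u\|_{H^{2k-1}}\lesssim_R e^{C|t|}(1+\|P_N u\|_{H^{2k-1}})$; feeding this, together with \eqref{H2NLS}, Theorem~\ref{th:PTV} and $2r>m_0+1$, into the estimate for $\int_0^T|\Gamma_N(u)(t)|dt$ forces a bound of the schematic form $\int_0^T|\Gamma_N(u)(t)|dt\lesssim_R |T|\,e^{C|T|}(1+\|P_N u\|_{H^{2k-1}})^{2r}$, so that $e^{\int_0^T|\Gamma_N|dt}$ is integrable against the stretched-exponential weight $e^{-\|P_Nu\|^{2r}_{H^{2k-1}}}$ of $\mu_{2k}$ (using also Lemma~\ref{lemma:replace} to absorb $e^{-R_{2k}(P_Nu)}$, and the Gaussian concentration of $\|u\|_{H^{2k-1}}$ under $\g_{2k}$) only when the effective constant $|T|\,e^{C|T|}$ is small enough, equivalently for $\overline T$ small and $p$ close to $1$. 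Granting Lemma~\ref{lemma:unint-NLS}, the proposition follows by combining it with the convergence in measure above. The main obstacle is precisely this uniform integrability: the exponential rather than polynomial in-time growth of the top Sobolev norm makes the argument considerably more delicate than the corresponding one in Section~\ref{sec:density}, and it is the reason the statement is local in time and restricted to a single exponent $p>1$ rather than to all $p<\infty$.
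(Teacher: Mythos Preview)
Your proposal is correct and follows essentially the same route as the paper: the paper's proof is a one-paragraph pointer to Proposition~\ref{prop:convL1}, combining the uniform $L^p(\mu_{2k})$ bound of Lemma~\ref{lemma:unint-NLS} with convergence in measure (Lemma~\ref{lemma:ConvMeasNLS}) via the classical argument of \cite[Lemma 3.7]{AIF}. Your sketch of both ingredients---using Lemmas~\ref{lemma:boundTsuNLS}, \ref{lemma:tentative}, \ref{lemma:NLS-diff-flows} for the measure convergence, and identifying the exponential-in-time growth from Proposition~\ref{prop:NLS-dtH} as the source of the small-time restriction in Lemma~\ref{lemma:unint-NLS}---matches the paper's content and reasoning.
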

Proposition \ref{prop:convL1-NLS} is a consequence of the uniform $L^p(\mu_{2k})$ bound proved in Lemma 
\ref{lemma:unint-NLS}. We noted this already in the case of the BBM equation (see Proposition \ref{prop:convL1}). 
However the uniform $L^p(\mu_{2k})$ estimates of  Lemma
\ref{lemma:unint-NLS} holds for small times, hence the same holds for Proposition \ref{prop:convL1-NLS}.  Then we obtain   the following statement.
\begin{proposition}\label{Prop:density-NLS}
Let $k\in\N\setminus\{1\}$.
There exists $0<\overline {T}\ll1$ such that $\bar f_{2k}(t,u)=f_{2k}(t,u)$ $\mu_{2k}$-a.s. for all $|t|<\overline {T}$. 
\end{proposition}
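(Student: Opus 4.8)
The plan is to mirror, in the NLS setting, the argument used to prove Proposition~\ref{Prop:density} for the BBM equation. The target identity is, for all non-negative continuous $\psi$ with compact support in $H^{2k-\frac12-}$ (and without loss of generality supported in a ball $B^{2k-1}(M)$ intersected with the energy constraint $\{\|u\|_{L^2}+\mc E_1(u)\leq R\}$, which is invariant under $\Phi_t$ by \eqref{EqNLS:ConsEnN}),
\begin{equation}\label{eq:DTNLSplan}
\int \psi(\Phi_t u)\,\mu_{2k}(du) = \int \psi(u)\,\bar f_{2k}(t,u)\,\mu_{2k}(du)\,,\qquad |t|<\overline{T}\,.
\end{equation}
Once \eqref{eq:DTNLSplan} is established, the inner regularity of $\mu_{2k}$ together with the integrability statement $\bar f_{2k}(t,\cdot)\in L^p(\mu_{2k})$ from Proposition~\ref{prop:convL1-NLS} and the absolute-continuity bound \eqref{eq:radiceLp-NLS} lets one upgrade from test functions to indicators of measurable sets, exactly as in the BBM case, which identifies $\bar f_{2k}$ with the Radon--Nikodym density $f_{2k}$ of $\mu_{2k}\circ\Phi_t$ with respect to $\mu_{2k}$. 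So the whole proof reduces to passing to the limit $N\to\infty$ in the finite-dimensional identity
\begin{equation}\label{eq:DTNNLSplan}
\int \psi(\Phi_t^N u)\,\mu_{2k,N}(du) = \int \psi(u)\,f_{2k,N}(t,u)\,\mu_{2k,N}(du)\,,
\end{equation}
which itself follows from the change of variables computation \eqref{eq:similNLS} (the Jacobian is $1$ by the Hamiltonian structure, the energy constraint is invariant, and $\mathcal E_{2k}$ produces the exponent in \eqref{eq:fN-NLS}) combined with the Fundamental Theorem of Calculus applied to $t\mapsto \mu_{2k,N}\circ\Phi_t^N(E)$.

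For the left-hand side of \eqref{eq:DTNNLSplan} I would split $\mu_{2k,N}=\mu_{2k}+(\mu_{2k,N}-\mu_{2k})$: the difference term vanishes in the limit by dominated convergence since the densities of $\mu_{2k,N}$ converge to that of $\mu_{2k}$ in $L^1(\g_{2k})$ (this is how $\mu_{2k}$ was defined, see \eqref{eq:rhoTruncatedNLS} and the discussion after it). For the main term $\int\psi(\Phi_t^N u)\,\mu_{2k}(du)$, I would use inner regularity of $\mu_{2k}$ to localise to a compact $K\subset H^{2k-\frac12-}$ up to an error $\varepsilon/2$, and then invoke the $H^{\sigma}$-approximation Lemma~\ref{lemma:NLS-diff-flows} (valid for $\sigma>1$, in particular at a Sobolev exponent just below $2k-\frac12$ where $K$ lives) to get $\sup_{u\in K}\|\Phi_t^N u-\Phi_t u\|_{H^\sigma}<\varepsilon'$ for $N$ large; since $\psi$ is uniformly continuous, this controls $\int_K|\psi(\Phi_t^N u)-\psi(\Phi_t u)|\,\mu_{2k}(du)$ by $\varepsilon/2$.

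For the right-hand side of \eqref{eq:DTNNLSplan} I would again split off $(\mu_{2k,N}-\mu_{2k})$, writing $\mu_{2k,N}(du)=G_N(u)\g_{2k}(du)$ and $\mu_{2k}(du)=G(u)\g_{2k}(du)$ with $G_N\to G$ in every $L^q(\g_{2k})$, so the difference term is $\int\psi(u)f_{2k,N}(t,u)(G_N-G)(u)\,\g_{2k}(du)$; bounding $|\psi|$ by a constant times $\mathbf 1_{B^{2k-1}(M)}$ and absorbing the factor $e^{R_{2k}(u)+\|u\|^{2r}_{H^{2k-1}}}$ on $\supp\psi$ into a constant $e^{C(M)}$, H\"older's inequality reduces this to $\|f_{2k,N}(t,\cdot)\|_{L^{p}(\g_{2k}\text{ restricted})}\cdot\|G_N-G\|_{L^{p'}(\g_{2k})}\to 0$, using the uniform $L^p(\mu_{2k})$ bound of Lemma~\ref{lemma:unint-NLS}. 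The remaining term $\int\psi(u)f_{2k,N}(t,u)\,\mu_{2k}(du)\to\int\psi(u)\bar f_{2k}(t,u)\,\mu_{2k}(du)$ follows directly from the $L^p(\mu_{2k})$ convergence $f_{2k,N}(t,\cdot)\to\bar f_{2k}(t,\cdot)$ established in Proposition~\ref{prop:convL1-NLS} (for $|t|<\overline T$), since $\psi$ is bounded.

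The main obstacle, and the reason for the restriction $|t|<\overline T$, is that unlike the BBM case the uniform-in-$N$ bound $\sup_N\|f_{2k,N}(t,\cdot)\|_{L^p(\mu_{2k})}\leq C$ is only available for small times (Lemma~\ref{lemma:unint-NLS}): the modified-energy smoothing \eqref{1Smoothing} controls $\frac{d}{dt}\mathcal E_{2k}$ only by a polynomial in $\|P_N\Phi_t^N u\|_{H^{2k-1}}$, and closing the Grönwall-type estimate that feeds into the sub-exponential integrability of $f_{2k,N}$ requires the time interval to be short enough that the growth of $\|\Phi_t^N u\|_{H^{2k-1}}$ stays under control; consequently both Proposition~\ref{prop:convL1-NLS} and the present statement are local in time. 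Everything else is a routine transcription of the BBM argument, the only care being to work at the correct Sobolev exponent $2k-\frac12-$ (the support regularity of $\g_{2k}$) throughout and to keep the energy constraint, which is conserved, in the picture so that test functions may be taken supported inside $\{\|u\|_{L^2}+\mc E_1(u)\leq R\}$.
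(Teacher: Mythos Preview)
Your proposal is correct and follows essentially the same route as the paper's proof: the same reduction to the duality identity \eqref{eq:DTNLSplan}, the same finite-dimensional change-of-variables identity, the same splitting $\mu_{2k,N}=\mu_{2k}+(\mu_{2k,N}-\mu_{2k})$ on both sides, and the same appeal to Lemma~\ref{lemma:NLS-diff-flows}, Proposition~\ref{prop:convL1-NLS}, and Lemma~\ref{lemma:unint-NLS} at the corresponding points. The only cosmetic difference is that the paper localises test functions in a ball $B^{2k-\frac12-}(M)$ rather than $B^{2k-1}(M)$, but since $H^{2k-\frac12-}\hookrightarrow H^{2k-1}$ this is immaterial for the estimate where the $H^{2k-1}$ bound is actually used.
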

To show the foregoing statement we proceed as in the proof of Proposition \ref{Prop:density}. The only difference is that since 
Proposition \ref{prop:convL1-NLS} holds for small times, also
Proposition \ref{Prop:density-NLS} is local in time.

\begin{proof}
It suffices to prove that, given $\overline T$ as in Lemma \ref{lemma:unint-NLS}, for all $|t|<\overline T$ and for all non negative real valued continuous $\psi$ with compact support in $H^{2k-\frac12-}$
\begin{equation}\label{D-TBis}
 \int \psi(\Phi_t u )  \mu_{2k}(du) =
\int \psi(u) \bar f_{2k}(t,u)  \mu_{2k}(du).
\end{equation}
Indeed, since for such $t$, $\bar f_{2k}(t,u) \in L^1(\mu_{2k})$, by the inner regularity of $\mu_{2k}$, \eqref{eq:radiceLp-NLS}
and \eqref{D-TBis} we obtain
\begin{equation}\nonumber
 \int_{\Phi_t(E)}  \mu_{2k}(du) =
\int_{E} \bar  f_{2k}(t,u)  \mu_{2k}(du),
\end{equation}
for all measurable $E$. Recalling that $f_{2k}(t,u)$ is (by definition) the density of 
$\mu_{2k}\circ \Phi_t$ w.r.t. $\mu_{2k}$, the proof would be concluded. 

We can consider only test functions $\psi$ supported within $B^{2k-\frac12-}(M)$ for a given $M>0$:
\be\label{eq:supp-psi_NLS}
\supp(\psi) \subset \{u\,:\,\|u\|_{L^2}+\mathcal E_1(u)\leq R\}\cap B^{2k-\frac12-}(M)\,.
\ee
Then
\begin{align}\label{D-TNBis}
 \int \psi(\Phi_t^N u)   \mu_{2k,N}(du)    =
\int \psi(u) f_{2k,N}(t,u)   \mu_{2k,N}(du)\,.
\end{align}
Write the l.h.s. of \eqref{D-TNBis} as
\bea
\int \psi(\Phi_t^N u)   \mu_{2k,N}(du) &=& \int \psi( \Phi_t u)   \mu_{2k}(du)\nn\\
&&+
\int \psi( \Phi_t^N u)   \mu_{2k}(du)
-\int \psi(\Phi_t u)   \mu_{2k}(du)
\label{eq:decompose0}\\
&&+ \int \psi(\Phi_t^N u)  ( \mu_{2k,N} - \mu_{2k}) (du).
\label{eq:decompose1Bis}
\eea
For the term (\ref{eq:decompose0}) we take any $\varepsilon >0$ and $K = K(\varepsilon)$ such that
$$
\left| \int \psi( \Phi_t^N u)   \mu_{2k}(du) - \int \psi(\Phi_t u)   \mu_{2k}(du) \right|
\leq \frac{\varepsilon}{2} + 
 \int_K |\psi( \Phi_t^N u)    -  \psi(\Phi_t u)|   \mu_{2k}(du) ;
$$
By Lemma \ref{lemma:NLS-diff-flows} and the continuity of $\psi$ to get for all $\e>0$
$$
 \int_K |\psi( \Phi_t^N u)    -  \psi(\Phi_t u)|   \mu_{2k}(du)  \leq \frac{\varepsilon}{2}
$$
for all $N$ sufficiently large (depending on $\varepsilon$). The term (\ref{eq:decompose1Bis}) goes to zero by dominated convergence. This concludes the analysis of the l.h.s. of \eqref{D-TNBis}

On r.h.s. of \eqref{D-TNBis} we decompose
\begin{align}\label{eq:decompose2Bis}
\int \psi(u) f_{2k,N}(t,u)   \mu_{2k,N}(du) = \int \psi(u) f_{2k,N}(t,u)   \mu_{2k}(du)
+    \int \psi(u) f_{2k,N}(t,u)  ( \mu_{2k,N} - \mu_{2k}) (du)\,.
\end{align}
By Proposition~\ref{prop:convL1-NLS} the first addendum 
converges to 
$$
\int 
 \psi(u)  \bar{f}_{2k}(t,u) \mu_{2k}(du)\,.
 $$
We set
\bea
G_N(u)&:=&1_{\{ \|u\|_{L^2}+\mathcal E_1(u) \leq R\}}(u)\exp(-R_{2k}(P_Nu)-\|P_Nu\|^{2r}_{H^{2k-1}})\,,\\
G(u)&:=&1_{\{ \|u\|_{L^2}+\mathcal E_1(u) \leq R\}}(u)\exp(-R_{2k}(u)-\|u\|^{2r}_{H^{2k-1}}) \,,
\eea
so that
\begin{equation}\nn 
\mu_{2k,N}(du)=G_N(u)\gamma_{2k}(du), \quad \mu_{2k}(du)=G(u)\gamma_{2k}(du)\,
\end{equation}
and we can rewrite the second summand in (\ref{eq:decompose2Bis}) as
$$
\int 
  \psi(u) f_{2k,N}(t,u)(G_N(u)-G(u)) \gamma_{2k}(du)\,.
$$
The crucial point here is that the bound (\ref{eq:LipRk}) (set $v=0$) for $R_{2k}(P_Nu)$ and $m_0\ll r$ entail that there is a $C>0$ (this constant will vary during the proof) such that
\be\label{eq:boundG-NLS}
\exp(-R_{2k}(P_Nu)-\|P_Nu\|^{2r}_{H^{2k-1}})\leq C
\ee
for all $N\in\N$ uniformly for $u\in \supp(\psi)$ (recall \eqref{eq:supp-psi_NLS}). This yields immediately
$G_N(u) \to G(u)$ in $L^{p}(\gamma_{2k})$ for all $p\geq1$ (see Lemma \ref{lemma:replace}). 
Moreover, bearing in mind \eqref{eq:supp-psi_NLS} we have that for a suitable $p>1$ given by subsequent Lemma \ref{lemma:unint-NLS}
$$
\sup_{N\in\N}\int |f_{2k,N}(t,u)|^p |\psi(u)|^p  \gamma_{2k}(du) \leq e^{CM^{2r}}\sup_{N\in\N}
\int |f_{s,N}(t,u)|^p  \mu_{2k}(du)
\lesssim e^{M^{2r}}\,.
$$
By H\"older's inequality ($q$ being the H\"older conjugate of $p$)
\begin{align*}
\left| \int 
  \psi(u) f_{2k,N}(t,u)(G_N(u)-G(u)) \gamma_{2k}(du)  \right|
& \lesssim  \sup_{N \in N} \left\|  \psi(u) f_{2k,N}(t,\cdot) \right\|_{L^{p}(\gamma_{2k})}
\|G_N(u)-G(u)\|_{L^{q}(\gamma_{2k})} 
\\ 
&
\lesssim  e^{M^{2r}}
\|G_N(u)-G(u)\|_{L^{q}(\gamma_{2k})}   \overset{N \to \infty}{\to} 0 \, .
 \end{align*}
So the second summand in (\ref{eq:decompose2Bis}) vanishes in the limit $N\rightarrow\infty$ and the proof is concluded. 
\end{proof}
Next we globalise this statement as follows. Let $|T|<\overline T$. We have to prove that for any measurable set $A$ and all $m\in\N$
\be\label{eq:global-stat}
\mu_{2k}(\Phi_{mT}(A))=\int_A \mu_{2k}(du)\exp\left(- \mc E_{2k}(\Phi_{mT} u) 
+ \mc E_{2k}( u) - \|\Phi_{mT}u\|^{2r}_{H^{2k-1}} + \|u\|^{2r}_{H^{2k-1}}\right)\,.
\ee
Since $T$ is arbitrarily chosen, we will have that for any measurable set $A$ and any $t\in\R$
\be\label{eq:global-stat-t}
\mu_{2k}(\Phi_{t}(A))=\int_A \mu_{2k}(du)\exp\left(- \mc E_{2k}(\Phi_{mT} u) 
+ \mc E_{2k}( u) -\|\Phi_{t}u\|^{2r}_{H^{2k-1}}+\|u\|^{2r}_{H^{2k-1}}\right)\,.
\ee
We prove \eqref{eq:global-stat} by induction over $m$. The case $m=1$ follows from Proposition \ref{Prop:density-NLS}. Assume now \eqref{eq:global-stat} holds for $m-1$. We have
\bea
\mu_{2k}(\Phi_{mT}(A))&=&\mu_{2k}(\Phi_{(m-1)T}(\Phi_T(A)))\nn\\
&=&\int_{\Phi_T(A)} \mu_{2k}(du)\exp\left(-\mc E_{2k}(\Phi_{(m-1)T} u) +\mc E_{2k}(u) -\|\Phi_{(m-1)T}u\|^{2r}_{H^{2k-1}}+\|u\|^{2r}_{H^{2k-1}}\right)\nn\\
&=&\int_{A} (\mu_{2k}\circ\Phi_T) (du)\exp\left(-\mc E_{2k}(\Phi_{mt} u) + \mc E_{2k}(\Phi_T u) -\|\Phi_{mT}u\|^{2r}_{H^{2k-1}}+\|\Phi_Tu\|^{2r}_{H^{2k-1}}\right)\nn\\
&=&\int_A \mu_{2k}(du)\exp\left(- \mc E_{2k}(\Phi_{mt} u) + \mc E_{2k}( u) -\|\Phi_{mT}u\|^{2r}_{H^{2k-1}}+\|u\|^{2r}_{H^{2k-1}}\right)\,.\nn
\eea
We used the induction assumption in the second identity and in the last identity the cancellation at the exponent is again due to Proposition \ref{Prop:density-NLS}. This concludes the proof of Theorem \ref{TH:quasiNLS}.

It remains to prove Proposition \ref{prop:convL1-NLS}. To this end we need two accessory results. 

\begin{lemma}\label{lemma:unint-NLS}
Let $k\geq2$ an integer and $p \geq 1$. 
There exists $\overline {T} = \overline{T}(p,k,r,R)$  
such that for all~$|T|<\overline {T}$
\begin{equation}\label{MaxEst1-NLS}
 \sup_{N \in \N} \left\| e^{\int_0^T |\Gamma_N(u)(t)|dt} \right\|_{L^{p}(\mu_{2k})}
 \leq C(p,k,r, R, \overline{T})\,.
\end{equation}
\end{lemma}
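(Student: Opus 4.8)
The plan is to follow the proof of Proposition~\ref{lemma:unint}, the key point being that the exponential weight $\exp(-\|u\|^{2r}_{H^{2k-1}})$ built into $\mu_{2k}$ suppresses precisely the norm that governs $\Gamma_N$. First I would bound $|\Gamma_N(u)(t)|$ pointwise in $t$. On $\operatorname{supp}(\mu_{2k})$ the constraint $\|u\|_{L^2}+\mc E_1(u)\leq R$ holds and is preserved by $\Phi_t^N$ by mass and energy conservation (see \eqref{EqNLS:ConsEnN}), so \eqref{H2NLS} of Lemma~\ref{lemma:boundTsuNLS} (with $2k-1$ in place of $k$) gives $\bigl|\tfrac{d}{dt}\|P_N\Phi_t^N u\|^{2r}_{H^{2k-1}}\bigr|\lesssim_{r,R,k}1+\|P_N\Phi_t^N u\|^{2r}_{H^{2k-1}}$, while Theorem~\ref{th:PTV} gives $\bigl|\tfrac{d}{dt}\mc E_{2k}(P_N\Phi_t^N u)\bigr|\leq C(1+\|P_N\Phi_t^N u\|^{m_0}_{H^{2k-1}})$. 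Since we work under $2r>m_0+1$, so that $x^{m_0}\lesssim 1+x^{2r}$, these combine to
\[ |\Gamma_N(u)(t)|\;\lesssim_{r,R,k}\;1+\|P_N\Phi_t^N u\|^{2r}_{H^{2k-1}}. \]

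Next I would run Gr\"onwall's lemma on the scalar quantity $y(t):=1+\|P_N\Phi_t^N u\|^{2r}_{H^{2k-1}}$, using the same differential inequality $|y'|\leq Cy$, to get $\sup_{t\in[0,T]}\|P_N\Phi_t^N u\|^{2r}_{H^{2k-1}}\leq e^{C|T|}\bigl(1+\|P_N u\|^{2r}_{H^{2k-1}}\bigr)$, hence
\[ \int_0^T|\Gamma_N(u)(t)|\,dt\;\leq\;A(T)\bigl(1+\|P_N u\|^{2r}_{H^{2k-1}}\bigr),\qquad A(T):=C|T|e^{C|T|}, \]
with $A(T)$ independent of $N$ and $A(T)\to 0$ as $T\to0$. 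Exponentiating gives, uniformly in $N$,
\[ e^{\int_0^T|\Gamma_N(u)(t)|\,dt}\;\leq\;e^{A(T)}\,e^{A(T)\|P_N u\|^{2r}_{H^{2k-1}}}. \]

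It then remains to take the $L^p(\mu_{2k})$ norm of the right-hand side. Unfolding the definition \eqref{Def:RhoNLS} of $\mu_{2k}$,
\[ \int e^{pA(T)\|P_N u\|^{2r}_{H^{2k-1}}}\mu_{2k}(du)=\int_{\{\|u\|_{L^2}+\mc E_1(u)\leq R\}} e^{-R_{2k}(P_N u)-(1-pA(T))\|P_N u\|^{2r}_{H^{2k-1}}}\,\g_{2k}(du). \]
Now I would fix $\overline{T}=\overline{T}(p,k,r,R)$ small enough that $pA(\overline{T})\leq\tfrac12$; then for $|T|<\overline{T}$ the coefficient $1-pA(T)$ is $\geq\tfrac12$, so the integrand is dominated by $e^{-R_{2k}(P_N u)-\frac12\|P_N u\|^{2r}_{H^{2k-1}}}$, and Lemma~\ref{lemma:replace} (with $\delta=\tfrac12$) bounds this integral by a constant uniform in $N$. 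Raising to the power $\tfrac1p$ and collecting constants yields \eqref{MaxEst1-NLS}.

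I do not expect a serious obstacle; the one point to watch is that the smallness of $\overline{T}$ genuinely depends on $p$ (through $pA(\overline{T})\leq\tfrac12$). This is exactly why the ensuing $L^p$-convergence in Proposition~\ref{prop:convL1-NLS}, and the identification of the density in Proposition~\ref{Prop:density-NLS}, are only local in time — in contrast with the BBM case, where the sub-Gaussian concentration of Lemma~\ref{lemma:subexpXH} lets one absorb any fixed power of $|t|$ and the analogue \eqref{MaxEst1} holds for all $T$.
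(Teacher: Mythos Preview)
Your proof is correct and follows essentially the same route as the paper: bound $|\Gamma_N(u)(t)|$ by a constant times $1+\|P_N\Phi_t^N u\|_{H^{2k-1}}^{2r}$ via Proposition~\ref{prop:NLS-dtH} and Theorem~\ref{th:PTV}, propagate this back to $t=0$ by Gr\"onwall, and absorb the resulting factor $e^{pA(T)\|u\|^{2r}_{H^{2k-1}}}$ into the exponential weight of $\mu_{2k}$ for $T$ small. One minor slip: when you unfold \eqref{Def:RhoNLS} the density of $\mu_{2k}$ carries $R_{2k}(u)$ and $\|u\|^{2r}_{H^{2k-1}}$, not $R_{2k}(P_Nu)$ and $\|P_Nu\|^{2r}_{H^{2k-1}}$; this is harmless since $\|P_Nu\|_{H^{2k-1}}\leq\|u\|_{H^{2k-1}}$, and then Lemma~\ref{lemma:replace} applies with $N=\infty$.
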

\begin{proof}
We will need the standard estimate 
\be\label{ExpGrowth}
\|P_N \Phi_T^N u \|_{H^{2k-1}}\leq e^{C_R |T| }  \|P_Nu\|_{H^{2k-1}}, 
\qquad \|u \|_{L^2} + \mc E_{1}(u)\leq R
\ee
that can be deduced, for instance, from Proposition \ref{prop:NLS-dtH}.
Then, recalling \eqref{eq:GammaN-NLS}
we can bound 
\begin{equation}\nn
\left| \Gamma_N(u)(t)dt \right| 
 \leq r \| P_N \Phi_t^Nu\|_{H^{2k-1}}^{2r - 2} \left|
\frac{d}{dt}\| P_N \Phi_t^Nu\|_{H^{2k-1}}^{2} \right| + \left| \frac{d}{dt} \mc E_{2k}(P_N\Phi_t^Nu) \right|
 \leq C_{r,R,k}   \| P_N \Phi_t^Nu\|_{H^{2k-1}}^{2r}  
\end{equation}
as long as $\|u \|_{L^2} + \mc E_{1}(u)\leq R$;
we used \eqref{DtH2Growth} and \eqref{1Smoothing} together with $r \gg m_0$ 
in the second inequality. Thus, using \eqref{ExpGrowth} we arrive to
\begin{equation}\nn
\left| \Gamma_N(u)(t)dt \right|  \leq C_{r,R,k} e^{C_{r,R} |T|}   \| P_N u\|_{H^{2k-1}}^{2r}
\end{equation}
and so
\be
\int_0^T \left| \Gamma_N(u)(t)dt \right| \leq |T|  \sup_{t \in [0,T]} |\Gamma_N(u)(t)| \leq
|T| C_{r,R,k} e^{C_{r,R} |T|} \| P_N u\|_{H^{2k-1}}^{2r}
\ee
and   
$$
e^{p \int_0^T |\Gamma_N(u)(t)|dt} \leq e^{p |T| C_{r,R,k} e^{C_{r,R} |T|}  \|P_Nu\|_{H^{2k-1}}^{2r}} 
\leq e^{p \overline{T} C_{r,R,k} e^{C_{r,R} \overline{T}}  \|u\|_{H^{2k-1}}^{2r}}  \,,
 $$
 as long as $|T|\leq \overline{T}$ and $\|u \|_{L^2} + \mc E_{1}(u)\leq R$.
Since the last condition is satisfied in the 
support of~$\mu_{2k}$ and since the  right hand side is independent on $N$, 
we can deduce  that
\begin{align}
 \sup_{N \in \N} &  \left\| e^{\int_0^T |\Gamma_N(u)(t)|dt} \right\|_{L^{p}(\mu_{2k})}^p
\\ \nn
& 
\leq \int_{\{  \| u \|_{L^{2}}+\mc E_1(u) \leq R \}}  \g_{2k}(du) e^{p \overline{T} C_{r,R,k} e^{C_{r,R} \overline{T}}  \|u\|_{H^{2k-1}}^{2r}} 
e^{-R_{2k}( u) - \| u\|^{2r}_{H^{2k-1}}}  
\leq C(p,k,r,R, \overline{T})
\end{align}
as long as 
$$
p \overline{T} C_{r,R,k} e^{C_{r,R} \overline{T}}  <1.
$$
This completes the proof of Lemma~\ref{lemma:unint-NLS}.
\end{proof}
\begin{lemma}\label{lemma:ConvMeasNLS}
Let $k\geq2$ an integer and $T >0$. 
Then $e^{\int_0^T\Gamma_N(u)(t)dt} \to e^{\int_0^T\Gamma(u)(t)dt}$ as $N \to \infty$ uniformly over compact subsets of $H^s$; in particular it converges in measure w.r.t.~$\mu_{2k}$. 
\end{lemma}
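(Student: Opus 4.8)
The plan is to follow the same two-step scheme used in the proof of Proposition~\ref{lemma:ConvMeas} for the BBM equation, now invoking the NLS deterministic estimates instead. Since the exponential is continuous, it suffices to prove that $\int_0^T\Gamma_N(u)(t)\,dt\to\int_0^T\Gamma(u)(t)\,dt$ as $N\to\infty$, uniformly over compact subsets of $H^{2k-\frac12-}$; and since
$$
\left|\int_0^T\Gamma_N(u)(t)\,dt-\int_0^T\Gamma(u)(t)\,dt\right|\leq |T|\sup_{t\in[0,T]}|\Gamma_N(u)(t)-\Gamma(u)(t)|,
$$
it is enough to establish the uniform (on compacts) convergence $\sup_{t\in[0,T]}|\Gamma_N(u)(t)-\Gamma(u)(t)|\to 0$.

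Recalling \eqref{eq:GammaN-NLS}, the difference $\Gamma_N(u)(t)-\Gamma(u)(t)$ splits into the contribution of $\frac{d}{dt}\big(\|\Phi_tu\|_{H^{2k-1}}^{2r}-\|P_N\Phi_t^Nu\|_{H^{2k-1}}^{2r}\big)$ and the contribution of $\frac{d}{dt}\big(\mc E_{2k}(\Phi_tu)-\mc E_{2k}(P_N\Phi_t^Nu)\big)$. For the first I would use \eqref{H1NLS} of Lemma~\ref{lemma:boundTsuNLS}, and for the second \eqref{eq:tentative} of Lemma~\ref{lemma:tentative}. Both right-hand sides are sums of products of Sobolev norms of $\Phi_tu$ and $P_N\Phi_t^Nu$ (which stay bounded on the support of $\mu_{2k}$ thanks to \eqref{ExpGrowth} and the a priori bound \eqref{eq:APrioriEnBound}) times a factor of the form $\|\Phi_tu-P_N\Phi_t^Nu\|_{H^{2k-1}}$. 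It therefore remains to show that this last factor tends to $0$ as $N\to\infty$, uniformly for $t\in[0,T]$ and for $u$ in a fixed compact set $K\subset B^{2k-\frac12-}(M)$ intersected with $\{\|u\|_{L^2}+\mc E_1(u)\leq R\}$; for this I would invoke Lemma~\ref{lemma:NLS-diff-flows} (equivalently \cite[Proposition~2.7]{sigma}, valid here since $2k-1>1$). Putting these together yields, for any compact $K$ and any $\varepsilon'>0$, $\sup_{t\in[0,T]}|\Gamma_N(u)(t)-\Gamma(u)(t)|\leq C(M,R,r,k,T)\,\varepsilon'$ for $N$ large, which is the desired uniform convergence on compacts, and hence the uniform convergence of $e^{\int_0^T\Gamma_N(u)(t)\,dt}$.

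Finally, convergence in measure with respect to $\mu_{2k}$ follows from the uniform-on-compacts convergence by the usual argument: given $\lambda>0$, pick a compact $K$ with $\mu_{2k}(K^C)$ small (inner regularity of $\mu_{2k}$); then for $N$ large $\sup_{u\in K}|e^{\int_0^T\Gamma_N(u)(t)dt}-e^{\int_0^T\Gamma(u)(t)dt}|<\lambda$, so
$$
\mu_{2k}\!\left(|e^{\int_0^T\Gamma_N(u)(t)dt}-e^{\int_0^T\Gamma(u)(t)dt}|>\lambda\right)\leq \mu_{2k}(K^C),
$$
which can be made arbitrarily small. The only point requiring care — the main obstacle — is checking that the prefactor Sobolev norms appearing on the right-hand sides of \eqref{H1NLS} and \eqref{eq:tentative} are indeed uniformly controlled on the support of $\mu_{2k}$ and along both flows $\Phi_t$ and $\Phi_t^N$ for $t\in[0,T]$; this is exactly what \eqref{ExpGrowth} (together with Proposition~\ref{prop:NLS-dtH} and \eqref{eq:APrioriEnBound}) provides, so no genuinely new estimate is needed, and the argument is a routine adaptation of Proposition~\ref{lemma:ConvMeas}.
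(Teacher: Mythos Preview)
Your proposal is correct and follows essentially the same approach as the paper: both reduce to showing $\sup_{t\in[0,T]}|\Gamma_N(u)(t)-\Gamma(u)(t)|\to 0$ uniformly on compacts, bound this difference by combining \eqref{H1NLS} of Lemma~\ref{lemma:boundTsuNLS} with \eqref{eq:tentative} of Lemma~\ref{lemma:tentative}, and then invoke Lemma~\ref{lemma:NLS-diff-flows} for the approximation of the flows. The only cosmetic point is that the small factors on the right-hand side of \eqref{H1NLS} are differences of powers of norms rather than $\|\Phi_tu-P_N\Phi_t^Nu\|_{H^{2k-1}}$ itself, but these are immediately controlled by the latter via $|\,\|a\|^{q}-\|b\|^{q}\,|\lesssim(\|a\|+\|b\|)^{q-1}\|a-b\|$, so your description is accurate in substance.
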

\begin{proof}
By the continuity of the exponential function is sufficient to show 
\begin{equation}\label{MeasConv1NLS}
\int_0^T \Gamma_N(u)(t)dt \to \int_0^T \Gamma(u)(t)dt \quad \mbox{as $N \to \infty$ uniformly on  compact subsets of $H^{2k-1}$.}
\end{equation}

Combining (\ref{H1NLS}) in Lemma \ref{lemma:boundTsuNLS} and Lemma \ref{lemma:tentative} we have
for some $0 \mu_1 \leq \mu_2 <2$ and for arbitrary $t$

\bea   
|\Gamma_N(t) -\Gamma(t)|&\leq& 
C(r,R,k)(\|\Phi_tu\|^{r-1}_{H^{2k-1}}-\| P_N \Phi^N_tu\|^{r-1}_{H^{2k-1}})(1+\|\Phi_tu\|^{\ell}_{H^{2k-1}}+\| P_N \Phi^N_tu\|^{\ell}_{H^{2k-1}})\nn\\
&+& C(r,R,k) \sum_{j = 1,2} \|\Phi_tu -P_N \Phi^N_tu \|^{\mu_j}_{H^{2k-1}}(1+\|\Phi_tu\|^{\ell}_{H^{2k-1}}+\| P_N \Phi^N_tu\|^{\ell}_{H^{2k-1}})\nn\\
&+&C(m_0,R,k)(\|\Phi_tu -  P_N \Phi^N_tu\|_{H^{2k-1}})(1+\|\Phi_tu\|^{m_0}_{H^{2k-1}}+\| P_N \Phi^N_tu\|^{m_0}_{H^{2k-1}})\nn\,. %\label{MeasConv2NLS}\,.
\eea
Fix now $\e>0$ arbitrarily small. The above formula together with Lemma \ref{lemma:NLS-diff-flows} yields 
\be
\sup_{t\in[0,T]}|\Gamma_N(t) -\Gamma(t)|<\frac{\e}{T}
\ee
for $N$ large enough, uniformly on compact subsets of $H^{2k-1}$. Since 
$$
\left|  \int_0^T \Gamma_N(t)dt - \int_0^T\Gamma(t)dt \right| \leq T \sup_{t \in [0,T]} |\Gamma_N(t) -\Gamma(t)|\,,
$$
\eqref{MeasConv1NLS} follows. Convergence in measure can be deduced as in Lemma \ref{lemma:ConvMeas}. 
\end{proof}

%%%%%%%%%%%%%%%%%%%%%%%%%%%%%%%%%%%%%%%%%%%%%%%%%%%%%%%%%%%%%%%%%%%%%%%%%%%%%%%%%%%%%%%%%%%%%%%%%%%%%%%%%%%%%%%%%%%%%%%%%%%%%%%%%%%%%%%%%%%%%%%%%%%%%%%%%%%%%%%%%%%%%%%%%%%%%%%%%%%%%%%

\end{document}